\numberwithin{section}{chapter}
\numberwithin{subsection}{section}
\newtheorem{lemma}{Lemma}[chapter]
\newtheorem{prop}[lemma]{Proposition}
\newtheorem{thm}[lemma]{Theorem}
\newtheorem{cor}[lemma]{Corollary}
\newtheorem{theorem}[lemma]{Theorem}
\newtheorem{conjecture}[lemma]{Conjecture}
\newtheorem{summary}[lemma]{Summary}
\newtheorem{thmdefn}[lemma]{Theorem/Definition}
\theoremstyle{definition}
\newtheorem{defn}[lemma]{Definition}
\newtheorem{ex}[lemma]{Example}
\newtheorem{rem}[lemma]{Remark}
\newtheorem{remark}[lemma]{Remark}
\newtheorem{notation}[lemma]{Notation}
\newcommand{\Vect}{\mathrm{-Vect}}
\newcommand{\Mod}{\mathrm{-Mod}}
\newcommand{\Zfree}{\Z\mathrm{-Proj}}
\newcommand{\End}{\mathrm{End}}
\newcommand{\Hom}{\mathrm{Hom}}
\newcommand{\Mor}{\mathrm{Mor}}
\newcommand{\Ext}{\mathrm{Ext}}
\newcommand{\Spec}{\mathrm{Spec}}
\newcommand{\Ah}{\mathcal{A}}
\newcommand{\Ch}{\mathcal{C}}
\newcommand{\Ph}{\mathcal{P}}
\newcommand{\isom}{\cong}
\newcommand{\id}{\mathrm{id}}
\newcommand{\ohne}{\smallsetminus}
\newcommand{\tensor}{\otimes}
\newcommand{\im}{\mathrm{im}}
\newcommand{\Ker}{\mathrm{ker}}
\newcommand{\Coker}{\mathrm{coker}}
\newcommand{\cone}{\mathrm{cone}}
\newcommand{\Lie}{\mathrm{Lie}}
\newcommand{\coLie}{\mathrm{coLie}}
\newcommand{\rk}{\mathrm{rk}}
\newcommand{\Pic}{\mathrm{Pic}}
\newcommand{\res}{\mathrm{res}}
\newcommand{\Gal}{\mathrm{Gal}}
\newcommand{\eff}{\mathrm{eff}}
\newcommand{\Sm}{\mathrm{Sm}}
\newcommand{\dR}{\mathrm{dR}}
\newcommand{\an}{\mathrm{an}}
\newcommand{\RGamma}{R\Gamma}
\newcommand{\RGammatilde}{R\tilde{\Gamma}}
\newcommand{\DM}{\mathsf{DM}}
\newcommand{\DA}{\mathsf{DA}}
\newcommand{\Cor}{\mathrm{Cor}}
\newcommand{\gm}{\mathrm{gm}}
\newcommand{\DMgm}{\DM_{\gm}}
\newcommand{\DMgmeff}{\DM_{\gm}^{\eff}}
\newcommand{\onemot}{1\mathrm{-Mot}}
\newcommand{\onemotgen}{1\mathrm{-MOT}}
\newcommand{\grp}{\mathcal{G}}
\newcommand{\MHS}{\mathrm{MHS}}
\newcommand{\VV}{(K,L)\Vect}
\newcommand{\VVarg}[2]{(#1,#2)\Vect}
\newcommand{\VVneu}{(\Q,\Qbar)\Vect}
\newcommand{\MMN}{\mathcal{MM}_{\Nori}}
\newcommand{\Nori}{\mathrm{Nori}}
\newcommand{\MMNeff}{\mathcal{MM}_\mathrm{Nori}^\eff}
\newcommand{\sing}{\mathrm{sing}}
\newcommand{\Per}{\mathcal{P}} 
\newcommand{\Perform}{\widetilde{\Per}} 
\newcommand{\Ann}{\mathrm{Ann}}
\newcommand{\VdR}[1]{V_\dR^\vee({#1})}
\newcommand{\Vsing}[1]{V_\sing(#1)}
\newcommand{\pairs}{\mathrm{Pairs}}
\newcommand{\pairseff}{\pairs^\eff}
\newcommand{\hodge}{\mathrm{Hdg}}
\newcommand{\Gd}{\mathrm{Gd}}
\newcommand{\ev}{\mathrm{ev}}
\newcommand{\gr}{\mathrm{Gr}}
\newcommand{\tate}{\mathrm{Ta}}
\newcommand{\baker}{\mathrm{Bk}}
\newcommand{\ab}{2}
\newcommand{\alg}{\mathrm{alg}}
\newcommand{\inc}{\mathrm{inc2}}
\newcommand{\red}{\mathrm{red}}
\newcommand{\mix}{\mathrm{inc3}}
\newcommand{\sat}{\mathrm{sat}}
\newcommand{\trans}{\mathrm{tr}}
\newcommand{\locus}{\mathcal{N}}
\newcommand{\tor}{\mathrm{tor}}
\newcommand{\Q}{\mathbb{Q}}
\newcommand{\Qbar}{{\overline{\Q}}}
\newcommand{\Z}{\mathbb{Z}}
\newcommand{\R}{\mathbb{R}}
\newcommand{\C}{\mathbb{C}}
\newcommand{\G}{\mathbb{G}}
\newcommand{\Pe}{\mathbb{P}} 
\newcommand{\A}{\mathbb{A}}
\newcommand{\B}{\mathsf{B}} 
\newcommand{\Na}{\mathbb{N}}
\newcommand{\Oh}{\mathcal{O}}
\renewcommand{\gg}{\mathfrak{g}}
\newcommand{\bb}{\mathfrak{b}}
\newcommand{\hh}{\mathfrak{h}}
\newcommand{\mm}{\mathfrak{m}}
\renewcommand{\aa}{\mathfrak{a}}
\newcommand{\Ga}{\G_a}
\newcommand{\Gm}{\G_m}
\newcommand{\uL}{\underline{L}}
\newcommand{\uG}{\underline{G}} 
\newcommand{\Uf}{\mathfrak{U}}
\begin{document}
\title{Transcendence and linear relations of $1$-periods}
\author{Annette Huber}
\address{Math. Institut\\ Ernst-Zermelo-Str. 1\\ 79104 Freiburg\\ Germany}
\email{annette.huber@math.uni-freiburg.de}
\author{Gisbert W\"ustholz}
\address{ETH Z\"urich\\ Department of Mathematics\\ R\"amistrasse 101\\ 8092 Zurich\\ Switzerland}
\email{gisbert.wuestholz@math.ethz.ch}
\date{April 2022}
\dedicatory{\vskip20ex {\Large In memoriam Alan Baker}}
\frontmatter
\maketitle
\begin{abstract} 
In this monograph we study  four fundamental questions about $1$-periods and give complete answers. These complex numbers are the values of integrals of rational algebraic $1$-forms  over not necessarily closed paths, or equivalently periods in cohomological degree $1$, or of Deligne $1$-motives over $\Qbar$.  
\begin{enumerate}
\item
We give a necessary and sufficient condition for a period integral to be \emph{transcendental}. We make this result explicit in the case of the Weierstraß $\sigma$-function on an elliptic curve.
\item
We give a qualitative description of all \emph{$\Qbar$-linear relations} between $1$-periods. This establishes Kontsevich's version of the Period Conjecture for such periods. 
\item
Periods may vanish and we determine all cases when this happens.1
\item For a fixed $1$-motive, we derive a general \emph{formula for the dimension} of the space of its periods  in the spirit of Baker's theorem, which appears as a very special case.
\end{enumerate}
These long-standing open questions lie in the heart of modern transcendence theory.
They look back to  a long history starting with the  transcendence of $\pi$.
\end{abstract} 
 
\tableofcontents


\chapter*{Prologue}
The study of transcendence properties of periods has a long history. It began in 1882 with the famous theorem of Lindemann on the transcendence of $\pi$ which showed that squaring the circle is not possible. 
This settled a problem more than 2000 years old from the time of the Greeks. At the same time,  he showed that $\alpha$ and $e^{\alpha}$ cannot both be algebraic unless $\alpha=0$.  In particular, $\log \alpha$ is transcendental for algebraic $\alpha\neq 0,1$. 
 Lindemann actually proved more: his method gives us that if $\alpha_1, \ldots \alpha_N$ are pairwise distinct algebraic numbers then $e^{\alpha_1}, \ldots , e^{\alpha_N}$ are linearly independent over $\Qbar$. This was carried out  in full detail, and with the  approval of Lindemann, in 1885 by 
Weierstraß in \cite{weierstrass}\footnote{see  p. 1067, footnote 2}.
 \vspace{.3cm}
 
   In his famous address at the ICM 1900,
Hilbert went further. In the seventh of his 23 problems, he asked when $\alpha, \beta$ and $\gamma=\alpha^\beta$ \,can all three be algebraic numbers. There are some obvious cases where this is true, namely when  $\alpha=0$, $\alpha=1$ or $\beta$ is rational. But Hilbert went further and asked whether these were the only cases.
He considered this problem as more difficult to prove than the Riemann hypothesis. 

To much surprise Gelfond \cite{gelfond} and Schneider \cite{S1},  applying different methods, independently succeeded in 1934  in answering Hilbert's problem. An almost equivalent formulation is that for $\alpha\neq 0$ the three numbers are algebraic  if  $\log (\alpha)$ and $\log(\alpha^\beta)$ are linearly dependent over $\Q$.  
\vspace{.3cm}

\subsection*{Linear Forms of Logarithms} 
 The work of Gelfond and Schneider initiated extensive work on so-called linear forms in logarithms.
 It was known that lower bounds for such linear forms would give solutions to several outstanding problems. One of them is the famous class number $1$ problem; another is finding effectively the integral  solutions of classes of diophantine equations. The main open problem in this context was to deal with linear forms in three logarithms of algebraic numbers with algebraic coefficients.
 
 The methods that had been developed so far could not handle more than two logarithms
and it was considered a very difficult problem to make progress on. This hurdle was overcome in the case of classical logarithms in 1966 by Baker.  In his  famous paper he was able to solve this problem in full generality not only for three  but even for any finite number logarithms. 
In particular he showed that if $\alpha_1,\ldots, \alpha_n$ are non-zero algebraic numbers, then a linear form with algebraic coefficients in  $\log\alpha_1,\ldots, \log \alpha_n$ vanishes if and only if  the logarithms are linearly dependent over $\Q$.  This result
is exactly in the spirit of Gelfond and Schneider's solutions of the Hilbert
Problem.

\subsection*{Elliptic and Abelian Integrals}
The number $2\pi i$ is a period of the integral $\int\frac{dx}{x}$ of the rational differential form $\frac{dx}{x}$ taken over a closed path in $\C$. 
Other period numbers appear in the theory of the Weierstraß elliptic function as elliptic integrals of the first kind and, consequently, it was no surprise that Siegel \cite{siegel} took up the topic. He  proved that not all periods of the Weierstraß elliptic functions with algebraic invariants $g_2$ and $g_3$ are algebraic, in particular it follows that in the complex multiplication case all non-zero periods are transcendental.

\vspace{.3cm}

Schneider further developed the theory also to deal with elliptic integrals of the first and  second kind and complete or incomplete periods. In a series of papers based on his solution of Hilbert's seventh problem \cite{S1, S2} he went as far as the technical tools of the time allowed. He proved in \cite{S3} for example that if $u$ is chosen such that the Weierstraß $\wp$-function takes an algebraic value, then 
$1$, $u$ and $\zeta(u)$ are linearly independent over $\Qbar$. In particular if $\omega$ is in the period lattice then $1$, $\omega$ and $\eta(\omega)$ are linearly independent over $\Qbar$. This was the first paper in which he proved a result about the Weiserstrass $\wp$-function and the exponential function. In particular he showed that $\frac{\pi}{\omega}$ is transcendental.

In the subsequent years Schneider extended his work, studying the transcendence properties of abelian functions and integrals and obtaining the first, albeit partial, results. The most striking example was the transcendence of the values of the $\B$-function at rational arguments, see also \cite{siegel-buch}. In his book \cite{S} he asked, as open problems, for proofs of similar results also for elliptic integrals of the third kind
and for abelian integrals. As he noted, it was clear that the methods were exhausted and new methods would be necessary for solving the problems. 
\vspace{.3cm}

In a series of papers following his work on logarithms, Baker  was also able to extend in a very limited way Schneider's results about transcendence of values of elliptic functions in the case of the Weierstraß $\wp-$ and $\zeta$-functions which contain also transcendence results on elliptic integrals of the second kind, see for example \cite{baker-goettingen}. His results were extended  to linear independence results on periods of integrals of the second kind by himself, Coates, Masser, Laurent and Bertrand. However, they were also strongly limited by the lack of a general tool for handling the case of an arbitrary number of elliptic or abelian logarithms.
The problem was that Baker's approach did not work in general in the case of abelian varieties or more generally in the case of commutative algebraic groups.
\vspace{.3cm}

This changed only  when the Analytic Subgroup Theorem \cite{wuestholz-subgroup} became available in 1982. It allowed one  to deal with $1$-periods in general and linear relations between them.
The case of  \emph{complete} periods in the general case where $\omega$ is an algebraic $1$-form on a curve of arbitrary genus and $\gamma$ a closed path on the corresponding Riemann surface, was settled in 1986 by the second author in \cite{wuestholz-icm}: If a period is non-zero, it is transcendental.
Both cases can arise. A simple example is a hyperelliptic curve whose Jacobian is isogenous to a product of two elliptic curves. 
Then $8$ of the $16$ standard periods are $0$. The others are transcendental. 

The Analytic Subgroup Theorem opened
a fruitful interplay  between transcendence theory and algebraic groups through the exponential map for Lie groups. More recently it turned out that the right frame is the theory of $1$-motives introduced by Deligne in 1974, see \cite{hodge3}. It added to the algebraic groups extra data  in the shape of a homomorphism of a free abelian group into the group, which lets one deal with so-called \emph{incomplete} periods in an elegant and natural way. We develop this point of view in full detail in the present monograph.

\subsection*{Grothendieck's Period Conjecture} 
The transcendence properties of periods make it natural to ask questions about linear and algebraic relations between them.
A conceptual interpretation of possible relations is provided by what came to be known as the \emph{Period Conjecture}. Periods are given a cohomological interpretation and all relations between them should be induced by relations between motives.

This string of ideas was started by Grothendieck in  \cite[p.~101]{grothendieck_66}. He discussed the comparison of the de Rham cohomology of a
 smooth variety $X$ over a number field $K$ with its  singular cohomology. 
The entries of the comparison matrix comparing $H^1_\dR(X)$ and $H^1_\sing(X^\an,\Q)$ of a complete non-singular curve $X$ are classical periods of the first and second kind. Grothendieck
 asks for instance if Schneider's theorem generalises in some way to these periods.\footnote{In footnote 10 Grothendieck recalls the belief that the periods
$\omega_1,\omega_2$ of a non-CM elliptic curve should be algebraically independent.     ``This conjecture extends in an obvious way to the set of periods $(\omega_1, \omega_2, \eta_1,\eta_2)$ and can be rephrased also for curves of any genus, or rather for abelian varieties of dimension $g$, involving $4g$ periods.''} 
Subsequently, he came to a conceptual conjecture and went further by  predicting the transcendence degree of the field of periods of $H^n(X)$ for
a  smooth projective variety $X$  (or more generally of a pure motive $M$) as the dimension of  the motivic Galois group or alternatively the dimension of the Mumford-Tate group of the Hodge structure on $M$.

However, he did not publish the conjecture himself. We refer to the first hand account of Andr\'e in \cite{andre_letter} on the history of the conjecture. 
A complete formulation  and discussion was finally given by Andr\'e \cite[Chapter~23]{andre2}. This includes a straightforward extension of the conjecture  to the case of mixed motives.
The formulation of the conjecture for $1$-motives is discussed by Bertolin in \cite{bertolin} and more recently \cite{bertolin2}. 
The only known result in this direction is a theorem of Chudnovsky, who showed in \cite{chudnovsky} that for any elliptic curve 
defined over $\Qbar$ at least two of the numbers $\omega_1$, $\omega_2$, $\eta(\omega_1)$ and $\eta(\omega_2)$ are algebraically independent provided that  $\omega_1$ and $\omega_2$ generate the period lattice over $\Q$. In the case of
complex multiplication this implies that $\omega$ and $\eta(\omega)$ are algebraically independent, which confirms the prediction in the CM case.
In general Grothendieck's conjecture is out of reach. We shall present an outlook on further developments below.

\vspace{.3cm}

\subsection*{The Period Conjecture as Formulated by Kontsevich and Zagier}
In a series of papers, Kontsevich and Zagier \cite{kontsevich_zagier}, \cite{kontsevich} promoted also the study of periods of non-smooth, non-projective varieties, or more generally of mixed motives. In \cite{kontsevich} Kontsevich formulated formulates an alternative version of the Period Conjecture. As Andr\'e pointed out, it has a very different flavour based on calculus rather than algebraic geometry. Relations between periods are induced by the transformation rule and Stokes's Theorem. This approach puts relative cohomology front and centre. Kontsevich
 views periods as the numbers in the image of the period pairing given by period integrals for relative cohomology
\[ H^*_\dR(X,Y)\times H_*^\sing(X,Y;\Q)\to \C\]
for algebraic varieties $X$ over $\Qbar$ and subvarieties $Y\subset X$.
By \emph{Kontsevich's Period Conjecture} all $\Qbar$-linear relations between such periods should be induced by bilinearity and functoriality of mixed motives. More explicitly, he
introduces an algebra of formal periods $\tilde{\Per}$ (also called motivic periods by some authors) with explicit generators and relations. His conjecture predicts that the evaluation map $\tilde{\Per}\to \C$ (sending a formal period to the actual value of the integral)  is injective.
In the present monograph, we give an answer in the case of periods in degree $1$, or equivalently, periods of curves.
\vspace{.3cm}

Grothendieck's Period Conjecture on algebraic relations between periods is essentially equivalent to Kontsevich's Period Conjecture on linear relations between  periods,
see e.g. \cite{ayoub}, \cite{andre3}, \cite[Section~13.2.1]{period-buch} or \cite[Section~5.3]{huber_galois} for the precise relation. 
It rests on a key insight of Nori, who realised that
 $\Spec(\Perform)$ of the algebra $\Perform$ is a torsor under a motivic Galois group, in fact the torsor of tensor isomorphisms between the de Rham realisation and the singular realisation of the category of mixed Nori motives.
 \vspace{.2 cm}

\subsection*{Dimensions of Period Spaces}  
For any given variety, the space of periods is finite dimensional. This makes it natural to ask about their dimension. A qualitative prediction already follows from the Period Conjecture.
The precedent for the kind of formula that we have in mind is Baker's theorem, \cite{baker} on logarithms.
Such a formula can be made  explicit: 
for $\beta_1,\dots,\beta_n\in\Qbar^*$ let $\langle \beta_1,\dots,\beta_n\rangle$ be the multiplicative subgroup of $\Qbar^*$ generated by these numbers. Then the dimension of the vector space generated by the principal determinations of $\log\beta_1,\dots,\log\beta_n$ over $\Qbar$ (but modulo multiples of $\pi$) is equal to the rank of the group generated by $\beta_1,\dots,\beta_n$.

In addition to  Baker's Theorem, a number of cases have been considered in the past; for example, the case of elliptic logarithms, see \cite[Chapter~6.2]{baker-wuestholz} or 
the extension of an elliptic curve by a torus of dimension $n$ in
\cite{wuestholz-trans}.
An interesting new case came up  recently in connection with curvature lines and geodesics for  billiards on a triaxial ellipsoid, see \cite{wuestholz-ell}. This leads to a period space generated by $1$, $2\pi i$ and the periods $\omega_1,\omega_2, \eta(\omega_1), \eta(\omega_2), \lambda(u,\omega_1), \lambda(u,\omega_2)$ of the first, second and third kind. Its dimension over $\Qbar$ is $8$, $6$ or $4$ depending on the endomorphisms of the elliptic curve involved and on the nature of the differential of the third kind. 
This case serves as a model for a completely general result. 

In the case of $1$-motives  this is the question about the dimension of the vector space generated over
$\Qbar$ by their periods.
It turns out that to state and prove such a general formula for the dimension of the period space of a 1-motive is difficult. The difficulties arise from periods of the third kind and the formulae we shall give are quite involved.

\subsection*{Outlook}
As we have already stated, the Period Conjecture itself seems currently far out of reach. Even the special case of values of the Riemann $\zeta$ functions  is widely open. The Period Conjecture implies that the $\zeta(2n+1)$ for $n\in\Na$ are algebraically independent. On one side we have the theory of motives and on the other hand transcendence theory. The interaction between both is a wonderful topic. It has been exploited in order to deduce upper bounds for the spaces of periods. 
However, on the transcendence side only comparitively weak lower bounds are available. Only for the case of $1$-motives over $\Qbar$ we have a complete description of the transcendental aspects. This is what our book explains. 

It is appropriate to give a short overlook on other types of motives which were studied with respect to transcendence. 
This concerns motives over $\Q$, motives over function fields, both over $\Q$ as well as over finite fields.
The more structure  the base field has the more complete gets the transcendence situation.
In the following we go through some cases of motives for which transcendence has been studied and we also mention some problems about effectivity.

\subsubsection*{Mixed Tate Motives}
The Riemann $\zeta$-function has been, since the work of Euler, one of the central objects in number theory. Euler showed that its value at positive integers $2n$ is, up to a constant, of the form $(2\pi)^{2n}$. This implies that they are transcendental over the rationals.
This is Lindemannn's theorem. The only other known fact about irrationality or transcendence of integral values of the $\zeta$-function is \'Apery's discovery of irrationality of $\zeta(3)$; see \cite{apery}.  
Only since about 2000 has there been more intensive study about these values, starting with Rivoal, Zudilin and others; see, for example, \cite{zudilin}, \cite{ball-rivoal}. They considered the space over $\Q$ generated by odd $\zeta$-values up to a
fixed integer $n$ for small $n$ and first showed that its dimension is at least $1$. More recently they have been able to prove  that the dimension tends to infinity with $n$ at a rate at least of order $\log n$.

From the other side, upper bounds on spaces generated by $\zeta$- and multi-zeta values (the periods of mixed Tate motives over $\Z$) were provided by Deligne--Goncharov in \cite{deligne-goncharov}. The reason why these results could be established is that  the motivic picture is completely understood, see also Brown's work \cite{brown-annals} on the structure of the motivic Galois group in this case.
Closely related to the  study of $\zeta$-values is the study of multi-zeta values, dilogarithms and multi-logarithms.

\subsubsection*{Motives over Function Fields over $\Q$}
Ayoub reformulated Kontsevich's Period Conjecture with fewer generators. Only polydisks are needed as domains of integration. Based on this description, he was able to formulate and prove a function field version of the conjecture in \cite{ayoub2}. 
 
For a closed polydisc $\overline{\mathbb D}^n$  
he considered the subspace 
\[\Oh_{\mathrm{alg}}^\dagger(\overline{\mathbb D}^n)\subset \Oh(\overline{\mathbb D}^n)[[T]][T^{-1}]\]
 of Laurent series $F=\sum_{i>-\infty} f_i(z_1,\ldots,z_n) T^i$  with coefficients in 
$\Oh(\overline{\mathbb D}^n)$ which are algebraic over $\C(T,z_1,\ldots,z_n)$. The dimension $n$ is allowed to vary and 
$\Oh_{alg}^\dagger(\overline{\mathbb D}^\infty)
=\bigcup_{n\in \mathbb N}\Oh_{alg}^\dagger(\overline{\mathbb D}^n).$
 In analogy to Kontsevich's space of formal periods $\Perform$, he defined $\mathcal P^\dagger$ as a quotient of  $\Oh_{alg}^\dagger(\overline{\mathbb D}^\infty)$
by certain relations and showed that there is an evaluation map $\mathcal P^\dagger \rightarrow \C((T))$. The main result (and geometric analogue of the Period Conjecture) is the injectivity of this evaluation map. There is also independent work of Nori (unpublished) in the same direction.

\subsubsection*{Motives over Function Fields over Finite Fields}
All that is known about transcendence over $\Q$ is also known for function fields over finite fields: indeed often more  is known.  Let $p$ be a prime, $q=p^n$ and $\mathbb F_q[x]$ the ring of polynomials in one variable 
over the finite field $\mathbb F_q$.
In 1935 Carlitz introduced  the so-called Carlitz $\psi$-functions in
\cite{carlitz},  defined  as 
\[
\psi(t)= \sum_0^\infty \frac{(-1)^k}{F_k} t^{q^k}
\]
where $F_k=[k][k-1]^{\,q} \cdots [1]^{\,q^{k-1}}$ and $[k] =x^{q^k} - x$.
On the basis of these functions Wade started studying transcendence theory 
over function fields over finite fields  \cite{wade}. Their minimal algebraic closure is complete with respect to the standard valuation so that the function $\psi(x)$ exists and is a replacement of the exponential function. Its inverse exists as a multi-valued function and is the analogue of the logarithm. Wade proved among other things the  analogue of the theorem of Gelfond and Schneider. 

In 1983 Jing Yu took up the topic and proved the analogue of Lindemann's theorem in the realm of Drinfeld modules and started a very interesting transcendence theory for Drinfeld modules and $t$-motives. As a highlight of a sequence of papers including periods and quasi-periods of Drinfeld modules as well as special zeta values in characteristic $p$,  he  obtained an analogue of the Analytic Subgroup Theorem for Drinfeld modules and, more generally, Anderson's $t$-motives.
 Once the theory and the techniques had been established, a whole spectrum of applications
followed, including linear independence of zeta-values, by Yu, Chieh-Yu Chang, Papanikolas and Thakur; see \cite{yu}, \cite{chang-yu}, \cite{carlitz-zeta}. They even were able to determine the transcendence degree of fields generated by logarithms or zeta-values in this setting. The survey paper of Chang \cite{chang-iccm} gives a very nice and substantial report about the newest achievements in this theory.

 \subsubsection*{Hypergeometric Period Relations, Periods of Higher Weight}
It is well-known that values of hypergeometric functions can be expressed as quotient of two abelian integrals, in general of the second kind. This leads to a period relation between the two periods of the second kind with the hypergeometric function as coefficient. Algebraic values of the hypergeometric function provide  linear relations between the two periods with algebraic coefficients. 
This cannot be true in general and leads to special  points on certain Shimura varieties as explained very carefully in Chapter~5 of Tretkoff's beautiful monograph \cite{tretkoff}.  In particular new transcendence results are given for  the Appel-Lauricella (hypergeometric) functions in $n \geq 2$ variables. They exceed the known results on the values of  the classical hypergeometric function in one variable. 

\subsubsection*{Hodge Level $1$}

An obvious problem is to extend the transcendence results to periods of higher weight. In general this seems to be a hopeless undertaking. However, there are cases when periods of higher weight can be related to $1$-periods.  Tretkoff gives some nice examples dealing with periods of Fermat hypersurface.
This is the case for certain algebraic $K3$-surfaces and  smooth complete intersections over $\C$ of Hodge level~$1$ as explained in  \cite{D2}; see also \cite{wuestholz-icm}.  

We are not going to expand our monograph in this direction but mention some interesting research 
dealing with this kind  of problems. The starting point was given in \cite{wuestholz-icm} and then taken up by 
Tretkoff.

In Chapter~6 of her monograph \cite{tretkoff}  P. Tretkoff discusses among other things algebraic K3 surfaces $X$
defined over a number field. She considers a holomorphic 2-form $\omega$ on $X$ and shows that if the vector space generated by the periods $\int_\gamma\omega$ for $\gamma \in H_2(X,\Z)$ has dimension~$1$, then $X$ has complex multiplication, i.e. its Mumford-Tate group is abelian. This implies that if $X$ has complex multiplication these periods are all transcendental unless $\gamma=0$. 

In Chapter~7 she deals with arbitrary smooth projective varieties $X$ defined over $\Qbar$.  One of the questions she raises is whether the Hodge filtration of $H^k(X,\C)$ for $0 \leq k \leq \dim X$ has complex multiplication if it is defined over $\Qbar$. She gives some nice examples dealing with periods on Fermat hypersurfaces.

All this is restricted to holomorphic differential forms and complete periods. In our monograph we deal with meromorphic differential forms and incomplete periods. It would be interesting to try to get more general cases involving incomplete periods.

\subsubsection*{Effectivity, Lower Bounds}
The main applications of Baker's work on linear forms in logarithms were the lower bounds he derived. He showed that if $\Lambda$ is a linear form in logarithms of algebraic numbers
with algebraic coefficients a lower bound for the absolute value of $\Lambda$ can be obained.
For a detailed account on this see \cite{baker-wuestholz}. A similar theory exists also for the
$p$-adic analogue, started by Coates and finally brought to the level of the archimedean case by
Kunrui Yu. Similar results were also obtained for elliptic and abelian logarithms. One might ask whether this can be extended to $1$-motives in a modified way. First steps in this direction were the work of Masser and Wüstholz \cite{masser-wuestholz-annals}  on isogeny estimates. It says that, given an isogeny between two abelian varieties, there exists an isogeny with degree bounded by the original data: height of
the source isogeny, degree of the number field, dimension of the abelian variety. The result is 
completely effective and a crucial ingredient for the proof of the famous Tate conjecture.
It has also been used 
for the proof of the Andr\'e--Oort conjecture for the coarse moduli space of prinicipally polarised abelian varieties \cite{ts} by Tsimerman. It would be interesting to formulate an isogeny estimate type statement for $1$-motives and to find applications in diophantine geometry.
\chapter*{Acknowledgments}

The two authors thankfully acknowledge the hospitality of the Freiburg Institute for Advanced Study, in particular the Research Focus Cohomology in Algebraic Geometry and Representation Theory, in the academic year 2017/18. Most of the book  was written during this stay. 

We thank Fritz H\"ormann, Florian Ivorra and Simon Pepin-Lehalleur for discussions on $1$-motives and the generalised Jacobian.
We also thank Cristiana Bertolin for comments on an earlier version and
J\"urgen Wolfart for help with hypergeometric series. Alin Bostan explained the corrected proof of the explicit formula for values of the hypergeometric series in Chapter~\ref{sec:hyper} to us. The argument for an alternative proof of
Theorem~\ref{thm:transc_curves} was pointed out to us by David Masser.
We explain a variant at the end of the chapter.

We are grateful to Yves Andr\'e for his historical comments, reading some part of the manuscript in detail and graciously sharing his insights on the Period Conjecture.

Finally, the first author would like thank all participants of her 2020 lecture on the Analytic Subgroup Theorem, in particular Fabian Glöckle, for their lively interest, comments and corrections.

\mainmatter
\chapter{Introduction}

This introduction aims to present the principal actors of the book and to explain the main results of our monograph. We begin with the question about transcendence of periods of integrals of 1-forms over closed or non-closed paths. Historically integrals over non-closed paths were not considered as periods.
The change came from looking at them in the relative cohomology. This leads us to distinguish between complete and incomplete periods.
 
The presentation does not follow the order in the main text, but is, we hope, designed to help those readers without a background in transcendence.

\section{Transcendence}

The vector space $\Per^1$ over $\Qbar$  of one-dimensional periods, complete or incomplete,  has a number of different descriptions. In the most elementary situation its elements are given by the period integrals 
\[ \alpha=\int_\sigma\omega\]
 where
\begin{itemize}
\item 
$X$ is a smooth projective curve over $\Qbar$;
\item $\omega$ is a rational differential form on $X$;
\item $\sigma=\sum_{i=1}^na_i\gamma_i$ is a chain in the Riemann surface $X^\an$
defined by $X$ which avoids the singularities of $\omega$ and has boundary divisor
$\partial \sigma$ in $X(\Qbar)$; in particular $\gamma_i:[0,1]\to X^\an$ is
a path and $a_i\in\Z$.
\end{itemize}
This set includes many interesting numbers like $2\pi i$, $\log\alpha$  for
algebraic $\alpha$ and the periods of elliptic curves over $\Qbar$. 
We study their transcendence properties.

The case of  \emph{complete} periods in the general case, i.e. $X$ and $\omega$ arbitrary,  $\gamma$ closed,   was settled in 1986 by the second author in \cite{wuestholz-icm}: if a period is non-zero, it is transcendental.
Both cases can arise. A simple example is a hyperelliptic curve whose Jacobian is isogenous to a product of two elliptic curves. 
Then $8$ of the $16$ standard periods are $0$. The others are transcendental. 

When $X$ is an elliptic curve we refer to \cite[Chapter~6.2]{baker-wuestholz} for the case of \emph{incomplete} periods. The general case has been described as an open problem in \cite{wu84bis}.   Often the values are transcendental, e.g.
$\int_1^2dz/z=\log 2$ but certainly not always, e.g.,  $\int_0^2dz=2$. 
Again, it is not difficult to write down a list of simple cases  in which the period is a non-zero algebraic number. However, it was not at all clear whether the list was complete and what the structure behind the examples was; see \cite{wuestholz}. The answer that we give now is surprisingly simple:

\begin{thm}[See Theorem~\ref{thm:transc_curves}]\label{thm:transc_curves_intro}
Let $\alpha=\int_\sigma\omega$ be a one-dimensional period on $X$. Then
$\alpha$ is algebraic if and only if
\[ \omega=df+\omega'\]
 with
$f\in\Qbar(X)^*$ and $\int_\sigma\omega'=0$ with $\omega'$ a form with no extra poles.
\end{thm}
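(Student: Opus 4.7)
For the ``if'' direction, suppose $\omega=df+\omega'$ with $f\in\Qbar(X)^*$ regular on $\partial\sigma$ and $\int_\sigma\omega'=0$. Then
\[ \alpha=\int_\sigma df=\sum_{i=1}^n a_i\bigl(f(\gamma_i(1))-f(\gamma_i(0))\bigr), \]
a $\Z$-linear combination of values of $f$ at points of $X(\Qbar)$, and hence algebraic.

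For the converse, I plan to split according to the $0$-chain $\partial\sigma$ on $X(\Qbar)$. If $\partial\sigma=0$, then $\alpha$ is a \emph{complete} period on $X$, so Wüstholz's theorem \cite{wuestholz-icm} applies: algebraicity of $\alpha$ forces $\alpha=0$, and we simply take $f=1$ and $\omega'=\omega$. If on the other hand $\partial\sigma=\sum_P b_P\cdot P\neq 0$, I would proceed by a Riemann--Roch argument: pick $\Qbar$-values $g(P)$ on the finitely many points of $\mathrm{supp}(\partial\sigma)$ with $\sum_P b_P g(P)=\alpha$, which is possible precisely because $\alpha\in\Qbar$ and not all $b_P$ vanish. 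Riemann--Roch on $X$ then produces $f\in\Qbar(X)^*$ taking the value $g(P)$ at each $P\in\mathrm{supp}(\partial\sigma)$, with pole divisor supported on $\Qbar$-points avoiding the image of $\sigma$; this last condition is arrangeable because the image of $\sigma$ is a proper subset of $X^\an$ and $X(\Qbar)$ is Zariski-dense in $X$. Then $\omega':=\omega-df$ satisfies
\[ \int_\sigma\omega'=\alpha-\sum_P b_P f(P)=0. \]

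The hard step is Case~1, in which Wüstholz's deep transcendence result is indispensable; Case~2, by contrast, is a formal Riemann--Roch interpolation once $\alpha\in\Qbar$. More in the spirit of the monograph, both cases can be subsumed under the period conjecture for $1$-motives proved elsewhere in this work, applied to the Nori $1$-motive attached to the pair $(X,\partial\sigma)$ with marked classes $[\omega]$ and $[\sigma]$: the conjecture identifies the ``algebraic'' subspace of the relative de Rham cohomology $H^1_\dR(X,\partial\sigma)$, via the connecting map from $H^0_\dR(\partial\sigma)/H^0_\dR(X)$, with the space of classes of exact differentials $df$, and algebraicity of $\alpha$ forces $[\omega]$ to decompose as an algebraic class plus a class annihilating $[\sigma]$ under the period pairing, which is exactly the sought-after decomposition.
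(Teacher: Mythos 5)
Your proof is correct, and it takes a genuinely different route from the paper's. The paper derives Theorem~\ref{thm:transc_curves} as a translation of the Subgroup Theorem for $1$-motives (Theorem~\ref{thm:transc_1}, applied to $M=[\Z[D]^0\to J(C^\circ)]$), whereas you avoid the $1$-motive machinery entirely by splitting on the boundary $\partial\sigma$. Your key observation is that since $\deg\partial\sigma=0$, the possible values of $\int_\sigma df=\sum_P b_P f(P)$ for $f\in\Qbar(X)^*$ regular on $\mathrm{supp}(\partial\sigma)$ form exactly the set $\{0\}$ when $\partial\sigma=0$ and all of $\Qbar$ when $\partial\sigma\neq 0$; in the second case the existence of the decomposition for algebraic $\alpha$ is a finite interpolation problem (achievable e.g.\ via a separating rational function and Lagrange interpolation, as an alternative to the Riemann--Roch phrasing), while in the first case it is literally the statement that algebraicity forces $\alpha=0$, which is precisely W\"ustholz's 1986 theorem on complete periods. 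This exposes cleanly that the only transcendental input to Theorem~\ref{thm:transc_curves_intro} is the older result on closed paths. The trade-off is that the paper's route through the $1$-motive Subgroup Theorem is designed to be reusable: the tautological reduction available here breaks down for the finer transcendence results (such as Theorem~\ref{thm:single} or Theorem~\ref{thm:sigma}) and for the dimension formulas, where one needs to control precisely \emph{which} decompositions $\omega=\phi+\psi$ can occur, and there the $1$-motive framework does real work. Your proof is fine as it stands (modulo the routine check, which you correctly flag as ``arrangeable'', that the pole divisor of the interpolating $f$ can be pushed off $\mathrm{im}(\sigma)$---this follows from density of $X(\Qbar)$ in $X^\an$, or alternatively by perturbing $\sigma$ rel its boundary); it is just less forward-compatible with the rest of the monograph than the paper's own argument.
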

The condition is clearly sufficient because the integral evaluates
to 
\[ \sum_ia_i(f(\gamma_i(1))-f(\gamma_i(0)))\in\Qbar\]
 in this case. 

Theorem \ref{thm:transc_curves} gives a complete answer to two of the seven problems listed in Schneider's book \cite[p.~138]{S},
\footnote{Problem 3. Es ist zu versuchen, Transzendenzresultate \"uber elliptische Integrale dritter Gattung zu beweisen.}
\footnote{Problem 4. Die Transzendenzs\"atze \"uber elliptische Integrale erster und zweiter Gattung sind in weitestm\"oglichem Umfang auf analoge S\"atze \"uber abelsche Integrale zu verallgemeinern.} open for more than 60 years.
Actually we even include periods of abelian integrals of the third kind.


\section{Relations between Periods}

Questions on transcendence can be viewed as a very special case of
the question on $\Qbar$-linear relations between $1$-periods:
a complex number is transcendental if it is $\Qbar$-linearly independent of
$1$. The most general problem of this kind is to determine the dimension of the period space generated over $\Qbar$ by the periods of all rational $1$-forms of an algebraic variety. It is easy to give an upper bound for this dimension in terms of cohomological data. The problem is then  to decide whether the upper bound is the correct number or whether there are linear relations between periods. 
\vspace{.3cm}

This fundamental question will be one of the central topics in this monograph. We establish a complete description of the linear relations  between (not necessarily complete) 
periods for all rational differential forms of degree $1$.
It is crucial to use here the  more conceptual descriptions of $\Per^1$ either as periods in cohomological degree $1$ or as cohomological periods of curves, or even better periods of $1$-motives. 

The following theorem gives a first answer. It  establishes Kontsevich's version of the Period Conjecture for $\Per^1$ and  furnishes a qualitative description of the period relations.

\begin{thm}[Kontsevich's Period Conjecture for $\Per^1$, Theorem~\ref{thm:main_kontsevich}]\label{thm:kont_pc}
All $\Qbar$-linear relations between elements of $\Per^1$ are induced by bilinearity and
functoriality of pairs $(C,D)$ where $C$ is a smooth affine curve over $\Qbar$ and
$D\subset C$ a finite set of points over $\Qbar$. 
\end{thm}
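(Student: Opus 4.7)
The plan is to reinterpret the statement as the injectivity of the evaluation map from a suitable formal period algebra and to reduce it, via the theory of $1$-motives, to a transcendence statement that can be attacked by the analytic subgroup theorem.

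First I would set up Kontsevich's formal period algebra $\Pform^1$ for pairs, generated by quadruples $(C,D,\omega,\sigma)$ with $C$ a smooth affine curve over $\Qbar$, $D\subset C$ a finite set of $\Qbar$-points, $\omega\in H^1_\dR(C,D)$, and $\sigma\in H_1^\sing(C^\an,D^\an;\Q)$, modulo bilinearity and functoriality in $(C,D)$. The canonical evaluation map $\Pform^1\to\C$ lands in $\Per^1$ and the theorem is exactly the assertion that this map is injective. By construction, a $\Qbar$-linear relation in $\Per^1$ is induced by bilinearity and functoriality of pairs if and only if it already holds in $\Pform^1$.

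Second, I would identify $\Pform^1$ with the formal period algebra attached to the abelian category $\onemot(\Qbar)$ of Deligne $1$-motives. Every pair $(C,D)$ has a canonical $1$-motive $M(C,D)$ (built from the Picard variety of $C$ together with the divisor class of $D$) whose de Rham and Betti realizations reproduce $H^1_\dR(C,D)$ and $H^1_\sing(C^\an,D^\an;\Q)$; conversely every $1$-motive over $\Qbar$ is an isogeny summand of such an $M(C,D)$. Combined with the Tannakian formalism for $\onemot(\Qbar)$ as a full subcategory of $\MMN$, this yields an isomorphism of $\Pform^1$ with the coordinate ring of the period torsor of isomorphisms between the de Rham and Betti fibre functors restricted to $1$-motives.

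Third, and this is the main obstacle, I would prove injectivity of evaluation by controlling the motivic Galois group of $\onemot(\Qbar)$ via the analytic subgroup theorem. Given a non-trivial $\Qbar$-linear relation among periods of a $1$-motive $M$, it defines a $\Qbar$-hyperplane in the de Rham realization whose preimage under the period map is a nontrivial $\Q$-subspace of the Betti realization. Exponentiating inside the universal vectorial extension $M^\natural$ exhibits this as the Lie algebra of a connected algebraic subgroup of the commutative algebraic group underlying $M^\natural$; the analytic subgroup theorem of the second author then forces this subgroup to be defined over $\Qbar$. A structural analysis of such $\Qbar$-rational subgroups, using the weight filtration and duality of $1$-motives, shows that each one corresponds to a sub-$1$-motive $N\subset M$ accounting for the relation, so the relation is already imposed by functoriality of morphisms in $\onemot(\Qbar)$.

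Finally, combining the first two steps with injectivity of evaluation, every $\Qbar$-linear relation in $\Per^1$ lifts to $\Pform^1$ and therefore arises from bilinearity and functoriality of pairs $(C,D)$, which is the statement of the theorem. The key conceptual difficulty, and the reason this requires the whole apparatus of the monograph, is packaging the single-shot analytic subgroup theorem into a statement about the full motivic Galois group of $\onemot(\Qbar)$ acting on all $1$-periods simultaneously; everything else is categorical bookkeeping built on the first theorem quoted above.
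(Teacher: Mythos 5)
Your overall plan coincides with the paper's: reformulate the conjecture as injectivity of the evaluation map on formal periods, identify the formal periods of pairs $(C,D)$ with those of Deligne $1$-motives via the Ayoub--Barbieri-Viale equivalence and Nori's diagram-category formalism, and deduce injectivity from the analytic subgroup theorem applied to the universal vector extension $M^\natural$. (The paper proves the $1$-motive version as Theorem~\ref{thm:main_one} and then translates; you fold the two steps together, which is fine.) However, your step~3 has the logic of the subgroup theorem essentially reversed, and this is a genuine gap. A vanishing relation is first rearranged into a \emph{single} vanishing period $\int_\sigma\omega=0$; the subgroup theorem is then applied to the one class $\sigma\in T_\sing(M)$, using only that $\exp(\sigma)\in M^\natural(\Qbar)$ (Lemma~\ref{alg}). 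Its \emph{conclusion} is that there is a short exact sequence of algebraic groups $0\to H_1\to M^\natural\to H_2\to 0$ over $\Qbar$ with $\sigma\in\Lie(H_1)_\C$ and $\Ann(\sigma)=\pi^*\coLie(H_2)$. The $\Qbar$-rationality of $\Ann(\sigma)\subset\VdR{M}$ is therefore an output, not an input — it is false in general that a relation ``defines a $\Qbar$-hyperplane in the de Rham realization whose preimage under the period map is a $\Q$-subspace of the Betti realization.'' Indeed, deciding whether $\Ann(\sigma)$ is cut out over $\Qbar$ is precisely what is at stake.

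A second hidden point is concealed in your phrase ``a structural analysis of such $\Qbar$-rational subgroups \dots\ corresponds to a sub-$1$-motive $N\subset M$.'' Promoting the sequence of algebraic groups to a short exact sequence of $1$-motives $0\to M_1\to M\to M_2\to 0$ is a nontrivial lemma (Proposition~\ref{prop:subquot_is_mot}): one gets only an inclusion $M_1^\natural\hookrightarrow H_1$ and a surjection $M_2^\natural\twoheadrightarrow H_2$, and it is a dimension count via the annihilator that forces $H_2=M_2^\natural$, whence $\Ann(\sigma)=p^*\VdR{M_2}$. The subgroup $H_1$ is in general strictly larger than $M_1^\natural$; the sub-$1$-motive does not by itself ``account for'' the relation — it is the quotient $M_2$ that does. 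Also, your remark that every $1$-motive over $\Qbar$ is an ``isogeny summand'' of some $M(C,D)$ should read ``subquotient''; what is really used is the description of $d_1\MMNeff(\Qbar,\Q)$ as the Nori diagram category of pairs $(C,D)$ (Theorem~\ref{thm:abv}) together with the general result that all relations in a diagram category are generated by bilinearity and the edges.
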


The conjecture  has an alternative formulation in terms of motives. Actually, we deduce
Theorem~\ref{thm:kont_pc} from the motivic version below together
with the result of Ayoub and Barbieri-Viale in \cite{ayoub-barbieri} which says that
the subcategory
of $\MMN^\eff$ generated by $H^*(C,D)$ with $C$ of dimension at most $1$
agrees with Deligne's much older category of $1$-motives; see \cite{hodge3}.

\vspace{.3cm}

Every $1$-motive $M$ has  a singular realisation $\Vsing{M}$ and a de Rham realisation $V_\dR(M)$. They are linked via a period isomorphism 
\[ \Vsing{M}\tensor_\Q\C\isom V_\dR(M)\tensor_\Qbar\C.\]
There is a  well-known relation between curves and $1$-motives provided by
the theory of generalised Jacobians.
From this fact we see that  the set
$\Per^1$ has another alternative description as the union of the images of the period pairings
\[ \Vsing{M}\times \VdR{M}\to \C\]
for all $1$-motives $M$ over $\Qbar$.

\begin{thm}[{Period conjecture for $1$-motives, Theorem~\ref{thm:main_one}}]\label{thm:main_one_intro}
All $\Qbar$-linear relations between elements of $\Per^1$ are induced
by bilinearity and functoriality for morphisms of iso-$1$-motives over
$\Qbar$.
\end{thm}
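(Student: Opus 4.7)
The plan is to reformulate the statement as Zariski density of a distinguished point in a torsor and then appeal to Wüstholz's analytic subgroup theorem as the transcendence input. I work in the $\Qbar$-linear Tannakian category of iso-$1$-motives over $\Qbar$ together with its two fibre functors, $V_\sing$ valued in $\Q$-vector spaces and $V_\dR$ valued in $\Qbar$-vector spaces. By construction, the formal period space $\Perform^1$ is generated by triples $(M,\omega,\gamma)$ modulo exactly the relations produced by morphisms of iso-$1$-motives, and carries an evaluation map $\Perform^1 \to \C$ with image $\Per^1$. Dually, the spectrum of the formal period algebra is a torsor $X^1$, defined over $\Qbar$, under the motivic Galois group $G^1_{\mathrm{mot}}$ of the category, and the complex comparison isomorphisms $\Vsing{M}\tensor_\Q\C\isom V_\dR(M)\tensor_\Qbar\C$ assemble into a distinguished point $p_{\mathrm{per}} \in X^1(\C)$. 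A kernel element of evaluation is precisely a regular function on $X^1$ defined over $\Qbar$ vanishing at $p_{\mathrm{per}}$, so Theorem~\ref{thm:main_one_intro} is equivalent to the assertion that $p_{\mathrm{per}}$ is Zariski dense in $X^1$.

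By the torsor structure it suffices to prove density one $1$-motive at a time: for each $M = [L \to G]$, I reduce to showing that the induced point $p_{\mathrm{per},M}$ is Zariski dense in the torsor $X_M$ under the motivic Galois group $G_{\mathrm{mot}}(M)$. The bridge to transcendence comes from the standard description of $V_\sing(M)$ and $V_\dR(M)$ via the universal vectorial extension $G^\natural$ of the semi-abelian part of $G$: modulo the tautological contributions of the weight filtration, the periods of $M$ are the coefficients, in a $\Qbar$-basis of $\Lie(G^\natural)$, of the logarithms $\log_{G^\natural}(P_i)$ of a finite family of $\Qbar$-rational points $P_i \in G^\natural(\Qbar)$ determined by the lattice part $L$. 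Under this dictionary, a $\Qbar$-algebraic relation among the periods of $M$ beyond those forced by morphisms of $1$-motives translates into a proper $\Qbar$-algebraic subgroup $H \subsetneq G^\natural$ whose complex Lie algebra contains all of the $\log_{G^\natural}(P_i)$.

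At this point Wüstholz's analytic subgroup theorem supplies the needed transcendence statement: the smallest such $H$ is itself defined over $\Qbar$ and contains the $P_i$ in its $\Qbar$-points. Translated back into the motivic picture, $H$ corresponds to a sub-iso-$1$-motive $M' \subset M$ which already carries and explains the relation, so the relation is of the form permitted by the theorem. The main obstacle is the two-way translation in the middle step: one must identify $G_{\mathrm{mot}}(M)$ and its action on $X_M$ precisely enough that stabilisers of $p_{\mathrm{per},M}$ correspond bijectively to sub-iso-$1$-motives of $M$. In particular, careful bookkeeping of the lattice and toric contributions -- where $2\pi i$ and logarithms of algebraic numbers appear -- together with the extension classes between weight-graded pieces, is required, so that no $\Qbar$-algebraic subgroup that fails to be motivic can stabilise the period point and thus produce an unaccounted relation.
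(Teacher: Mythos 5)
You have correctly identified the Analytic Subgroup Theorem as the key transcendence input and the universal vector extension $M^\natural$ as the natural place to apply it, and this agrees with the paper. But the argument as written has two genuine problems.

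First, the Zariski-density reformulation in your opening paragraph is not equivalent to Theorem~\ref{thm:main_one_intro}. Kontsevich's period conjecture for $1$-motives, as proved in the paper, is a \emph{linear} statement: the evaluation map from the $\Qbar$-vector space of formal $1$-periods to $\C$ is injective. Zariski density of the period point in a Tannakian torsor would assert that all \emph{algebraic} relations between these periods are motivic, i.e.\ the Grothendieck period conjecture, which the paper explicitly does not claim: the set $\Per^1$ is not closed under products, and the full conjecture would already imply algebraic independence of $(\omega_1,\omega_2,\eta_1,\eta_2)$ for a non-CM elliptic curve, an open problem. Moreover $\onemot_\Qbar$ is not a rigid tensor category, so the formal period object is a \emph{semi}-torsor under a coalgebra; the correct numerical criterion for the period conjecture in $\langle M\rangle$ is $\dim_\Qbar\Per\langle M\rangle = \dim_\Q E(M)$ as in Corollary~\ref{cor:abstract_dimension}, not Zariski density of a point in $\mathrm{Spec}$ of an algebra. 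Since the Analytic Subgroup Theorem gives linear information about logarithms, it cannot yield the algebraic version, and your argument as framed would prove more than is known.

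Second, what you call ``the main obstacle'' in the final paragraph is a genuine gap and not mere bookkeeping. After the Analytic Subgroup Theorem hands you an algebraic subgroup $H\subset M^\natural$ defined over $\Qbar$ containing the relevant logarithm, one still has to know that $H$ comes from a sub-iso-$1$-motive of $M$; this is precisely Proposition~\ref{prop:subquot_is_mot} in the paper, whose proof uses the structure theory of commutative algebraic groups (splitting $H_i$ into vector and semi-abelian parts, constructing the lattices $L_i$, and matching $T_\sing$ on both sides). Without this there is no reason for the stabilising subgroup to be ``motivic''. The paper's actual proof of Theorem~\ref{thm:main_one} is also considerably more direct than the torsor route and avoids the motivic Galois group entirely: given a $\Qbar$-linear relation $\sum\lambda_i\alpha_i=0$, one assembles it, via the direct sum $M=M_1\oplus\dots\oplus M_n$ and bilinearity, into the vanishing of a single period $\int_\sigma\omega$, applies the Subgroup Theorem for $1$-motives (Theorem~\ref{thm:annihilator_mot}), and observes that the resulting short exact sequence of $1$-motives exhibits the vanishing as pure functoriality.
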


The theorem does not say anything about the actual dimension of the period space. We need a quantitative answer. In other words the space of relations has to be determined. It turns out that to find the answer  is rather difficult in some cases.

\section{Dimensions of Period Spaces}\label{background}

The above qualitative theorems can be refined into an  explicit computation of the dimension $\delta(M)$ of the $\Qbar$-vector space generated by the periods of a given $1$-motive $M$. 
The result depends on the subtle and very unexpected interplay between the constituents of $M$.
\vspace{.3cm}

Not only for the proofs, but also for the very formulation of the dimension formulas, we rely on the theory of $1$-motives introduced by Deligne; see \cite{hodge3}. They form an abelian category that captures all cohomological properties of algebraic varieties in degree $1$, including all one-dimensional periods. 

We review the basics: a $1$-motive over $\Qbar$ is a complex $M=[L\to G]$ where
$G$ is a semi-abelian variety over $\Qbar$ and $L$ a free abelian group of finite rank. The map is a group homomorphism. As mentioned before, every $1$-motive has
de Rham and singular realisations and a period isomorphism between them after extension of scalars to $\C$.  

If $C$ is a smooth curve over $k$, $D\subset C$ a finite set of $\Qbar$-points,
then there is a $1$-motive $M_1(C)$ such that  $H_1^\sing(C^\an,D;\Q)$ agrees
with the singular realisation of $M_1(C)$, and  $H^1_\dR(C,D)^\vee$ agrees with
the de Rham realisation of $M_1(C)$. Hence the periods of the pair $(C,D)$
agree with the periods of $M_1(C)$. Explicitly, $M_1(C)=[\Z[D]^0\to J(C)]$ where
$J(C)$ is the generalised Jacobian of $C$ and $\Z[D]^0$ means divisor of degree $0$ supported on $D$.

We denote by $\Per(M)$ the image of the period pairing for $M$ and by $\Per\langle M\rangle$ the abelian group (or, equivalently $\Qbar$-vector space) generated by $\Per(M)\subset \C$.
\vspace{.3cm}

We fix a $1$-motive $M=[L\to G]$ with $G$ an extension of an abelian variety $A$ by a torus $T$ and $L$ a free abelian group of finite rank. For the definition of its singular realisation $\Vsing{M}$ and its de Rham realisation
$\VdR{M}$, we refer to Chapter~\ref{sec:one-mot}.

The weight filtration on $M$, explicitly given by
\[ [0\to T]\subset [0\to G]\subset [L\to G]\]
induces 
\[ \Vsing{T}\hookrightarrow \Vsing{G}\hookrightarrow \Vsing{M}.\]
and dually
\[ \VdR{M}\hookleftarrow\VdR{[L\to A]}\hookleftarrow\VdR{[L\to 0]}.\]
Together, they introduce a bifiltration 
\[\begin{xy}\xymatrix{
 \Per\langle T\rangle \ar@{^{(}->}[r]&\Per\langle G\rangle\ar@{^{(}->}[r]&\Per\langle M\rangle\\
         &\Per\langle A\rangle\ar@{^{(}->}[r]\ar@{^{(}->}[u]&\Per\langle [L\to A]\rangle\ar@{^{(}->}[u]&\\
         &&\Per\langle [L\to 0]\rangle\ar@{^{(}->}[u]&
}\end{xy}\]
on $\Per\langle M\rangle $.
\vspace{.3cm}

We introduce the notation and terminology
\begin{align*}
\Per_\tate(M)&=\Per\langle T\rangle&&\text{Tate periods,}\\
\Per_\ab(M)&=\Per\langle A\rangle &&\text{2nd kind wrt closed paths,}\\
\Per_\alg(M)&=\Per\langle [L\to 0]\rangle&&\text{algebraic periods,}\\
\Per_3(M)\ \ &=\Per\langle G\rangle/(\Per_\tate(M)+\Per_\ab(M))&&\text{3rd kind wrt closed paths,}\\
\Per_\inc(M)&=\Per\langle[L\to A]\rangle/(\Per_\ab(M)+\Per_\alg(M))&&\text{2nd kind wrt non-cl. paths,}\\
\Per_\mix(M)&=\Per\langle M\rangle/(\Per_3(M)+\Per_\inc(M))&&\text{3rd kind wrt non-cl. paths,}
\end{align*}
where wrt and non-cl. are abbreviatios for 'with respect to' and'non-closed'.
After choosing bases, we can  organise the periods into a period matrix of the form
\[\left(\begin{matrix}
  \Per_\tate(M)&\Per_3(M)&\Per_\mix(M)\\
  0 &\Per_\ab(M)&\Per_\inc(M)\\
 0&0&\Per_\alg(M)\end{matrix}\right)\]
The contribution of $\Per_\tate(M)$ (multiples of $2\pi i$) and $\Per_\alg(M)$
(algebraic numbers) is readily understood.
Note that the off-diagonal entries are only well-defined up to periods on the diagonal. This can also be seen in the case of Baker periods, which are contained in $\Per_\mix(M)$ for special $M$. The value
of $\log\alpha$ depends on the chosen path and is only-well-defined up to
multiples of $2\pi i$.
The total dimension is obtained by adding up these dimensions.
In particular, we have e.g.
\[ \Per\langle [L\to A]\rangle\cap \Per\langle [0\to G]\rangle =\Per\langle [0\to A]\rangle.\]

The complete result takes a rather complicated form. In order to state it we 
write $\delta(M)=\dim\Per\langle M\rangle$ and $\delta_?(M)=\dim\Per_?(M)$ for the different entries of the period matrix. If $B$ is a simple abelian variety,  $g(B)$ will be its genus and $e(B)$ the $\Q$-dimension of
$\End(B)_\Q$. We also need the invariants $\rk_B(L,M)$, $\rk_B(T,M)$ as introduced in Notation~\ref{not:data_all}.

\begin{thm}[{Corollary~\ref{cor:sum}, Proposition~\ref{prop:list}}]
We always have
\[ \delta(M)=\delta_\tate(M)+\delta_\ab(M)+\delta_\alg(M)+\delta_3(M)+\delta_\inc(M)+\delta_\mix(M).\]
 \begin{enumerate}
\item 
All Tate periods are $\Qbar$-multiples of $2\pi i$. 
All algebraic
periods are in $\Qbar$. In particular $\delta_\tate(M)$ and $\delta_\alg(M)$ take the values $0$ or $1$, depending on the (non)-vanishing of $T$ and $L$.
\item We have
\[ \delta_\ab(M)=\sum_B \frac{4g(B)^2}{e(B)}\]
where the sum is taken over all simple factors of $A$, without multiplicities.
\item We have
\begin{align*}
 \delta_3(M)&=\sum_B 2g(B)\rk_{B}(L,M)\\[2ex]
  \delta_\inc(M)&=\sum_B2g(B)\rk_B(T,M)
\end{align*}
\end{enumerate}
\end{thm}

 In the special
case $A=0$, we get back Baker's theorem. 
The most interesting and hardest contribution is $\Per_\mix(M)$. The computation of this contribution was not possible without the methods that we develop here. Up to particular cases the formulae for the other contributions were not in the literature either. For an overview seee for example\cite[Chapter~6.2]{baker-wuestholz}, \cite{wu84bis} \cite{wuestholz} and \cite{wuestholz-ell}.

The formula for $\Per_\mix(M)$ simplifies in the case of motives that we call saturated,
see Definition~\ref{defn:red-sat}. 

\begin{thm}[{Theorem~\ref{thm:dimension_sat}}]
If $M=M_0\times M_1$ is the product of a Baker motive $M_0=[L_0\to T_0]$, i.e. with 
vanishing abelian part, and a saturated motive $M_1=[L_1\to G_1]$, then
\[ \delta_\mix(M)=\rk_\gm(L,M_1)+\sum_Be(B)\rk_B(G_1,M_1)\rk_B(L_1,M_1)\]
\end{thm}

Fortunately,  by Theorem~\ref{thm:dimension_sat}~(\ref{it:contained}) the periods of a general motive  are always included
in the period space of $M_0\times M_\sat$ with $M_0$ of Baker type ($A_0=0$)
and $M_\sat$ saturated.

There is a precise recipe for $\delta_\mix(M)$ for any $1$-motive $M$. It is spelled out in Chapter~\ref{ch:precise}, in particular Theorem~\ref{thm:precise1}.
We also refer to Chapter~\ref{ch:ex}  for the examples of elliptic curves without  and with CM.

\section{Method of Proof}\label{ssec:method}

As  in the case of closed paths, the main ingredient of our proof (and the only input from transcendence
theory) is the Analytic Subgroup Theorem of \cite{wuestholz-subgroup}. We
give a reformulation as Theorem~\ref{thm:annihilator}:
given a smooth connected commutative algebraic group over $\Qbar$ and $u\in\Lie(G^\an)$ such that $\exp_G(u)\in G(\Qbar)$, there is a canonical short
exact sequence 
\[ 0\to G_1\to G\xrightarrow{\pi} G_2\to 0\]
of algebraic groups over $\Qbar$
such that $\Ann(u)=\pi^*(\coLie(G_2))$ and $u\in \Lie(G_1^\an)$.
 Here $\Ann(u)\subset \coLie(G)$ is the largest subspace
such that $\langle \Ann(u),u\rangle=0$ under the canonical pairing.

Given a $1$-motive $M$, Deligne constructed a vector extension $M^\natural$ of $G$ such that $V_\dR(M)=\Lie(M^\natural)$. This is the group
we apply the Subgroup Theorem to. 

\begin{thm}[Subgroup Theorem for $1$-motives, Theorem~\ref{thm:annihilator_mot}]
Given
a $1$-motive $M$ over $\Qbar$ and $u\in\Vsing{M}$, there is  a short exact sequence
of $1$-motives up to isogeny
\[ 0\to M_1\xrightarrow{i} M\xrightarrow{p} M_2\to 0\]
such that $\Ann(u)=p^*\VdR{M_2}$ and $u\in i_*\Vsing{M_1}$.
Here $\Ann(u)\subset\VdR{M}$ is the left kernel under the period pairing.
The sequence is uniquely determined by these properties.
\end{thm}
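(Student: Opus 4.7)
The plan is to reduce to the classical Analytic Subgroup Theorem (Theorem~\ref{thm:annihilator}) applied to the smooth connected commutative algebraic group $G^\natural$ underlying Deligne's universal vector extension $M^\natural=[L\to G^\natural]$ of $M$. Under the identification $\VdR{M}=\Lie(G^\natural)$, the period pairing between $\Vsing{M}$ and $\VdR{M}$ is a reformulation of the classical period pairing on $G^\natural$. Hence the output of Theorem~\ref{thm:annihilator} applied to $(G^\natural,\tilde u)$ for a suitable lift $\tilde u$ of $u$ should deliver the annihilator statement, provided the resulting short exact sequence of algebraic groups descends to a short exact sequence of $1$-motives.

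First I would lift $u\in\Vsing{M}$ to an element $\tilde u\in\Lie(G^{\natural,\an})$ with $\exp_{G^\natural}(\tilde u)\in G^\natural(\Qbar)$. After clearing denominators, an integer representative of $u$ corresponds to a pair $(\ell,v)$ with $\ell\in L$ and $v\in\Lie(G^\an)$ satisfying $\exp_G(v)=u_M(\ell)$; since $G^\natural\to G$ has vector-group fibres (with trivial $H_1$), $v$ lifts to some $\tilde u\in\Lie(G^{\natural,\an})$ with $\exp_{G^\natural}(\tilde u)=u^\natural(\ell)\in G^\natural(\Qbar)$. Theorem~\ref{thm:annihilator} applied to $(G^\natural,\tilde u)$ then yields
\[ 0\to G^\natural_1\to G^\natural\xrightarrow{\pi} G^\natural_2\to 0\]
with $\tilde u\in\Lie(G^{\natural,\an}_1)$ and $\Ann(\tilde u)=\pi^*\coLie(G^\natural_2)$. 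Taking $G_1$ to be the image of $G^\natural_1$ in $G$, $L_1\subset L$ the saturation of $\{\ell'\in L:u^\natural(\ell')\in G^\natural_1(\Qbar)\}$, and setting $M_1:=[L_1\to G_1]$, $M_2:=[L/L_1\to G/G_1]$, I would verify that $0\to M_1\xrightarrow{i}M\xrightarrow{p}M_2\to 0$ is exact and that $u\in i_*\Vsing{M_1}$ by inspecting the lift (the pair $(\ell,v)$ already lives in $L_1\times\Lie(G_1^\an)$). The identity $\Ann(u)=p^*\VdR{M_2}$ should then follow from $\Ann(\tilde u)=\pi^*\coLie(G^\natural_2)$ via the identifications $\VdR{M_i}=\Lie(G_i^\natural)$ and the functoriality of the period pairing.

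The main obstacle is the descent step: a priori the connected algebraic subgroup $G^\natural_1\subset G^\natural$ produced by Theorem~\ref{thm:annihilator} need not itself be the universal vector extension of $M_1$, so the identification $\Lie(G_i^\natural)=\VdR{M_i}$ invoked in the last step requires justification. Concretely, one must show that the vector-group components of $G^\natural_1$ and $G^\natural_2$ are exactly those prescribed by $\Ext^1(M_1,\Ga)^\vee$ and $\Ext^1(M_2,\Ga)^\vee$. I expect this to follow from the functoriality of Deligne's universal vector extension applied to the candidate sequence $M_1\hookrightarrow M\twoheadrightarrow M_2$ combined with the minimality of $G^\natural_1$ as the smallest connected algebraic subgroup of $G^\natural$ with $\tilde u$ in its analytic Lie algebra: because $\tilde u$ arose canonically from the motivic class $u\in\Vsing{M}$, its minimal containing subgroup should be compatible with the $1$-motive structure, forcing the two notions of universal vector extension to agree and completing the translation from the classical statement to the motivic one.
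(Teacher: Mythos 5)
Your overall route is the same as the paper's: lift $u$ to the universal vector extension (your $G^\natural$, the paper's $M^\natural$), apply Theorem~\ref{thm:annihilator} there to obtain a short exact sequence of algebraic groups $0\to H_1\to M^\natural\to H_2\to 0$, and then descend this to a short exact sequence of $1$-motives. You also correctly identify the crux: a priori the groups $H_1, H_2$ produced by the subgroup theorem need not be the universal vector extensions $M_1^\natural, M_2^\natural$, so the identification $\VdR{M_i}=\coLie(H_i)$ is not automatic. Where your write-up falls short is that your proposed resolution of that crux is stated too loosely, and it is not the argument the paper uses.

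The paper's resolution (Proposition~\ref{prop:subquot_is_mot} plus a sandwich) works on the $M_2$ side and never needs $M_1^\natural=H_1$ directly: one obtains $M_1^\natural\hookrightarrow H_1$ and $M_2^\natural\twoheadrightarrow H_2$ from functoriality, deduces $u\in T_\sing(M_1)$ from $T_\sing(M_1)=T_\sing(M)\cap\Lie(H_1)_\C$, and then observes that the resulting motivic short exact sequence forces $\coLie(M_2^\natural)=\VdR{M_2}\subset\Ann(u)$, while the surjection $M_2^\natural\twoheadrightarrow H_2$ gives $\Ann(u)=\coLie(H_2)\subset\coLie(M_2^\natural)$; equality follows. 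Your proposed ``minimality'' argument \emph{can} be made rigorous, but only if you assemble the following chain explicitly: (i) by the universal property of $M_1^\natural$ and the map $L_1\to H_1$, one gets an algebraic embedding $M_1^\natural\hookrightarrow H_1$ over $\Qbar$; (ii) $u\in T_\sing(M_1)\subset\Lie(M_1^\natural)_\C$ by construction; (iii) $H_1$ is the \emph{smallest} $\Qbar$-algebraic subgroup of $M^\natural$ whose complexified Lie algebra contains $u$ (this is implicit in the proof of Theorem~\ref{thm:annihilator}, where $\Lie(H_1)=\Ann(u)^\perp$). Then (ii) and (iii) give $H_1\subset M_1^\natural$, which with (i) yields $M_1^\natural=H_1$, hence $M_2^\natural=H_2$ by comparing kernels. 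As written, your appeal to ``$\tilde u$ arose canonically from a motivic class'' does not constitute an argument; the heart of the matter is step (i), which is precisely the content of Proposition~\ref{prop:subquot_is_mot} that you would need to prove (or cite) in any case. So: same route, obstacle correctly named, resolution left at the hand-waving stage where either the paper's sandwich or the spelled-out minimality argument is required.
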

Given a pair of non-zero $u\in \Vsing{M}$ and $\omega\in\VdR{M}$ with vanishing period, the theorem provides a proper submotive $M_1$ of $M$ such that
$u=i_*u_1$ for $u_1\in \Vsing{M_1}$ and $\omega=p^*\omega_2$ for $\omega_2\in\VdR{M_2}$.
Any $\Qbar$-linear relation between periods
can be translated into the vanishing of a period. Then the Subgroup Theorem
for $1$-motives is applied.

As a byproduct, we also get a couple of new results on $1$-motives over
$\Qbar$: they are a full subcategory of the category of $\Q$-Hodge structures
over $\Qbar$ (see  Proposition~\ref{prop:hodge_ff}) and
of the category of (non-effective) Nori motives (see Theorem~\ref{thm:ff})
and of the category $\VVneu$ of pairs of vector spaces together with a period matrix. The last statement was also obtained independently by Andreatta, Barbieri-Viale and Bertapelle, see \cite{ABB}. The case of Hodge structures has just recently been considered by Andr\'e in \cite{andre4}. He proves that
the functor from $1$-motives into $\Q$-Hodge structures is fully faithful for all algebraically closed fields $k\subset \C$.

\section{Why $1$-Motives?}

This seems the right moment to address the question if our emphasis on $1$-motives is necessary. We think that the answer is yes. 

Obviously, all proofs using $1$-motives could be rewritten in terms of commutative algebraic groups because this is how the Subgroup Theorem for $1$-motives itself is deduced. However, the dimension formulas depend on the constituents of the $1$-motive and do not admit a transparent formulation in terms of the constituents of the algebraic group. 

More generally, $1$-motives are the link between the classical objects of transcendence theory \`a la Lindemann, Schneider, or Baker and the structural
predictions linked with  Grothendieck, Andr\'e, or Kontsevich. 

\section{The Case of Elliptic Curves}

The above results are very general and depend on a subtle interplay between the data. It is a non-trivial task to make them explicit in particular examples. We have carried this out to some extent in the case of an elliptic curve $E$ defined over $\Qbar$.

Recall the Weierstraß $\wp$-, $\zeta$- and $\sigma$-function for $E$. We obtain:

\begin{theorem}[see Theorem~\ref{thm:sigma}]
Let $u\in\C$ be such that $\wp(u)\in \Qbar$ and $\exp_E(u)$ is non-torsion in $E(\Qbar)$. Then
\[ u\zeta(u)-2\log \sigma(u)\]
is transcendental.
\end{theorem}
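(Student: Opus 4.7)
The plan is to realize $\lambda := u\zeta(u) - 2\log\sigma(u)$ as an entry of the period matrix of an explicit iso-$1$-motive $M$ over $\Qbar$ and then to invoke the subgroup theorem for $1$-motives, Theorem~\ref{thm:annihilator_mot}, or equivalently the dimension formula of Theorem~\ref{thm:gen_baker}. Set $P := \exp_E(u) \in E(\Qbar)$, which is algebraic by the hypothesis $\wp(u) \in \Qbar$ and non-torsion by assumption. Let $G$ be the semi-abelian variety over $\Qbar$ sitting in a short exact sequence $0 \to \Gm \to G \to E \to 0$ whose extension class in $\Ext^1(E, \Gm) \isom E^\vee(\Qbar) \isom E(\Qbar)$ is the point $P$ (the last identification using the principal polarization). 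The exponential map of $G$ admits a closed form in terms of the Weierstrass $\sigma$- and $\zeta$-functions. Choosing a lift $Q \in G(\Qbar)$ of $P$ and forming the $1$-motive $M := [\Z \to G]$ with $1 \mapsto Q$, an explicit computation with $\exp_G$ shows that $\lambda$ appears, modulo $\Qbar + \Q \cdot 2\pi i$, in the $\Per_\mix(M)$ corner of the period matrix of $M$.

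Given this identification, I would apply Theorem~\ref{thm:annihilator_mot}. If $\lambda \in \Qbar$ then, combined with the known relations coming from the sub-$1$-motives $[0 \to \Gm] \subset [0 \to G] \subset M$ and the quotient $[\Z \to E]$ of $M$, one produces a singular vector $v \in \Vsing{M}$ whose annihilator in $\VdR{M}$ is strictly larger than what the obvious sub-objects force. Theorem~\ref{thm:annihilator_mot} then yields a non-trivial short exact sequence $0 \to M_1 \to M \to M_2 \to 0$ of $1$-motives over $\Qbar$ through which this annihilator factors, with $v \in i_*\Vsing{M_1}$ and $\Ann(v) = p^*\VdR{M_2}$.

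The main obstacle is classifying the candidate pairs $(M_1, M_2)$ compatible with such a factorization. Since $P$ is non-torsion, the extension $G$ is non-split, and the semi-abelian sub-varieties of $G$ reduce to $0$, $\Gm$ and $G$; case analysis on the lattice part combined with the admissible semi-abelian subs then shows that every putative quotient $M_2$ either contradicts the non-torsion hypothesis on $P$ or reduces $\lambda$ to a $\Qbar$-linear combination of simpler already-known periods of $E$ and $\Gm$, a reduction excluded by direct inspection of the $\sigma$-expression. The cleanest packaging of this case analysis is the dimension formula Theorem~\ref{thm:gen_baker}: applied to this particular $M$, it yields $\dim_\Qbar \Per_\mix(M) \geq 1$ precisely because $P$ is non-torsion, which is equivalent to $\lambda \notin \Qbar$ and establishes the transcendence claim.
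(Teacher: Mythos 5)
There is a real gap at the identification step. You claim that an explicit computation with $\exp_G$ shows $\lambda = u\zeta(u)-2\log\sigma(u)$ appears in the $\Per_\mix(M)$ corner of the period matrix \emph{modulo $\Qbar + \Q\cdot 2\pi i$}. But if you carry this computation out (this is Proposition~\ref{prop:calc_sigma} in the paper), the actual period is
\[
\xi_P(\gamma) = 2\log\sigma(u) - u\zeta(u) + \log\bigl(\wp(u)-\wp(u/2)\bigr) + 2\pi i\nu,
\]
so $\lambda$ and the period differ by $\log(\alpha)+2\pi i\nu$ with $\alpha = \wp(u)-\wp(u/2)\in\Qbar^*$. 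The number $\log(\alpha)$ is a Baker period, not an algebraic number, so it does not lie in $\Qbar + \Q\cdot 2\pi i$ in general. Consequently, knowing $\dim_\Qbar\Per_\mix(M)\geq 1$ (i.e., that $\xi_P(\gamma)$ is not a $\Qbar$-combination of Tate, abelian and algebraic periods) does not by itself imply $\lambda\notin\Qbar$: a priori, $\xi_P(\gamma)$ could equal $\log(\alpha)+2\pi i\nu$ exactly, and then $\lambda=0$.

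What is actually needed, and what the paper supplies, is a linear-independence statement among $1$, $2\pi i$, $\xi_P(\gamma)$ and the Baker period $\log(\alpha)$. This goes beyond the single motive $M$; one has to analyze the product $M_0\times M_1^{\sat}$ with $M_0=[\Z\xrightarrow{1\mapsto\alpha}\Gm]$ of Baker type and $M_1^{\sat}$ a saturation of $M$, and invoke Theorem~\ref{thm:inc}~(\ref{it:baker_sat}) to conclude that $\delta_\mix(M_0\times M_1^{\sat})=\delta_\mix(M_0)+\delta_\mix(M_1^{\sat})$, which is what forces $\log(\alpha)$ and $\xi_P(\gamma)$ apart. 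Your proposal never sees the Baker term, so it never meets the obstacle this additivity is designed to overcome; as written, the final "which is equivalent to $\lambda\notin\Qbar$" is a non sequitur. The rest of your setup (identifying $G$ as the extension of $E$ by $\Gm$ classified by $P$, forming $M=[\Z\to G]$, case analysis via the subgroup theorem) is sound and matches the paper's motive $[\Z\to J(E^\circ)]$; you just need the explicit incomplete period formula and the Baker-plus-saturated decomposition to close the argument.
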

This is an incomplete period integral of the third kind. The proof of the above result actually is not a direct consequence of Theorem~\ref{thm:transc_curves_intro} but rather uses the insights of our dimension computations.

We also carry out the dimension computation in this case:
let $M=[L\to G]$ with $L\isom \Z$, $G$ an extension of $E$ by 
$\Gm$ that is non-split up to isogeny, $L_\Q\to E_\Q$ injective.
Then by Proposition~\ref{prop:ell} and \ref{prop:with_CM}
\[ \dim\Per\langle M\rangle =\begin{cases} 11& E\text{\ without CM},\\
                                          9 &E\text{\ CM}.

                           \end{cases}\]
The incomplete periods of the third kind become more difficult already if we consider  $M=[L\to G]$ with $L\isom\Z^2$, $G$ an extension of $E$ by $\Gm^2$, again
$L_\Q\to E_\Q$ injective and $G$ completely non-split up to isogeny. 
If $E$ does not have CM, then
\[ \dim\Per\langle M\rangle = 18.\]
If $E$ is CM, then
\[ \dim\Per\langle M\rangle = 16,14,12,10,\]
depending on the interplay of the complex multiplication and $L$ and $G$.
The extreme case occurs when $\End(M)$ is the CM-field. Then the resulting
dimension is $10$.

\section{Values of Hypergeometric Functions}
Euler had already known that the hypergeometric function $F(a,b,c;z)$ can be written as a quotient of two integrals. If $a,b,c$ are rational numbers, these integrals can be regarded as periods on certain explicit algebraic curves. Knowledge about $\Qbar$-linear indepdendence of periods translates then into transcendence statements for the values $F(a,b,c;\lambda)$ for $\lambda\in\Qbar\ohne\{0,1\}$. This insight is exploited by
Wolfart's in
\cite{wolfart} or by Chudnovski--Chudnovsky in \cite{chudchud}. 
We explain the argument in detail for $a=b=1/2$ and $c=1$:

\begin{prop}[{Proposition~\ref{prop:1/2}}]
The value
$F(1/2,1/2,1;z)$ of the hypergeometric function is transcendental for $z\in\Qbar\ohne\{0,1\}$. 
\end{prop}
The proposition follows from the $\Qbar$-linear independence of $\pi$ and the complete periods of elliptic curves established first by Schneider in 1936, see \cite[Satz~IIIa]{S3}.

In the case of general $a,b,c\in\Q$ with least common denominator $N$, the Euler integrals can be seen as periods for the algebraic curve with affine equation
\[ y^N=x^r(1-x)^s(1-\lambda x)^t\]
for suitable $r,s,t$. For the formula in the case of $N=p$ a prime see Proposition~\ref{prop:abc}. These curves have been intensely studied. Using results of Gross--Rohrlich  \cite{gross-rohrlich}, Archinard \cite{arch2} and Asakura--Otsubo \cite{AO} we work out another example. 

\begin{thm}[Corollary~\ref{cor:jetzt_aber}]
Let $p$ be a prime such that $p\nequiv 1\mod 3$, $1\leq n\leq p-1$. Let $
0<r,s<p$ such that $p$ does not divide $r+s$, put $t=p-s$ and
\[
u=\left[\frac{nr}{p}\right], \;
v=\left[\frac{ns}{p}\right], \; w= \left[\frac{nt}{p}\right].
\]
We further assume
\[ \langle \frac{nr}{p}\rangle +\langle \frac{ns}{p}\rangle-\langle \frac{n(r+s)}{p}\rangle\neq 1.\]
Then for all $\lambda\in\Qbar\ohne\{0,1\}$, the corresponding value $F(a,b,c;\lambda)$ is zero or transcendental and transcendental if $\lambda\in (0,1)$.
\end{thm}
An explicit example where the assumptions are satisfied is
$p=11$, $r=s=2$, $n=1,2,6,7,8$. We deduce, for example, that the numbers
$F(6/11,6/11,12/11;\lambda)$ are zero or transcendental, provided $\lambda\in\Qbar\ohne\{0,1\}$.

We should stress that this application only relies on complete periods on abelian varieties and not on the more general theory developed in our monograph. It should be seen as a proof of concept: the same argument can be applied to other geometric families of curves, allowing families of differential forms of the third kind and non-closed paths. The hyergeometric function would be replaced by the solutions of differential equation defined by the Gauß--Manin connection.

\section{Structure of the Monograph}
We have tried to make the monograph accessible to readers not familiar with either motives or periods.

The first part provides foundational material that will be used throughout, fore example terminology from category theory, a review of the theory of the generalised Jacobian or the basics on singular homology and de Rham cohomology. We provide precise references for the facts that we need later. Along the way we also fix notation and normalisations. Depending on their background, readers are invited to skip on some or all of these chapters and only use them for reference. 

Chapter~\ref{ch:subgroup} and Chapter~\ref{ch:formalism} address less classical material. The first deduces a reformulation or our key tool, of the Analytic Subgroup Theorem. We apply it to the comparison between analytic and algebraic homomorphisms between connected commutative algebraic groups. 

Chapter~\ref{ch:formalism} presents an abstract formulation of the theory of periods and the Period Conjecture for abelian categories without a tensor structure.

Part~\ref{part2} is the heart of the monograph and presents our main result.
It addresses periods of $1$-motives.
After settling some notation, 
Chapter~\ref{sec:one-mot}  starts by reviewing Deligne's category of $1$-motives and its properties. We then establish auxiliary results that are needed in the next chapter.

Chapter~\ref{sec:periods_eins} discusses  periods of $1$-motives and proves the version
of the Period Conjecture purely in terms of $1$-motives. We then
consider examples: in Chapter~\ref{ch:first} we treat the classical cases
like the transcendence of $\pi$ and values of $\log$ in our language.
In Chapter~\ref{ch:ex}
we apply the general results in the
case of a $1$-motive whose constituents are as small as possible without being trivial and compute the dimensions of their period spaces.

In Part~\ref{part3} we turn to periods of algebraic varieties. 
Chapter~\ref{sec:nori} clarifies the notion of a
cohomological period. After defining $\Per^1$ in a down to earth way,
the interpretation of cohomological periods as 
the periods of $1$-motives is explained. Finally, we explain the interpretation as periods of  Nori or Voevodsky motives.

In Chapter~\ref{sec:results} we use the results on periods of $1$-motives to
deduce the qualitative results on $\Per^1$ and periods of curves: the criterion on transcendence and the Period Conjecture. The results are made explicit in the classical terms of differential forms of the first, second and third kind on an algebraic curve in Chapter~\ref{ch:van}.

Part~\ref{part4} aims at a dimension formula for the space of periods of a $1$-motive in terms of its data. 
Chapter~\ref{ch:dim_comp_part1} treats mainly the saturated case. This can be applied to deduce complete structural results in Chapter~\ref{ch:struct}.
Finally, Chapter~\ref{ch:precise} is devoted to an explicit dimension computation  for the space of incomplete periods of the third kind, which is very involved.
In this rather complicated case the results were unexpected.

In the next chapter~\ref{sec:elliptic} we deal with the case of elliptic curves and make our results  explicit in terms of the classical Weierstraß functions
$\wp,\zeta,\sigma$. 

We explain in Chapter~\ref{ch:hypergeom} how transcendence results on special values of the hypergeometric function can be deduced from $\Qbar$-linear independence of $1$-periods.

There are three appendices: 
The first two sketch the theories of Nori and Voevodsky motives
to the extent used in the proof of Theorem~\ref{thm:main_kontsevich}.

The last appendix is
of technical nature: we need to verify that the singular and de Rham realisations
of a $1$-motive agree with the realisation of the attached geometric motive.

\part{Foundations}
\chapter{Basics on Categories}
In this monograph some basic concepts of the theory of categories are used frequently.  For the convenience of the reader, we recall them here.  At some places
however the requirements are higher than what can be found in this chapter. In such a situation  we give a precise reference to the literature where the reader can find the full information needed. 

\section{Additive and Abelian Categories}

\begin{defn}
A \emph{category}\index{category} $\Ch$ consists of a class of \emph{objects} $\mathrm{Ob}(\Ch)$ and for every pair of objects $X,Y\in\mathrm{Ob}(\Ch)$ a set of \emph{morphisms} $\Mor_\Ch(X,Y)$ together with a composition law 
\begin{align*} \circ:\Mor_\Ch(X,Y)\times \Mor_\Ch(Y,Z)&\to \Mor_\Ch(X,Z)\\
                             (f,g)\hspace{5ex}&\mapsto g\circ f
\end{align*}
for every triple of objects $X,Y,Z$. These data are subject to the following conditions:
\begin{enumerate}
\item The composition law is associative.
\item For every object $X$ there is a morphism $\id_X\in\Mor_\Ch(X,X)$ which is a left and right identity for the composition law, i.e.
$f\circ \id_X=f$ for all $Y$ and all $f:X\to Y$ and $\id_X\circ g=g$ for all $Z$ and all $g:Z\to X$.
\end{enumerate}
\end{defn}

A category is $\Z$-linear (or $\Q$-linear) if the morphism sets are given the structure of abelian groups (or $\Q$-vector spaces) and the composition of morphisms is bilinear. We usually write $\Hom_\Ch$ instead of $\Mor_\Ch$ in this case. It is \emph{additive}\index{category!additive} if it is $\Z$-linear and, in addition, has finite direct sums. The direct sum of two objects $X$ and $Y$ is denoted $X\oplus Y$. As the particular case of the empty direct sum, this also requires the existence of a $0$-object characterised uniquely by the property that
\[ \Hom(0,X)=\Hom(X,0)=0\]
for all objects $X$.

An additive category $\Ah$ is called \emph{abelian},\index{category!abelian} if the following two properties are satisfied:
\begin{enumerate}
\item Every morphism $f:X\to Y$ in $\Ah$ has a kernel and cokernel.
\item For every morphism $f:X\to Y$ in $\Ah$ the natural map $X/\Ker(f)\to \im(f)$ is an isomorphism.
\end{enumerate}
In this abstract setting, the image of a morphism is defined as the kernel of
the cokernel.

\begin{ex}
For every ring $R$, the category of $R$-modules is abelian. The category of finitely generated modules is additive. It is abelian if $R$ is noetherian.
\end{ex}

Many examples of additive and abelian categories are going to be used throughout the book. In order of appearance:

\begin{ex}
\begin{enumerate}
\item 
Let $k$ be a field. Then the category of connected commutative algebraic groups schemes over $k$ is additive, but not abelian. We refer to Chapter~\ref{ch:alg_groups} for details.
\item Let $K,L\subset\C$ be subfields. Then the category $\VV$ introduced in
Section~\ref{sec:formalism_2} is $\Q$-linear and abelian.
\item Let $k$ be a field. Then the category of filtered $k$-vector spaces is additive, but not abelian. Every morphism has a kernel and a cokernel, but the
isomorphism between $X/\Ker(f)$ and $\im(f)$ fails in general.
\item Let $k$ be an algebraically closed field of characteristic $0$. The category $\onemot_k$ of iso-$1$-motives is abelian. A thorough review is given in 
Chapter~\ref{sec:one-mot}.
\end{enumerate}
\end{ex}

Given a $\Z$-linear category $\Ah$, we obtain a $\Q$-linear category
$\Ah\tensor\Q$ with the same objects as $\Ah$ and morphism
\[ \Hom_{\Ah\tensor\Q}(X,Y)=\Hom_{\Ah}(X,Y)\tensor_\Z\Q.\]
We refer to it as the \emph{isogeny category of $\Ah$.}\index{isogeny category}
If $\Ah$ is additive or abelian, then so is $\Ah\tensor\Q$.

\begin{ex}
\begin{enumerate}
\item Let $k$ be a field. If $\Ah$ is the category of abelian varieties over $k$, the $\Ah\tensor\Q$ is what is referred to as the \emph{category of abelian varieties up to isogeny} in the literature. It is abelian. The same remark also applies to the category of connected commutative group schemes.
\item By Definition~\ref{defn:einsmot}, $\onemot_k$ is defined as the isogeny category of the category of $1$-motives over $k$.
\end{enumerate}
\end{ex}

For any category $\Ch$, we define the \emph{additive hull} $\Z[\Ch ]$ with the
objects formal direct sums $\bigoplus_{i=1}^nX_i$ for $n\geq 0$ and 
$X_1,\dots,X_n\in\Ch$. We interpret the empty direct sum as an object $0$.
Morphisms are defined by the formula
\[ \Hom_{\Z[\Ch]}\left(\bigoplus_{i=1}^nX_i,\bigoplus_{j=1}^mY_j\right)=\bigoplus_{i,j}\Z[\Hom_\Ch(X_i,Y_j)].\]
Here for a set $S$, we denote by $\Z[S]$ the free abelian group with basis $S$.

\section{Subcategories}

Given a category $\Ch$, a \emph{subcategory} of $\Ch$ is a
category $\Ch'$ such that every object and every morphism of $\Ch'$ is an object and morphism in $\Ch$, respectively. The composition of morphisms in $\Ch'$ is defined as their composition in $\Ch$ and the identity morphisms in $\Ch'$ agree with the identity morphisms in $\Ch$.
A subcategory is called \emph{full}\index{full subcategory} if 
\[ \Mor_{\Ch'}(X,Y)=\Mor_{\Ch}(X,Y)\]
for all objects $X,Y$ of $\Ch'$.

\begin{rem}
If $\Ch$ is additive or abelian, then a subcategory $\Ch'$ is not necessarily
additive or abelian itself. If $\Ch$ and $\Ch'$ are both abelian, this does not imply that the kernels and cokernels are the same when computed in $\Ch$ or
$\Ch'$. We are not going to consider such pathological situations, which only appear if the subcategory is not full.
\end{rem}

\begin{ex}
The category of abelian varieties (up to isogeny) is a full subcategory of the category of connected commutative algebraic groups (up to isogeny).
\end{ex}

\begin{ex}
The category of $\Q$-vector spaces is a full subcategory of the category of abelian groups.
\end{ex}

Let $\Ah$ be an abelian category. A \emph{subquotient}\index{subquotient} of an object $X$ in $\Ah$ is
a quotient of a subobject of $X$, or equivalently, a subobject of a quotient of $X$.

\begin{defn}
Let $\Ah$ be an abelian category and  $X\in\Ah$ an object. We define
$\langle X\rangle$ as the smallest full subcategory closed under subquotients containing $X$.
\end{defn}
More explicitly, this means that $\langle X\rangle$ contains $X$ and all the quotients and subobjects of every object $Y\in\langle X\rangle$. 

\begin{lemma}
The category $\langle X\rangle$ is abelian. We have
\[ \Ah=\bigcup_{X\in\Ch}\langle X\rangle.\]
\end{lemma}
\begin{proof}
Let $f:Y\to Z$ be a morphism in $\langle X\rangle$. Then $\ker(f)\subset Y$ exists in $\Ah$. As a subobject of an object in $\langle X\rangle$ it is itself an object of $\langle X\rangle$. The universal property of a kernel holds because it holds in $\Ah$. The same argument gives the existence of cokernels. The natural map $Y/\ker(f)\to \im(f)$ has an inverse in $\Ah$ because the category is
abelian. This inverse is in $\langle X\rangle$ because the subcategory is full.

Obviously all objects of $\Ah$ are contained in the union of all
$\langle X\rangle$. We have to check that the same is true for morphisms.
Let $f:X\to Y$ be a morphism in $\Ah$. Both $X$ and $Y$ are subobjects
of $X\oplus Y$, hence they are both objects of $\langle X\oplus Y\rangle$.
As $\langle X\oplus Y\rangle\subset\Ah$ is a full subcategory, the morphism
$f$ is a morphism in $\langle X\oplus Y\rangle$.
\end{proof} 

\section{Functors}

\begin{defn}
Let $\Ch$ and $\Ch'$ be categories. A \emph{covariant functor}\index{functor} $F:\Ch\to\Ch'$ is an assignment $F:\mathrm{Ob}(\Ch)\to\mathrm{Ob}(\Ch')$ together with
a map
\[ \Mor_\Ch(X,Y)\to\Mor_{\Ch'}(F(X),F(Y))\]
for every pair of objects $X,Y$ of $\Ch$. It is subject to the following conditions.
\begin{enumerate}
\item Compatibility with composition: $F(g)\circ F(f)=F(g\circ f)$ for
all objects $X,Y,Z$ in $\Ch$ and morphisms $f:X\to Y$, $g:Y\to Z$;
\item Compatibility with identities: $F(\id_X)=\id_{F(X)}$ for all objects $X$ of $\Ch$.
\end{enumerate}
In the case of a \emph{contravariant functor}, we are given maps
\[ \Mor_\Ch(X,Y)\to\Mor_{\Ch'}(F(Y),F(X))\]
and the compatibility condition reads $F(f)\circ F(g)=F(g\circ f)$ instead.
\end{defn}

A functor is called $F:\Ch\to\Ch'$ \emph{faithful},\index{faithful functor} \emph{full}, or \emph{fully faithful},\index{fully faithful functor} if for all objects $X,Y\in\Ch$ the natural map
\[ \Hom_{\Ch}(X,Y)\to \Hom_{\Ch'}(F(X),F(Y))\]
is injective, surjective or bijective, respectively.

\begin{ex}
If $F$ is the inclusion of a subcategory of a category, then it is faithful. If the subcategory is full, the inclusion is fully faithful.
\end{ex}

A functor $F:\Ch\to\Ch'$ between $\Z$-linear or $\Q$-linear categories is called \emph{additive} or $\Q$-linear if for all $X,Y\in\Ch$ the map
\[ \Hom_\Ch(X,Y)\to\Hom_{\Ch'}(F(X),F(Y))\]
is $\Z$-linear or $\Q$-linear, respectively.
Such a functor automatically respects direct sums, provided that they exist (e.g., because the categories are additive).

An additive functor $F:\Ah\to\Ah'$ between abelian categories is called \emph{exact}\index{exact functor} if it sends short exact sequences to short exact sequences.

\begin{lemma}
Let $F:\Ah\to\Ah'$ be an exact functor between abelian categories. Then $F$ is faithful if and only if for all $X$ in $\Ah$ the assumption $F(X)=0$ implies $X\isom 0$.
\end{lemma}

\begin{proof}Assume that $F$ is faithful and that $X$ is an object of $\Ah$ such that
$F(X)=0$. Then this implies $F(0)=F(\id_X)$. By faithfulness this
gives $0=\id_X$ and hence $X\isom 0$.

Conversely assume the condition on objects. 
Let $f$ be in the kernel of the map $\Hom_{\Ah}(X,Y)\to \Hom_{\Ah'}(F(X),F(Y))$. The functor $F$ maps the exact sequence
\[ 0\to \Ker(f)\to X\xrightarrow{f}Y\to \Coker(f)\to 0\]
to the exact sequence
\[ 0\to F(\Ker(f))\to F(X)\xrightarrow{F(f)=0}F(Y)\to F(\Coker(f))\to 0.\]
This gives $F(\Ker(f))\isom F(X)$ and $F(Y)\isom F(\Coker(f))$. Now consider
the short exact sequence
\[ 0\to\Ker(f)\to X\to X/\Ker(f)\to 0\]
and its image
\[ 0\to F(\Ker(f))\to F(X)\to F(X/\Ker(f))\to 0.\]
We had established that the first map is an isomorphism, so $F(X/\Ker(f))\isom 0$. By assumption this implies that $X/\Ker(f)\isom 0$ or $\Ker(f)\isom X$. The same type argument also shows that $Y\isom\Coker(f)$. 
Taken together this means that $f=0$.
\end{proof}

Faithful functors allow us to test for inclusions.
\begin{lemma}\label{lem:is_sub}
Let $F:\Ah\to\Ah'$ be a faithful exact functor between abelian categories, 
$X\in\Ah$ an object and $X_1,X_2\subset X$ subobjects. If $F(X_2)\subset F(X_1)$, then $X_2\subset X_1$.
\end{lemma}
\begin{proof}Let $X_3=X_1\cap X_2$ (or more abstractly, let $X_3$ be the pull-back of $X_1\to X$ via $X_2\to X$). We need to show that the natural inclusion $X_3\to X_2$ is an isomorphism, whence $X_2\subset X_1$. By the exactness of $F$, we have $F(X_3)\isom F(X_1)\cap F(X_2)$. By assumption this is $F(X_2)$.
We apply $F$ to the exact sequence
\[ 0\to X_3\to X_2\to C\to 0.\]
As $F(X_3)=F(X_2)$, we get $F(C)=0$. By the faithfulness of $F$, this implies $C\isom 0$.
\end{proof}

As a consequence of our results on transcendence and the Period Conjecture, we are are also going to establish  results on fullness of certain functors,
see Proposition~\ref{prop:hodge_ff}, Theorem~\ref{thm:fulness_VV} and Theorem~\ref{thm:ff}. The following criterion will be useful.

\begin{lemma}\label{lem:crit_full}
Let $F:\Ah\to\Ah'$ be a faithful exact functor between abelian categories.
Assume that the image of $F$ is closed under subquotients, i.e, if
\[ 0\to Y'\to F(X)\to Y''\to 0\]
is an exact sequence in $\Ah'$, then there is a short exact sequence
\[ 0\to X'\to X\to X''\to 0\] 
in $\Ah$ mapping to  the given  exact sequence in $\Ah'$.
 Then $F$ is full.
\end{lemma}
\begin{proof}
Let $f:F(Y_1)\to F(Y_2)$ be a morphism in $\Ah'$ and $\Gamma\subset F(Y_1)\times F(Y_2)$ its graph. We find it as the image of
$F(Y_1)$ under the map
\[ F(Y_1)\xrightarrow{\Delta}F(Y_1)\times F(Y_1)\xrightarrow{(\id,f)}F(Y_1)\oplus F(Y_2).\]
It is a subobject. By assumption there is $G\subset Y_1\times Y_2$ in  $\Ah$ such that $F(G)=\Gamma$. The projection $p:G\to Y_1\times Y_2\to Y_1$ is an isomorphism
because this is true for the image $\Gamma\to F(Y_1)$ and $F$ is faithful. Let
$i$ be its inverse. The composition
\[ Y_1\xrightarrow{i} G\subset Y_1\times Y_2\to Y_2\]
is the preimage of $f$ we were looking for. 
\end{proof}

\chapter{Homology and Cohomology}\label{ch:homology}
A key step in our approach to periods is the reinterpretation of paths as homology classes and differential forms as classes in algebraic de Rham cohomology. We survey the key definitions. For an in-depth review with complete references
we refer to \cite[Part~I]{period-buch}.

\section{Singular Homology}\label{sec:homology_intro}

All topological spaces in this section are locally compact, Hausdorff and satisfy the second countable axiom. The analytification of an algebraic variety over $\C$ is an example of such a space. We refer to standard text books on algebraic topology like \cite{spanier} or \cite{hatcher} for full details and proofs.

\begin{defn}
The \emph{topological $n$-simplex} $\Delta_n$ is defined
by
\[ \Delta_n=\left\{ (x_0,\dots,x_n)\in\R^{n+1}\;|\; x_0\geq 0,\dots,x_n\geq 0,\; \sum_{i=0}^nx_i=1\right\}.\]
\end{defn}
By setting one coordinate $x_i$ to $0$, we get the inclusion of the codimension~$1$ \emph{faces} $F_i$. They are homeomorphic to the $(n-1)$-simplex in an obvious way.

\begin{ex}
The $0$ simplex is a single point. The $1$-simplex is the interval from $F_0=(0,1)$ to $F_1=(1,0)$. We often identify it with the unit interval $[0,1]\subset\R$. 
\end{ex}

\begin{defn}
Let $X$ be a topological space.   For an integer $n\geq 0$ a  \emph{singular $n$-chain} is a formal $\Q$-linear combinations of continuous maps $f:\Delta_n\to X$. The space of singular chains $S_n(X)$ is a $\Q$-vector space. Together with the natural differential 
\[ d_n:S_n(X)\to S_{n-1}(X)\]
which maps a basis element $f$ in $S_n(X)$ to
\[ d_n(f)=\sum_{i=0}^n(-1)^if|_{F_i}\]
we obtain a chain complex 
$(S_*(X),d_*)$, the \emph{singular chain complex of $X$}.\index{singular chain complex}
Its 
homology \[ H^\sing_n(X,\Q):=H_n( (S_*(X),d_*)).\]
 is called the \emph{singular homology of $X$ with rational coefficients}.\index{singular homology}
\end{defn}

\begin{ex}
If $X$ is a finite discrete set, then
\[ H^\sing_n(X,\Q)=\begin{cases}\Q^{|X|}&n=0,\\
                           0&\text{else.}
              \end{cases}
\]
\end{ex}

\begin{ex}
Take $\sigma=\sum_{i=1}^na_i\gamma_i\in S_1(X)$, where $a_i\in\Q$ and
$\gamma_i:\Delta_1\to X$ is continuous. We identify
$\Delta_1$ with the unit interval and view the $\gamma_i$ as paths. 
Then $\sigma$ is in the kernel of $d_1$ if the formal linear combination
\[ \sum_{i=1}^na_i\gamma_i(1)-\sum_{i=1}^na_i\gamma_i(0))\]
vanishes. We say that $\sigma$ is \emph{closed} or a \emph{cycle}. Cycles
in the image of $d_2$ are called \emph{boundaries}. They are the ones that can be filled in by discs.
\end{ex}

In particular, every closed path gives rise to a homology class and  homotopic paths are homologous. We get a well-defined map
\[ \pi_1(X,x_0)\to H^\sing_1(X,\Q),\]
the \emph{Hurewicz homomorphism}. Its image generates $H^\sing_1(X,\Q)$ if $X$ is
path connected. In fact, we have
\[ H^\sing_1(X,\Q)\isom \pi_1(X,x_0)^{\mathrm{ab}}\tensor_\Z\Q\]
in this case.

\begin{ex}
Let $C$ be a smooth complete curve of genus $g$ over $\C$ and $C^\an$ the compact Riemann surface defined by its analytification. Then
\[ \dim_\Q H^\sing_1(C^\an,\Q)=2g.\]
For $C^\circ=C\ohne S$ with $S$  a non-empty finite set we have
\[ \dim_\Q H^\sing_1({C^\circ}^\an,\Q)=2g-|S|+1.\]
\end{ex}

\begin{ex} 
The projective line satisfies
\[ H^\sing_1({\Pe^1}^\an,\Q)=0.\]
For ${\A^1}^\an=\C$ and $\Gm^\an=\C^*$ we obtain
\[ H^\sing_1(\C,\Q)=0, \quad H^\sing_1(\C^*,\Q)\isom \Q\]
with the last group generated by a loop around $0$, e.g., the boundary of the unit disc.
\end{ex}

We also want to handle non-closed paths. They define classes in relative homology.

\begin{defn}Let $X$ be a topological space, $A\subset X$ a closed subset. We call
\[ S_*(X,A;\Q)=S_*(X,\Q)/S_*(A,\Q)\]
the \emph{singular chain complex for $(X,A)$}. Its homology is \emph{singular homology of the pair $(X,A)$ with rational coefficients}\index{singular homology! of a pair} or \emph{singular homology of $X$ relative to $A$},\index{singular homology! relative}\index{relative singular homology} written as
\[ H^\sing_n(X,A;\Q)=H_n( S_*(X,A;\Q),d_*).\]
\end{defn}
By definition relative singular homology is functorial for pairs.

\begin{ex}Let $\gamma:[0,1]\to X$ be a path with end points in $A$. Then it
is a cycle relative to $A$ and gives rise to a class in $H^\sing_1(X,A;\Q)$.
\end{ex}

The short exact sequence
\[ 0\to S_*(A,\Q)\to S_*(X,\Q)\to S_*(X,A;\Q)\to 0\]
gives rise to a long exact sequence in homology
\[ \dots\to H^\sing_n(A,\Q)\to H^\sing_n(X,\Q)\to H^\sing_n(X,A;\Q)\to H^\sing_{n-1}(A,\Q)\to\cdots.\]
Of particular interest for us is $n=1$. 
\begin{ex}
If $A$ is a finite set of points, then the sequence simplifies to
\[ 0\to H^\sing_1(X,\Q)\to H^\sing_1(X,A;\Q)\to \Q^{|X|-1}\to 0.\]
\end{ex}
More generally, we get \emph{boundary maps} for triples $B\subset A\subset X$
\[ \partial: H^\sing_n(X,A;\Q)\to H^\sing_{n-1}(A,B;\Q)\]
and natural long exact sequences
\[ \dots\to H^\sing_n(A,B;\Q)\to H^\sing_n(X,B;\Q)\to H^\sing_n(X,A;\Q)\to H^\sing_{n-1}(A,B;\Q)\to\cdots.\]

\begin{rem}\label{rem:smooth_chains}
If $X$ is a manifold, it suffices to work with \emph{smooth singular chains}. 
Let $S_*^\infty(X)\subset S_*(X)$ be  the subcomplex of linear combinations of
$C^\infty$-maps $f:\Delta_n\to X$, see \cite[Definition~2.2.2]{period-buch}.
By \cite[Theorem~2.2.5]{period-buch} the complex $S_*^\infty(X)$  can be used to compute singular homology of $X$. 
\end{rem}

There is another tool that allows us to reduce questions on the homology of algebraic varieties to the smooth case. We formulate it in cohomology obtained by replacing all vector spaces by their duals.
\begin{prop}[Blow-up sequence]\label{prop:blow-up}
Let $X$ be an algebraic variety over $\C$, $\pi:\tilde{X}\to X$ a proper map,
$Z\subset X$ a closed subvariety with preimage $E$ in $\tilde{X}$ such that
$\pi$ induces an isomorphism $\tilde{X}\ohne E\to X\ohne Z$. Then there is a natural long exact sequence
\begin{multline*}
 \dots \to H^n_\sing(X^\an,\Q)\to H^n_\sing(\tilde{X}^\an,\Q)\oplus H^n_\sing(Z^\an,\Q)\to H^n_\sing(E^\an,\Q)\\
\to H^{n+1}_\sing(X^\an,\Q)\to\cdots.
\end{multline*}
\end{prop}
\begin{proof}
In the case of analytic spaces attached to algebraic varieties, we can identify singular cohomology with sheaf cohomology. The statement then follows from
proper base change (see \cite[Proposition~2.6.7]{KS}) for $\pi$.
\end{proof}
\begin{ex}\label{ex:singsing}
Let $C$ be a curve with normalisation $\tilde{C}$. Let $Y\subset C$ be the set of singular points with preimage $\tilde{Y}\subset\tilde{C}$. The assumptions of the proposition are satisfied and the long exact sequence degenerates to the short exact sequence
\[ 0\to\Q^N\to H^1_\sing(C^\an,\Q)\to H^1_\sing(\tilde{C}^\an,\Q)\to 0.\]
with $N=|\tilde{Y}|-|Y|$.
\end{ex}

\section{Algebraic de Rham Cohomology}\label{sec:deRham_intro}

Algebraic de Rham cohomology was introduced by Grothendieck in \cite{grothendieck_66} as a purely algebraic way to define the Betti numbers of algebraic varieties. 
In this section $k$ is a field of characteristic $0$.

\subsection{The Smooth Case}
We start by presenting the much easier smooth case.

\begin{defn}Let $X$ be a smooth variety over $k$. We define \emph{algebraic de Rham cohomology of $X$}\index{de Rham cohomology}\index{de Rham cohomology! of a smooth variety} as hypercohomology of the complex of sheaves of differential forms on $X$,
\[ H_\dR^n(X)=\mathbb{H}^n(X,\Omega^*_X).\]
\end{defn}
This is particularly easy if $X$ is, in addition, affine. In this case
\[ H_\dR^n(X)=H^n( \Omega^*(X),d ).\]
There are different ways to compute hypercohomology. We  make the approach via \v{C}ech-cohomology explicit for later use.

Let $X$ be a smooth variety and  $\Uf=(U_1,\dots,U_n)$ be an open cover of $X$ by affine subvarieties $U_i$. For every $I\subset \{1,\dots,n\}$ we put
$U_I=\bigcap_{i\in I}U_i$ and for every $p,q\in\Na_0$ we define
\[ C^p(\Uf,\Omega_X^q)=\prod_{|I|=p+1}\Omega^q(U_I).\]
The $C^p(\Uf,\Omega_X^q)$  form a double complex. The differential $d$ in $q$-direction is induced
by the differential of $\Omega^*_X$. The differential $\delta$ in $p$-direction is
the differential of the \v{C}ech-complex given as follows: for $\alpha=(\alpha_I)\in C^p(\mathfrak{U},\Omega^q_X)$ we have
\[ \delta^p(\alpha)_{i_0\leq i_1\leq \dot i_p}=\sum_{j=0}^p(-1)^p\alpha_{i_0\leq\dots\leq \hat{i}_j\leq \dots i_p}\]
where $\hat{i_j}$ means that the index is omitted.

\begin{defn} Let
$\RGammatilde_\dR(X,\Uf)$ be the total complex of the double
complex $C^p(\Uf,\Omega^q)$ consisting of
\[ \RGammatilde_\dR(X,\Uf)^n=\bigoplus_{p+q=n}C^p(\Uf,\Omega^q)\]
with differential $ \sum_{p+q=n} (d^p+(-1)^{q}\delta^q)$.
\end{defn}

\begin{rem}This complex is nice because it is explicit and bounded. However, the boundary depends on the choice of an ordering of $U_1,\dots,U_n$.
In consequence, these complexes have bad functorial properties, unless
$f:Y\to X$ is affine. 
\end{rem}

\begin{lemma}The complex $\RGammatilde_\dR(X,\Uf)$ computes algebraic de Rham cohomology of $X$.
\end{lemma}
\begin{proof}We take the stupid filtration on the complex $\Omega^*_X$, which induces 
a filtration on the double complex $C^p(\Uf,\Omega^q)$. By the spectral sequence for the filtration, it suffices to consider the individual $\Omega^q$.
They are coherent, so by \cite[III~Theorem~4.5]{hartshorne_71} \v{C}ech-cohomology agrees with sheaf cohomology.
\end{proof}
We also need relative cohomology. Again the smooth case is easier.

\begin{defn}Let $X$ be a smooth variety, $Y\subset X$ a smooth closed subvariety, and $\Uf$ a finite open affine cover of $X$. We put
\[ \RGammatilde_\dR(X,Y,\Uf):=\cone(\RGammatilde_\dR(Y,\Uf\cap Y)\to \RGammatilde_\dR(X,\Uf))[-1]\]
and define \emph{algebraic de Rham cohomology of the pair $(X,Y)$}\index{de Rham cohomology!of a pair}\index{relative de Rham cohomology} or \emph{algebraic de Rham cohomology of $X$ relative to $Y$} as
\[ H^n_\dR(X,Y)=H^n(\RGammatilde_\dR(X,Y,\Uf)).\]
\end{defn}
These groups satisfy the same formal properties as singular cohomology.
\begin{lemma}Relative algebraic de Rham cohomology is well-defined. There is a natural long exact sequence
\[ \dots \to H^n_\dR(X,Y)\to H^n_\dR(X)\to H^n_\dR(Y)\to H^{n+1}_\dR(X,Y)\to \cdots.\]
\end{lemma}
\begin{proof}The exact sequence is the long exact sequence attached to the distinguished triangle
\[ \RGammatilde_\dR(X,\Uf)\to\RGamma_\dR(Y,\Uf\cap Y)\to\RGammatilde_\dR(X,Y,\Uf)[1]\]
 or, in different language, the short exact sequence of complexes
\[ 0\to \RGammatilde_\dR(Y,\Uf\cap Y)[-1]\to \RGammatilde_\dR(X,Y,\Uf)\to \RGammatilde_\dR(X,\Uf)\to 0.\]
To verify that algebraic de Rham cohomology is well-defined we need to check independence of the choice of cover.
We sketch the argument. 

As a first step replace $\RGammatilde_\dR(X,\Uf)$ by the quasi-isomorphic
complex $\RGammatilde_\dR(X,\Uf)'$ which  has all tuples $I=(i_0,\dots,i_n)$ as indices rather than only the ordered ones.  
Given two covers $\Uf_1$ and $\Uf_2$, there is a common refinement $\Uf_3$. It suffices to compare $\Uf_1$ and $\Uf_2$ with $\Uf_3$. The choice of a refinement map
from $\Uf_3$ to $\Uf_1$ induces homomorphisms
\[ \RGammatilde_\dR(X,\Uf_1)'\to\RGammatilde_\dR(X,\Uf_3)'\]
and also  for $Y$ and the pair $(X,Y)$.
They are quasi-isomorphisms for $X$ and $Y$ because the complexes compute algebraic de Rham cohomology. By the above-mentioned long exact sequence the comparison map is a quasi-iso\-mor\-phism for $(X,Y)$ as well.
\end{proof}

If $\dim X=0$, then  $\Omega^*_X=\Oh_X[0]$ 
and $X$ is affine, hence
\[ \RGammatilde_\dR(X)=\Oh_X(X).\]

We now spell out the curve case in degree $1$. Let $C$ be a smooth affine curve over $k$,
$D\subset C$ a finite set of closed points viewed as a smooth subvariety of dimension $0$. We use the trivial cover $\Uf=(C)$ and get
\begin{equation}
\label{eq:de_rham_einfach} \RGammatilde_\dR(C,\Uf)=[\Oh(C)\xrightarrow{f\mapsto (df,-f|_D)} \Omega^1(C)\oplus \Oh(D)]
\end{equation}
with $\Oh(C)$ in degree $0$. 

More generally, if $C$ is not necessarily affine, let $U_1,\dots,U_n$ be an open affine cover. We write $D_i=U_i\cap D$ and more generally $D_I=D\cap U_I$. By definition $\RGammatilde(C,D,\Uf)$ is the shifted cone of the homomorphism 
$\RGammatilde(C,\Uf)\to \RGammatilde(D,\Uf\cap D)$ of complexes. If we write the complexes vertically this takes the form
\[\begin{xy}\xymatrix{ 
\vdots &\vdots\\
\displaystyle\prod_{i<j}\Omega^1(U_i\cap U_j)\oplus \prod_{|I|=3}\Oh(U_I)\ar[u]\ar[r]&
\displaystyle\prod_{|I|=3}\Oh(D_I)\ar[u]\\
\displaystyle\prod_{i=1}^n\Omega^1(U_i)\oplus \prod_{i<j}\Oh(U_i\cap U_j)\ar[u]\ar[r]&\displaystyle\prod_{i<j}\Oh(D_i\cap D_j)\ar[u]\\
\displaystyle\prod_{i=1}^n\Oh(U_i)\ar[u]\ar[r]&\displaystyle\prod_{i=1}^n\Oh(D_i)\ar[u]
}\end{xy}\]
When taking the cone, we obtain the following description of cohomology in degree $1$.

\begin{lemma}\label{lem:de_rham_explicit}\index{de Rham cohomology!explicit cocycle}
The group $H^1_\dR(C,D)$ is given by the cohomology in degree~$1$ of 
\begin{multline*}
 \prod_{i=1}^n\Oh(U_i)\longrightarrow \prod_{i=1}^n\Omega^1(U_i)\oplus \prod_{i<j}\Oh(U_i\cap U_j)\oplus \prod_{i=1}^n\Oh(D_i)\\[1ex]
\longrightarrow
\prod_{i<j}\Omega^1(U_i\cap U_j)\oplus \prod_{|I|=3}\Oh(U_I)\oplus \prod_{i<j}\Oh(D_i\cap D_j)\longrightarrow \dots
\end{multline*}
with differentials 
\begin{align*} 
d^0( (f_i)_i) &=((df_i)_i, (f_j-f_i)_{ij}, (-f_i|_{D_i})_i)\\
d^1( (\omega_i)_i, (f_{ij})_{ij}, (g_i)_i)&=\\
 ( (-\omega_j+\omega_i+df&_{ij})_{ij}, (-f_{i_1i_2}+f_{i_0i_2}-f_{i_0i_1})_{i_0i_1i_2}, (-f_{ij}|_{D_{ij}}-g_j+g_i)_{ij})
\end{align*}
\end{lemma}.

\begin{rem}
The signs in the differentials depend on the sign conventions used for total complexes,  shifts and cones. We are using the normalisations
of \cite[Section~1.3]{period-buch}. However, any other choice
of sign conventions will lead to isomorphic cohomology groups. We only
have to ensure that $d^1\circ d^0=0$.
\end{rem}

\subsection{The General Case}
\index{de Rham cohomology!the general case}
The definition of (relative) algebraic de Rham cohomology can be generalised to the singular case by different methods, all yielding the same cohomology groups. The first was Hartshorne's, who embedded a singular variety $X$ into a smooth variety $P$ and worked with the completion of the complex of sheaves of differential forms on $P$ with respect to the vanishing ideal of $X$. In the context of Hodge theory, Deligne used smooth proper hypercovers: given $X$, he constructs a simplicial variety $X_\bullet$ over $X$ with smooth components and such that 
the 
singular cohomology of $X$ agrees with the singular cohomology of $X_\bullet$. (Our Proposition~\ref{prop:blow-up} is an instance of this fact.) He then defines algebraic de Rham cohomology of $X$ as the de  Rham cohomology of $X_\bullet$. In \cite{period-buch}, we use a variant of this approach. Algebraic de Rham cohomology is defined as sheaf cohomology of the complex $\Omega^*_h$ of $h$-differentials in the $h$-topology introduced by Voevodsky with the theory of triangulated motives in mind.

Instead of explaining the construction, we summarise the result:

\begin{thm}[{\cite[Section~3.2]{period-buch}}]
There is a sequence of functors
\[ H^n_\dR:\; (X,Y)\mapsto H^n_\dR(X,Y)\]
which attach a finite dimensional $k$-vector space to every pair $(X,Y)$ consisting of an algebraic variety and
a closed subvariety $Y$ and which extends the functors for smooth $X$ and $Y$.

If $Z\subset Y\subset X$ are closed subvarieties, there are natural coboundary maps
\[ \partial: H^n_\dR(Y,Z)\to H^{n+1}_\dR(X,Y)\]
fitting into a long exact sequence  
\[ \dots \to H^n_\dR(X,Z)\to H^n_\dR(Y,Z)\xrightarrow{\partial}  H_\dR^{n+1}(X,Y)\to \cdots.\]
\end{thm} 

\begin{ex}
Let $C$ be a curve with normalisation $\tilde{C} \to C$,  and $Y\subset C$ the set of singular points with preimage $\tilde{Y}\subset\tilde{C}$. Then the shift by $[-1]$ of the cone of
\[ \RGammatilde_\dR(\tilde{C})\oplus \RGammatilde_\dR(Y)\to\RGammatilde_\dR(\tilde{Y})\]
computes algebraic de Rham cohomology of $C$. In particular, this leads to a short exact sequence  
\[ 0\to k^N\to H^1_\dR(C)\to H^1_\dR(\tilde{C})\to 0\]
with $N=|\tilde{Y}|-|Y$. This is the same result as in Example~\ref{ex:singsing}.
\end{ex}

\section{The Period Pairing}\label{sec:pairing_intro}

We now fix an embedding of $k$ into $\C$, which allows us to define an analytification functor from $k$-varieties to complex spaces.
Again we start under the simplifying assumption that $X$ is smooth and affine.
 In this case there is the natural pairing
\[ S_n^\infty(X^\an)\times \Omega^n(X)\to\C; \quad(\sigma,\omega)\mapsto \int_\sigma \omega.\]
By Stokes's theorem the pairing induces a well-defined map on homology
\[ H_n^\sing(X,\Q)\times H^n_\dR(X)\to\C,\]
the \emph{period pairing}. \index{period pairing!for varieties}

\begin{rem}
 For $n=1$, which is the most important case for us, the pairing is even defined on all of $S_1(X)$. We get its value by analytic continuation.
\end{rem}

For fixed $n$ and $(X,Y)$, singular cohomology is a $\Q$-vector space and de Rham cohomology is  a $k$-vector space. After extension of scalars to $\C$, the pairing becomes perfect.
In the smooth and proper case, this is a direct consequence of GAGA. In full generality, it was established by Deligne as part of the development of Hodge theory for non-proper and singular varieties. It even extends to pairs.

\begin{thm}[{\cite[Chapter~5]{period-buch}}]
There is a natural \emph{period pairing}
\[ H_n^\sing(X,Y;\Q)\times H^n_\dR(X,Y)\to \C\]
inducing \emph{period isomorphisms}\index{period isomorphism!for varieties}
\[ H^n_\sing(X,Y;\Q)\tensor_\Q\C\to H^n_\dR(X,Y)\tensor_k\C.\]
The period isomorphism is functorial for pairs of $k$-varieties and compatible with coboundary maps for triples $Z\subset Y\subset X$.
\end{thm}

\subsection{The Case of Smooth Affine Curves}\label{ssec:period_compute_1}

For later use, we make the period pairing explicit in the first interesting case.
Let $C$ be a smooth affine curve, $Y\subset C$ a finite set of $k$-points. 
By definition algebraic de Rham cohomology
of $(C,Y)$ is the cohomology of the complex 
\[ \left[\Omega^0(C)\to\Omega^1(C)\oplus\Omega^0(Y)\right].\]
Hence every class in $H^1_\dR(C,Y)$ is represented
by a pair $(\omega,\alpha)$ where $\omega$ is a $1$-form and $\alpha:Y(k)\to k$ is a set-theoretic map. Conversely, every such pair defines a class in relative de Rham cohomology.

Singular homology of the pair is defined as homology of the complex
\[ S^\infty_*(C)/S^\infty_*(Y).\]
Its $1$-cycles are represented by $\Z$-linear combinations $\sigma=\sum n_i\gamma_i$
of smooth maps $\gamma_i:[0,1]\to C^\an$ such that 
$\partial(\sum n_i\gamma_i)=\sum n_i\gamma_i(1)-\sum n_i\gamma_i(0)$ is 
in $S_0(Y)$. Up to homotopy such a cycle can be replaced by a formal $\Z$-linear
combination of closed paths and paths with end points in $Y(k)$. The definition of the period pairing requires to replace the singular complex of $(C,Y)$ by the quasi-isomorphic
\[ \cone\left( S^\infty_*(Y)\to S^\infty(C)_*\right)=
[ S^\infty_0(C)\leftarrow S^\infty_1(C)\oplus S^\infty_0(Y)\leftarrow\cdots]
\]
The homology class of $\sigma$ is represented by the pair $(\sigma,-\partial \sigma)$. 
In these explicit terms, the period pairing is given by 
\[ ((\omega,\alpha),(\sigma,\partial\sigma))=\int_{\sigma}\omega-\alpha(\partial\sigma).\]

\chapter{Commutative Algebraic Groups}\label{ch:alg_groups}
\section{The Building Blocks}

We fix an algebraically closed field $k$ of characteristic zero. The cases of relevance for us are $k=\Qbar$ and $k=\C$.
We denote by $\grp$ the category of commutative connected algebraic groups over $k$. They are automatically smooth. This category is not abelian because the kernel of a morphism is not necessarily connected. However, the category of all commutative algebraic groups over $k$ is abelian and so is the isogeny category of $\grp$, where morphisms are tensored by $\Q$. For a careful review and analysis, see \cite{brionI}.

\begin{ex}
The additive group $\Ga=\Spec(k[T])$\index{additive group $\Ga$} and the multiplicative group \index{multiplicative group $\Gm$}
$\Gm=\Spec(k[T,T^{-1}])$ are objects of $\grp$.
\end{ex}
A connected commutative algebraic group is called \emph{vector group} \index{vector group} if it is isomorphic to a power of $\Ga$ and \emph{torus}\index{torus} if it is isomorphic to a power of $\Gm$.

\begin{ex}
Every abelian variety over $k$ is an object of $\grp$.
\end{ex}

Note that there are no non-trivial morphisms between vector groups, tori and abelian varieties.

\begin{thm}[Structure theory]\label{thm:structure}
\index{structure theorem for commutative algebraic groups}
Let $G$ be a connected commutative algebraic group. Then there is a canonical short exact sequence
\[ 0\to L\to G\to A\to 0\]
with an abelian variety $A$ and a linear connected commutative algebraic group 
$L$. Moreover, there is a canonical split short exact sequence
\[ 0\to V\to L\to T\to 0\]
with a torus $T$ and a vector group $V$.
\end{thm}
\begin{proof}The first sequence is the commutative case of
\cite{barsotti}; see also \cite{chevalley}. By \cite[Ch. IV \S 3 Th\'eor\'eme~1.1]{demazure-gabriel} or \cite[Ch. III Proposition~12]{serre_class} we have $L\isom V\times T$ with $V$ unipotent and a torus $T$. By \cite[Ch. VII \S 2.7]{serre_class}, all unipotent groups are powers of $\Ga$ in characteristic $0$, hence $V$ is a vector group.
\end{proof}

\begin{cor}
An object of $\grp$ is simple if and only if it is isomorphic to
$\Ga$, $\Gm$ or a simple abelian variety.
\end{cor}

A connected commutative algebraic group is called a  \emph{semi-abelian variety} \index{semi-abelian variety} if it is an extension of an abelian variety by a torus, or, equivalently, if its vector part is trivial. The Structure Theorem then implies that for any connected commutative algebraic group $G$ there is also a canonical short exact sequence
\[ 0\to V\to G\to G^{sa}\to 0\]
with a vector group $V$ and a semi-abelian variety $G^{sa}$.

The aim of this chapter is on the one hand to give an alternative description of the category of semi-abelian varieties and, on the other hand, explain the construction of the universal vector extension.

\begin{defn}
\begin{enumerate}
\item Let $V$ be a vector group. We define $V^\vee=\Hom(V,k)$ and view it as a vector group.
\item Let $A$ be an abelian variety. We define $A^\vee=\Pic^0(A)$, the \emph{dual abelian variety}.\index{dual abelian variety}
\item Let $T$ be a torus. We define $X(T)=\Hom(T,\Gm)$, the \emph{character group of $T$}.\index{character group of  torus}
\item Let $\Xi$ be a free abelian group of finite rank. We define
$\Gm(\Xi)=\Hom(\Xi,\Gm)$, the \emph{dual torus of $\Xi$}.
\end{enumerate}
\end{defn}
Note that $A^\vee$ has a canonical structure of abelian variety by \cite[\S 13 Theorem]{mumford_av}; see also \cite[p.~40]{milne_av}. 
All these functors are contravariant. Given a morphisms of abelian
varieties $\alpha:A\to A'$, by pull-back of line bundles we get an induced morphism
$\alpha^\vee:A'^\vee\to A^\vee$. Given a morphism of tori
$\tau:T\to T'$, by composition we get an induced morphism of abelian groups
$\tau^*:X(T')\to X(T)$. Given a morphism of finitely generated abelian groups
$\xi:\Xi\to\Xi'$, by composition we get an induced morphism of tori
$\xi^*:\Gm(\Xi')\to \Gm(\Xi)$.

\begin{prop}
 Let $V$ be a vector group, $A$ an abelian variety and
$T$ a torus. Then there are canonical isomorphisms
\[ V^{\vee,\vee}\isom V,\quad A^{\vee,\vee}\isom A,\quad \Gm(X(T))\isom T.\]
\end{prop}
\begin{proof} The case of $V$ is linear algebra. In the case of
$T$, we get by adjunction a canonical map
\[ T\to \Gm(X(T)).\]
By naturality, it suffices to check that it is an isomorphism in the case $T=\Gm$. This case is again trivial.
For the double dual of an abelian variety, see \cite[\S Corollary, p.~132]{mumford_av}, see also \cite[Theorem~8.9]{milne_av}.
\end{proof}

\section{Group Extensions}\label{ext-groups}

In this section we give  a short introduction to the functor $\Ext^1$ in the abelian category  of commutative algebraic groups
defined over $k$. Most of the material can be found in Serre's book \cite{{serre_class}}. 

Let $A, B$ be objects of  $\grp$. To give an \emph{extension of $A$ by $B$} is the same as to give a triple $(C,\iota,\pi)$ with $C$ a commutative algebraic group $C$ and $(\iota,\pi)\in \Hom(B,C)\times \Hom(B,A)$ such that 
\begin{eqnarray}\label{extension*}
\xymatrix{
0\ar[r] & B\ar[r]^\iota & C \ar[r]^\pi&A\ar[r]&0
}
\end{eqnarray}\label{commdiag}
is exact. By abuse of notation we often call the group $C$
an \emph{extension}\index{extensions of commutative algebraic groups} of $A$ by $B$. It is automatically connected.

A morphism of extensions is a triple $\alpha:A\to A', \beta:B\to B',\gamma:C\to C'$ making the diagram
\begin{eqnarray}
\xymatrix{
0\ar[r] & B \ar[r]^{\iota}\ar[d]^{\beta}& C \ar[r]^{\pi}\ar[d]^{\gamma} & A \ar[r]\ar[d]^{\alpha}&0\\
0\ar[r] &B' \ar[r]^{\iota'}& C' \ar[r]^{\pi'} & A' \ar[r]& 0
}
\end{eqnarray}
commutative.  Note that $\gamma$ is an isomorphism if and only if $\alpha$ and $\beta$ are. In this case the extensions are isomorphic. We say that two extensions are \emph{equivalent} if there
is a homomorphism of extensions with $A=A'$, $B=B'$ and $\alpha=\id_A, \beta=\id_B$. 
The set of equivalence classes  of extensions  makes up a commutative group $\Ext^1(A,B)$,  the group of \emph{Yoneda-$1$-extensions}.  By abuse of notation we often write $C$ for its equivalence class $[C]$.

\begin{rem}
The morphisms $\alpha, \beta$ are uniquely determined by $\gamma$. If $\Hom(B,A')=0$ (for example because $B$ is a linear algebraic group and $A'$ an abelian variety), then the existence of $\alpha$ and $\beta$ is automatic.
\end{rem}

We now review the group structure on $\Ext^1$ via the Baer sum.\index{Baer sum} As for all abelian categories,
the bi-functor $\Ext^1$ which associates with a pair $(A,B)$ the set $\Ext^1(A,B)$ is contravariant in the
first and covariant in the second variable. The functoriality in the first variable is given by pull-back.  
Given a morphism $\alpha:A''\to A$, introduce
\[ C'':=C\times_{A}A''.\] 
By construction,
\[ 0\to B\to C''\to A''\to 0\]
is exact
and we define 
\[ \alpha^*[C]=[C'']\in\Ext^1(A'',B).\]
The functoriality in the second variable is given by push-out. Given a morphism
$\beta:B\to B'$, introduce 
\[ C'= C\times B'/B\] 
where $B$ acts both on $B'$ and on $C$. Let $B'\to C'$ be given by $b\mapsto (-b,\beta(b))$.  
By construction,
\[ 0\to B'\to C'\to A\to 0\]
is exact 
and we define 
\[ \beta_{*}[C]=[C']\in\Ext^1(A,B').\]  
The two transformations  $\alpha^*$ and $\beta_{*}$ commute in the sense that
\begin{eqnarray}\label{linear}
\xymatrix{
\Ext^1(A,B) \ar[r]^{\alpha^*}\ar[d]^{\beta_*} & \Ext^1(A'',B) \ar[d]^{\beta^*}\\
\Ext^1(A,B') \ar[r]^{\alpha^*} & \Ext(A'',B')
}
\end{eqnarray}
commutes. This is a general property of Ext-groups in abelian categories.

If $[C_1]$ and $[C_2]$ are in $\Ext^1(A,B)$, then their \emph{Baer sum} is
\[ [C_1]+[C_2]=\Delta^* s_* ([C_1\times C_2]),\] 
which makes $\Ext^1(A,B)$ into a commutative group with neutral element $0=[B\times A]$; here $\Delta$ is the diagonal map from $A$ into $A\times A$ and $B\times B \xrightarrow{s} B$ the addition on $B$. 
We deduce that multiplication by an integer $n$ can be defined inductively using addition. Equivalently take 
\[ n[B]=\Delta^* s_* ([B^n])\]
 where $\Delta=\Delta_n$ is the diagonal from $C$ to $C^n$ and $s=s_n$ is $n$-fold addition $A^{\,n}\rightarrow A$ on $A$.

The bi-functor $\Ext^1$ is additive in both variables, which  implies that 
\[ \Ext^1(A_1\times A_2,B) = \Ext^1(A_1,B)\times \Ext^1(A_2,B)\]
 and 
\[ \Ext^1(A,B_1\times B_2)=\Ext^1(A,B_1)\times\Ext^1(A,B_2).\]
Hence it is of particular importance to understand $\Ext^1$ for the simple building blocks.

\begin{prop}\label{prop:compute_ext}
\index{extension of commutative algebraic groups!computation}
Let $A$ be an abelian variety, $L, L'$ linear connected commutative algebraic groups. Then:
\begin{gather}
\Ext^1(A,\Ga)=H^1(A,\Oh),\tag{1}\label{eq:ext1}\\
\Ext^1(A,\Gm)=A^\vee(k)=\Pic^0(A)(k)\subset \Pic(A)(k)=H^1(A,\Oh^\times),\tag{2}\label{eq:ext2}\\
\Ext^1(L,L')=0.\tag{3}\label{eq:ext3}
\end{gather}
\end{prop}
\begin{proof}For the statements on abelian varieties 
see \cite[Ch VII \S 3 Theorem 7 and 6]{serre_class}. 
All linear connected commutative algebraic groups are split by Theorem~\ref{thm:structure}. In particular, there are no non-trivial extensions. 
\end{proof}

\begin{rem}\label{rem:poincare}
The identification of $\Ext^1(A,\Gm)$ with $A^\vee(k)$ is provided by the Poincar\'e bundle $\Ph$ on $A\times A^\vee$: see \cite[Chapter~II 8.~p. 78]{mumford_av}. Given a point $x\in A^\vee(k)$, the pull-back $\Ph_x$ to $A$ via $(\id,x)$ is a line bundle of degree $0$. After removing the zero-section, it is a $\Gm$-bundle and, in fact, a semi-abelian variety in $\Ext^1(A,\Gm)$. 
\end{rem}

\begin{cor}\label{cor:compare_ext}
Let $G$ be a  semi-abelian variety with abelian part $A$ and $V$ a vector group. Then the natural map
\[ \Ext^1(A,V)\isom \Ext^1(G,V)\]
is an isomorphism. In particular, $\Ext^1(G,V)$ is finite dimensional.
\end{cor}
\begin{proof}
We start with the short exact sequence
\[ 0\to T\to G\to A\to 0\]
with $T$ the torus part of $G$, and apply the long exact sequence
for $\Hom(-,V)$; see \cite[Ch. VII \S 1 2.]{serre_class}. This yields the exact sequence
\[  \dots\to   \Hom(T,V)\to \Ext^1(A,V)\to \Ext^1(G,V)\to\Ext^1(T,V)\to\dots\]
The first and the last term vanish.

Finite dimensionality holds because
\[ \Ext^1(A,V)\isom\Ext^1(A,\Ga)^s\isom H^1(A,\Oh)^s\]
where $s=\dim(V)$. 
\end{proof}

From this proposition, we get classifying maps: let $A$ be an abelian variety, $T$ a torus. Then we have the bilinear map 
\begin{align*} X(T)\times\Ext^1(A,T)&\to\Ext^1(A,\Gm)\\
                 (\chi,[G])&\mapsto \chi_*[G].
\end{align*}
Alternatively, let $[G]\in \Ext^1(A,T)$ and consider the exact sequence
of Theorem~\ref{thm:structure} with $L=T$. Applying the long exact $\Hom(-,\Gm)$-sequence of \cite[VII, \1 Proposition 2]{serre_class}
we get a long exact sequence
\begin{eqnarray*}
\xymatrix{
\cdots\;\ar[r]&\Hom(T, \Gm)\ar[r]^{d_G}& \Ext^1(A,\Gm)\ar[r]^{\pi^*}&\Ext^1(G,\Gm)\ar[r]&\;\cdots,
}
\end{eqnarray*}
where the connecting homomorphism is given by 
$d_G(\lambda)=\lambda_*([G])$.
The two descriptions are equivalent.

\begin{cor}\label{cor:sa_ext}
The induced map
\[ \Ext^1(A,T)\to \Hom(X(T),A^\vee)\]
is an isomorphism.
\end{cor}
\begin{proof}As $T\isom\Gm^r$ and both sides are natural in $T$, it suffices to treat the case $T=\Gm$. In this case we have $X(\Gm)=\Z$ and
$\Hom(X(\Gm),A^\vee)=A^\vee(k)$. The claim now follows from (\ref{eq:ext2}) in  Proposition~\ref{prop:compute_ext}.
\end{proof}

We refer to the image of $G$ in $\Hom(X(T),A^\vee)$ as the \emph{classifying map} of $G$.\index{classifying map of a semi-abelian variety}\index{semi-abelian variety!classifying map}

Now let $V$ be a vector group. Again we have a bilinear map
\begin{align*} V^\vee\times\Ext^1(A,V)&\to\Ext^1(A,\Ga)\\
                 (\lambda, [G])&\mapsto \lambda_*[G].
\end{align*}
As in the torus case, there is an alternative description as
\[ (\lambda, [G])\mapsto d_G(\lambda)\]
with respect to the long exact $\Hom(-,\Ga)$-sequence attached to
(\ref{extension*}) with $L=V$.

\begin{cor}\label{cor:vector_ext}
The induced map
\[ \Ext^1(A,V)\to\Hom(V^\vee,H^1(A,\Oh))\]
is an isomorphism.
\end{cor}
\begin{proof}
As $V\isom\Ga^s$ and both sides are natural in $V$, it suffices to treat the case $V=\Ga$. In this case $\Ga^\vee=\Ga$ and $\Hom(\Ga,H^1(A,\Oh))=H^1(A,\Oh)$.
 The claim follows from (\ref{eq:ext1}) in  Proposition~\ref{prop:compute_ext}.
\end{proof}

The same considerations also apply to extensions of semi-abelian varieties by
vector groups. We obtain: 

\begin{cor}\label{cor:vector_ext2}
Let $G$ be semi-abelian, $V$ a vector group. Then the natural map
\[ \Ext^1(G,V)\to \Hom(V^\vee,\Ext^1(G,\Ga))\]
is an isomorphism.
\end{cor}
\begin{proof}By Corollary~\ref{cor:compare_ext} we can replace $G$ by its abelian part $A$ on both sides. Then we are back in the situation of Corollary~\ref{cor:vector_ext}.
\end{proof}

\section{Semi-abelian Varieties}\label{sec:sa}

\index{semi-abelian variety}\index{semi-abelian variety!isogeny category}

As shown in Corollary~\ref{cor:sa_ext},
the datum of a semi-abelian variety $G$ over $k$  is equivalent to the datum of a homomorphism   $X(T)\to A^\vee(k)$.
This
construction is functorial. A morphism of semi-abelian varieties $\alpha:G_1\to G_2$
induces a commutative diagram
\[\begin{xy}\xymatrix{
 X(T_1)\ar[d]_{[G_1]}&X(T_2)\ar[l]_{\alpha^\vee}\ar[d]^{[G_2]}\\
 A_1^\vee(k)&A_2^\vee(k)\ar[l]^{\alpha^\vee}
}\end{xy}.\]
In conclusion:
\begin{prop}\label{prop:equiv_sa}
The assignment $G\mapsto [X(T)\to A^\vee(k)]$ yields an equivalence between the category
of semi-abelian varieties over $k$ and the category with objects given by 
triples $(X,A,\phi)$ where $X$ is a free abelian group of finite rank, $A$ an abelian variety and $\phi:X\to A^\vee(k)$ is a group homomorphism.
\end{prop}
\begin{proof}
We verify that the functor is faithful. Let $f:G\to G'$ be a morphism of semi-abelian variety mapping to $0$ under the functor. In particular, the induced morphisms on the torus part and the abelian part vanish. The composition $G\to G'\to A'$ vanishes because it factors over $0:A\to A'$. Hence
$f$ maps into $T'$. The restriction $f|_T$ vanishes, hence we get an induced
map $\bar{f}:A\to T'$. There are no such maps, hence $\bar{f}=0$.

The functor is also full:
given a commutative diagram as above, we get back the morphism $\alpha$
as the composition
\[ G_1\to G_{\alpha^\vee\circ [G_2]}=G_2\times_{A_2}A_1\to G_2.\]
We are often going to make use of this equivalence without mentioning it explicitly. To verify the equality $G_{\alpha^\vee\circ [G_2]}=G_2\times_{A_2}A_1$ we consider the cartesian diagram
\[\begin{xy}\xymatrix{
G_2\times_{A_2}A_1\ar[d]\ar[r]&G_2\ar[d]\\
A_1\ar[r]&A_2}
\end{xy}
\]
which corresponds to 
\[\begin{xy}\xymatrix{
X(T_2)\ar[d]_{[G_2]}\ar@{=}[r]&X(T_2)\ar[d]^{[G_2\times_{A_2}A_1]}\\
A_2^\vee\ar[r]_{\alpha^\vee}&{A_1}^\vee}.
\end{xy}
\]
This shows that $G_{\alpha^\vee\circ [G_2]}=G_2\times_{A_2}A_1=\alpha^*G_2$.

It remains to check that the functor is full on objects.
Given $(X,A,\phi)$, we construct $G$ as follows: let $e_1,\dots,e_n$ be
a basis of $X$. The elements $\phi(e_i)\in A^\vee(k)$ define
elements of $\Ext^1(A,\Gm)$ and hence extensions
\[ 0\to \Gm\to G_i\to A\to 0.\]
We put
\[ G=G_1\times_A\times \dots\times_A G_n.\]
By construction $G$ maps to $(X,A,\phi)$ under our functor.
\end{proof}    

\begin{rem}\label{rem:decomp_G}
The map $X(T)\to A^\vee(k)$ is zero if and only if
$G\isom A\times T$. Given two maps $s_1:X(T_1)\to A^\vee(k)$ and
$s_2:X(T_2)\to A^\vee(k)$ corresponding to $G_1$ and $G_2$, their 
sum defines $s:X(T_1)\oplus X(T_2)\to A^\vee(k)$. It corresponds
to the semiabelian variety $G$ obtained as the  pull-back of $G_1\times G_2\to A\times A$ via the diagonal $A\to A\times A$. Its torus part is
$T_1\times T_2$. If $s_1=0$, then the composition
$G\to G_1\isom A\times T_1\to T_1$ together with $G\to G_2$ induce
an isomorphism $G\isom T_1\times G_2$.
\end{rem}

\begin{defn}
The category of semi-abelian varieties \emph{up to isogeny} has the same objects as the category of semi-abelian varieties but with morphisms tensored by $\Q$.
\end{defn}

Proposition~\ref{prop:equiv_sa} implies that the category of semi-abelian varieties up to isogeny is equivalent to the category with objects given by triples $(X_\Q,A,\phi)$ where $X_\Q$ is a finite dimensional $\Q$-vector space, $A$ denotes an abelian variety up to isogeny and $\phi$ a 
$\Q$-linear map $X_\Q\to A^\vee(k)_\Q$.
We often write objects as $X\to A^\vee(k)_\Q$, where $X$ is a free abelian group of finite rank.

\begin{cor}
Let $G$ be a semi-abelian variety, $T$ a torus and $G\to T$ a surjective morphism of algebraic groups with kernel $G'$. Then $G\isom T\times G'$ up to isogeny.
\end{cor}
\begin{proof}
Since tori are semi-simple there is also an injective homomorphism $T \to G$ with image in the torus part of $G$. By the universal property of the direct product, this leads to an isomorphism $G \isom T \times G'$.
\end{proof}

\section{Universal Vector Extensions}\label{sec:universal}

\index{universal vector extension}
As shown in Corollary~\ref{cor:vector_ext2},
the datum of a vector extension of a semi-abelian variety \index{vector extension! of a semi-abelian variety} $G$ over $k$  is equivalent to the datum of a linear map   $V^\vee\to \Ext^1(G,\Ga)$ or dually
$\Ext^1(G,\Ga)^\vee\to V$.
As in the semi-abelian case, this
construction is functorial. 

\begin{prop}\label{prop:equiv_vector}
The assignment $G\mapsto [\Ext^1(G,\Ga)^\vee\to V]$ yields an equivalence between the category
of vector extensions of semi-abelian varieties over $k$ and the category with objects given by 
triples $(V,A,\phi)$ where $V$ is a finite dimensional $k$-vector space,
$A$ is an abelian variety and $\phi:\Ext^1(G,\Ga)^\vee\to V$ is a $k$-linear map.
\end{prop}
\begin{proof}
The argument is the same as in the semi-abelian case.
\end{proof}    

The vector space $\Ext^1(G,\Ga)$ is itself finite-dimensional by Corollary~\ref{cor:compare_ext}, hence there is a distinguished object in the category of vector extensions of $A$.

\begin{defn}
Let $G$ be a semi-abelian variety. We call the vector
extension 
\[ 0\to \Ext^1(G,\Ga)^\vee\to G^\natural\to G\to 0\]
corresponding to $\id:\Ext^1(G,\Ga)^\vee\to \Ext^1(G,\Ga)^\vee$ the \emph{universal vector extension of $G$.}
\end{defn}

\begin{prop}\label{prop:universal}
The universal vector extension of a semi-abelian variety $G$ has the following universal property: given a vector extension
\[ 0\to V\to G'\to G\to 0\]
there is a unique morphism $G^\natural\to G'$ compatible with the projection to $G$.
\end{prop}
\begin{proof}
Under the equivalence of Proposition~\ref{prop:equiv_vector} the vector extension $G'$ corresponds to the 
triple $(V,G,\phi:\Ext^1(G,\Ga)^\vee\to V)$ and $G^\natural$ corresponds to
$(\Ext^1(G,\Ga),A,\id)$.
A morphism $G'\to G^\natural$ corresponds to a linear
map $\Ext^1(G,\Ga)^\vee\to V$ compatible with the structure maps. The only such maps is $\phi$.
\end{proof}

\begin{rem}
Let $A$ be the abelian part of $G$. By the computation of
$\Ext^1(A,\Ga)$ in Proposition~\ref{prop:compute_ext}, we have
\[ 0\to H^1(A,\Oh)^\vee\to A^\natural\to A\to 0.\]
Moreover, the isomorphism $\Ext^1(A,\Ga)\isom\Ext^1(G,\Ga)$
of Corollary~\ref{cor:compare_ext} implies that $G^\natural$ is explicitly constructed as
\[ G^\natural=A^\natural\times_AG.\]
\end{rem}

If $V$ is a vector group, $G\in \grp$, both $\Hom(G,V)$
and $\Ext^1(G,V)$ are $k$-vector spaces and do not change when we replace
the category $\grp$ with its isogeny category $\grp_\Q$.  This remark implies:

\begin{cor}
The universal vector extension $G^\natural$ of a semi-abelian variety $G$ also satisfies the universal property of a vector extension in the
isogeny category $\grp_\Q$.
\end{cor}

\section{Generalised Jacobians}\label{app:B}
\index{generalised Jacobian}

Let $k$ be an algebraically closed field of characteristic zero. 
Let $Y$ be a smooth connected algebraic curve over $k$ with a chosen base point $y_0$.

The following theorem is  a special case of the theory of generalised Jacobians. 
They were introduced by Rosenlicht. We follow the presentation of Serre, see \cite[Chapter V]{serre_class}. We recall briefly the deduction.
\begin{thm}[{Rosenlicht: see Serre \cite[Chapter V]{serre_class}}]\label{thm:gen_jacob} 
There is a semi-abelian variety $J(Y)$ over $k$ and a morphism
\[ Y\to J(Y)\]
depending only on $y_0$ such that $H_1^\sing(Y,\Z)\to H_1^\sing(J(Y),\Z)$ is an isomorphism.
\end{thm}

\subsection{Construction of $J(Y)$}

Let $\bar{Y}$ be the smooth compactification   of $Y$ and $S=\bar{Y}\setminus Y$ the set of points in the complement of $Y$. 
We define the divisor $\mm$ as $\sum_{P\in S}P$. In the terminology of \cite{serre_class} this is a (special case of a) \emph{modulus}. The case $\mm=0$ (i.e. $S=\emptyset$) is allowed.

A rational function $\varphi$ on $Y$ is congruent to 1 mod $\mm$  if  $\nu_P(1-\varphi) \geq 1$ for all $P\in S$ where $\nu_P$ denotes the valuation at $P$. 
We write
\begin{itemize}
\item  $C_{\mm}$ for the group of classes of divisors on $\bar{Y}$ which are prime to $S$ modulo those which can be written as $(\varphi)$ for some rational function $\varphi \equiv 1\mod \mm$;
\item  $J_\mm=C_{\mm}^0$ for the subgroup of classes which have degree $0$;
\item  $J=C^0$ for the usual group of divisor classes of degree $0$.
\end{itemize}
There is a surjective homomorphism 
\[ \pi: J_\mm\rightarrow J\]
 with kernel $L_\mm$ consisting of those classes in $J_\mm$   which are invertible  at each $P\in S$. 
Moreover, let 
\[ \theta: Y\to J_\mm\]
  be the map assigning 
to a point
$y\in Y$  the class of the divisor $y-y_0$.

Alternatively, the group $J_\mm$ can be described as the group of isomorphism classes of pairs $(L,\iota)$ where $L$ is a line bundle of degree $0$ on $\bar{Y}$ and
$\iota$ is a trivialisation of $L$ on $S$. The image of a divisor $D$ of degree $0$  on $\bar{Y}$ is the line bundle $\Oh(D)$ together with the canonical trivialisation, which exists because $\Oh(D)|_U=\Oh|_U$ outside the support of the divisor, in particular on $S$. In the case $S=\emptyset$, this identification is the familiar isomorphism $J\isom\Pic^0(\bar{Y})$.

By \cite[Chapter V Proposition 2]{serre_class}, the group $J_\mm$ is an algebraic group and by Serre's Proposition 4 the map 
 $\theta$ is a morphism of algebraic varieties. 
By \cite[Chapter V Theorem 2]{serre_class}, the pair has a universal property for morphisms into commutative algebraic groups mapping $y_0$ to $0$: given a rational map $f : \bar{Y} \to G$ to  a commutative algebraic group $G$ admitting $\mm$ for a modulus; see \cite[Chapter I, Theorem 1]{serre_class}, there exists a unique algebraic homomorphism $F: J_\mm \to G$ such that 
\[ f = F  \circ \theta + f(y_0).\]

The structure of $J_\mm$ is explained in \cite[Chapter V Section  13]{serre_class}.
In the case $\mm=0$, we get back the usual Jacobian of $\bar{C}$. This is an abelian variety. In our special case, the kernel $L_\mm$ is isomorphic to $\Gm^r$ where
\[r=\begin{cases}0&\text{for }\mm=0,\\
              \deg\mm-1&\text{for }\mm\neq 0.
\end{cases}\]

We put $J(Y):=J_\mm$ and obtain up to isomorphism the short exact sequence
\[ 1\to \Gm^r\to J(Y)\to J(\bar{Y})\to 0\]
of commutative algebraic groups, making $J(Y)$ semi-abelian.

The classifying map of $J(Y)$  maps a lattice of rank $r$ to $J(\bar{Y})^\vee\isom\Pic^0(\bar{Y})\isom J(\bar{Y})$. 

\begin{lemma}[Serre {\cite[Section~1]{serre_troisieme}}]\label{lem:class_jac}
The classifying map of $J(Y)$ is given by the map
\[ \Z[S]^0\to J(\bar{Y})\isom J(\bar{Y})^\vee\]
induced by $\theta$, where $\Z[S]^0$ is the group of divisors of degree $0$ supported on the $S$.
\end{lemma}

\subsection {Generalised Jacobian over $\C$}

The structure of $J_\mm$ over $\C$ is explained in \cite[Chapter V, \S 19]{serre_class}.
Serre shows
\[ J_\mm(\C)\isom H^0(\bar{Y},\Omega^1(-\mm))^\vee/H_1^\sing(Y,\Z).\]                                 
This implies that the map induced by $Y\to J(Y)$ induces
an isomorphism
\[ H_1^\sing(Y,\Z)\to H_1^\sing(J(Y)(\C),\Z).\]

\begin{rem}
Actually, this isomorphism is shown in \cite{serre_class} on the way to establishing the
formula for $J_\mm(C)$.
\end{rem}

\chapter{Lie Groups}\label{lie}
We review the construction and properties of the exponential map,
fixing notations and normalisations for later.
\index{Lie group}
\section{The Lie Algebra}
\index{Lie algebra}
Let $G^\an$ be a connected commutative complex Lie group.
We denote $\gg_\C$ or $\Lie(G^\an)$ the Lie algebra of invariant vector fields on $G^\an$ and by $\gg_\C^\vee$ or $\coLie(G^\an)$ the dual space of invariant differential forms. 
Note that $\gg_\C$ is abelian, i.e. the Lie bracket is trivial and does not play a role in what follows.

If $V$ is a $\C$-vector space, we can view it as a complex commutative Lie group $V^\an$. In this case $\Lie(V^\an)=V$.

\begin{ex}
For $\Ga=\Spec(\Z[t])$ we have $\Ga^\an=\C$. It has a canonical coordinate with the property $t(1)=1$. Then its Lie algebra $\gg_a^\an$ is generated by $\frac{d}{dt}$ and its dual by $dt$. The canonical identification
of $\gg_a^\an$ with $\Ga^\an$ maps $\frac{d}{dt}$ to $1$.
\end{ex}

Morphisms in the category of connected commutative complex Lie groups are called \emph{analytic homomorphisms}.\index{analytic homomorphism} The assignments $\Lie$ and $\coLie$ are functors. Given an analytic homomorphism $\varphi:G^\an\to H^\an$ of connected commutative complex Lie groups, we get by push-forward of vector fields and pull-back of differential forms  $\C$-linear maps
\[ \varphi_*=d\varphi:\gg_\C\to \hh_\C,\quad \varphi^*=\delta\phi:\hh_\C^\vee\to\gg_\C^\vee.\]
The linear maps are adjoint with respect to the natural pairings between a space and its dual space and this  means that $(\varphi_*)^\vee = \varphi^*$.
If $(\ ,\  )$ is the pairing which defines duality, then
\[
\bigl(\varphi^*(\omega),X)=\bigl(\omega,\varphi_*X\bigr)
\]
for every invariant differential form $\omega$ on $H$ and invariant vector field $X$ on $G$.

Of particular interest is the case of analytic homomorphisms $\varphi:\C\to G^\an$ and $X={d\over{dt}}$.

\section{The Exponential Map}\label{ssec:exp}
\index{exponential map!of a Lie group}
For any  given vector field $X\in \gg_\C$ there exists a unique analytic homomorphism $\varphi_X:\Ga^{\an}\rightarrow G^{\an}$ such that its tangent map $d\varphi_X$ satisfies
$$(d\varphi_X)\left({d\over dt}\right)=X$$ 
as can be found in \cite{warner}.
It amounts to solving an ordinary linear differential equation.
This is used to construct the exponential map of the Lie group $G^\an$ in the following way. We choose  $X\in \gg_\C$ and put 
$$ \exp_{G}(X):=\varphi_X(1).$$
This defines  an analytic homomorphism $\exp_{G}:\gg_\C\rightarrow G^\an$.
In other words, $\exp_G$ is uniquely characterised by functoriality with respect to analytic group homomorphisms and $d\exp_G=\id$. 
This leads to an exact sequence 
\begin{eqnarray}\label{expseq}
\xymatrix{
0\ar[r]&\Lambda\ar[r] &\gg_\C  \ar[r]^{\exp_G\;\;\;} & G^\an \ar[r] &0.
}
\end{eqnarray}
\begin{prop}
Let $G^\an$ be a connected commutative complex Lie group. Then
$\exp_G:\gg_\C\to G^\an$ is the universal cover.
\end{prop}
\begin{proof}
The map $\exp_G$ is unramified because $d\exp=\id$ is an isomorphism.
It is a cover by the sequence (\ref{expseq}). 
This makes it the universal cover.
\end{proof}

\begin{rem}
This also means that $\Lambda$ is the fundamental group of
$G^\an$. We will deduce an explicit identification from the point of view
of paths below.
\end{rem}

An analytic homomorphism $\varphi: G^\an \rightarrow H^\an$
induces a commutative diagram
\begin{eqnarray}
\xymatrix{
0\ar[r] & \Lambda_G \ar[r]^{\iota_G}\ar[d]^{\nu}& \gg_\C \ar[r]^{\exp_G}\ar[d]^{d\varphi} & G^\an \ar[r]\ar[d]^{\varphi}&0\\
0\ar[r] &\Lambda_H \ar[r]^{\iota_H}& \hh_\C \ar[r]^{\exp} & H^\an \ar[r]& 0
}
\end{eqnarray}
The converse  does not hold necessarily: A  Lie algebra homomorphism $\theta:\gg_\C\to\hh_\C$ does not necessarily descend to an analytic homomorphism from $G^\an$ to $H^\an$. However, this does hold if $G^\an$ is simply connected:

\begin{lemma}\label{lem:sconn}
Let $G^\an$ be simply connected,  $\theta: \gg_\C \rightarrow \hh_\C$ a linear map. Then there exists an analytic homomorphism $\Theta: G^\an\rightarrow H^\an$ such that $d\Theta=\theta$. 
\end{lemma}
\begin{proof}
As $G^\an$ is simply connected, it agrees with its universal cover. Hence
$\exp_G$ is an isomorphism of connected commutative complex Lie groups. We define 
\[ \Theta= \exp_H\circ\theta\circ\exp_G^{-1}.\]
\end{proof}

The group $G^\an$ is simply connected  if and only if $G^\an$ is a vector group $V\isom \Ga^n$.

\section{Integration over Paths}\label{ssec:paths}
Another pairing is obtained by integration. Let $\gamma:[0,1]\rightarrow G^{\an}$ be any path in $G^{\an}$. This path defines an element $I(\gamma)$ in $\gg_\C$ by putting 
$$
I(\gamma)(\omega):=\int_\gamma\omega \quad\text{for all $\omega\in\gg^\vee_\C$}.
$$
By Stokes's Theorem we see that $I(\gamma)$ depends only on the homotopy class of $\gamma$. The pairing is non-degenerate so that  $I(\gamma)=0$ implies that $\gamma$ is closed and homotopically equivalent relative $\{0,1\}$ to a constant path. 

\begin{ex}
In the case that $G=\Ga$ and $\epsilon:[0,1]\rightarrow{\G}_a^{\an}$ is the path going from $\epsilon(0)=0$ straight to $\epsilon(1)=1$ we have 
\[ I(\epsilon)=\left(- ,{d\over{dt}}\right).\]
 In fact every invariant differential form on $\Ga$ is a constant multiple of $dt$  and everything reduces to the calculation 
\[
\int_0^1{dt}=1=\left(dt,{d\over{dt}}\right).
\]
\end{ex}

Let $\varphi:G^\an\to H^\an$ be an analytic homomorphism. Then we have for all
invariant differential forms $\omega\in\hh_\C$ and paths $\gamma$ on $G^\an$
\[ I(\varphi_*\gamma)(\omega)=I(\gamma)(\varphi^*\omega)\]
by the transformation rule.

Fix now an element $X$ in $\gg_\C$ and let $\varphi_X$ be the analytic homomorphism from $\Ga^{\an}$ to $G^{\an}$ determined by $X$. Also let 
\[ \gamma_X:[0,1]\rightarrow G^{\an}\]
 be the path obtained by restricting the analytic homomorphism $\varphi_X$ to the interval $[0,1]$. 
Note that by definition $\gamma_X=\varphi_X\circ\epsilon=\varphi_{X,*}\epsilon$.

We thus have defined maps $I$ and $X\mapsto \gamma_X$ assigning tangent vectors to 
paths and conversely.

\begin{lemma}
We have $I(\gamma_X)=X$.
\end{lemma}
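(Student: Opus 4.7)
The plan is to unwind the definitions and pull back $\omega$ along the path $\gamma_X$ using the one-parameter subgroup $\phi_X$ introduced just above. By definition, $\gamma_X(t) = \exp_G(tX)$ for $t \in [0,1]$, and the key observation is that this path factors through $\phi_X$: precisely, $\gamma_X = \phi_X \circ \iota$, where $\iota \colon [0,1] \to \Ga^\an = \C$ is the standard straight path from $0$ to $1$. This follows from the functoriality of $\exp$ applied to $\phi_X \colon \Ga \to G$, together with the fact that the identification $\gg_a^\an \isom \Ga^\an$ sends $\tfrac{d}{dt}$ to $1$, so that $\exp_{\Ga}$ on $\gg_a^\an$ is simply the identity.

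Using this factorisation and the change-of-variables formula, for any $\omega \in \gg_\C^\vee$ we get
\[
I(\gamma_X)(\omega) = \int_{\gamma_X} \omega = \int_{\iota} \phi_X^* \omega.
\]
The pulled-back form $\phi_X^*\omega$ lies in $\coLie(\Ga^\an)$, hence equals $c \cdot dt$ for some constant $c \in \C$, and one reads off $c = (\phi_X^*\omega)\bigl(\tfrac{d}{dt}\bigr) = \omega\bigl(d\phi_X(\tfrac{d}{dt})\bigr) = \omega(X)$ by the defining property of $\phi_X$. Integrating $c\,dt$ over $\iota$ yields $c$, so $I(\gamma_X)(\omega) = \omega(X)$.

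Since this equality holds for every $\omega \in \gg_\C^\vee$ and the pairing between $\gg_\C$ and $\gg_\C^\vee$ is non-degenerate, the linear functional $I(\gamma_X)$ on $\gg_\C^\vee$ is represented by $X$, i.e., $I(\gamma_X) = X$. There is no real obstacle here: the whole proof is a bookkeeping exercise that reduces the general case to the elementary computation on $\Ga^\an$, where the exponential is the identity. The only subtlety worth flagging is making sure the identifications $\gg_a^\an = \C$ and $\Lie(V^\an) = V$ are used consistently, as fixed in the example preceding the lemma.
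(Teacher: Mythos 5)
Your proof is correct and follows essentially the same route as the paper: factor $\gamma_X = \phi_X \circ \iota$ through the one-parameter subgroup, pull back $\omega$ along $\phi_X$, and reduce to the elementary integral on $\Ga^\an$. You merely spell out the final reduction a bit more explicitly (writing $\phi_X^*\omega = \omega(X)\,dt$ and invoking non-degeneracy) where the paper simply says the claim is reduced to $G = \Ga$ and evaluates at $\omega = dt$.
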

\begin{proof}
Let $\omega$ be in $\gg_\C^\vee$. 
Since $\gamma_X$ is a restriction of the analytic homomorphism $\varphi_X$ we see that $\gamma_X^*\omega $ is 
an invariant differential form in 
$\gg_{a,\C}$.
Then
\begin{align*}\label{dual}
I(\gamma_X)(\omega)&= I(\varphi_{X,*}\epsilon)(\omega)=I(\epsilon)(\varphi_X^*\omega)=\left(\varphi_X^*\omega,\frac{d}{dt}\right)=\left(\omega,\varphi_{X}^*\frac{d}{dt}\right)=(\omega,X)
\end{align*}
by the transformation formula for integrals together with  $I(\epsilon)=( -,d/dt)$. 
This means that $I(\gamma_X)=X$. 
\end{proof}
In particular we may start with the element $X=I(\gamma)$. Then 
\[
I(\gamma)=X=I(\gamma_X))=I(\gamma_{I(\gamma)})
\] 
whence $\gamma$ is homotopic to $\gamma_{I(\gamma)}$. This gives $\gamma(1)=\gamma_{I(\gamma)}(1)=\exp_G(I(\gamma))$ and  leads to  the following 

\begin{cor}
Let $P$ be a point in $G^\an$ and $\gamma$ a path from $0$ to $P$. Then we have 
$$
\exp_{G}\bigl(I(\gamma)\bigr)=P.
$$
\end{cor}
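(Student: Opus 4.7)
The plan is to reduce the statement directly to Lemma~\ref{lem:int} by homotoping $\gamma$ to a straight-path image.

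First I would lift. The exact sequence $0\to\Lambda\to\gg_\C\to G^\an\to 0$ established just above the lemma exhibits $\exp_G\colon\gg_\C\to G^\an$ as a surjective local diffeomorphism with discrete kernel $\Lambda$, hence a covering map. By the path-lifting property, there is a unique continuous path $\tilde\gamma\colon[0,1]\to\gg_\C$ with $\tilde\gamma(0)=0$ and $\exp_G\circ\tilde\gamma=\gamma$. Set $X=\tilde\gamma(1)\in\gg_\C$; then $\exp_G(X)=P$.

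Next I would homotope. Since $\gg_\C$ is contractible, $\tilde\gamma$ is homotopic rel endpoints to the straight-line segment $t\mapsto tX$ from $0$ to $X$. Applying $\exp_G$ to this homotopy gives a homotopy rel endpoints in $G^\an$ between $\gamma$ and the path $\gamma_X$ defined before Lemma~\ref{lem:int}. By the homotopy invariance of $I$ (noted in the paragraph preceding Lemma~\ref{lem:int}, as a consequence of Stokes), this yields
\[
I(\gamma)=I(\gamma_X).
\]

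Finally I would invoke Lemma~\ref{lem:int}, which gives $I(\gamma_X)=X$. Therefore
\[
\exp_G\bigl(I(\gamma)\bigr)=\exp_G(X)=P,
\]
as claimed.

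There is no real obstacle: the whole argument is a bookkeeping exercise combining path lifting for the covering $\exp_G$, contractibility of $\gg_\C$, and the already-proved special case of Lemma~\ref{lem:int}. The only point to double-check is that path-lifting applies in this analytic-group setting, which is immediate from the exact sequence $0\to\Lambda\to\gg_\C\to G^\an\to 0$ exhibiting $\exp_G$ as a covering map in the analytic category.
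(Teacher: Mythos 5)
Your proof is correct and uses essentially the same ingredients as the paper's: both arguments identify $\gamma$ with $\gamma_X$ up to homotopy rel endpoints and then apply Lemma~\ref{lem:int}. The only difference is direction of travel — the paper sets $X=I(\gamma)$ and invokes the injectivity of $I$ on homotopy classes to conclude $\gamma\simeq\gamma_X$, while you obtain $X$ by path-lifting and use contractibility of $\gg_\C$ to get the same homotopy before applying $I$; these are two phrasings of the same fact that $\exp_G$ realises $\gg_\C$ as the universal cover of $G^\an$.
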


The lemma shows that integration is inverse to exponentiation as it should be.  But this is precisely the definition of a logarithm and we may write 
$$
\log_G(P):=I(\gamma).
$$ 
Note that $\log_G$ is multivalued.
The map $\gamma\mapsto I(\gamma)$ from the path space $\mathcal L_G(0)$ of $G$, with the unit element of the group as base point, taken modulo homotopy into the Lie algebra $\gg_\C$ identifies $\gg_\C$ with the universal covering space  of $G^\an$. 

We now restrict to closed paths. The maps 
\begin{equation}\label{eq:fund} \begin{xy}\xymatrix{  
\Lambda\ar@/^/[rr]^{X\mapsto \gamma_X}&&\pi_1(G^\an,0)\ar@/^/[ll]^{I}
}\end{xy}\end{equation}
are inverse to each other; in particular $\Lambda\isom\pi_1(G,0)$ and the fundamental group is abelian.

Let $\sigma=\sum_{i=1}^na_i\gamma_i$ be a chain with $a_i\in\Z$, $\gamma_i:[0,1]\to G^\an$ continuous. We extend the definition of $I$ and put
\[ I(\sigma)=\sum_{i=1}^n a_i I(\gamma_i)\in\gg_\C.\]
If $\gamma$ is closed, but $\gamma(0)\neq 0$, then $p(I(\gamma))$  is still homologous to $\gamma$. Hence
\begin{equation}\label{eq:homol} \begin{xy}\xymatrix{  
\Lambda\ar@/^/[rr]^{X\mapsto\gamma_X}&&H^\sing_1(G^\an,\Z)\ar@/^/[ll]^{I}
}\end{xy}\end{equation}
are inverse to each other. The two identifications are compatible with
the Hurewitz map $\pi_1(G^\an,0)\to H_1^\sing(G^\an,\Z)$, which is an isomorphism in this case.


\chapter{The Analytic Subgroup Theorem}\label{ch:subgroup}

In this chapter, we give a  new formulation of the Analytic Subgroup Theorem. We then explore its consequences for the comparison of analytic and algebraic homomorphisms.

\section{The Statement}

Let $G$ be a commutative and connected  algebraic group defined over $\Qbar$ and $\gg$ its Lie algebra. The associated complex manifold $G^\an$ is a complex Lie group and its Lie algebra is $\gg_\C=\gg\otimes_{\Qbar} \C$. The exponential map 
\[ \exp_{G}: \gg_\C \rightarrow G^\an\]
  from the Lie algebra 
into $G^\an$ defines  an analytic homomorphism.
If $\bb \subseteq \gg$ is a subalgebra and $\bb_{\C}=\bb_\Qbar\otimes\C$ we denote by $B$ the analytic subgroup 
$\exp_G(\bb_\C)$. An obvious question one can ask is whether $B(\Qbar):=B\cap G(\Qbar)$ can contain an 
algebraic point different from $0$, the neutral element. The answer is given by the Analytic Subgroup Theorem.

\begin{theorem}[Wüstholz {\cite{wuestholz-icm}, \cite{wuestholz-subgroup}}]
\label{ast1} 
\index{Analytic Subgroup Theorem}
The group of algebraic points $B(\Qbar)$ is non-trivial if and only if there is a connected algebraic subgroup $H\subseteq G$ with Lie algebra $\hh$ such that $\{0\}\neq \hh \subseteq \bb$.
\end{theorem}

We conclude that the only source for algebraic points is the obvious one. Note that $B(\Qbar)\ne \{0\}$ implies that $\bb \neq  \{0\}$. 

There is a refined version of the theorem. 
To state it let $G$ be a connected commutative algebraic group over $\Qbar$ with Lie subalgebra $\gg$ and let $\langle \;, \; \rangle$ be the duality pairing between $\gg^\vee$ and $\gg$. For
 $u$ in $\gg_\C $ with $\exp_G(u) \in G(\Qbar)$ we denote by $\Ann(u)$ the largest subspace of $\gg^\vee$ (sic) such that $\langle \Ann(u), u\rangle=0$.

\begin{theorem}\label{thm:annihilator}
Assume that $\exp_G(u)\in G(\Qbar)$. Then
there exists an exact sequence 
\begin{eqnarray*} 
0\rightarrow H \rightarrow G \xrightarrow{\pi} G/H\rightarrow 0
\end{eqnarray*}
of connected commutative algebraic groups defined over $\Qbar$ such that $\Ann(u) = \pi^*({\gg/\hh})^\vee$ and $u\in\hh_\C$, where $\hh$ is the Lie algebra of $H$. The sequence is uniquely determined by these properties.
\end{theorem}
\begin{proof}
We write $P=\exp_G(u)$. 
If $u=0$, the theorem holds with $H=0$. If $u\neq 0$, but
$P=\exp_G(u)=0$, we may replace $u$ by $\frac{1}{n}u$ for a big enough $n\in\Na$. We then have $\exp_G(\frac{1}{n}u)\neq 0$ because the kernel of $\exp_G$ is discrete. Moreover, the image point is a torsion point of $G$, hence in $G(\Qbar)$. Without loss of generality, we may assume that $P\neq 0$.

Let $\langle\;,\;\rangle: \gg^\vee\times \gg\rightarrow \Qbar$ be the natural duality pairing and for any  subalgebra $\aa\subset\gg$ denote the left kernel by 
\[ \aa^\perp=\{\lambda\in \gg^\vee; \langle\lambda,\aa\rangle = 0\}.\]
 The right kernel is defined similarly. We put  $\bb:= \Ann(u)^\perp \subseteq \gg$. Then $\bb_\C$ contains $u$  and  Theorem~\ref{ast1} 
gives an algebraic subgroup $H\subset G$ with Lie algebra $\hh$. We may assume that $u\in\bb_\C$, otherwise we apply our arguments to $G/H$. It has smaller dimension, so the process must stop after finitely many steps.
Taking the left kernels  gives  $\bb^\perp\subset \hh^\perp\subset\Ann(u)$ and then $\bb^\perp=\Ann(u)=\hh^\perp$ by the maximality property of $\Ann(u)$. We get an exact sequence of Lie algebras 
\begin{eqnarray*}
0\rightarrow \hh \rightarrow \gg \xrightarrow{\pi_*} \gg/\hh \rightarrow 0
\end{eqnarray*}
which corresponds to an exact sequence 
\begin{eqnarray*}
0\rightarrow H \rightarrow G \xrightarrow{\pi} G/H \rightarrow 0
\end{eqnarray*}
of algebraic groups and by duality to the exact sequence
\begin{eqnarray*}
0\rightarrow \left(\gg/\hh\right)^\vee \xrightarrow{\pi^*} \gg^\vee \rightarrow \hh^\vee \rightarrow 0.
\end{eqnarray*}
We have $\hh^\perp=\pi^*(\gg/\hh)^\vee$ and $(\gg/\hh)^\perp=\hh^\vee$, which we prove as follows:
we have $\lambda\in \hh^\perp$ if and only if the restriction of $\lambda$  to  $\hh$ is zero. This implies that $\lambda$ descends to $\gg/\hh$ and that there is an element $\mu\in (\gg/\hh)^\vee$ with $\lambda=\pi^*\mu$. This leads to  $\hh^\perp\subseteq \pi^*(\gg/\hh)^\vee$. Conversely 
\[\langle \pi^*(\gg/\hh)^\vee,\hh\rangle=\langle (\gg/\hh)^\vee,\pi_*\hh\rangle=0\]
since $\pi_*\hh=0$ and we conclude  that $\Ann(u)=\hh^\perp=\pi^*(\gg/\hh)^\vee$ as stated.

Suppose that there is another short exact sequence
\[ 0\to H'\to G\xrightarrow{\pi'} G/H'\to 0\]
with the same properties. In particular $\pi'^*(\gg/\hh)^\vee= \pi^*(\gg/\hh')^\vee$ as subobjects of $\gg^\vee$. This implies that $\hh=\hh'$ as subspaces
of $\gg$. As $H$ and $H'$ are connected, this also implies $H=H'$ as subgroups of $G$.
\end{proof}

\section{Analytic vs. Algebraic Homomorphisms}

The Subgroup Theorem also has a consequence for the category of groups itself.
A connected commutative algebraic group over $\Qbar$ gives rise to a complex Lie group. We recall that morphisms in the category of complex Lie groups are called \emph{analytic homomorphisms}. 

\begin{thm}\label{thm:hom_algebraic}
\index{analytic homomorphism}
Let $G,G'$ be connected commutative algebraic groups defined over $\Qbar$ with Lie algebras $\gg$ and $\gg'$ and let $\phi:G^\an\to {G'}^\an$ be an analytic group homomorphism such that $\gg_\C\to\gg'_\C$ maps $\gg$ to $\gg'$.

Then there exists a vector group $V_\trans$, a connected commutative algebraic group $G_\alg$ and a decomposition 
\[ G\isom V_\trans\times G_\alg\]
such that $\phi|_{G_\alg^\an}$ is algebraic over $\Qbar$ and $\phi|_{V_\trans^\an}$ is
purely transcendental, i.e.  $\phi(V_\trans(\Qbar))$ does not contain any non-zero algebraic values.
\end{thm}

\begin{rem}
\begin{enumerate}
\item
An earlier version claimed the same corollary but without the $V_\trans$-factor. We thank the referee for pointing out the mistake in
the argument. Indeed, the statement would be false as the example
$\exp:\C\to\C^*$ shows. The theorem says that all counterexamples are of a similar nature, see Corollary~\ref{cor:class_hom_algebraic} for a complete classification.
\item
The assumption on the induced map on Lie algebras is necessary as the following example shows: let $G_1=\G_m$, and $G_2=E$ an elliptic curve over $\Qbar$. 
Let
\[ \C\xrightarrow{z\mapsto \exp(2\pi iz)}\C^*=\G_m^\an\]
be the standard uniformisation. For $E^\an$ we use the explicit uniformisation
\[ \exp_E: \C\to E^\an\]
with kernel $\Lambda=\Z\omega_1+\Z\omega_2$ of Section~\ref{ssec:class}.
In these coordinates the $\Qbar$-coLie algebras of $\G_m$ and $E$ are
generated by $dz/z$ and $dz$ respectively.

We get a well-defined analytic homomorphism 
\[ \phi:\G_m^\an=\C/ 2\pi i \Z\to E^\an=\C/\Lambda\]
by mapping $z\mapsto (\omega_1/2\pi i)z$. 
It is not algebraic. Note that this does not
contradict Theorem~\ref{thm:hom_algebraic} because  it
does not map $\gg_1$ to $\gg_2$ since $\omega_1/2\pi i$ is not in $\Qbar$ as we shall see later.
\end{enumerate}
\end{rem}
The proof of this theorem will take the rest of this chapter.

\begin{lemma}\label{lem:A.5}
Suppose that the set of torsion points $G_\tor$ is dense in $G$. Under the 
assumptions of Theorem~\ref{thm:hom_algebraic}, the morphism $\phi$
is algebraic.
\end{lemma}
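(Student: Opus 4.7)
My plan is to apply the analytic subgroup theorem to the graph of $\phi$ inside $H:=G\times G'$. Put $\bb=\{(v,d\phi(v)):v\in\gg\}\subseteq\gg\oplus\gg'$; by the standing hypothesis $d\phi(\gg)\subseteq\gg'$ this is a $\Qbar$-subspace, whose complexification is the Lie algebra of the graph $\Gamma_\phi\subset H^\an$ (since $\exp_{G'}\circ d\phi=\phi\circ \exp_G$, we have $\Gamma_\phi=\exp_H(\bb_\C)$). It suffices to produce a connected algebraic subgroup $H_1\subseteq H$ with $\Lie(H_1)=\bb$: then $H_1^\an=\Gamma_\phi$, the first projection $H_1\to G$ is algebraic and analytically bijective hence an isomorphism of algebraic groups, and composing its inverse with the second projection recovers $\phi$.

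For each $g\in G_\tor$, choose $u_g\in\gg_\C$ with $\exp_G(u_g)=g$; since $\phi(g)$ is torsion in $G'$ and hence in $G'(\Qbar)$, the point $\exp_H(u_g,d\phi(u_g))=(g,\phi(g))$ lies in $H(\Qbar)$. Theorem~\ref{thm:annihilator} then supplies a connected algebraic subgroup $H_g\subseteq H$ with $(u_g,d\phi(u_g))\in\Lie(H_g)_\C$ and $\Ann(u_g,d\phi(u_g))=\Lie(H_g)^\perp$. A short pairing calculation identifies $\bb^\perp$ with the anti-graph $\{(-d\phi^*\omega',\omega'):\omega'\in{\gg'}^\vee\}$, and every element of this anti-graph kills $(u_g,d\phi(u_g))$; thus $\bb^\perp\subseteq\Ann(u_g,d\phi(u_g))$ and, dually, $\Lie(H_g)\subseteq\bb$. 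Let $H_1\subseteq H$ be the algebraic subgroup generated by the $H_g$: then $\hh_1:=\Lie(H_1)=\sum_g \Lie(H_g)\subseteq\bb$, and $\hh_1^\perp=\bigcap_g\Ann(u_g,d\phi(u_g))$. Representing $(\gg^\vee\oplus{\gg'}^\vee)/\bb^\perp$ by first coordinates identifies $\Ann(u_g,d\phi(u_g))/\bb^\perp$ with $\Ann_G(u_g):=\{\omega\in\gg^\vee:\omega(u_g)=0\}$, so the desired equality $\hh_1=\bb$ reduces to $\bigcap_{g\in G_\tor}\Ann_G(u_g)=0$.

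The density assumption now closes the argument. It forces $G$ to be semi-abelian (a non-trivial unipotent quotient admits no torsion), so $\Lambda:=\ker\exp_G$ is a lattice satisfying $\Lambda\otimes_\Z\R=\gg_\C$. The vectors $\{u_g:g\in G_\tor\}$ span $\Lambda_\Q$, which is Euclidean-dense in $\gg_\C$. Any $\omega\in\gg^\vee$ is $\C$-linear, hence continuous, so if it annihilates every $u_g$ it vanishes on a dense subset and thus identically. This gives $\hh_1=\bb$, whence $H_1^\an=\Gamma_\phi$ and $\phi$ is algebraic. The main obstacle is the annihilator bookkeeping in the second paragraph; it becomes clean only once one recognises $\bb^\perp$ as the anti-graph of $d\phi^*:{\gg'}^\vee\to\gg^\vee$.
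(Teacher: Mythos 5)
Your set-up and the reduction to $\bigcap_{g\in G_\tor}\Ann_G(u_g)=0$, i.e.\ to the assertion that the lattice $\Lambda=\ker\exp_G$ spans $\gg_\C$ over $\C$, are correct, and this is a legitimate Lie-algebra-level variant of the paper's argument (which instead observes directly that the closed immersion $H\hookrightarrow G$ has image containing the Zariski-dense $G_\tor$ and is therefore onto). The gap is in the final paragraph, where you justify that spanning claim. First, Zariski density of $G_\tor$ does \emph{not} force $G$ to be semi-abelian; it only rules out a quotient map onto $\Ga$, which leaves $G$ a completely non-trivial vector extension of its semi-abelian quotient (Lemma~\ref{lem:A.7}); the universal vector extension of an abelian variety has dense torsion but is not semi-abelian. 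Second, the equality $\Lambda\otimes_\Z\R=\gg_\C$ is false already for $G=\Gm$, where $\Lambda\otimes\R=2\pi i\,\R\neq\C$; more generally, for a semi-abelian $G$ with torus dimension $t>0$ and abelian dimension $g$, one has $\rk\Lambda=t+2g<2(t+g)=\dim_\R\gg_\C$. Third, the hypothesis only gives Zariski density of $G_\tor$, not Euclidean density of $\Lambda_\Q$ in $\gg_\C$ (again $\Gm$ is a counterexample), so the continuity argument has no traction.

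The claim you need is nonetheless true, but its proof must invoke the de~Rham comparison isomorphism. One way: an $\omega\in\gg^\vee$ (defined over $\Qbar$) vanishing on $\Lambda$ has all periods zero, hence $[\omega]=0$ in $H^1_\dR(G)$ by the comparison isomorphism; since $\omega$ is an invariant form, $\omega=df$ for a homomorphism $f\colon G\to\Ga$, which vanishes because $G$ has no $\Ga$-quotient, and so $\omega=0$. Alternatively, the absence of a $\Ga$-quotient yields a surjection $\pi\colon(G^{sa})^\natural\twoheadrightarrow G$ from the universal vector extension of the semi-abelian part; the lattice $\Lambda'=\ker\exp_{(G^{sa})^\natural}$ spans $\Lie\bigl((G^{sa})^\natural\bigr)_\C$ over $\C$ by the period isomorphism $H_1(G^{sa,\an},\C)\cong H^1_\dR(G^{sa})^\vee_\C$, and $d\pi$ carries $\Lambda'$ onto $\Lambda$, so $\Lambda$ spans $\gg_\C$. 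Either repair closes the gap; the paper's own route avoids it entirely by staying at the level of algebraic groups.
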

\begin{proof}
Let $B\subset G^\an\times {G'}^\an$ be the graph of $\phi$. It is connected because it is isomorphic to $G^\an$ via the first projection.
By assumption its Lie algebra is defined over $\Qbar$. 
Let $g\in G(\Qbar)$ be
an $N$-torsion point. Then $\phi(g)\in G'(\C)$ is also an $N$-torsion point, hence in $G'(\Qbar)$. 
This implies that
\[ T:=\{(g,\phi(g))|g\in G_\tor\}\subset B(\Qbar).\]
By the Analytic Subgroup Theorem there is an algebraic subgroup $H\subset G\times G'$ defined over $\Qbar$ such that
$H^\an\subset B$ and containing all of $T$. The projection $B\hookrightarrow G^\an\times {G'}^\an\to G^\an$
is an isomorphism, hence its restriction  to $H$ is a closed immersion. The image contains the set $G_\tor$. It is Zariski dense, hence the inclusion is surjective. In other words, $H^\an= B$. The group $H\subset G\times G'$ is the graph of the morphism we wanted to find.
\end{proof}

\begin{lemma}\label{lem:A.6}
The theorem holds if $G=V$ is a vector group.
\end{lemma}
\begin{proof}Let $\Sigma=\phi^{-1}(G'(\Qbar))\cap V(\Qbar)$. We denote by $V_\Sigma\subset V$ the smallest algebraic subgroup containing $\Sigma$. We choose
 a direct complement $V_\trans$ of $V_\Sigma$ in $V$. 
By construction,
$\phi|_{V_\trans^\an}$ is purely transcendental. Indeed, any $\sigma\in V_\trans(\Qbar)$ with $\phi(\sigma)\in G'(\Qbar)$ is already in $\Sigma$ and hence
in $V_\Sigma(\Qbar)$.

It remains to show
that $\phi|_{V_\Sigma^\an}$ is algebraic.
As in the last lemma we consider its graph $B$ in $V_\Sigma^\an\times {G'}^\an$.
 Its Lie algebra is defined over $\Qbar$ and it contains the set
\[ T:=\{(g,\phi(g))|g\in \Sigma\}\subset B(\Qbar).\]
By the Analytic Subgroup Theorem there is an algebraic subgroup $H\subset V_\Sigma\times G'$ defined over $\Qbar$ such that
$H^\an\subset B$ and containing all of $T$. The projection $B\hookrightarrow V_\Sigma^\an\times {G'}^\an\to V_\Sigma^\an$
is an isomorphism, hence its restriction  to $H$ is a closed immersion. The image is an algebraic subgroup containing the set $\Sigma$, hence equal to
$V_\Sigma$.
In other words, again $H^\an= B$. The group $H\subset G\times G'$ is the graph of the morphism we wanted to find.
\end{proof}

\begin{lemma}
Let $G_1\to G_2$ be a vector extension. Then
$(G_1)_\tor=(G_2)_\tor$.
\end{lemma}
\begin{proof}It suffices to check the statement over the complex numbers and in the analytification. We have
\[ G_1^\an\isom\C^{n_1}/H_1^\sing(G_1^\an,\Z)\to G_2^\an\isom \C^{n_2}/H_1^\sing(G_2^\an,\Z).\]
By homotopy invariance, $H_1^\sing(G_1^\an,\Z)\isom H_1^\sing(G_2^\an,\Z)$.
The torsion
is computed as $G_{i,\tor}\isom H_1^\sing(G_i^\an,\Z)\tensor \Q/\Z$, hence the torsions of of $G_1$ and $G_2$ are isomorphic. 
\end{proof}

Let $V$ be the vector part of $G$, i.e. we have a short exact sequence
\[ 0\to V\to G\to G^{sa}\to 0\]
with $V$ a vector group and  $G^{sa}$ semi-abelian.
We say that $G$ is \emph{completely non-trivial}\index{completely non-trivial vector extension} (as vector extension) if it does not have a direct factor
$\G_a$. In other words: the classifying 
map
\[ V^\vee\to \Ext^1(G^{sa},\G_a)\]
is injective, see  Corollary~\ref{cor:vector_ext2}.

\begin{lemma}\label{lem:A.7}
Let $G_1$ be the Zariski closure of $G_\tor$ in $G$.
Then $G_1$ is a completely non-trivial vector extension of $G^{sa}$. Moreover, 
there is  a decomposition 
\[ G\isom V_1\times G_1\] 
with a vector group $V_1$, i.e. $G_1$ is the maximal completely non-trivial subextension of $G^{sa}$ contained in $G$.
\end{lemma}
\begin{proof}
We have $G_\tor\isom G^{sa}_\tor$, hence the image of
$G_1\to G^{sa}$ contains all torsion points. They are dense in $G^{sa}$, hence
$G_1\to G^{sa}$ is surjective. This makes $G_1$ a vector extension of $G^{sa}$.
By construction, $(G_1)_\tor$ is dense in $G_1$.
If it was not completely non-trivial, we would have a decomposition
$G_1=G_2\times \G_a$ and the torsion points would not be dense.

Finally, let $V$ be the vector part of $G$, $W=G_1\cap V$ and choose a direct complement $V_1$ of $W$ in
$V$. The natural map
\[ G_1\times V_1\to G\]
is an isomorphism.
\end{proof}

\begin{proof}[Proof of Theorem~\ref{thm:hom_algebraic}.]
By Lemma~\ref{lem:A.7} we have
\[ G\isom G_1\times V_1\]
such $G_\tor$ is dense in $G_1$ and $V_1$ is a vector group.
By Lemma~\ref{lem:A.5}, the theorem holds for $G_1$. 

By Lemma~\ref{lem:A.6}, there is a decomposition
$V_1\isom (V_1)_\Sigma\times V_\trans$ such that $\phi$ is algebraic on
$(V_1)_\Sigma$ and purely transcendental on $V_\trans$.
This completes the proof.
\end{proof}

The interplay between algebraic and transcendental morphisms is subtle.
In the situation of Theorem~\ref{thm:hom_algebraic} let
\[ G\isom V_1\times G_1, \hspace{2ex}G'\isom V_2\times G_2\]
be decompositions of the algebraic groups $G$ and $G'$ into a vector group and a completely non-trivial
vector extension of its semi-abelian part, as in Lemma~\ref{lem:A.7}. The analytic homomorphism
$\phi$ decomposes as a
$(2\times 2)$-matrix
\begin{eqnarray*} 
\phi=\begin{pmatrix}
\phi_{11} & \phi_{12} \\
\phi_{21} & \phi_{22}
\end{pmatrix}
\end{eqnarray*}
with $\phi_{11} \in \Hom(V_1^\an,V_2^\an)$, $\phi_{12}\in \Hom(V_1^\an,G_2^\an)$ and with  $\phi_{21}\in \Hom(G_1^\an,V_2^\an)$ and $\phi_{22}\in 
\Hom(G_1^\an,G_2^\an)$.

\begin{cor}\label{cor:class_hom_algebraic}
In this situation, we have:
\begin{enumerate}
\item $\phi_{11}$ and $\phi_{22}$ are algebraic and defined over $\Qbar$;
\item $\phi_{21}=0$;
\item There is a decomposition $V_1\isom V_{1,\trans}\times  V_{1,\alg}$  such
that  the maps 
\[ V_{1,\trans}^\an\to G_2^{sa,\an},\quad V_{1,\trans}^\an\to G_2^{sa,\an}\]
induced by $\phi_{12}$ 
are purely transcendental   and algebraic over $\Qbar$, respectively. 
\end{enumerate}
\end{cor}
\begin{proof}
By the proof of Theorem~\ref{thm:hom_algebraic},
we have $G_1\subset G_\alg$ and $V_\trans\subset V_1$. In particular,
$\phi_{21}$ and $\phi_{22}$ are algebraic and defined over $\Qbar$.
If $\phi_{21}:G_1^\an\to V_2^\an$ was non-zero, we would be able to split off a factor $\G_a$ from $G_1$. This is impossible because $G_1$ is completely non-trivial. 

All analytic homomorphisms $\phi_{11}:V_1^\an\to V_2^\an$ are algebraic over $\C$. It agrees with the analytification of the $\C$-linear map
$\mathfrak{v}_{1,\C}\to\mathfrak{v}_{2,\C}$.  By assumption it is induced by a $\Qbar$-linear
map $\mathfrak{v}_1\to\mathfrak{v}_2$, hence it is even algebraic over $\Qbar$.

We decompose $V_1$ as in Theorem~\ref{thm:hom_algebraic} in this special case.
Then $\phi_{12}$ is algebraic and defined over $\Qbar$ on $V_{1,\alg}$ and
purely transcendental on $V_{1,\trans}$. 
It remains to show that the composition
$V_{1,\trans}\to G_2\to G_2^{sa}$ is purely transcendental.
In order to simplify notation, we write $W$ for $V_{1,\trans}$ and $G'$ for $G_2$. We apply Theorem~\ref{thm:hom_algebraic} to $W\to {G'}^{sa}$. Accordingly there is a decomposition $W\isom W_\trans\times W_\alg$ such that the map is algebraic on $W_\alg$ and purely transcendental on $W_\trans$. The algebraic map
$W_\alg\to {G'}^{sa}$ vanishes because $W_\alg$ is a vector group and
${G'}^{sa}$ is semi-abelian. This implies that we get an induced algebraic map
$W_\alg\to V'$ where $V'$ is the vector part of $G'$. This contradicts
that $W_\alg^\an\to {G'}^\an$ is purely transcendental. We conclude that  $W_\alg$ is
in fact $0$ and $W^\an\to {G'}^{sa,\an}$ is purely transcendental.
\end{proof}

\chapter{The Formalism of the Period Conjecture}\label{ch:formalism}
The Period Conjecture predicts relations between the periods of algebraic varieties or, more generally, periods of motives. We explain the abstract set-up behind the explicit formulation. Our machinery will be applied
mostly to periods of $1$-motives, but also in a couple of other cases.

\section{Periods}

We first introduce periods and formal periods, 
following \cite[Definition~5.1.1]{period-buch} and \cite[Definition~3.6]{huber_galois}. 

Throughout we fix subfields $K,L\subset\C$. Their compositum $KL$ is the subfield generated by $K$ and $L$. To simplify notation, we work under the hypothesis $K\cap L=\Q$.

\begin{defn}\label{defn:VV}
\begin{enumerate}
\item
Let $\VV$ be the category of tuples $ V=(V_K,V_L,\phi)$
 where
$V_K$ and $V_L$ are finite dimensional vector spaces over $K$ and $L$, respectively, and $\phi:V_K\tensor_K\C\to V_L\tensor_L\C$ a $\C$-linear isomorphism.
    Morphisms are pairs of linear maps such that the diagram 
\[\begin{xy}\xymatrix{
   V_K\tensor_K\C\ar[r]^{f_K\tensor_K\C}\ar[d]_{\phi_V}&W_K\tensor_K\C\ar[d]^{\phi_W}\\
 V_L\tensor_L\C\ar[r]_{f_L\tensor_L\C}&W_L\tensor_L\C
}\end{xy}\]
commutes.
\item
Given $V\in \VV$, we define the \emph{set of periods} \index{periods!of $\in\VV$}
of $V$ as
\[ \Per(V)=\im (V_K\times V_L^\vee\to \C),\hspace{2ex} (\sigma,\omega)\mapsto \omega_\C(\phi(\sigma_\C))\]
and the \emph{space of periods} $\Per\langle V\rangle$ as the additive group generated by it. Here we write $\sigma_\C$ and $\omega_\C$ for the images
of $\sigma$ and $\omega$ in $V_K\tensor \C$ and $V_L^\vee\tensor \C$, respectively.
\item
If $\Ch$ is a category, $V:\Ch\to\VV$ a functor, we put
\[ \Per(\Ch)=\bigcup_{X\in\Ch}\Per(V(X)).\]
\end{enumerate}
\end{defn}

\begin{rem}
\begin{enumerate}
\item
The category $\VV$ is $\Q$-linear and abelian. We  could even turn into a rigid tensor category and then $V\mapsto V_K$ becomes a so-called fibre functor if $K\subset L$. We are not going to use this fact. 
\item The abelian group $\Per\langle V\rangle$ is even a $KL$-vector space because of the bilinearity of the map $V_K\times V_L^\vee\to\C$.
It has an alternative interpretation as the $KL$-vector space generated by the entries of the \emph{period matrix},\index{period matrix} the matrix of $\phi$ in a $K$-basis of $V_K$ and an $L$-basis of
$V_L$. 
\item The set $\Per(\Ch)$ only depends on the objects in the image $V(\Ch)$. 
\item With $L=\Q$ this is the definition given in in \cite{period-buch} and \cite{huber_galois}. In the present monograph, the case $K=\Q$ , $L=\Qbar$ will be of most interest because $1$-motives are a homological theory, whereas
the other references take the cohomological point of view.
In both cases we want to compare de Rham \emph{cohomology} (the $\Qbar$-component) with singular \emph{homology} (the $\Q$-component).
\item We may replace the category $\Ch$ and the functor
$V$ by a diagram $D$ (i.e. an oriented graph) and a representation $V$. Its periods are simply defined as the periods of the path category of $D$ and the induced functor with values in $\VV$. This is the point of view taken originally by Nori and also in \cite{period-buch}. It will only play a very minor role in our monograph,  in the proof of Theorem~\ref{thm:main_kontsevich} on the Period Conjecture for curves.
\end{enumerate}
\end{rem}

\begin{ex}
The main case of interest for us is the category of iso-$1$-motives
over $\Qbar$, see Chapter~\ref{sec:one-mot} below. The functor
$V$ is given by the singular realisation, the de Rham realisation  and by the period isomorphism.
\end{ex}

\begin{ex}\label{ex:triangle}
Given a short exact sequence
\[ 0\to V_1\to V\to V_2\to 0\]
in $\VV$, the period matrix for $V$ (in adapted bases) is upper block triangular, i.e. of the form
\[\left(\begin{matrix} A&B\\ 0&C\end{matrix}\right)\]
such that $A$ is the period matrix of $V_1$ and $C$ the period matrix of $V_2$.
In particular,
\[ \Per\langle V_1\rangle+\Per\langle V_2\rangle\subset \Per\langle V\rangle.\]
This is not an equality in general.
\end{ex}

\begin{lemma}\label{lem:vsp_abstract}
Let $\Ch$ be an additive category and $V:\Ch\to \VV$ an additive functor. Then $\Per(\Ch)$ is a $KL$-vector space.
For $M\in\VV$ we have 
\[ \Per\langle M\rangle=\Per(\langle M\rangle)\]
where $\langle M\rangle\subset \VV$ is the full abelian subcategory generated by $M$ and closed under subquotients, i.e. the morphisms in $\langle M\rangle$ are the same as in $\VV$, and for $X\in \langle M\rangle$ and $Y$ a subquotient of $X$ in $\VV$, the object $Y$ is also in $\langle M\rangle$.
\end{lemma}
\begin{proof}It suffices to show that $\Per(\Ch)$ is closed under addition. If $\alpha_1$ is a period of $X_1$ and $\alpha_2$ is a period of $X_2$, then
$\alpha_1+\alpha_2$ is a period of $X_1\oplus X_2$.

As a consequence, the periods of the category $\langle V\rangle$ form an abelian group. They contain the periods of $V$, hence
\[ \Per\langle V\rangle \subset\Per(\langle V\rangle).\]
For the converse inclusion, note that if $V_1$ is a subquotient of
$V_2$, then $\Per(V_1)\subset\Per(V_2)$ by Example~\ref{ex:triangle}. 

Moreover, $\Per(M^n)\subset\Per\langle M\rangle$. As all objects of $\langle V\rangle$ are subquotients of
$M^n$ for some $n$, this shows that $\Per(W)\subset\Per\langle V\rangle$ for
all objects of $\langle V\rangle$.
\end{proof}

There are obvious relations between the periods of a category $\Ch$. They are encoded in a space of formal periods.

\begin{defn}\label{defn:formal_abstract}
Let $\Ch$ be an additive category, $V:\Ch\to\VV$ be an additive functor. The 
\emph{space of formal periods}\index{formal periods} $\Perform(\Ch)$ is the $KL$-vector space generated by symbols
$(\sigma,\omega)$ for $\sigma\in V_K(X)$, $\omega\in V_L(X)^\vee$ for all
objects $X$ of $\Ch$ with relations given by
\begin{itemize}
\item (Bilinearity) for all objects $X$ and   
$\sigma_1,\sigma_2\in V_K(X)$, 
 $\omega_1,\omega_2\in V_L(X)^\vee$, $a_1,a_2\in K$, $b_1,b_2\in L$,
\[
(a_1\sigma_1+a_2\sigma_2,b_1\omega_1+b_2\omega_2)= a_1b_1(\sigma_1,\omega_2)+\cdots+ a_2b_2(\sigma_2,\omega_2).
\] 
\item (Functoriality) for all morphisms $f:X\to Y$ and $\omega\in V_\Qbar(Y)^\vee$, $\sigma\in V_\Q(X)$,
\[ (f^*\omega,\sigma)=(\omega,f_*\sigma).\]
\end{itemize}
\end{defn}
Equivalently, the vector space $\Perform(\Ch)$ can be characterised as the quotient space
\[ \Perform(\Ch)=\left(\bigoplus_{X\in\Ch}V_K(X)\tensor_{\Q} V_L(X)^\vee\right)/\text{\ functoriality}.\]
The bilinearity relation is incorporated into the tensor product. 
\begin{rem}
We could also apply the same definition to formal periods of a diagram $D$ and a representation $V:D\to\VV$. This is the point of view taken in
\cite{period-buch}. The resulting space of formal periods agrees with the space of formal periods of the additive hull of the path category of $D$. 
\end{rem}

It is often useful to break $\Ch$ into smaller pieces.

\begin{defn}\label{defn:single}
Let $\Ch$ be an abelian category, $X$ an object of $\Ch$. By $\langle X\rangle$ \index{subcategory!generated by $X$} we denote the smallest full subcategory of $\Ch$ that contains $X$ and  is closed under subquotients.
\end{defn}
We have $\Per\langle X\rangle=\Per(\langle X\rangle)$ and this shows that this is the right category if we try to understand linear relations between periods of $X$.

\begin{lemma}\label{lem:dim_X}
For an additive functor $\Ch=\langle X\rangle\to \VV$, the elements of
$V_K(X)\tensor_\Q V_L(X)^\vee$ generate $\Perform(\Ch)$ as $KL$-vector space.
In particular,
\[ \dim_{KL}\Perform(\Ch)\leq (\dim_KV_K(X))^2.\]
\end{lemma}

\begin{proof} We need to verify that all generators of $\Perform(\Ch)$ can be expressed in terms of elements of $V_K(X)\tensor_\Q V_L(X)^\vee$.

If $f:Y\to Y'$ is a surjective morphism in $\Ch$, then all elementary tensors in the tensor product $V_K(Y')\tensor_\Q V_L(Y')^\vee$ can be identified with elementary tensors of $V_K(Y)\tensor_\Q V_L(Y)^\vee$ because
$V_K(Y)\to V_K(Y')$ is surjective, i.e. every element in $V_K(Y')$ has the 
form $f_*\sigma$ for some $\sigma\in V_K(Y)$, and in consequence 
\[ f_*\sigma\tensor \omega=\sigma\tensor f^*\omega\in\Perform(\Ch).\]
In the same way, if
$f:Y\to Y'$ is injective, then the elementary tensors on $Y$ can be identified with some elementary tensors on $Y'$ because $V_L(Y')\to V_L(Y)$ is surjective.

 By assumption, every object of $\langle X\rangle$ is
a subquotient of some $X^n$ for $n\geq 1$. Hence it suffices to consider
elementary tensors on $X^n$. 
Note that $V_K(X^n)\isom V_K(X)^n$ and that  we can write an
elementary tensor $\sigma\tensor\omega\in V_K(X^n)\tensor_\Q V_L(X^n)$ as
    \[ \sigma\tensor \omega=\sum_{k=1}^n (i_k)_*\sigma_k\tensor\omega, \]
where $\sigma_1,\dots,\sigma_n$ are the components of $\sigma$. By the functoriality relation this yields the identification 
\[ \sigma\tensor\omega=\sum_{k=1}^n\sigma_k\tensor i_k^*\omega\]
with an element of $V_K(X)\tensor_\Q V_L(X)^\vee$.
\end{proof}

Following Hörmann in \cite{hoermann-notiz}, there is an interesting alternative description of the space of relations in the abelian case. It is closer to the shape in which they will appear in the context of $1$-motives and was motivated by it. 

Given a short exact sequence
\[ 0\to X_1\xrightarrow{i} X^n\xrightarrow{p} X_2\to 0\]
in a $\Q$-linear abelian category $\Ch$ and elements 
$(\sigma_1,\dots,\sigma_n)\in i_*(V_K(X_1))$, 
$(\omega_1,\dots,\omega_n)\in p^*(V_L(X_2)^\vee)$, the functoriality relation implies that
$\sum_{i=1}^n \sigma_i\tensor  \omega_i$ vanishes in $\Perform\langle X\rangle$.
Actually, even the converse is true.

\begin{prop}[{Hörmann \cite{hoermann-notiz}}]\label{prop:hoermann}For an additive functor 
$\Ch=\langle X\rangle\to\VV$,
an element of the form $\sum_{i=1}^n \sigma_i\tensor  \omega_i$  is in
 the kernel of the map $V_K(X)\tensor_\Q V_L(X)^\vee\to\Perform\langle X\rangle$ if and only if there is a is short exact sequence
\[  0\to X_1\xrightarrow{i} X^n\xrightarrow{p} X_2\to 0\]
with $(\sigma_1,\dots,\sigma_n)\in i_*(V_K(X_1))$, $(\omega_1,\dots,\omega_n)\in p^*(V_L(X_2)^\vee)$.
\end{prop}
We omit the proof as we are not going to need this fact.

By construction, the space of formal periods comes with an evaluation map to $\C$.

\begin{defn}
Let $\Ch$ be an additive category, $V:\Ch\to \VV$ an additive functor. We define the \emph{evaluation map}\index{evaluation map!for $V\in\VV$}
\[ \ev:\Perform(\Ch)\to \C\]
by sending a symbol $(\sigma,\omega)\in V_K(X)\times V_L(X)^\vee$ to
\[ (\sigma,\omega)\mapsto \omega_\C( \phi(\sigma_\C) ).\]
\end{defn}

The map is obviously well-defined, $KL$-linear and has image $\Per(\Ch)$.

\begin{defn}
 We define the \emph{external duality functor}\label{defn:external}\index{external duality}
\[ \cdot^\vee: \VV\to \VVarg{L}{K}\]
by assigning the triple $(V_K,V_L,\phi)$ to $(V_L^\vee, V_K^\vee,\phi^\vee)$.
\end{defn}
This functor should not be confused with the internal duality functor
on $\VV$ which maps $(V_K,V_L,\phi)$ to $(V_K^\vee,V_L^\vee, (\phi^\vee)^{-1})$.

\begin{lemma}\label{lem:periods_dual}
 Let $V:\Ch\to \VV$ be an additive functor. Then
the period spaces $\Per(\Ch)$ and $\Perform(\Ch)$ do not change when applying the external duality functor.
\end{lemma}
\begin{proof}
Let $X\in\Ch$. The definition of $\Perform(\Ch)$ via $V$ uses
    $V_K(X)\tensor_{\Q}V_L(X)^\vee$, whereas the definition via $\,\cdot^\vee\circ V$ uses $V_L^\vee\tensor_{\Q} (V_K^\vee)^\vee$. These spaces are identified by exchanging the factors. The compatibility with the evaluation map is the very definition of the dual $\phi^\vee$ of $\phi$. 
\end{proof}

\begin{rem}
In the case of the internal duality, we get the same statement for
$\Perform(\Ch)$, but not longer for actual periods. External duality maps a period matrix \index{period matrix} to its transpose, so the period space remains the same. In contrast, internal duality maps the period matrix to the inverse of the transpose, hence the period space is divided by the determinant, and its periods are  divided by the determinant.  
\end{rem}

\section{The Period Conjecture}\label{sec:formalism_2}

The Period Conjecture asserts that in certain cases the obvious relations are the only ones. We follow \cite{huber_galois}. As in the previous section we fix
subfields $K,L\subset\C$ with $K\cap L=\Q$. The cases of interest for the Period Conjecture are $K=\Q$, $L\subset\Qbar$ or conversely.

\begin{defn}[Huber {\cite[Definition~3.7]{huber_galois}}]\label{defn:conj_abstract}
Let $\Ch$ be an additive category, $V:\Ch\to\VV$ an additive functor. We say that \emph{the Period Conjecture holds} \index{Period Conjecture!for a category $\Ch$} for $\Ch$ if the evaluation map $\Perform(\Ch)\to\Per(\Ch)$ is injective.
\end{defn}

\begin{rem}
If $\Ch$ is the category of all Nori motives over $\Qbar$ (see Appendix~\ref{sec:app_nori}), then this is the Period Conjecture as formulated by Kontsevich in \cite{kontsevich}. We refer to \cite[Part~III]{period-buch} for a detailed discussion. In Theorem~\ref{thm:main_one}, the conjecture is proved for the category of iso-$1$-motives. Note that the above statement does not mention the tensor structure, which exists on the category of all motives (but not on $\onemot_\Qbar$). 
We refer to \cite{huber_galois} for the discussion of tensor products and the comparison of the above conjecture with Grothendieck's version predicting the transcendence degree of the algebra generated by the periods of a motive. The latter does not a play a role in our monograph.
\end{rem}

Note that the space of formal periods $\Perform(\Ch)$ and hence the Period Conjecture only depends on the image of $\Ch$ under $V$. Hence we may assume without loss of generality that
$V$ is faithful.

\begin{prop}[Huber {\cite[Proposition~5.2]{huber_galois}}]\label{prop:is_full}
Let $F:\Ch'\to\Ch$ and $V:\Ch\to \VV$ be a faithful exact functor between $\Q$-linear abelian categories. Then
\[ \Perform(\Ch')\to\Perform(\Ch)\]
is injective if and only if $F$ is full with image closed under taking subquotients.
\end{prop}

\begin{rem}
The proof of this general fact relies on Nori's description of such categories as
categories of comodules for an explicit coalgebra. In the cases of interest for us, we will give a direct proof in later chapters.
\end{rem}

\begin{cor}[Fullness: Huber {\cite[Corollary~5.3]{huber_galois}}]\label{cor:fullness_abstract}\index{Period Conjecture! and fullness}\index{fullness! and the Period Conjecture}
Let $\Ch$ be a $\Q$-linear abelian category and $V:\Ch\to\VV$ a faithful exact functor.
If the Period Conjecture holds for $\Ch$, then $V$ is full with image closed under taking subquotients. 
\end{cor}
\begin{proof} Let $\bar{\Ch}$ be the full subcategory of $\VV$ closed under taking subquotients generated by $\Ch$. 
Then the evaluation map factors as 
\[ \Perform(\Ch)\to\Perform(\bar{\Ch})\to\C.\]
If the composition map is injective, so is the first map.
By applying Proposition~\ref{prop:is_full} to $F=V$ we deduce that $V$ is full, and as a consequence $\Ch$ is
equivalent to $\bar{\Ch}$. 
\end{proof}

The Period Conjecture for an abelian category $\Ch$ can be broken into parts.
Recall from Definition~\ref{defn:single} the subcategory
$\langle X\rangle$ generated by a single object.
We have $\Per\langle X\rangle=\Per(\langle X\rangle)$, so this is the right category if we want to understand linear relations between periods of $X$.

\begin{lemma}[Huber {\cite[Proposition~5.6]{huber_galois}}]\label{lem:conj_single}Let $\Ch$ be an abelian category, $V:\Ch\to\VV$ a faithful exact functor. Then the following statements are equivalent:
\begin{enumerate}
\item\label{it:5.20.1} The Period Conjecture holds for $\Ch$.
\item \label{it:5.20.2} The Period Conjecture holds for $\langle X\rangle$ for all objects
$X$ of $\Ch$.
\end{enumerate}
\end{lemma}
\begin{proof}By Proposition~\ref{prop:is_full} applied to $\langle X\rangle \to \Ch$, the natural map
\[ \Perform(\langle X\rangle )\to \Perform(\Ch)\]
is injective. If $\Perform(\Ch)\to\C$ is injective, so is the composition 
\[ \Perform(\langle X\rangle)\to \Perform(\Ch)\to \C\]
 for every object $X$. This shows that (\ref{it:5.20.1}) implies (\ref{it:5.20.2}). Conversely, we have
\[ \Ch=\bigcup_{X\in\Ch}\langle X\rangle\]
because a morphism $f:X\to Y$ in our abelian category $\Ch$ is already a morphism in the subcategory
$\langle X\oplus Y\rangle$.
As a consequence we have 
\[ \Perform(\Ch)=\varinjlim_{X\in \Ch}\Perform(\langle X\rangle).\]
If the evaluation map is injective for every $X$, it is injective on the inductive limit.
\end{proof}

Using Hörmann's alternative description of the space of formal periods, we can reformulate the conjecture.

\begin{cor}The Period Conjecture holds for the abelian category $\Ch=\langle X\rangle$ if and only if for element $\sum_{i=1}^n\sigma_i\tensor \omega_i\in V_K(X)\tensor_\Q V_L(X)^\vee$ in the kernel of the evaluation map there is a short exact sequence
\[  0\to X_1\xrightarrow{i} X^n\xrightarrow{p} X_2\to 0\]
with $(\sigma_1,\dots,\sigma_n)\in i_*(V_K(X_1))$, $(\omega_1,\dots,\omega_n)\in p^*(V_L(X_2)^\vee)$.
\end{cor}
\begin{proof}Apply Proposition~\ref{prop:hoermann}.
\end{proof}

The advantage of taking the  subcategory $\langle X\rangle$ for an individual $X\in \Ch$  is that its period space is finite dimensional over $KL$; indeed its dimension is bounded by
$\dim_K V_K(X)^2$. Hence it makes sense to ask what the dimension actually is.

\begin{cor}
Let $\Ch$ be a $\Q$-linear additive category, $X$ an object of $\Ch$.
Then the Period Conjecture holds for $\langle X\rangle$ if and only if
\[ \dim_{KL}\Perform(\langle X\rangle)=\dim_{KL}\Per\langle X\rangle.\]
\end{cor}
\begin{proof}The evaluation map $\Perform(\langle X\rangle) \to \Per\langle X\rangle$ for $\langle X\rangle$ is surjective. Hence it is injective if and only if it is an isomorphism and if and only the dimensions of the two finite dimensional vector spaces agree.
\end{proof}
It remains to understand
the dimension of $\Perform(\langle X\rangle)$. This question is answered via Nori's version of Tannaka theory without a tensor product. We recall the main player.

We keep concentrating on the case of an abelian category generated by a single object $X$, in the sense of $\Ch=\langle X\rangle$.

\begin{defn}\label{defn:End}
Let 
$T:\Ch=\langle X\rangle \to \Q\Vect$ be a faithful exact functor. We introduce the spaces
\[
\End(T)=\left\{ (f_Y)\in \prod_{Y\in\Ch}\End_\Q(T(Y))\mid \forall g:Y\to Y':f_{Y'}\circ T(g)=T(g)\circ f_Y\right\}
\]  
and 
\[
\Ah(\Ch,T)=\End(T)^\vee.
\]
\end{defn}

\begin{lemma}\label{lem:is_finite}
If $\Ch=\langle X\rangle$, then elements 
$(f_Y)$ of $\End(T)$ are uniquely determined by $f_X$. In other words,
$\End(T)\subset \End_\Q(T(X))$ and in consequence
\[ \dim_\Q \End(T)\leq (\dim_\Q T(X))^2\]
In particular, the dimension is finite. 
\end{lemma}

\begin{proof}The argument is the same as in the proof of Lemma~\ref{lem:dim_X}, phrased in a slightly different language.

Let $f=(f_Y)$ be in $\End(T)$. Assume that $g:Y\to Y'$ is surjective in $\langle X\rangle$, then so is $T(g):T(Y)\to T(Y')$. From the commutative diagram
\[\xymatrix{
 T(Y)\ar@{->>}[r]^{T(g)}\ar[d]_{f_Y}&T(Y')\ar[d]^{f_{Y'}}\\
 T(Y)\ar@{->>}[r]^{T(g)} &T(Y')
}\]
we deduce that $f_{Y'}$ is uniquely determined by $f_Y$. In the same way, if $g:Y\to Y'$ is 
injective, then $f_Y$ is uniquely determined by $f_{Y'}$.
By assumption, every object of $\langle X\rangle$ is
    a subquotient of $M^n$ for some $n\geq 1$. 
Hence $f$ is determined by the components $f_{X^n}$. We write
$f_{X^n}\in \End(T(X)^n)$ as a matrix $\phi_{ij}$ with entries in
$\End(T(X))$. The entry $\phi_{ij}$ is the composition
\[ T(X)\xrightarrow{T(\iota_i)}T(X)^n\xrightarrow{f_{X^n}}T(X)^n\xrightarrow{T(p_j)}T(X)\]
with the injection $\iota_i:X\to X^n$ and
the projections $p_j:X^n\to X$. We have commutative diagrams
\[\xymatrix{
T(X)\ar[d]_{f_X}\ar[r]^{T(\iota_i)} &T(X)^n\ar[d]_{f_{X^n}}\ar[r]^{T(p_j)} &T(X)\ar[d]^{f_X}\\
T(X)\ar[r]^{T(\iota_i)} &T(X)^n\ar[r]^{T(p_j)} &T(X).
}\]
The map from the top left to the bottom right is $\phi_{ij}$. For $i\neq j$, the
composition $p_j\circ \iota_i$ vanishes, hence so does $\phi_{ij}$. For $i=j$ the composition is the identity, hence $\phi_{ii}=f_{X}$. In particular, the map
$f_{X^n}$ is uniquely determined by $f_X$. This finishes the proof of the first claim. The others follows directly from this fact.
\end{proof}

The set $\End(T)$ is stable under composition, making it into
a unital $\Q$-algebra. Dually, $\Ah(\Ch,T)$ is a \emph{counital coalgebra}: it is equipped with a \emph{comultiplication}, \index{coalgebra $\Ah(\Ch,T)$} i.e. a $\Q$-linear map
\[ \Ah(\Ch,T)\to \Ah(\Ch,T)\tensor_\Q\Ah(\Ch,T)\]
    satisfying axioms dual  to the axioms of a unital algebra.

For every $Y\in\langle X\rangle$, the vector space $T(Y)$ has a natural action  of $\End(T)$ where an element $f$ operates via its $f_Y$-component. This defines a functor
\[\tilde{T}: \langle X\rangle\to\End(T)\Mod\]
where $\End(T)\Mod$ denotes the category of finitely generated $\End(T)$-modules, or equivalently, $\End(T)$-modules whose underlying $\Q$-vector space is finite dimensional.
By adjunction, the structure map
\[ \End(T)\tensor V\to V\]
of an $\End(T)$-module $V$ induces a $\Q$-linear map
\[ V\to \Ah(\langle X\rangle,T)\tensor_\Q V,\]
turning $V$ into a \emph{comodule} whose underlying $\Q$-vector space is finite
dimensional. For the axioms of a comodule, see for example \cite[Section~7.5.2]{period-buch}. They are dual to the axioms of a module under an algebra. 

The significance of $\End(T)$ and $\Ah(\Ch,T)$ is the following strong property.

\begin{prop}\label{prop:comodule_category_finite}
Let  $T:\Ch=\langle X\rangle\to\Q\Vect$ be a faithful and exact functor. Then $\Ch$ is equivalent to the category of finite dimensional $\End(T)$-modules, or, equivalently to the category of finite dimensional $\Q$-vector spaces equipped with the structure of an $\Ah(\Ch,T)$-comodule.
\end{prop}
\begin{rem}We are not going to use this structural result in our applications to the Period Conjecture. We defer the deduction from the existing literature toward the end of the chapter.
\end{rem}

For later use, we give a more explicit description of $\Ah(\Ch,T)$.

\begin{lemma}\label{lem:coalgebra_finite}
In the situation of the Definition~\ref{defn:End}, we have
\[ \Ah(\Ch,T)=\left( \bigoplus_{Y\in\Ch}\End_\Q(T(Y))^\vee\right)/\text{\ functoriality}.\]
The functoriality relations are generated by elements of the form
$\sigma\tensor f^*(\omega)-f_*(\sigma)\tensor \omega$ for all
$f:Y\to Y'$ in $\Ch$ and $\sigma\in T(Y)$, $\omega\in T(Y')^\vee$ under the identification $\End(T(Y))^\vee\isom T(Y)\tensor T(Y)^\vee$.
\end{lemma}
\begin{proof}Let $\Ah'(\Ch,T)$ be the object on the right hand side. By definition, $\Ah'(\Ch,T)^\vee\isom \End(T)$ (the direct sum turns into a product and the quotient into a subobject). Taking duals again, we get
\[ (\Ah'(\Ch,T)^\vee)^\vee\isom \End(T)^\vee=\Ah(\Ch,T).\]
As the vector spaces are finite dimensional, they are isomorphic to their double duals. Hence we have shown 
\[ \Ah'(\Ch,T)\isom \Ah(\Ch,T).\]
\end{proof}

The coalgebra point of view has the advantage of generalising to all $\Ch$.

\begin{defn}
Let $\Ch$ be a $\Q$-linear abelian category, $T:\Ch\to\Q\Vect$ a faithful exact functor into the category of finite dimensional $\Q$-vector spaces. We put
\[  \Ah(\Ch,T)=\left( \bigoplus_{Y\in\Ch}\End_\Q(T(Y))^\vee\right)/\text{\ functoriality}\]
with functoriality interpreted as in Lemma~\ref{lem:coalgebra_finite}.
\end{defn}

As in the case where $\Ch$ is generated by a single object, this vector space has a natural $\Q$-coalgebra structure and every $T(Y)$ inherits the structure of
a comodule.

\begin{thm}[Nori: see {\cite[Chapter~7]{period-buch}}]\label{thm:nori}
Let $\Ch$ be a $\Q$-linear abelian category, $T:\Ch\to\Q\Vect$ a faithful exact functor into the category of finite dimensional $\Q$-vector spaces. Then $\Ch$ is equivalent to the category of $\Ah(\Ch,T)$-comodules whose underlying $\Q$-vector spaces are finite dimensional.
\end{thm}
The proof of this theorem is non-trivial. Proposition~\ref{prop:comodule_category_finite} is a key step in its proof. In our monograph we use the opposite approach and deduce Proposition~\ref{prop:comodule_category_finite} from Theorem~\ref{thm:nori} instead. 

\begin{proof}[Proof of Proposition~\ref{prop:comodule_category_finite}.]
We apply the theorem to $\Ch=\langle X\rangle$. By Lemma~\ref{lem:coalgebra_finite}, the coalgebra $\Ah(\Ch,T)$ is dual to the algebra
$\End(T)$. It is finite dimensional by Lemma~\ref{lem:is_finite}, hence the category of finitely generated $\End(T)$-modules is equivalent to the category of
$\Ah(\Ch,T)$-comodules which are finite dimensional over $\Q$.
\end{proof}

\begin{rem}
We have decided to deduce Proposition~\ref{prop:comodule_category_finite} from Theorem~\ref{thm:nori}, but actually the converse is also true and implicitly this is the way Theorem~\ref{thm:nori} is shown in \cite{period-buch}. The argument in \cite{period-buch} is made more complicated by allowing other  base rings than $\Q$. At heart, the result is even older. It appears as a step in the proof of Tannaka duality in \cite{LNM900}. Let us explain how Theorem~\ref{thm:nori} relates to Tannaka duality, even if these issues are not relevant for the rest of the monograph.

If $\Ch$ is not only abelian, but a so-called Tannaka category with fibre functor $T:\Ch\to\Q\Vect$ (i.e. equipped with with a unitary commutative associative
tensor product and $T$ being a faithful tensor functor), then the coalgebra $\Ah(´\Ch,T)$ is endowed with a commutative multiplication, making it into a Hopf algebra. It follows that $G=\Spec(\Ah(\Ch,T))$ is a group scheme. There is an equivalence of categories between $\Ah(\Ch,T)$-comodules and representations of $G$, identifying
$\Ch$ (as a Tannakian category) with finite dimensional representations of $G$.
\end{rem}

Having introduced $\End(T)$ and $\Ah(\Ch,T)$ and explained their significance, we can now establish the dimension formula for the space of abstract periods that we are after. As before, let $L$ be a subfield of $\C$.

\begin{prop}\label{prop:dim_formula_abstract}
\index{formal periods!dimension formula}\index{dimension formula!for formal periods}
 Let $\Ch=\langle X\rangle$ be a $\Q$-linear additive category and
 $V:\Ch\to \VVarg{\Q}{L}$ a faithful and exact additive functor. Then
\[ \dim_{L}\Perform(\Ch)=\dim_\Q\Ah(\Ch,V_\Q)=\dim_\Q\End(V_\Q).\]
\end{prop} 
\begin{proof}
The second equality holds because the vector spaces are $\Q$-dual  to each other.

The definitions of $\Perform(\Ch)$ and $\Ah(\Ch,V_\Q)$ are very similar. They become isomorphic after base change to $\C$, in particular they have the same dimensions. In detail:
\begin{align*}
 \Ah(\Ch,V_\Q)\tensor_\Q\C&=\left( \bigoplus_{Y\in\Ch}\End_\C(V_\C(Y))^\vee\right)/\text{\ functoriality}.\\
& =\left( \bigoplus_{Y\in\Ch}V_\C(Y)\tensor_\C V_\C(Y)^\vee\right)/\text{\ functoriality}.
\end{align*}
where we write $V_\C:=V_\Q\tensor_\Q\C$. On the other hand
\begin{align*}
\Perform(\Ch)\tensor_L\C&=\left( \bigoplus_{Y\in\Ch}V_\Q(Y)\tensor_\Q V_L(Y)^\vee\tensor_\Qbar\C\right)/\text{\ functoriality}\\
&=\left( \bigoplus_{Y\in\Ch}V_\C(Y)\tensor_\C (V_L(Y)^\vee\tensor_L\C)\right)/\text{\ functoriality}\\
&=\left( \bigoplus_{Y\in\Ch}V_\C(X)\tensor_\C (V_\C(Y)^\vee\C)\right)/\text{\ functoriality}
\end{align*}
because $V_L\tensor_L\C\isom V_\C$. 
\end{proof}

In the language of \cite{huber_galois}: $\Perform(\Ch)$ is a semi-torsor under
$\Ah(\Ch,V_\Q)$. 

\begin{cor}
Let $\Ch$ be a $\Q$-linear abelian category, $X$ an object of
$\Ch$, and $V:\Ch\to (\Q,L)\Vect$ a faithful and exact additive functor. Then the Period Conjecture holds for
$\langle X\rangle$ if and only if
\[ \dim_L\Per\langle X\rangle=\dim_\Q \End(V_\Q|_{\langle X\rangle}).\]
\end{cor}


\part{Periods of Deligne $1$-motives}\label{part2}


\chapter{Deligne's $1$-Motives}\label{sec:one-mot}
In this chapter we give a review on Deligne's category of  of $1$-motives over an algebraically closed field $k$ embedded into $\C$. The cases of interest for us are $k=\C$ and $k=\Qbar$. In the latter case, Section~\ref{sec:hodge} and in particular Section~\ref{sec:key} contain new results.

\section{The Category and the Realisation Functors}

\begin{defn}[Deligne {\cite[Ch. 10]{hodge3}}]\label{defn:einsmot}\index{$1$-motive}\index{motive!$1$-motive}
A \emph{ $1$-motive} $M=[L\to G]$ over $k$ is the datum given by a semi-abelian group $G$ over $k$, a free abelian group $L$ of finite rank and a group homomorphism $L\to G(k)$. Morphisms of $1$-motives are morphisms of complexes $L\to G$. The category $\onemot_k$ of \emph{iso-$1$-motives} \index{iso-$1$-motive} has the same objects, but morphism tensored by $\Q$.
\end{defn}

\begin{rem}
The category of iso-$1$-motives is abelian. 
In this monograph, we are working in the category of  iso-$1$-motives. The arguments
often involve replacing a $1$-motive $[L\to G]$ by an isogenous $1$-motive $[L'\to G']$. This will
sometimes happen tacitly. 
\end{rem}

We need to spell out the singular and de Rham realisation defined in \cite[Ch. 10]{hodge3} in detail.

\subsection{The Singular Realisation}

Let $M=[L\xrightarrow{u} G]$ be a $1$-motive. We write $G^\an$ for the commutative Lie group over $\C$ attached to $G$. The associated exponential sequence is
\[ 0\to H_1^\sing(G^\an,\Z)\to \Lie (G^\an)\xrightarrow{\exp} G^\an\to 0, \]
where we have used the identification made in Section~\ref{ssec:paths}, Equation (\ref{eq:homol}). 

\begin{defn}
Let $T_\sing(M)$ be fibre product of $L$ and $\Lie(G^\an)$ over $G^\an$ under  the structure map $u:L\to G^\an$ and the exponential map $\exp$
\[\begin{xy}\xymatrix{
T_\sing(M)\ar[r]\ar[d]&\Lie(G^\an)\ar[d]^{\exp}\\
L\ar[r]^u&G.
}\end{xy}\]
 
The vector space $V_\sing(M)=T_\sing(M)\tensor\Q$ is called the \emph{singular realisation}\index{singular realisation!of a $1$-motive} of $M$.
\end{defn}

By construction, there is a short exact sequence
\[ 0\to H_1(G^\an,\Q)\to V_\sing(M)\to L\tensor\Q\to 0.\]
In particular this gives $V_\sing(M)\isom H_1(G^\an,\Q)$ if $L=0$ and $V_\sing(M)=L_\Q$ if
$G$ is trivial.
The vector space $V_\sing(M)$ carries a weight filtration \index{weight filtration} with
\[W_nV_\sing(M)=\begin{cases} \quad \quad  0 \quad&n\leq -3,\\
                 H_1(T,\Q) & n=-2,\\ 
                 H_1(G,\Q)&n=-1,\\
                 V_\sing(M)&n\geq 0.\end{cases}\]
Here $0\to T\to G\to A\to 0$
is the decomposition of $G$ into a torus and an abelian variety.

\begin{lemma}
The functor $V_\sing:\onemot_k\to \Q\Vect$ is faithful and exact.
\end{lemma}
\begin{proof}Exactness holds because taking Lie algebras and pull-backs are exact functors. In order to verify faithfulness it suffices
to show that $\Vsing{M}=0$ implies $M=0$ in $\onemot_k$. Hence we consider some $M$ with $\Vsing{M}=0$. This implies
$H_1(G,\Q)=0$, and in consequence $G$ is trivial and $\Vsing{M}=L_\Q$. In this case $L$ also has to be trivial.
\end{proof}

\subsection{The Universal Vector Extension}                                                                           

As in Chapter~\ref{ch:alg_groups} let $\grp$ be the category of connected commutative algebraic groups. We denote by $\grp_\Q$ its isogeny category.
We  enlarge the category  $\onemot_k$ to a bigger abelian category which we call $\onemotgen_k$. Its objects are of the form $[L\to G]$ with $G$ in the category  $\grp$ and its morphisms are given by morphisms of complexes tensored by $\Q$. The category $\grp_\Q$ can be identified with a full subcategory of $\onemotgen_k$ by $G \mapsto [0 \to G]$. It is closed under extensions. This means that the bifunctor  $\Ext^1$ in $\grp_\Q$
is the same as  the bifunctor $\Ext^1$ in $\onemotgen_k$ restricted to $\grp_\Q$.

We briefly recall the universal vector extension of \cite[Constr. 10.1.7]{hodge3}. \index{universal vector extension!of a $1$-motive}
In Section~\ref{sec:universal} we introduced the universal vector extension \[ 0\to \Ext^1(G,\Ga)^\vee\to G^\natural\to G\to 0.\]  
It depended on the computation of $\Ext^1_{\grp}(G,\Ga)=\Ext^1_{\grp_\Q}(G,\Ga)$ for the additive group $\Ga$. 

\begin{lemma}
For  $M=[L\to G]$ in $\onemot_k$ there is a natural short exact sequence of $k$-vector spaces
\[ 0\to \Hom_{\mathrm{ab}}(L,\G_a)\to\Ext^1_{\onemotgen}(M,\G_a)\to\Ext^1_{\grp}(G,\G_a)\to 0.\]
In particular, they  are finite dimensional vector spaces. 
\end{lemma}

\begin{proof} 
An application of the functor $\Hom(-, [0\to\Ga])$ to the short exact sequence
\[ 0\to [0\to G]\to M\to [L\to 0]\to 0\]in $\onemotgen_k$
and identifying the 1-motives $[0\to G]$ and $[0\to \Ga]$ with $G$  and $\Ga$, respectively, by  our convention above,  induces a long exact sequence
\begin{multline}\label{eq:gl9} 
\Hom(G,\Ga)\to \Ext^1([L\to 0],\Ga)\to  \Ext^1(M,\Ga)
\to\Ext^1(G,\Ga). 
\end{multline}
The first term vanishes because $G$ is semi-abelian. Given an extension
\[ 0\to \Ga\to E\to G\to 0\]
in $\Ext^1(G,\Ga)$
we can lift the structure map $L\to G$ to $L\to E$ because $L$ is free. 
We obtain  a short exact sequence
\[ 0\to [0\to \Ga]\to [L\to E]\to [L\to G]\to 0\]
which is in $\Ext^1_{\onemotgen_k}(M,\Ga)$ because $[L\to G]=M$.
This makes the last map of (\ref{eq:gl9}) surjective.

Elements of $\Ext^1([L\to 0],\Ga)$ are of the form 
\[ 0\to [0\to\Ga]\to [L\to\Ga]\to [L\to 0]\to 0,\]
hence they can be identified with homomorphisms $L\to\Ga$. 

The group $\Ext^1(G,\Ga)$ is finite dimensional by Corollary~\ref{cor:compare_ext}. On the other hand $\dim \Hom(L,\Ga)=\rk(L)$  and the last statement follows.
\end{proof}

Our construction can be easily extended to the case when the group $\Ga$ is replaced by a finite dimensional vector space $V$. 

\begin{defn}Let $M=[L\to G]$ be in $\onemot_k$. A \emph{vector extension}\index{vector extension!of a $1$-motive}
of $M$ is an extension of the form
\[ 0\to [0\to V]\to [L\to G']\to [L\to G]\to 0\]
for a vector group $V$. 
\end{defn}

By the same arguments as in the case of semi-abelian varieties in
Section~\ref{sec:universal}, the datum of a vector extension of $M$ is equivalent to the datum of a classifying map
\[ \Ext^1(M,\Ga)^\vee\to V.\] 
The identity is a distinguished choice for $V=\Ext^1(M,\Ga)^\vee$.  

\begin{defn}\label{defn:Mnatural}  
For a 1-motive $M=[L\to G]$ in $\onemot_k$  
we call the extension  
\[ 0\to  \Ext^1(M,\Ga)\to [L\to M^\natural]\to [L\to G]\to 0\]
in $\onemotgen_k$ corresponding to 
the classifying map 
\[\id:\Ext^1(M,\Ga)^\vee\to\Ext^1(M,\Ga)^\vee\]
the \emph{universal vector extension}\index{universal vector extension!of a $1$-motive} of $M$.
\end{defn}

\begin{rem}
Our notation deviates from Deligne's.  
Our $M^\natural$ as defined in Definition~\ref{defn:Mnatural} corresponds to Deligne's $G^\natural$ and our $[L\to M^\natural]$ to his $M^\natural$.
The reason is that we want to be able to distinguish between the universal vector extension $G^\natural$ of $G$ (in $\grp_\Q$) and $M^\natural$
when discussing $M=[L\to G]$. 
\end{rem}

\begin{lemma}\label{lem:lift_inj}
The universal vector extension 
$[L\to M^\natural]$ of $[L\to G]$
is universal in the following sense: given a vector extension $[L\to G']$ there is a unique morphism
\[ [L\to M^\natural]\to [L\to G'].\]
Moreover, $L\to M^\natural$ is injective.
\end{lemma}
\begin{proof}The proof of the universal property here is the same as in the case of
abelian varieties, see Proposition~\ref{prop:universal}.

Let $l\in L$ be an element with image $0$ in $M^\natural$. A fortiori, its image in $G$ vanishes. Hence it suffices to show injectivity in the case $G=0$. Then
$\Ext^1(M,\Ga)=\Hom(L,\Ga)$,  hence
\[ M^\natural=\Hom(L,\Ga)^\vee.\]
The natural map $L\to M^\natural$ is the evaluation map $l\mapsto (\chi\mapsto \chi(l))$.
It is injective.
\end{proof}
The universal property of $G^\natural$ induces
a canonical map $G^\natural\to M^\natural$. Comparing their  vector group components, we see that we have a short exact sequence
\[0\to G^\natural\to M^\natural\to \Hom(L,\Ga)^\vee \to 0.\]
By the structure theory of commutative algebraic groups, this gives even a (non-canonical) decomposition
\[ M^\natural\isom G^\natural\times \Hom(L,\Ga)^\vee.\]
\begin{lemma}\label{lem:natural_exact} 
The universal vector extension defines faithful exact functors
\[ \onemot_k\to\grp,\quad M\mapsto M^\natural\]
and
\[ \onemot_k\to\onemotgen_k,\quad M=[L\to G]\mapsto [L\to M^\natural].\]
\end{lemma}

\begin{proof}
The dimension of $M^\natural$ is given by the formula $\rk(L)+\dim(A^\natural)+\dim(T)$
for $M=[L\to G]$ with $0\to T\to G\to A\to 0$ the decomposition into the torus part and the abelian part. If $M^\natural=0$, then $M=0$. This is faithfulness.

Let $0\to M_1\to M_2\to M_3\to 0$ be an exact sequence in $\onemot_k$.  We have the commutative diagram
{\small
\[\begin{xy}\xymatrix{
0\ar[r]&\Ext^1(M_1,\Ga)^\vee\ar[r]\ar[d]&\Ext^1(M_2,\Ga)^\vee\ar[r]\ar[d]&\Ext^1(M_3,\Ga)^\vee\ar[d]\ar[r]&0\\
0\ar[r]&M_1^\natural\ar[r]\ar[d]&M_2^\natural\ar[r]\ar[d]& M_3^\natural\ar[r]\ar[d]&0\\
0\ar[r]&G_1\ar[r]&G_2\ar[r]& G_3\ar[r]&0\
}\end{xy}\]
}
in which the sequence of semi-abelian varieties is the sequence of the semi-abelian parts of the motives. In the Ext-sequence the composition of the two maps vanishes by functoriality. In order to deduce exactness, it suffices to show that the dimensions add up in the right way. This is the case, because the dimension of $\Ext^1(M,\Ga)$ is linear in the dimension of the constituents. Together this implies exactness in the middle row.
\end{proof}

\subsection{The de Rham Realisation}

\begin{defn}
Let $M$ be a $1$-motive over $k$.
 We define the \emph{de Rham realisation of $M$}\index{de Rham realisation!of a $1$-motive} as
\[ V_\dR(M):=\Lie(M^\natural)\]
with $M^\natural$ as in Definition~\ref{defn:Mnatural}.
\end{defn}

The de Rham realisation carries a weight filtration \index{weight filtration}in the same way as the singular realistion, and in addition a \emph{Hodge filtration}\index{Hodge filtration}
\[ F^pV_\dR(M)=\begin{cases}0&p>0\\
                           \ker(\Lie(M^\natural)\to \Lie(G))& p=0\\
                            V_\dR(M)&p\leq -1,
               \end{cases}\]
see \cite[Constr. 10.1.8]{hodge3}. Note that $F^0V_\dR(M)=\Ext^1_{\onemotgen}(M,\G_a)^\vee$. 

\begin{lemma}
The functor $V_\dR:\onemot_k\to k\Vect$ is faithful and exact.
\end{lemma}
\begin{proof}Exactness follows from the exactness of the functors
$M\mapsto M^\natural$ and $G\mapsto \Lie(G)$. Hence it suffices to check that
$V_\dR(M)=0$ implies $M=0$ in $\onemot_k$. The assumption implies
$M^\natural=0$ and in consequence that $G$ is trivial. In this case
$M^\natural=\Hom(L,\Ga)^\vee$. Its vanishing implies also $L=0$.
\end{proof}

\subsection{The Period Isomorphism}
In addition to the two realisations, there is a filtered comparison isomorphism $V_\sing(M)_\C\isom V_\dR(M)_\C$, the \emph{period isomorphism},\index{period isomorphism!of a $1$-motive} which is constructed as follows: the structural map $L\to G$ has a canonical lift to the universal vector extension $M^\natural$, see Definition~\ref{defn:Mnatural}. We obtain the commutative diagram
\[\begin{xy}\xymatrix{
&H_1^\sing(M^{\natural,\an},\Z)\ar[d]\ar[r]^{\isom}&H_1^\sing(G^\an,\Z)\ar[d]\\
&\Lie(M^\natural)_\C\ar[d]^{\exp}\ar[r]&\Lie(G)_\C\ar[d]^{\exp}\\
L\ar[r]&M^{\natural,\an}\ar[r]&G^\an
}\end{xy}\]
The map at the top is an isomorphism by homotopy invariance because $M^\natural$ is a  vector bundle over $G$. 
        Hence the pull-back $T_\sing(M)=L\times_{G^\an}\Lie(G)_\C $ of $L\to G^\an$ to $\Lie(G)_\C$  agrees with the pull-back $L\times_{M^{\natural,\an}}\Lie(M^\natural)_\C$
of $L\to M^{\natural,\an}$ to $\Lie(M^\natural)_\C$.
 Let
\[ \phi_M:V_\sing(M)_\C\to \Lie(M^\natural)_\C\]
be the map obtained by this identification of pull-backs.

\begin{lemma}[Deligne {\cite[Constr. 10.1.8]{hodge3}}]\label{lem:filtered_iso}
The morphism $\phi_M$ is a filtered isomorphism.
\end{lemma}
\begin{proof}It is sufficient to consider the three cases $[L\to 0]$, $[0\to T]$ for a torus $T$ and $[0\to A]$ for an abelian variety $A$ separately. If they are isomorphisms, then $\phi_M$ is a filtered isomorphism in general.

For $M=[L\to 0]$, we have $\Vsing{M}=L\tensor\Q$, $V_\dR(M)=\Hom(L,\Ga)^\vee$
and the period map is the natural map, hence an isomorphism after extension of scalars.

For $M=[0\to T]$, we have $T_\sing(M)=H_1(T^\an,\Z)$, $M^\natural=T$ and the period map is induced from the inclusion $H_1(T^\an,\Z)\to\Lie(T)^\an$. It is an isomorphism after extension of scalars to $\C$.

We now turn to the case $M=[0\to A]$. We have $V_\dR(M)\tensor_k\C=\Lie(A^\natural)^\an$.  The complex Lie group $(A^\natural)^\an$ is the universal vector extension of $A^\an$ because $H^1(A_\C,\Oh)=H^1(A^\an,\Oh)$. To simplify notation we now write $A$ instead of $A^\an$ and also use the abbreviations   $T=T_\sing(M)\isom H_1(A,\Z)$
and $T_\C=T\tensor_\Z\C$. We want to show that
\[ \phi:T_\C\to \Lie(A^\natural)\]
is an isomorphism. Both sides have the same dimension $2\dim A$. The group $T\subset T_\C$ is in the kernel of the composition $T\to \Lie(A^\natural)\xrightarrow{\exp_{A^\natural}} A$. We get an induced vector extension
\[ T_\C/T\to A.\]
It suffices to show that it is the universal one. Let
\[ 0\to V\to G\to A\to 0\]
be a vector extension. By homotopy invariance, the map $T\to\Lie(A)$ lifts
to $T\to \Lie(G)$. (This is the same argument that we used to construct
 $\phi: T\to\Lie(A^\natural)$ in the first place.) It induces a $\C$-linear map
\[ T_\C\to\Lie(G)\]
and a holomorphic group homomorphism
\[ T_\C/T\to G.\]
In conclusion we have verified that $T_\C/T$ satisfies the universal property.
\end{proof}

\section{The Functor  to Mixed Hodge Structures}\label{sec:hodge}
Deligne introduced the category of $1$-motives because of its close relation to Hodge theory. We use a modification that also takes the $k$-structure into account. 

\subsection{The Category of Mixed Hodge Structures}\label{ssec:mhs}

All filtrations on vector spaces are \emph{separated} and \emph{exhaustive}, i.e. they start with $0$ and end with the full space.
If $W_\bullet V$ is an ascending filtration, we write
$\gr_n^WV=W_nV/W_{n-1}V$. If $F^\bullet V$ is a descending filtration, we write
$\gr^p_FV=F^pV/F^{p+1}V$.

\begin{defn}[{Deligne~\cite{hodge2}}]\label{defn:mhs}
\index{mixed Hodge structure}
Let $k\subset\C$ be a subfield.
A \emph{mixed $\Q$-Hodge structure defined over $k$} consists of
\begin{itemize}
\item a finite dimensional $\Q$-vector space $V_\Q$ equipped with an ascending filtration $W_\bullet V_\Q$, the \emph{weight filtration},
\item a finite dimensional $k$-vector space $V_\dR$ equipped with an ascending filtration $W_\bullet V_\dR$ and a descending filtration $F^\bullet V_\dR$, the \emph{Hodge filtration},
\item a filtered isomorphism $\phi:(V_\Q,W_\bullet)\tensor_\Q\C\to (V_\dR,W_\bullet)\tensor_k\C$
\end{itemize}
such that for every $n\in\Z$ the data 
$(\gr_n^WV_\Q,\gr_n^W V_\C,\gr_n^W\phi)$ with $V_\C=V_\dR\tensor\C$ and  the induced Hodge filtration is a pure Hodge structure of weight $n$. This means that
\[ \bigoplus_{p+q=n} (F^p\cap \bar{F}^q)(\gr_n^WV_\C)=\gr_n^WV_\C.\]
Here $\bar{F}^q$ is the complex conjugate of $F^q$ with respect to the $\R$-structure induced by $\phi(V_\Q)$.

A \emph{morphism} $f:V\to V'$ of mixed $\Q$-Hodge structures over $k$ consists of a filtered $\Q$-linear map $f_\Q:V_\Q\to V'_\Q$ and a bifiltered $k$-linear map $f_\dR:V_\dR\to V'_\dR$ compatible with the period isomorphisms of $V$ and $V'$.

We denote the category of mixed $\Q$-Hodge structures over $k$ by $\MHS_k$.
\end{defn}
The category is obviously additive and $\Q$-linear. Less obviously it is even abelian, see \cite[Th\'eor\`eme~1.2.10]{hodge2}.

\begin{ex} Let $X$ be a smooth projective variety over $\C$. By the Hodge Decomposition Theorem, every cohomology class has a unique harmonic representative. The decomposition of harmonic forms into $pq$-forms gives
\[ H^n_\sing(X^\an,\C)\isom\bigoplus_{p+q=n} H^{pq}\]
satisfying $\overline{H^{pq}}=H^{qp}$. With the Hodge filtration
$F^pH^n_\sing(X^\an,\C)=\bigoplus_{p'\geq p}H^{p'q'}$ this turns
$H^n_\sing(X^\an,\Q)$ into a pure $\Q$-Hodge structure of weight $n$. Alternatively, the Hodge filtration is induced by the stupid filtration
\[ F^p\Omega^*_X=[0\to\dots \to 0\to \Omega^p_X\to\dots] \]
on the complex of holomorphic or algebraic differential forms on $X$. If $X$ is defined over a subfield of $k$, this point of view allows us to also define  the Hodge filtration over the subfield. 
\end{ex}

The main result of \cite{hodge2} and \cite{hodge3} is the construction of a natural mixed Hodge structure on the cohomology of any algebraic variety over $k$.

\subsection{The Relation with $1$-Motives}
By Lemma~\ref{lem:filtered_iso},  the assignment $M\mapsto (V_\sing(M),V_\dR(M),\phi_M)$
defines a functor 
\[ V:\onemot_k\to\MHS_k\]
from the category of iso-$1$-motives to
the category of mixed Hodge structures over $k$.

\begin{thm}[Deligne {\cite[Construction~10.1.3, pp.~53--56]{hodge3}}]\label{thm:del_equiv}\index{mixed Hodge structure! of a $1$-motive}
For $k=\C$, the functor $V:\onemot_\C\to \MHS_\C$ is fully faithful. Its image consists of the full subcategory of  the polarisable Hodge structures 
whose only non-zero Hodge numbers are
$(-1,-1), (-1,0), (0,-1), (0,0)$.
\end{thm}
Note that a mixed Hodge structure with Hodge numbers as above is polarisable if and only if the graded piece in weight $-1$ is polarisable.
Using the Analytic Subgroup Theorem Deligne's result can be sharpened:

\begin{prop}\label{prop:hodge_ff} 
In the case $k=\Qbar$, the functor $V:\onemot_\Qbar\to \MHS_\Qbar$ is fully faithful.
\end{prop}
\begin{proof}Let $M=[L\to G],M'=[L'\to G']$  be in $\onemot_\Qbar $ and 
\[ \gamma:V(M)\to V(M')\]
 a morphism
of Hodge structures over $\Qbar$. By extension of scalars we get
a morphism of Hodge structures 
\[ (V_\dR(M)_\C,V_\sing(M),\phi)\xrightarrow{\gamma_\C} (V_\dR(M')_\C,V_\sing(M'),\phi)\]  over $\C$. By Deligne's theorem $\gamma_\C$ is induced by
a  morphism $\gamma_{\mathrm{Mot}}$ in $\onemot_\C$ and after replacing $L$  by a rational multiple it is represented by
\[ \gamma_{\mathrm{Mot}}: [L\to G_\C]\to [L'\to G'_\C].\]
It remains to show that the induced morphism of algebraic groups $G_\C\to G'_\C$ is even defined over $\Qbar$. Using that
\[\Lie(G)= V_\dR(M)/F^0V_\dR(M)\]
the morphism of Hodge structures over $\Qbar$ also induces a compatible homomorphism
\[ \Lie(G)\to\Lie(G')\]
defined over $\Qbar$. By the Analytic Subgroup Theorem (see Corollary~\ref{cor:class_hom_algebraic}) this is enough to imply that the group homomorphism is defined over $\Qbar$.
\end{proof}
By composition with the forgetful functor
$\MHS_\Qbar\to\VVarg{\Q}{k}$ (see Definition~\ref{defn:VV}) we also get a functor
\[ \onemot_\Qbar\to\VVarg{\Q}{k}.\]
We shall show in Theorem~\ref{thm:fulness_VV} that it is still fully faithful.

\begin{rem}
In the meantime  Andr\'e has   shown in \cite{andre4} that 
$\onemot_k\to\MHS_\C$ is fully faithful even for  all algebraically closed fields $k\subset\C$.
\end{rem}

\subsection{The Weight Filtration}
\index{weight filtration}
The weight filtration on the Hodge structure is \emph{motivic}, i.e. induced by a filtration on the motive.

\begin{defn}Let $M=[L\to G]$ be a $1$-motive, $G$ an extension of the abelian variety $A$ by a torus $T$, and define the \emph{weight filtration of $M$} as
\[ W_nM=\begin{cases} 0&n\leq -3,\\
                     [0\to T]& n=-2,\\
                       [0\to G]&n=-1,\\
                         M&n\geq 0.
          \end{cases}
\]
\end{defn}
The associated gradeds of this filtration are
$[0\to T]$, $[0\to A]$ and $[L\to 0]$. The simple buidlings blocks are $\Gm$,
$[0\to B]$ for simple abelian varieties $B$ and $[\Z\to 0]$. The functors
$W_n:\onemot_k\to\onemot_k$ are exact. This is often used to deduce the existence of splittings. For example:
\begin{lemma}Let $[L\to G]$ be a $1$-motive, $[L'\to 0]\hookrightarrow M$ be an injective morphism. Then 
\[ M\isom [L'\to 0]\times [L''\to G]\]
where $L''=L/L'$ (modulo torsion).
\end{lemma}
\begin{proof}
We choose $L\isom L'\times L''$ (after replacing $L$ by an isogenuous lattice). The natural map $[L''\to G]\to M$ together with the given $[L'\to 0]\to M$
define $[L'\to 0]\times [L''\to G]\to M$. This map is an isomorphism because
it is an isomorphism on all gradeds with respect to the weight filtration.
\end{proof}

\section{The Key Comparison}\label{sec:key}

We come back to the functor $(\cdot)^\natural:M\mapsto M^\natural$ from $1$-motives to commutative algebraic groups. By Lemma~\ref{lem:natural_exact} it is faithful and exact. 

\begin{prop}\label{prop:subquot_is_mot}
Let $H$ be an object of $\grp$ of the form $H=M^\natural$ for a $1$-motive $M$.
 Given a short exact sequence
\[ 0\to H_1\to H\to H_2\to 0\]
in $\grp$, there is a short exact sequence
\[ 0\to M_1\to M\to M_2\to 0\]
in $\onemot_k$ and a commutative diagram
\[\begin{xy}\xymatrix{
0\ar[r]&H_1\ar[r]&H\ar[r]\ar@{=}[d]&H_2\ar[r]&0\\
0\ar[r]&M^\natural_1\ar[r]\ar@{^(->}[u]&M^\natural\ar[r]&M^\natural_2\ar[r]\ar@{->>}[u]&0
}\end{xy}\]
such that
\[ V_\sing(M)\cap \Lie(H_1)_\C=V_\sing(M_1).\] 
The sequence is uniquely determined by these properties.
\end{prop}
\begin{proof}
Now let $M=[L\to G]$ be a $1$-motive and  
\[ 0\to H_1\to M^\natural\to H_2\to 0\]
a short exact sequence of connected commutative algebraic groups. 
By the structure theory of
commutative algebraic groups, there are  canonical decompositions
\[ 0\to V_i\to H_i\to G_i\to 0\]
with $G_i$ semi-abelian and $V_i$ a vector group. Moreover, the sequence 
\[ 0\to G_1\to G\to G_2\to 0\]
is exact. The data organise as 
\[\begin{xy}\xymatrix{
0\ar[r]&V_1\ar[r]\ar@{_(->}[d]&V\ar[r]\ar@{_(->}[d]&V_2\ar[r]\ar@{_(->}[d]&0\\
0\ar[r]&H_1\ar[r]\ar@{->>}[d]&M^\natural\ar[r]\ar@{->>}[d]&H_2\ar[r]\ar@{->>}[d]&0\\
0\ar[r]&G_1\ar[r]&G\ar[r]&G_2\ar[r]&0
}.\end{xy}\]

Let $L\to M^\natural$ be the canonical lift from $G$ to $M^\natural$, see
Definition~\ref{defn:Mnatural}.
 Note that it is injective, by Lemma~\ref{lem:lift_inj}.
 We define $L_1$ as the intersection of $L$ with $H_1$ and $L_2$ as $L/L_1$ (modulo torsion).
 By construction,
$L_1\to L\to G$ factors via $G_1$ and $L\to G\to G_2$ via $L_2\to G_2$.
We put $M_i=[L_i\to G_i]$. By construction the sequence
\[ 0\to M_1\to M\to M_2\to 0\]
is exact and there are  maps $L_i\to H_i$. By the universal property of $M_i^\natural$
this induces morphisms $M_i^\natural\to H_i$, compatible with the morphisms
to/from $M^\natural$. For $i=1$, the composition
$M_1^\natural\to H_1\to M^\natural$ is injective, hence so is $M_1^\natural \to H_1$. The dual argument gives surjectivity of $M_2^\natural\to H_2$.

We abbreviate $T_\sing(H_1)=T_\sing(M)\cap \Lie(H_1)_\C$.
Since $L_1\to H^\an_1$ is the pull-back of $L\to M^{\natural,\an}$
and $\Lie(H_1)_\C\to \Lie(M^\natural)_\C$ is the pull-back of
$H_1\to M^\natural$ via the exponential map, we deduce that
\[ T_\sing(H_1)=T_\sing(M)\cap \Lie(H_1)_\C=\exp_{H_1}^{-1}L_1.\]
It follows that the sequence
\[ 0 \to \ker(\exp_{H_1})\to T_\sing(H_1)\to L_1\to 0\]
is exact.
We compare it with the same sequence for $M_1^\natural$. In both cases, the kernel computes $H_1^\sing(G_1,\Z)$ because they are vector groups over $G_1$. Hence they are the same, which implies the claim $T_\sing(H_1)\isom T_\sing(M_1)$ and hence $\Vsing{M_1}\isom \Vsing{M}\cap \Lie(H_1)_\C$.

Given a second short exact sequence of motives
\[ 0\to M'_1\to M\to M'_2\to 0\]
with the same properties, the assumptions imply $\Vsing{M_1}=\Vsing{M'_1}$ inside $\Lie(H_1)_\C$. Without loss of generality even $T_\sing(M_1)=T_\sing(M'_1)$. Let $M'_1=[L'_1\to G'_1]$. We have $G'_1\subset G_1$ because this is the semi-abelian part of $H_1$. We also have $L'_1\subset L_1=L\cap H_1$. This means $M'_1\subset M_1$. We get equality of iso-$1$-motives from equality of the singular realisations.
\end{proof}
It is shown later (see Theorem~\ref{thm:ff}) that the image
of the functor $\onemot_\Qbar\to \MHS_\Qbar$ is closed under subquotients. 

\begin{rem}
\begin{enumerate}
\item
The above result is the central input into our proof of the main result of our monograph: a version of the Subgroup Theorem for $1$-motives in Theorem~\ref{thm:annihilator_mot}.
\item The proof gives even an integral construction of $M_1$ in a suitable abelian enlargement of the category of $1$-motives such that 
\[ T_\sing(M)\cap \Lie(H_1)_\C=T_\sing(M_1).\]
We do not need this for our applications to periods. 
\end{enumerate}
\end{rem}


\chapter{Periods of $1$-Motives}\label{sec:periods_eins}

In this chapter we introduce the set of periods of a $1$-motive as obtained from the comparison isomorphism between the singular and the de Rham realisation of the motive.  Then in Section \ref{sec:periods1} an alternative  description of periods as explicit integrals will be derived. This turns out to be  more useful in transcendence theory. As a next step  we  determine in Section \ref{sec:relations} the relations between periods. This is  a classical open question which has been answered in the past in very special cases only. As a first main result of the Chapter  we show that all relations  are induced by  trivial ones. This gives an answer to Kontsevich's Period Conjecture in the case of $1$-motives. For this we need an extension of the Subgroup Theorem to $1$-motives. As a first application, we given in Section \ref{sec:transcendence1} a necessary and sufficient condition for periods of 1-motives to be algebraic.

\section{Definition and First Properties}\label{sec:periods1}

Let $M=[L\to G]$ be a $1$-motive over $\Qbar$. 

\begin{defn}\label{defn:period_eins}\index{periods!of a $1$-motive}
The set of \emph{periods of $M$} is the union of the sets of entries of the period matrices of the comparison isomorphism $V_\sing(M)_\C\to V_\dR(M)_\C$ with respect to all $\Q$-bases of $V_\sing(M)$ and $\Qbar$-bases of $V_\dR(M)$. We denote it $\Per(M)$ and by
$\Per\langle M\rangle$ the $\Qbar$-subvector space of $\C$ generated by
$\Per(M)$.

For any subcategory $\Ch\subseteq\onemot_\Qbar$ we write $\Per(\Ch)$ for the union of the
$\Per(M)$ for all objects $M\in C$ and  $\Per\langle \Ch\rangle$ for the vector space over $\Qbar$ generated by $\Per(\Ch)$. In particular we write  $\Per(\onemot_\Qbar)$ for the union of all $\Per(M)$.
\end{defn}
Equivalently, we can define $\Per(M)$ as the image of the period pairing \index{period pairing!for a $1$-motive}
\[ \Vsing{M}\times \VdR{M} \to\C.\]
This description makes clear that it is not a vector space.
We write $\omega(\sigma)$ or more suggestively $\int_\sigma\omega$  for the value of the period
pairing for $\sigma\in \Vsing{M}$ and $\omega\in \VdR{M}$. The notation will be justified  later.

Note that this definition is an instance of the abstract definition of periods in Chapter~\ref{ch:formalism}.

\begin{lemma}\label{lem:vsp}
The vector space $\Per\langle M\rangle$ agrees with the set of periods
of the additive subcategory generated by $M$ and  with the set of
periods of the full abelian subcategory closed under subquotients generated by $M$.
The set $\Per(\onemot_\Qbar)$ is a $\Qbar$-subvector space of $\C$.
\end{lemma}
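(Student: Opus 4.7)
The plan is to reinterpret $\Per(M)$ concretely as values of the period pairing, then exploit bilinearity together with direct sums to deduce all three claims in sequence.

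First, I would observe that the $(i,j)$-entry of $\phi_M$ in a $\Qbar$-basis $\{v_i\}$ of $V_\dR(M)$ and a $\Q$-basis $\{\sigma_j\}$ of $\Vsing{M}$ equals $v_i^\vee(\phi_M(\sigma_j))$, the value of the period pairing $\VdR{M}\times\Vsing{M}\to\C$ on these basis elements. Since every non-zero vector of $\Vsing{M}$ (resp.\ $\VdR{M}$) extends to a $\Q$-basis (resp.\ $\Qbar$-basis), this identifies $\Per(M)$ with the set of pairings $\omega(\sigma)$ for $\omega\in\VdR{M}$, $\sigma\in\Vsing{M}$ (together with $0$).

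For the first claim, both $V_\sing$ and $V_\dR$ (hence also $V_\dR^\vee$) commute with finite direct sums. A period of $M^n$ is therefore $\omega(\sigma)=\sum_i\omega_i(\sigma_i)\in\Per\langle M\rangle$ for some $\omega_i\in\VdR{M}$, $\sigma_i\in\Vsing{M}$. Conversely, given $\lambda_1,\ldots,\lambda_n\in\Qbar$ and periods $p_i=\omega_i(\sigma_i)$ of $M$, the element $\sum_i\lambda_ip_i$ is realised as the single period $\omega(\sigma)$ of $M^n$ with $\omega=(\lambda_1\omega_1,\ldots,\lambda_n\omega_n)\in\VdR{M^n}$ and $\sigma=(\sigma_1,\ldots,\sigma_n)\in\Vsing{M^n}$; the key point here is that $\VdR{M}$ is already a $\Qbar$-vector space, allowing the $\Qbar$-coefficients to be absorbed on the de Rham side.

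For the second claim I would invoke exactness of the realisation functors: $V_\dR$ is exact by Proposition~\ref{prop:subquot_is_mot} together with exactness of $\Lie$ on $\grp$, and $V_\sing$ is exact via the full faithfulness of $\onemot_\Qbar\to\MHS_\Qbar$ (Proposition~\ref{prop:hodge_ff} combined with Theorem~\ref{thm:del_equiv}). Hence for any subquotient $N$ of some $M^n$, both $\Vsing{N}$ and $\VdR{N}$ are subquotients of $\Vsing{M^n}$ and $\VdR{M^n}$ respectively; lifting elements and using functoriality of the pairing gives $\Per(N)\subseteq\Per(M^n)\subseteq\Per\langle M\rangle$. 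Since the smallest full abelian subcategory of $\onemot_\Qbar$ containing $M$ and closed under subquotients consists precisely of the subquotients of the various $M^n$, this, combined with the reverse inclusion from the first claim, gives the second equality.

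Finally, $\Per(\onemot_\Qbar)$ is a $\Qbar$-subspace of $\C$: any finite $\Qbar$-linear combination of periods $p_i\in\Per(M_i)$ lies in $\Per\langle \bigoplus_iM_i\rangle\subseteq\Per(\onemot_\Qbar)$ by the first claim applied to $\bigoplus_iM_i$. The argument as a whole is straightforward; the only conceptual point worth flagging is the asymmetric role of $\VdR{M}$ (already a $\Qbar$-vector space) versus $\Vsing{M}$ (only $\Q$-linear), which dictates that $\Qbar$-scalars must always be absorbed on the de Rham side.
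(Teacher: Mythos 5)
Your proposal is correct and follows essentially the same route as the paper's (very terse) proof, which defers the details to \cite[Proposition~11.2.4]{period-buch}: realize the period pairing concretely, absorb $\Qbar$-scalars into $\VdR{\cdot}$, realize sums on direct sums, and observe that passing to subquotients (via exactness and covariance of $V_\sing$ and $V_\dR$) neither adds nor removes periods. You fill in the exactness step more explicitly than the paper, which is reasonable; no gap.
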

\begin{proof}Apply Lemma~\ref{lem:vsp_abstract} to $\Ch=\onemot_\Qbar$
and $V:\onemot_\Qbar\to\VVarg{\Q}{k}$.
\end{proof}

We want to make the definition of $\int_\sigma\omega$ explicit. Let $M=[L\to G]$ be  a $1$-motive over $\Qbar$. Recall that
$V_\dR(M)$ was defined as $\Lie(M^\natural)$ for a certain vector extension $M^\natural$ of $G$. Hence
$\VdR{M}=\coLie(M^\natural)$. Every cotangent vector of $M^\natural$ defines a unique $M^\natural$-equivariant differential form on $M^\natural$ and any
equivariant global differential arises in this way. Therefore we may make the identification
\[ \VdR{M}\isom \Omega^1(M^\natural)^{M^\natural}.\]
with the space of invariant differentials and we may from now on  view $\omega$ as a differential form on $M^\natural$.
Recall also that $T_\sing(M)$ is a lattice in $\Lie(M^{\natural,\an})$.     The latter maps to $M^{\natural,\an}$ via the exponential map for the Lie group $M^{\natural,\an}$, see Section~\ref{ssec:exp}. Hence a vector field $u\in T_\sing(M)$  defines a point
\[ \exp(u)\in M^{\natural,\an}\]
and by composition of a straight path from $0$ to $u$ with the exponential map a path 
\[ \gamma_u:[0,1]\to M^{\natural,\an}\]
from $0$ to $\exp(u)$.
The period pairing is then computed as
\[ \int_u\omega=\int_{\gamma_u}\omega\]
where the right hand side is an honest integral on a manifold; see Section~\ref{ssec:paths} for more details.
In particular, for $u\in\ker(\exp:\Lie(M^{\natural,\an})\to M^{\natural,\an})$, the path $\gamma_u$ is closed and hence it defines an element of $H_1(M^{\natural,\an},\Z)\isom H_1(G^\an,\Z)$. Conversely, every element $\sigma\in H_1(M^{\natural,\an},\Z)$ is represented by a formal linear combination 
$\sigma=\sum a_i\gamma_i$ of closed loops $\gamma_i$. The cycle defines a linear map
\begin{align*} I(\sigma):\coLie(M^{\natural,\an})&\to \C\\
                      \omega&\mapsto \sum a_i\int_{\gamma_i}\omega.
\end{align*}
In other words, $I(\sigma)\in\Lie(M^{\natural,\an})^{\vee\vee}\isom \Lie(M^{\natural,\an})$. As spelled out in Section~\ref{ssec:paths}, the two operations are inverse to each other: 
$I(\gamma_u)=u$ for $u\in\ker(\exp)$ and $\gamma(I(\sigma))$ is homologous to $\sigma\in H_1(M^{\natural,\an},\Z).$

\begin{lemma}\label{alg}
We have $\exp(u)\in M^\natural(\Qbar)$.
\end{lemma}
\begin{proof}
By definition, we have the commutative diagram 
\[\xymatrix{
T_\sing(M)\ar[r]\ar[d]&\Lie(M^{\natural,\an})\ar[d]^\exp\\
L\ar[r]&M^{\natural,\an}
}\]
which we evaluated  at $u\in T_\sing(M)$. The value is in $M^\natural(\Qbar)$ because $L$ takes values there.
\end{proof}

For the record, the above argument proves:

\begin{prop}\label{prop:explicit}
Every period of a $1$-motive is of the form
\[ \int_\gamma\omega\]
where $\omega$ is an algebraic $1$-form on a commutative algebraic group $G$ over $\Qbar$ and $\gamma$ is a path from $0$ to a point $P\in G(\Qbar)$.
\end{prop}

\begin{ex}\label{ex:algebraic}
Let $M$ be $[\Z\to 0]$. Then $M^\natural=\Ga$, $\Vsing{M}=\Q$, $\VdR{M}=\coLie(\Ga)=\Qbar dt$ and the period map sends $1$ to $\frac{\partial}{\partial t}$. The image of the period pairing $\Vsing{M}\times\VdR{M}\to \C$ is simply $\Qbar$. In terms of integration: an element $u\in T_\sing(M)\subset\Lie(\Ga)_\C$ gives rise to a path $\gamma_u$ from $0$ to $u$ in $\C$. An element $\omega\in \VdR{M}$ is identified with a differential form $\alpha dt$ on $\Ga$ and the period is
\[ \int_0^u\alpha dt=\alpha u\in\Qbar.\]
More generally, all periods of motives of the form $[L\to 0]$ are algebraic.
\end{ex}

We are going to see many more explicit examples in subsequents chapters.

\section{Relations between Periods}\label{sec:relations}

There are two types of obvious sources of relations between periods of $1$-motives:
\begin{enumerate}
\item (Bilinearity)
Let $M$ be a $1$-motive over $\Qbar$, $\sigma_1,\sigma_2\in \Vsing{M}$ and 
take $\omega_1,\omega_2\in \VdR{M}$, $\mu_1,\mu_2\in\Q$, $\lambda_1,\lambda_2\in\Qbar$. Then
\[ \int_{\mu_1\sigma_1+\mu_2\sigma_2}(\lambda_1\omega_1+\lambda_2\omega_2)=\sum_{i,j=1,2}\mu_i\lambda_j\int_{\sigma_i}\omega_j.\]
\item (Functoriality) Let $f:M\to M'$ be a morphism in  $\onemot_\Qbar$. Let
$\sigma\in \Vsing{M}$ and $\omega'\in \VdR{M}$. Then
\[ \int_\sigma f^*\omega'=\int_{f_*\sigma}\omega'.\]
\end{enumerate}

As a special case we get the relations coming  from short exact sequences: consider
\[ 0\to M'\xrightarrow{i} M\xrightarrow{p} M''\to 0\]
in $\onemot_k$. Then the period matrix for $M$ will be block triangular, hence will contain  plenty of zeroes. Explicitly: for $\sigma'\in \Vsing{M'}$ and
$\omega''\in \VdR{M''}$, we have
\[ \int_{i_*\sigma'}p^*\omega''=\int_{\sigma'}i^*p^*\omega''=\int_{p_*i_*\sigma'}\omega''=0\]
because $i^*p^*\omega''=0$ and $p_*i_*\sigma'=0$.

This is actually the \emph{only} source of relations between  periods of $1$-motives as we show.

\begin{defn}
For $u\in V_\sing(M)$ we define $\Ann(u)\subseteq \VdR{M}$ as the left kernel of the period pairing.
\end{defn}

\begin{thm}[Subgroup Theorem for $1$-motives]\label{thm:annihilator_mot}
\index{Subgroup Theorem for $1$-motives}\index{vanishing of periods}\index{periods!vanishing}
Given a $1$-motive $M$ over $\Qbar$  and $u\in \Vsing{M}$,
there exists an exact sequence in $\onemot_\Qbar$
\[0\rightarrow M_1\xrightarrow{i} M \xrightarrow{p} M_2\rightarrow 0\] 
such that $\Ann(u)=p^*\VdR{M_2}$ and $u\in i_*\Vsing{M_1}$. It is uniquely determined by these properties.
\end{thm}
\begin{proof}We consider the connected commutative algebraic group
$M^\natural$. Without loss of generality, $u\in T_\sing(M)$. By Lemma~\ref{alg}, $u\in \Lie(M^\natural)_\C$ with $\exp(u)\in M^\natural(\Qbar)$. We apply the Analytic Subgroup Theorem in the version Theorem~\ref{thm:annihilator}. Hence there is a short exact sequence in $\grp$
\[0\to H_1\to M^\natural\xrightarrow{\pi} H_2\to 0\]
such that $u\in \Lie(H_1)_\C$ and $\Ann(u)=\pi^*\coLie(H_2)$. By Proposition~\ref{prop:subquot_is_mot}, we find an associated short sequence in $\onemot_\Qbar$
\[ 0\to M_1\to M^\natural\to M_2\to 0\]
such that $T_\sing(M_1)=T_\sing(M)\cap \Lie(H_1)_\C$. In particular,
$u\in T_\sing(M_1)$. Because of the short exact sequence of motives, we have 
$\coLie(M_2^\natural)=\VdR{M_2}\subset\Ann(u)$. On the other hand, Proposition~\ref{prop:subquot_is_mot}  also gives a surjection $M_2^\natural\to H_2$, hence
\[ \Ann(u)=\coLie(H_2)\subset \coLie(M_2^\natural)\subset\Ann(u).\]
This implies equality of co-Lie algebras and hence even $M_2^\natural\isom H_2$.

Suppose that there is a second short exact sequence 
\[ 0\to M'_1\xrightarrow{i'} M\xrightarrow{p'}M'_2\to 0\]
with the same properties. In particular $p'^*\VdR{M'_2}=p^*\VdR{M_2}$ inside
$\VdR{M}$. This means that $M_2^\natural={M'}_2^\natural$ as quotients of $M^\natural$. As the functor $\cdot^\natural$ is faithful, this gives $M_2=M'_2$ as quotients of $M$.
\end{proof}

\begin{rem}
The proof shows that the decompositions in terms of algebraic groups (Theorem~\ref{thm:annihilator}) and in terms of $1$-motives agree.
\end{rem}

Most of the time we apply the theorem through the following consequence:

\begin{cor}
Given a $1$-motive $M$ over $\Qbar$, $u\in\Vsing{M}$, $\omega\in\VdR{M}$ such that $\int_u\omega=0$. Then there is a short exact sequence
\[ 0\to M_1\xrightarrow{i} M\xrightarrow{p} M_2\to 0\]
of $1$-motives and $u_1\in\Vsing{M_1}$, $\omega_2\in\VdR{M_2}$ such that
$u=i_*u_1$, $\omega=p^*\omega_2$.
\end{cor}
\begin{proof}We apply the theorem. This already gives the existence of
$u_1$.
By assumption, we have $\omega\in\Ann(u)$, hence $\omega$ is in the image of
$p^*$.
\end{proof}

Our main aim is to prove the following, which was formulated as a conjecture in
\cite{wuestholz}. We remind that $\langle M\rangle$ is the full subcategory of
$\onemot_\Qbar$ generated by $M$ and closed under subquotients.

\begin{thm}[Kontsevich's Period Conjecture for $1$-motives]\label{thm:main_one}
\index{Period Conjecture!for $1$-motives}
All $\Qbar$-linear relations between elements of $\Per(\onemot_\Qbar)$ are induced by bilinearity and functoriality.

More precisely, for every $1$-motive $M$ the relations between elements of $\Per(M)$ are generated by bilinearity and functoriality for morphisms in  $\langle M\rangle$, or equivalently, morphisms of the induced mixed Hodge structures over $\Qbar$.
In other words, the Period Conjecture in the sense of Definition~\ref{defn:conj_abstract} holds for
$\langle M\rangle$.
\end{thm}

\begin{proof}
We consider a linear relation between periods.
 For $i=1,\dots,n$ let
 $\alpha_i=\int_{\sigma_i}\omega_i$ be periods for $1$-motives $M_i$ and
let $\lambda_i\in \Qbar$  be such that
\begin{equation}\label{eq:sum} \lambda_1\alpha_1+\lambda_2\alpha_2+\dots+\lambda_n\alpha_n=0.
\end{equation}
We have already argued in Lemma~\ref{lem:vsp_abstract}  that
a linear combination of periods can be represented as single  period. We now have to go through the construction carefully in order to check that no relations other than bilinearity and functoriality are used. 

We put $M=M_1\oplus\dots\oplus M_n$. By pull-back via the projection, we can
view each $\omega_i$ as an element of $\VdR{M}$. By push-forward via the inclusion
we may view each $\sigma_i$ in $\Vsing{M}$. We then put $\sigma=\sum \sigma_i$ 
and $\omega=\sum\lambda_j\omega_j$. From the additivity relation we deduce
\[ \int_\sigma\omega=\sum_{i,j}\lambda_j\int_{\sigma_i}\omega_j.\]
The functoriality relation leads to $\int_{\sigma_i}\omega_j=0$ for $i\neq j$ 
and gives $\alpha_i$ for $i=j$. Hence the left hand side of (\ref{eq:sum})
equals $\int_\sigma\omega$. 

We are now in the situation 
\[ \int_\sigma\omega=0\]
 on the $1$-motive $M$. In other words, $\omega\in\Ann(\sigma)$. By Theorem~\ref{thm:annihilator_mot} there is
a short exact sequence 
\[ 0\to M'\xrightarrow{i} M\xrightarrow{p} M''\to 0\]
in $\onemot_\Qbar$ such that $\sigma=i_*\sigma'$ for $\sigma'\in \Vsing{M'}$ and
$\omega=p^*\omega''$ for $\omega''\in \VdR{M''}=\Ann(\sigma)$. 
Hence 
the vanishing of
\[ \int_\sigma\omega=\int_{i_*\sigma'}p^*\omega''=\int_{\sigma'}0\]
 is now implied by functoriality
of $1$-motives.
\end{proof}

\section{Transcendence of Periods of $1$-Motives}\label{sec:transcendence1}

All this implies a result on transcendence of periods.

\begin{thm}[Transcendence]\label{thm:transc_1}\index{transcendence!for periods of $1$-motives}
Let $M=[L\to G]$ be a $1$-motive, $\sigma\in \Vsing{M}$, and $\omega\in \VdR{M}$. Then the integral
\[ \int_\sigma\omega\]
is in $\Qbar$ if and only if 
there are $\phi,\psi\in \VdR{M}$ with  
\[ \omega=\phi+\psi\] 
such that
$\int_\sigma\psi=0$ and the image of $\phi$ in
$\VdR{G}$ vanishes.
\end{thm}
\begin{proof}
We write $\alpha=\int_\sigma\omega$ and
begin with the easy direction.  The short exact sequence 
\[ 0\to [0\to G]\to M\to [L\to 0]\to 0 \]
induces a short exact sequence
\[ 0\to \VdR{[L\to 0]}\to \VdR{M}\to \VdR{[0\to G]}\to 0.\]
Suppose that the image of $\phi$ in $\VdR{[0\to G]}$ vanishes or, equivalently, that $\phi\in \VdR{[L\to 0]}$.

Let $\bar{\sigma}$ be the image of
$\sigma$ in $\Vsing{[L\to 0]}$. Then
\[ \alpha=\int_\sigma\phi=\int_{\bar{\sigma}}\phi\]
is a period for $[L\to 0]$. It is a general fact; see Example~\ref{ex:algebraic}, and that $\VdR{[\Z^r\to 0]}=\coLie(\A^r)$
and all its periods are algebraic.

Conversely, assume that $\alpha$ is algebraic. 
If $\alpha=0$, the theorem holds with $\omega=\psi$. Assume that it is non-zero from now on. The algebraicity of $\alpha$ means that we can
write $\alpha$ as $\int_{\sigma'}\omega'$ with
$\omega'=\alpha dt$ and $\sigma'\in \Vsing{[\Z\to 0]}$ the standard basis vector. 
Let $\Sigma=(\sigma,-\sigma')\in \Vsing{ M\oplus [\Z\to 0]}$
and $\Omega=(\omega,\omega')\in \VdR{M\oplus [\Z \to 0]}$.
By assumption
\[\int_\Sigma\Omega=\int_\sigma\omega-\int_{\sigma'}\omega'=0.\]
By the Subgroup Theorem for $1$-motives, Theorem~\ref{thm:annihilator_mot}, there is a short
exact sequence of iso-$1$-motives
\begin{equation}\label{eq:alg} 0\to M_1\xrightarrow{(\iota,\iota_\Z)} M\oplus [\Z\to 0]\xrightarrow{p+q} M_2\to 0\end{equation}
and $\sigma_1\in \Vsing{M_1}$, $\omega_2\in \VdR{M_2}$ such that
\[ (\iota,\iota_{\Z})_*\sigma_1=(\sigma,-\sigma'),\hspace{2ex} \ p^*\omega_2=\omega, \hspace{2ex}q^*\omega_2=\omega'.\]
The map $q:[\Z\to 0]\to M_2$ does not vanish because the
pullback  of $\omega_2$ is $\omega'$. The latter is non-zero because  $\alpha$ is assumed to be non-zero.
The non-vanishing of the map already implies that $[\Z\to 0]$ is 
a direct summand of $M_2$. We explain the  argument as follows. We write $M_2$ in the form $[L_2\to G_2]$. Then the composition
\[ [\Z\to 0]\to [L_2\to G_2]\to [L_2\to 0]\]
is non-trivial. The composition $\Z\to L_2$ does not vanish, hence we can
decompose $L_2$ up to isogeny into the image of $\Z$ and a direct complement.
This defines a section of our map. 

We decompose $M_2$ as $[\Z\to 0]\oplus M'_2$.
Hence $\omega_2=\phi_2+\psi_2$ with
$\phi_2$ coming from $[\Z\to 0]$ and $\psi_2$ from the complement $M'_2$. 
Let $\phi=p^*\phi_2$ and $\psi=p^*\psi_2$ be their images in $\VdR{M}$. Then $\omega=p^*\omega_2=\phi+\psi$ and hence
\[ \alpha=\int_\sigma \omega=\int_\sigma \phi+\int_\sigma\psi.\]
By splitting off the direct summand $[\Z\to 0]$ from the sequence (\ref{eq:alg}), we obtain the short exact sequence 
\[ 0\to M_1\to M\to M'_2\to 0.\]
The element $\sigma=\iota_*\sigma_1\in \Vsing{M}$ is induced from $M_1$ and $\psi=p^*\psi_2$ from $M'_2$. Hence $\int_\sigma\psi=0$.

The element $\phi=p^*\phi_2$ is induced from $[\Z\to 0]$, and we conclude that it is in the image of
\[ \VdR{[\Z\to 0]}\to \VdR{[L\to 0]}\to \VdR{M}.\]
 It follows that its image in $\VdR{G}$ vanishes. 
\end{proof}

The general period formalism explained Chapter~\ref{ch:formalism} also implies a dimension formula. 
We put (see Definition~\ref{defn:End} with $T=V_\sing|_{\langle M\rangle}$)
\[ E(M)=\left\{ (\phi_N)\in \prod_{N\in\langle M\rangle}\End_\Q(\Vsing{N})|
  \phi_ {N'}\circ f=f\circ \phi_{N} \forall f:N\to N'\right\}.\]
This is a subalgebra of $\End_\Q(\Vsing{M})$, hence finite dimensional over $\Q$. In fact it agrees with $\End(V_\Q|_{\langle M\rangle})$ in the notation of
Section~\ref{sec:formalism_2}.

By Theorem~\ref{thm:nori}, 
 the category $\langle M\rangle$ is equivalent to the category of finitely generated $E(M)$-modules.

\begin{cor}\label{cor:dim_easy}
\index{dimension formula!for periods of $1$-motives}
We have
\[ \dim_\Qbar \Per\langle M\rangle=\dim_\Q E(M).\]
\end{cor}
\begin{proof}
By Theorem~\ref{thm:main_one} the Period Conjecture holds for $\langle M\rangle$. We now apply the abstract dimension formula of
Proposition~\ref{prop:dim_formula_abstract}. Note that $E(M)=\End(V_\Q|_{\langle M\rangle})$.
\end{proof}

\begin{rem}
This is a clear qualitative characterisation. However, it is by no means obvious to compute the  explicit value for a given $M$. We carry out this computation in Part~\ref{part3}.
\end{rem}

\section{Fullness}

As pointed out in Corollary~\ref{cor:fullness_abstract}, the validity of the Period Conjecture for a category $\Ch$ implies fullness of the functor
$F:\Ch\to\VV$ under consideration. In our case, we give a direct proof from the Analytic Subgroup Theorem for $1$-motives.

\begin{thm}[Fullness]\label{thm:fulness_VV}
\index{fullness!for $1$-motives}
The functor $V:\onemot_\Qbar\to \VVneu$ is fully faithful with image closed under subquotients.
\end{thm}

\begin{proof}Note that the functor $V$
is faithful and exact because the functors $V_\sing$ and $V_\dR$ are. 

 We choose $M\in\onemot_\Qbar$ and consider a short exact sequence in $\VVneu$
\[ 0\to V'\xrightarrow{\iota} V(M)\xrightarrow{\pi} V''\to 0.\]
It gives a 2-step filtration of $V(M)$, which furnishes a period matrix for $V(M)$ in block-triangular form, i.e. the period matrix contains a square of zeroes. We show that 
 $V'=V(M')$ and $V''=V(M'')$ for
objects $M',M''$ of $\onemot_k$. 
To see this,  consider $\Ann=\Ann(\iota_* V'_\Q)\subset \VdR{M}$. We record for later use that 
\begin{equation}\label{eq:ann} \pi^*({V''}_\Qbar^\vee)\subset \Ann.\end{equation}
 By Theorem~\ref{thm:annihilator_mot} applied to the elements of $V'_\Q$, there is a short exact
sequence
 \[ 0\to M'\xrightarrow{i} M\xrightarrow{p} M''\to 0\]
in $\onemot_\Qbar$
such that $p^*\VdR{M''}=\Ann$ and $V'_\Q\subset \Vsing{M'}$. We have 
\[ \iota_*V'_\Q\subset \ker(\Vsing{M}\to \Vsing{M''})=i_*\Vsing{M'}.\]
 Note that both $V(M')$ and $V'$ are subobjects of $V(M)$. We apply Lemma~\ref{lem:is_sub} to the faithful exact functor $V\mapsto V_\Q$. This gives even
$V'\subset V(M')$. This implies also $V(M'')\twoheadrightarrow V''$ and hence
\[ \Ann=p^*\VdR{M''}\subset \pi^*({V''}^\vee_\Qbar).\]
 Together with Equation~(\ref{eq:ann}) this gives
$p^*\VdR{M''}=\pi^*({V''}^\vee_\Qbar)$ inside $\VdR{M}$. Applying again
Lemma~\ref{lem:is_sub}, this time to the faithful exact functor
$V\mapsto V_\Qbar^\vee$, this leads to even $V(M'')=V''$ and in turn
$V(M')=V'$.  We have now shown that the image of
$f_3$ is closed under subobjects and quotients, implying the same for  subquotients.

Closedness of the image under subquotients implies fullness by Lemma~\ref{lem:crit_full} 
\end{proof}

\begin{rem}
Theorem~\ref{thm:fulness_VV} is not equivalent to Theorem~\ref{thm:main_one}. Consider for example the full abelian subcategory $\Ah$ closed under subquotients of $\VVneu$ generated 
by the single object $V=(\Qbar^2,\Q^2,\phi)$ with $\phi:\C^2\to\C^2$ given
by multiplication by the matrix
\[ \Phi=\left(\begin{matrix}1&\pi\\\log 2&1\end{matrix}\right).\]
It is easy to check that $V$ is simple and that its only endomorphisms are
multiplication by rational numbers. 
Indeed, a subobject $V'\subsetneq V$ is determined  by a vector  $v=\left(\begin{matrix}s\\t\end{matrix}\right)\in\Qbar^2$ such that 
\[ \phi(v)=\left(\begin{matrix}1&\pi\\
\log 2&1\end{matrix}\right)\left(\begin{matrix}s\\t\end{matrix}\right)
=\left(\begin{matrix}s+t\pi \\ s \log 2 +t\end{matrix}\right)\in\Q^2.\]
 As $\pi $ and $\log 2$ are transcendental, this implies $t=s=0$.  An endomorphism
$f:V\to V$ is represented by a pair of  matrices $A_\Qbar=\left(\begin{matrix}\alpha&\beta\\\gamma&\delta\end{matrix}\right)$ on $V_\Qbar$ and $A_\Q=\left(\begin{matrix}a&b\\ c&d\end{matrix}\right)$ on $V_\Q$ such that  $\Phi A_\Qbar=A_\Q\Phi$.

Spelling out the matrix equation we get   
\[ \left(\begin{matrix} \alpha+\pi\gamma&\beta+\pi \delta\\ \log 2\alpha +\gamma&\log 2\beta+\delta\end{matrix}\right)=\left(\begin{matrix}a+b\log 2 &a\pi + b\\c+ d\log 2&c\pi +d\end{matrix}\right)
\]
By the $\Qbar$-linear independence of $1,\pi$ and $\log 2$ it implies that
\[ \gamma=b=0, \ \alpha=a, \ \delta=a, \ \beta=b, \ \alpha=d,\ \gamma=c,\ \beta=c=0, \ d=\delta\]
and shows that that $A_\Qbar=A_\Q$ is a diagonal matrix for $a\in\Q$.
In other words, the full abelian category $\langle V\rangle\subset\VVneu$ generated by $V$ is semi-simple with a single simple object $V$ for which $\End(V)=\Q$.

The space of periods $\Per\langle V\rangle$ of $V$ has
$\Qbar$-dimension $3$ with basis $1,\pi,\log 2$. We compare it with the space $\Perform(\langle V\rangle)$, which was introduced in  Definition~\ref{defn:formal_abstract}.  The relations listed above imply that $\Perform(\langle V\rangle)$ is generated by
$4$ elements corresponding to the $4$ entries of the period matrix of $V$. 
There are no additional relations coming from subobjects or endomorphisms. This proves that $\dim_\Qbar\Perform(\langle V\rangle)=4$, which is not the same as $ 3= \dim_\Qbar \Per\langle V\rangle$, hence the Period Conjecture does not hold for $\langle V\rangle$.
As it does hold for $\onemot_\Qbar$, this implies that $\Phi$ does not occur as the period matrix of a $1$-motive, even though all entries are indeed in $\Per^1$. 
\end{rem}

\begin{rem}
The fact that the Period Conjecture implies Theorem~\ref{thm:fulness_VV}     is a special case of a general pattern, see Corollary~\ref{cor:fullness_abstract}.
For a careful analysis of this property we refer to the the discussion in \cite[Proposition~13.2.8]{period-buch} and \cite{huber_galois}.  The relation between the Period Conjecture and the Hodge conjecture is also explained there. The fullness question is also taken up by Andreatta, Barbieri-Viale and Bertapelle in their recent work, \cite{ABB}. They give an independent proof of Theorem~\ref{thm:ff}. Their second proof copies ours, but without the detour to periods.
\end{rem}

\chapter{First Examples}\label{ch:first}
Before turning to the case of period numbers of curves more generally, we give some examples, all of them very classical. They do not rely on the full strength of Theorem~\ref{thm:annihilator_mot}, but could be deduced directly from the Analytic Subgroup Theorem as in Theorem~\ref{thm:annihilator}.
We still prefer to go via Theorem~\ref{thm:annihilator_mot} in order to demonstrate the method.

\section{Squaring the Circle}

To prove the transcendence of $\pi$, or rather more naturally in our setting of $2\pi i$, we take the 1-motive $M_1=[0\to\Gm]$.
Then $M_1^\natural=\Gm$ because $\Ext^1_{\grp}(\Gm,\Ga)=0$ and this means that by definition we have  
\[ \VdR{M_1}=\coLie(\Gm)=\Omega^1(\Gm)^{\Gm}\] for the de Rham realisation of the motive.
On the singular side we get 
\[\Vsing{M_1}=\ker( \Lie(\Gm)^\an\to \Gm^\an)=\ker(\exp:\C\to\C^*).\]
We take   $\omega_1=\frac{dz}{z}\in \VdR{M_1}$ and the positively oriented loop $\gamma_1:[0,1]\to \C^*$ given by $\gamma_1(s)=e^{2\pi is}$ around $0$. In the notation of Section~\ref{ssec:paths} we define $\sigma_1=I(\gamma_1)\in\Lie(\Gm)^\an$
and obtain \[ \int_{\sigma_1}\omega_1=\int_{\gamma_1}\frac{dz}{z}=2\pi i.\]

\begin{cor}[Lindemann 1882]\label{cor:pi} 
The period $2\pi i$ is transcendental.\index{transcendence!of $\pi$}
\end{cor}
\begin{proof}[First proof.]
Assume that
$2\pi i$ is algebraic. Then, by Theorem~\ref{thm:transc_1}, we can express $\omega_1$ as 
\[ \omega_1=\phi+\psi \in\VdR{M_1}\]
such that the image of $\phi$ vanishes in $\VdR{\Gm}$ and such that $\int_{\sigma_1}\psi=0$. As $\Gm=M_1^\natural$, the map
$\VdR{M_1}\to \VdR{\Gm}$ is an isomorphism. Hence $\phi=0$. This means that
$\omega_1=\psi$ and hence $2\pi i=0$, which is false. 
\end{proof}

We offer a second proof where the result is deduced from the Analytic Subgroup Theorem for $1$-motives. 
Let $M_2=[\Z\to 0]$. Then $M_2^\natural=\Ga$ because
$\Ext^1_{\onemotgen}(M_2,\Ga)=\Hom(\Z,\Ga)=\Ga$. This gives 
\[ \VdR{M_2}=\coLie(\Ga)=\Omega^1(\Ga)^{\Ga}.
\]
In this case $\exp:\Lie(M_2^\natural)^\an\to M_2^{\natural,\an}$ is
the identity on $\Ga^\an=\C$. The lattice $\Z$ embeds naturally
into $M_2^{\natural,\an}=\C$. In conclusion 
\[ \Vsing{M_2}=\exp^{-1}(\Z)=\Z.\]
Let $\gamma_2$ be the straight path from $0$ to $1$ in $\Ga^\an=\C$ with image
$\sigma_2=I(\gamma_2)\in\gg_a^\an$. For $\omega_2$ we take $dt\in \VdR{M_2}$ to get 
\[ \int_{\sigma_2}\omega_2=\int_{\gamma_2}dt=1.\]

\begin{proof}[Second proof of Corollary~\ref{cor:pi}.]
Assume that $2\pi i$ is algebraic. This means that
\[ 2\pi i+\alpha=0\]
for  some $\alpha\in\Qbar$. In the notation from above we consider
\[ M=M_1\times M_2=[\Z\xrightarrow{0}\Gm]\]
and deduce
\[ \VdR{M}=\VdR{M_1}\times \VdR{M_2},\quad \Vsing{M}=\Vsing{M_1}\times \Vsing{M_2}.\]
We choose 
\[ \sigma=(\sigma_1,\sigma_2)\in\Vsing{M}\]
and
\[ \omega=(\omega_1,\alpha\omega_2)\in\VdR{M}.\] 
Here it is  used that  $\alpha$ is algebraic.  
We find that \[ \int_\sigma\omega=\int_{\sigma_1}\omega_1+\int_{\sigma_2}\alpha\omega_2=
2\pi i+ \alpha\cdot 1=0.\]
Now Theorem~\ref{thm:annihilator_mot} is applied to the motive $M$ and
the classes $\omega$ and $\sigma$. Their period vanishes, hence
there is a short exact sequence 
\[ 0\to M'\xrightarrow{i}M\xrightarrow{p}M''\to 0\]
such that $\sigma$ is in the image of $i_*$ and
$\omega$ is in the image of $p^*$. 

Note that $M$ is the product of two simple non-isomorphic $1$-motives, hence this leaves only
four possible choices for $M'$:
\[ 0,M_1\times 0, 0\times M_2, M.\]
We go through the cases. If $M'=0$, then the image of $i_*$ is zero and hence
$\sigma=0$, in contradiction to our hypothesis.

If $M'=M_1\times 0$, then the second component $\sigma_2$ of $\sigma$ is zero. This is false. The same argument also eliminates $M'=0\times M_2$.

We conclude that we have $M'=M$ and  $M''=0$, and this shows that the image of $p^*$ is zero and hence
$\omega=0$. This is false.

We have deduced  a contradiction and $2\pi i$ cannot be algebraic.
\end{proof}

\section{Transcendence of Logarithms}

We now turn to logarithms of algebraic numbers. For $\alpha\in\Qbar^*$, we have
\[ \int_1^\alpha\frac{dz}{z}=\log \alpha\]
(with the branch depending on the choice of the path from $1$ to $\alpha$). This is obviously an (incomplete) period of an algebraic variety. In order to apply our theorems directly, we identify it with the period of a $1$-motive.

We put 
\[ M(\alpha)=[\Z\xrightarrow{1\mapsto \alpha}\Gm].\]
This is often called \emph{Kummer motive} in the literature.\index{Kummer motive}
If $\alpha$ is a root of unity, this is the motive $M=M_1\times M_2$  from above. 
Otherwise the extension
\[ 0\to [0\to\Gm]\to M(\alpha)\to [\Z\to 0]\to 0\]
is non-trivial.
By definition, $M(\alpha)^\natural$
is an extension of $\Gm$ by $\Ga=\Hom(\Z,\Ga)$. By Theorem~\ref{thm:structure}
 $M(\alpha)^\natural$ is canonically isomorphic to $\Ga\times\Gm$.
Alternatively, we can also get the splitting by applying the functor $(\cdot)^\natural$ to the natural sequence of motives above.
We put 
\[
\omega=(0,dz/z)\in \VdR{M(\alpha)}=\coLie(\Ga\times\Gm).\]
The singular realisation of $M(\alpha)$ is 
\[ \exp_{\Gm}^{-1}(\alpha^\Z)\subset \Lie(\Gm)^\an\] 
  which gives
\[ \Vsing{M(\alpha)}=\langle I(\gamma_1), I(\gamma(\alpha))\rangle_\Q\subset\Lie(\Gm)^\an\] 
with $\gamma_1$   as before the positively oriented loop around $0$ and
$\gamma(\alpha)$ a path from $1$ to $\alpha$ in $\Gm^\an=\C^*$.
Note that the basis depends on the choice of path $\gamma(\alpha)$, but the lattice does not.  We introduce $\sigma(\alpha)=I(\gamma(\alpha))\in \Vsing{M(\alpha)}$.
Then 
\[ \int_{\sigma(\alpha)}\frac{dz}{z}=\int_{\gamma(\alpha)}\frac{dz}{z}=\log \alpha \]
with again the choice of logarithm determined by the choice of path.
The calculation of  the periods for $M(\alpha)$ uses the canonical embedding
\[ \Vsing{M(\alpha)}\subset \Lie(M(\alpha)^\natural)=\Lie(\Ga)^\an\times\Lie(\Gm)^\an=\C\times\C\]
given by
\[ \sigma_1\to (0,\sigma_1), \quad \sigma(\alpha)\mapsto \sigma(\alpha)^\natural=(1,\sigma(\alpha))=I(\gamma(\alpha)^\natural)\]
with $\gamma(\alpha)^\natural(s)=(s,\gamma(\alpha)(s))\in M(\alpha)^{\natural,\an}=\C\times\C^*$.
The period $\omega(\sigma(\alpha))$ is defined by applying the cotangent vector
$\omega$ to the tangent vector $\sigma(\alpha)^\natural$. Hence the period pairing gives
\[ \omega(\sigma(\alpha))=\int_{\gamma(\alpha)^\natural}\frac{dz}{z}=\int_{\gamma(\alpha)}\frac{dz}{z}=\log\alpha.\]

\begin{cor}[Transcendence of logarithms, Lindemann~1882]
For $\alpha \neq 1$  algebraic, $\log \alpha$\index{transcendence! of $\log\alpha$}
is transcendental, independent of the choice of the branch of the logarithm.
\end{cor}
\begin{proof} If $\alpha$ is a root of unity, then $\log\alpha$
is a rational multiple of $2\pi i$, whose transcendence we have already established. From now on let $\alpha$ not be a root of unity and as a consequence $M(\alpha)$ is non-split. Let $\omega$ and $\sigma(\alpha)$ be as above and assume that 
$\int_{\sigma(\alpha)}\omega$ is algebraic. By Theorem~\ref{thm:transc_1} there is a decomposition
\[ \omega=\psi+\phi\]
with  $\int_{\sigma(\alpha)}\psi=0$ and such that the image of $\phi$ vanishes in
$\VdR{\Gm}$. We first concentrate on $\psi$.  An application of Theorem~\ref{thm:annihilator_mot} to $\psi$ gives a short exact sequence
\[ 0\to M'\xrightarrow{i} M(\alpha)\xrightarrow{p} M''\to 0\]
such that $\sigma(\alpha)$ is in the image of $i_*$ and
$\psi$ is in the image of $p^*$. There are only three possibilities for
$M'$:
\[ 0,\quad [0\to \Gm],\quad M(\alpha).\]
We exclude $M'=0$ because $\sigma(\alpha)\neq 0$ and $M'=[0\to\Gm]$ because
$\sigma(\alpha)\notin\Vsing{[0\to \Gm]}$. This shows that $M'=M(\alpha)$  and implies that $M''=0$ and we conclude that $\psi=0$. As a consequence
$\omega=\phi$ vanishes when mapped to $\VdR{[0\to\Gm]}$. But actually
this image is $dz/z$, so we have a contradiction.
\end{proof}

\section{Hilbert's 7th Problem}

In his 7th problem Hilbert asked whether 
\[ \alpha,\beta,\alpha^\beta\]
can all  be algebraic unless $\alpha=0,1$ or $\beta$ rational. In other words:
\begin{equation}\tag{GS}\label{eq:GS} \alpha,\beta,\alpha^\beta\in\Qbar \Rightarrow\alpha=0,1\text{\ or\ }\beta\in\Q.\end{equation}
There is a logarithmic version of \ref{eq:GS}: let $\alpha,\gamma\in\Qbar^*$,
$\log\alpha$ and $\log\gamma$ choices of branches of logarithm.
\begin{multline*}
\tag{B}\label{eq:B} 
\text{$\log\alpha$ and  $\log\gamma$ are $\Qbar$-linearly dependent}\\
\text{$\Rightarrow$ $\log\alpha$ and $\log\gamma$ are $\Q$-linearly dependent}
\end{multline*}
We think of (GS) as the Gelfond-Schneider version of the implication
and of (B) as the Baker version. 

Note that the converse implication of (B) is obvious. However, the converse implication of (GS) fails in the case $\alpha=1$ if we do not use the principal branch of logarithm in the definition of $\alpha^\beta$. The problem disappears when restricting to real numbers.

\begin{lemma}
The implications (GS) for all $\alpha$, $\beta$ and $\gamma$ and (B) for all $\alpha$ and $\gamma$ are equivalent.
\end{lemma}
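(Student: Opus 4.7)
The plan is to show each implication by using the fundamental relation $\gamma = \alpha^\beta \iff \log(\gamma) = \beta\log(\alpha)$, interpreted with a compatible choice of branch.

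For (B) $\Rightarrow$ (GS), I would start with $\alpha,\beta \in \Qbar$ such that $\gamma := \alpha^\beta \in \Qbar$, with $\alpha \neq 0,1$. By the very definition of $\alpha^\beta$, there is a choice of $\log(\alpha)$ so that $\log(\gamma) := \beta\log(\alpha)$ is a branch of logarithm of $\gamma$. Hence $\log(\alpha)$ and $\log(\gamma)$ satisfy a $\Qbar$-linear relation. Applying (B) with these particular branches, they are $\Q$-linearly dependent. Since $\alpha \neq 1$ forces $\log(\alpha) \neq 0$, this dependence gives $\beta = \log(\gamma)/\log(\alpha) \in \Q$, which is exactly the conclusion of (GS).

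For (GS) $\Rightarrow$ (B), I would take $\alpha,\gamma \in \Qbar^*$ with chosen branches $\log(\alpha), \log(\gamma)$ satisfying a non-trivial relation $a\log(\alpha) + b\log(\gamma) = 0$ with $a,b \in \Qbar$ not both zero. If either logarithm is $0$, then the corresponding number is $1$ and the relation is already $\Q$-linear (take the other coefficient to be $1$ and the one paired with $0$ to be $0$). Otherwise, $\alpha, \gamma \neq 1$ and both $a,b \neq 0$, so setting $\beta := -a/b \in \Qbar$ gives $\log(\gamma) = \beta\log(\alpha)$, which means $\gamma = \alpha^\beta$ for the chosen branches. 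Since $\alpha,\beta,\gamma \in \Qbar$ and $\alpha \neq 0,1$, (GS) yields $\beta \in \Q$, so the relation $\beta\log(\alpha) - \log(\gamma) = 0$ is a $\Q$-linear dependence, as required.

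The only genuine obstacle is bookkeeping of branches: $\alpha^\beta$ is multivalued, but the claim $\gamma = \alpha^\beta$ must be interpreted as equality for a compatible choice of $\log(\alpha)$. Once one fixes the convention that ``$\gamma = \alpha^\beta$'' means $\log(\gamma) = \beta\log(\alpha)$ for some branches (which is the natural reading in the logarithmic reformulation), both directions are immediate algebraic manipulations. The boundary cases $\alpha = 1$ (equivalently $\log(\alpha) = 0$) must be treated separately, as noted in the paper just before the lemma, but they are trivial.
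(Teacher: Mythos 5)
Your (B) $\Rightarrow$ (GS) direction is correct and follows the paper's route; the only minor issue is that you should spell out why $\Q$-linear dependence of $\log(\alpha)$ and $\log(\gamma)$, together with $\log(\gamma)=\beta\log(\alpha)$ and $\log(\alpha)\neq 0$, forces $\beta\in\Q$ (eliminate $\log(\gamma)$ from the relation and divide by $\log(\alpha)$).

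The (GS) $\Rightarrow$ (B) direction contains a genuine gap, and it is exactly the one the paper flags in the remark immediately before the lemma. You dispose of the case where some logarithm is $0$ and then write ``Otherwise, $\alpha,\gamma\neq 1$''. This inference is false: $\alpha=1$ with the branch $\log(\alpha)=2\pi i n$ for $n\neq 0$ gives $\log(\alpha)\neq 0$ while $\alpha=1$. Your closing parenthetical ``$\alpha=1$ (equivalently $\log(\alpha)=0$)'' makes the misconception explicit --- $\log(\alpha)=0$ implies $\alpha=1$, but not conversely. In the uncovered case you set $\beta=-a/b$ and want to apply (GS) with base $\alpha$, but (GS) excludes $\alpha=1$, so the argument does not close. (There is also a small slip in your trivial case: to exhibit a $\Q$-relation when $\log(\alpha)=0$ you should put the \emph{non}zero coefficient on $\log(\alpha)$, e.g.\ $1\cdot\log(\alpha)+0\cdot\log(\gamma)=0$, not the reverse.)

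The paper handles the $\alpha=1$, $\log(\alpha)=2\pi i n\neq 0$ case by replacing the pair $(\alpha,\gamma)$ with $(\gamma,\gamma)$ and two different branches of $\log(\gamma)$, which reduces to a base $\neq 1$. An alternative repair, closer to your framework, is to swap the roles of $\alpha$ and $\gamma$: when $\alpha=1$ but $\gamma\neq 1$, both $a,b$ are nonzero, so put $\beta'=-b/a$, giving $\log(\alpha)=\beta'\log(\gamma)$ and hence $\alpha=\gamma^{\beta'}$ for the chosen branches; now (GS) with base $\gamma\neq 0,1$ applies and yields $\beta'\in\Q$, whence $\log(\alpha)-\beta'\log(\gamma)=0$ is a $\Q$-linear dependence. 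Either way, this case must be addressed separately; as written your proof omits it.
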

\begin{proof} We assume (GS). Let $\alpha,\gamma$ be non-zero algebraic numbers such that
$\log\alpha$ and $\log\gamma$ are $\Qbar$-linearly dependent.
By assumption
there are $u,v\in\Qbar^*$  such that
\[ u\log\alpha+v\log\gamma=0.\]
Without loss of generality we may assume that $\alpha\neq 1$. Indeed, if $\alpha=1$
and $\log\alpha=2\pi n$ for $n\in\Z$, we consider instead $\alpha=\gamma$
and different branches of $\log\gamma$.

We introduce $\beta=u/v$. As a consequence of the linear relation from above  we find that $\alpha^\beta=\exp(\beta\log \alpha)=\gamma^{-1}\in\Qbar^*$.
 By
(GS) this implies $\beta\in\Q$.

Conversely, we assume that (B). We take  $\alpha,\beta,\gamma\in\Qbar$, $\alpha\neq 0,1$ such that and $\gamma=\alpha^\beta$ and then
\[ \log\gamma=\beta\log\alpha+2\pi i n\]
for some choice of logarithms and an appropriate $n\in\Z$. We replace
the choice of branch of of the logarithm for $\gamma$ such that $n=0$.
Then  $\log\gamma$ and $\log\alpha$ are $\Qbar$-linearly dependent.
By (B) this implies $\beta\in\Q$.
\end{proof}

\begin{thm}[Gelfond--Schneider~1934]\label{thm:gelfond-schneider}
\index{Gelfond-Schneider, theorem of}
Implication (B) holds true.
\end{thm}
\begin{proof}We switch notation from (B) and write $\beta$ instead of $\gamma$.
Having fixed  $\alpha,\beta\in\Qbar^*$ and branches of logarithm
$\log\alpha$, $\log\beta$ such that the numbers are $\Qbar$-linearly dependent, we find $a,b\in\Qbar^*$ such that
\[ a\log\alpha+b\log\beta=0.\]
In order to show that they are $\Q$-linearly dependent,
we consider the $1$-motive $M=[\Z\to\Gm^2]$ with
structure morphism $1\mapsto (\alpha,\beta)$.  
The motive is split, i.e. isogenous to $[\Z\to 0]\oplus [0\to\Gm^2]$  
 if both $\alpha$ and $\beta$ are roots of unity. In this
case $\log\alpha$ and $\log\beta$ are rational multiples of
$2\pi i$, hence linearly dependent. This case is excluded  from now on.

Similar to the case of transcendence of logarithms we have
$M^\natural=\Ga^1\times \Gm^2$ and hence
$\VdR{M}=\coLie(M^\natural)$ has the basis $(dt,0,0)$, $(0,dz_1/z_1,0)$, $(0,0,dz_2/z_2)$.
We put $\omega=(0,adz_1/z_1,bdz_2/z_2)$.

Our next step is to compute $T_\sing(M)$. As $M$ is non-split, $T_\sing(M)$ is a subset
of $\Lie(\Gm^2)^\an$.
Recall that $\log\alpha\frac{d}{dz_1}=I(\gamma(\alpha))$ where $\gamma(\alpha)$ is
a suitable path in $\Gm^\an$ from $1$ to $\alpha$. The same relation holds for $\beta$. Let $\gamma:[0,1]\to\Gm^{\an}\times\Gm^\an$ be given by
$\gamma(t)=(\gamma(\alpha)(t),\gamma(\beta)(t))$ and
put $\sigma=I(\gamma)\in T_\sing(M)\subset \Lie(\Gm^2)^\an$. 
By construction 
\[ \omega(\sigma)=\int_{\gamma(\alpha)}a\frac{dz_1}{z_1}+\int_{\gamma(\beta)}b\frac{dz_2}{z_2}=a\log\alpha+b\log\beta=0\]
and Theorem~\ref{thm:annihilator_mot} furnishes a short exact sequence
\[ 0\to M_1\to M\to M_2\to 0\]
such that $\sigma$ is induced from $M_1$ and $\omega$ from $M_2=[\Z^s\to \Gm^t]$ with $s\leq 1$ and $t\leq 2$.

If $t=2$, then the surjection $M\to M_2$ is the identity on the torus part. The push-forward of 
$\sigma$ is given by $(\log\alpha\frac{d}{dz_1},\log\beta\frac{d}{dz_2})$. Hence both vanish. This implies that $\alpha=\beta=1$, a case we had excluded.

The case $t=0$ does not occur because $\omega$ is not a pull-back from
a motive of the form $[\Z^s\to 0]$. 

We are left with the case $t=1$. The torus part of the map
$M\to M_2$ is given by $(x,y)\to x^ny^m$ for $n,m\in\Z$. The induced
map on Lie algebras maps
$(\frac{d}{dt},\log\alpha\frac{d}{dz_1},\log\beta\frac{d}{dz_2})$ to $(n\log\alpha+m\log\beta)\frac{d}{dz}$.  This image
vanishes and gives the linear dependence
we were looking for.
\end{proof}

What we have just  seen is  a motivic reformulation of Gelfond's proof based on 
$\Gm\times \Gm$. In contrast, Schneider's argument uses
$\Ga\times\Gm$ but does not have a translation to our language. One would need a modification of the Analytic Subgroup Theorem, which would be desirable.

The same arguments also apply to more then two numbers, which leads to: 

\begin{thm}[Baker 1967] 
\index{Baker, theorem of}
Take $\alpha_1,\dots,\alpha_n\in\Qbar^*$. If $\log\alpha_1$,\dots,
$\log\alpha_n$ are $\Qbar$-linearly dependent, then they
are $\Q$-linearly dependent.

We even have
\[ \rk\langle \alpha_1,\dots,\alpha_n\rangle_\Z=\dim_\Qbar\langle \log\alpha_1,\dots,\log\alpha_n,2\pi i\rangle_\Qbar/ 2\pi i\Qbar
\]
for any choice of branches of logarithms.
\end{thm}

\begin{rem} 
In the literature we often find formulations with
$\alpha_1,\dots,\alpha_n$ multiplicatively independent.
  The above is the correct version that also allows roots of unity or even repetitions with different choices of branch of logarithm. We will discuss later (see Chapter~\ref{ch:struct}) in more detail
that the space of periods of the third kind with respect to non-closed paths is only well-defined up to other types of periods.
\end{rem}

\section{Abelian Periods for Closed Paths}

Another important case involves periods of abelian varieties in the classical sense.

\begin{cor}[W\"ustholz~\cite{wuestholz-icm}]
\index{transcendence!of periods of abelian varieties}
Let $A$ be an abelian variety,
$\omega\in\Omega^1(A)$ and $\gamma$ a closed path on $A^\an$. Then
\[ \int_\gamma\omega\]
is either $0$ or transcendental.
\end{cor}
\begin{proof}Consider $M=[0\to A]$. Its de Rham realisation is
$\coLie(A^\natural)^{A^\natural}\supset \coLie(A)^{A}$. All global differential forms on $A$ are $A$-invariant, hence $\omega$ defines an element
of $\VdR{M}$. It singular realisation is by definition the kernel
of $\exp_A:\Lie(A)^\an\to A^\an$. Let $\sigma\in \Vsing{A}$ be the element such that the image of a path from $0$ to $\sigma$ under $\exp_A$ is equal to
$\gamma$. Then
\[\int_\sigma\omega=\int_\gamma\omega.\]

Assume that the period is algebraic.
An application of Theorem~\ref{thm:transc_1} gives $\omega=\phi+\psi$ with
$\int_\sigma\psi=0$ and $\phi$ in the kernel of the restriction
to the group part of $M$. But $M$ is equal to its group part, implying that 
$\phi=0$.
\end{proof}

\chapter{On Non-closed Elliptic Periods}\label{ch:ex}
\index{elliptic periods}

The computation of the dimension of the period space is a classical problem
and has been studied in various cases by many authors.
In this chapter, we concentrate on the case of a 
non-classical elliptic $1$-motive. For instance we deduce the first examples of periods which were not known to be transcendental. At this point everything will be formulated in terms of $1$-motives. For the translation to periods of the first, second and third kind
on curves, see
Chapter~\ref{ch:van} and Chapter~\ref{sec:elliptic}.

The dimension formula is a special case of the generalised Baker Theory in Part~\ref{part4}. We give a direct proof that should be understood as a warm-up for the considerably more complicated general case. The special result
will not be needed later on.

\section{The Setting}\label{sec:setting_ell}

Let $A=E$ be an elliptic curve, $0\to \Gm\to G\to E\to 0$ a non-trivial
extension (which is even non-split up to isogeny) and $P\in G(\Qbar)$ a point whose image in $E(\Qbar)$ is not torsion.
We consider the 1-motive 
\[ M=[\Z \to G]\]
with $1$ mapping to $P$.
We denote $\delta(M)$ the dimension of the $\Qbar$-vector space $\Per\langle M\rangle$ generated by the periods of $M$ in $\C$.

We start by choosing bases in the singular and de Rham cohomology respecting the weight filtration. 
The inclusions
\[ [0\to\Gm]\hookrightarrow [0\to G]\hookrightarrow M\]
with cokernels $[0\to E]$ and $[\Z \to 0]$, respectively,  lead to a filtration 
\[\Vsing{\Gm}\subset \Vsing{G}\subset\Vsing{M}.\]
Extend a basis $\sigma$ for $\Vsing{\Gm}$ by $\gamma_1,\gamma_2$ to a
basis of $\Vsing{G}$ and further by $\lambda$ to a basis of the whole space.
Then their images $\bar{\gamma}_1,\bar{\gamma}_2$ in $E$ form a basis of $\Vsing{E}$
and $\bar{\lambda}$ forms a basis of $\Vsing{[\Z\to 0]}$.

For the de Rham realisation consider the cofiltration
\[ M\twoheadrightarrow [\Z \to E]\twoheadrightarrow [\Z \to 0]\]
with  kernels $[0\to \Gm]$ and $[0\to E]$, respectively.  They lead by pull-back of forms to a filtration 
 \[\VdR{M}\supset\VdR{[\Z\to E]}\supset\VdR{[\Z\to 0]}.\]
Extend a basis $u$ of $\VdR{[\Z\to 0]}$   
by $\omega,\eta$ to a basis of $\VdR{[\Z\to E]}$ and
by $\xi$ to a basis of $\VdR{M}$. The images $\bar{\omega},\bar{\eta}$ of $\omega$ and $\eta$
are a basis of $\VdR{E}$ and the image $\bar{\xi}$ of $\xi$ is a basis of
$\VdR{\Gm}$. The period space $\Per\langle M\rangle$ is spanned by the numbers obtained by pairing our basis vectors.

The pairing of an element of $\Vsing{M}$ coming from a subobject with an
element of $\VdR{M}$ coming from the corresponding quotient is zero, as we know.
Applying this observation to the two filtrations from above give
$u(\sigma)=\omega(\sigma)=u(\gamma_1)=u(\gamma_2)=\omega(\sigma)=0$.
Further one sees that $u(\lambda)=1$ and $\xi(\sigma)=2\pi i$ (at least after scaling). Taking this together gives
a period matrix \index{period matrix!elliptic example}of the shape
\[ \left(\begin{matrix}2\pi i&\xi(\gamma_1)&\xi(\gamma_2)&\xi(\lambda)\\
 0&\omega(\gamma_1)&\omega(\gamma_2)&\omega(\lambda)\\
 0&\eta(\gamma_1)&\eta(\gamma_2)&\eta(\lambda)\\
0&0&0&1
\end{matrix}\right).
\]
The calculation of the dimension of the associated space of periods needs to distinguish between two cases, the CM-case and the non-CM-case.  We deal with each of the two cases  separately.

\section{Without CM}

 In the case when there is no complex multiplication, the non-CM-case,  the endomorphism algebra $\End(E)=\Z$ is trivial.

\begin{prop}\label{prop:ell}
 Let $M$ be as just described. Then
\[ \delta(M)=11.\]
\end{prop}
This will be also  a corollary of the general theory in Part~\ref{part4}. The deduction of the corollary is 
explained in Example~\ref{ex:zahlen} and its continuation in Example~\ref{ex:delta_1}.
\begin{proof}[Direct proof.]
We use the notation fixed above. 
It has to bw shown that all entries of the period matrix are $\Qbar$-linearly independent.
If not, there is a relation 
\[ a\, 2\pi i+\sum_{i=1}^{2}(b_i\, \xi(\gamma_i)+c_i\,\omega(\gamma_i)+d_i\,\eta(\gamma_i))
+e\,\xi(\lambda)+f\,\omega(\lambda)+g \,\eta(\lambda)+h=0\]
with $a,b_1,b_2,c_1,c_2,d_1,d_2,e,f,g,h\in\Qbar$.
We consider the motive 
\[ \tilde{M}=[0\to\Gm]\times[0\to G]^2\times M=[0^3\times\Z\to \Gm\times G^3]\]
together with
\begin{align*}
\tilde{\gamma}&=(\sigma,\gamma_1,\gamma_2,\lambda)\in \Vsing{\tilde{M}},\\\tilde{\omega}&=(a\xi,b_1\xi+c_1\omega+d_1\eta,b_2\xi+c_2\omega+d_2\eta,e\xi+f\omega+g\eta+hu)\in\VdR{\tilde{M}}.
\end{align*}
Then  $\tilde{\omega}(\tilde{\gamma})=0$.
By the Analytic Subgroup Theorem for
$1$-motives, Theorem~\ref{thm:annihilator_mot}, there is a short exact sequence 
\[ 0\to M_1\xrightarrow{i} \tilde{M}\xrightarrow{p} M_2\to 0\]
of $1$-motives $M_1=[L_1\to G_1]$ and $M_2=[L_2\to G_2]$ with $\tilde{\gamma}=i_*\gamma_1$ for some $\tilde{\gamma}_1\in \Vsing{M_1}$  and $\tilde{\omega}=p^*\omega_2$ for some $\omega_2\in \VdR{M_2}$.

Let  $A_2$  be the abelian part of $M_2$. We want to show that  $A_2=0$. Assuming $A_2\neq 0$ we choose  
a non-zero map surjective map $\tilde{\kappa}: \tilde{M}\to [L \to E]$ which factors as $\kappa_2\circ p$ through $p$. As  $L$ is a quotient of $\Z$, there are (up to isogeny) only two possibilities, namely
$L=0$ or $L=\Z $. The map $\tilde{\kappa}$ factors via  
\[ \kappa: [0^3\times \Z\to 0\times E^3]\to [L \to E].\]
On the abelian part it is  
given by a vector $(0,n,m,k)$ with $n,m,k\in \End(E)=\Z$. Note that there is no complex multiplication. We deduce that  $L=k\Z$ which is non-zero if and only if $k\neq 0$.
Since $\tilde{\kappa}_* \tilde{\gamma} = {\kappa_2}_*\circ p_*\circ i_*\tilde{\gamma}_1=0$, we deduce that 
\[ 0=\kappa_*\tilde{\gamma}=n\gamma_1+m\gamma_2+k\lambda\in \Vsing{[L_\kappa\to E]}.\]
The elements $\gamma_1,\gamma_2$ (and if $k\neq 0$ also  $\lambda$) are linearly independent in the vector space $\Vsing{[L\to E]}$, which implies that $n=m=k=0$.
This contradicts the non-triviality of $\tilde{\kappa}$ and proves that $A_2=0$.

In conclusion we have $M_2=[L_2\to \Gm^r]$ for some $0\leq r\leq 4$.
The group part of the morphism of motives $p:\tilde{M}\to M_2$ has the form
\[ \theta:\Gm\times G^3\twoheadrightarrow \Gm^r.\]
Its components $G\to\Gm^r$ have to vanish since $G$ is non-split.
The surjectivity of $\theta$ implies that  $r\leq 1$ with $\theta=(?,0,0,0)$ either $0$ or the projection
to the factor $\Gm$. 

This gives us a lot of information on
$\tilde{\omega}$. Recall that $\tilde{\omega}=p^*\omega_2$ for
some $\omega_2\in \VdR{M_2}$. We have the commutative diagram
\[\xymatrix{
   [0\to \Gm\times G^3]\ar[r]^\theta\ar[d]&[0\to \Gm^r]\ar[d]\\
    \tilde{M}\ar[r]^p&M_2.
}\]
Hence the pull-back of  $\tilde{\omega}$
to $\VdR{\Gm\times G^3}$ is concentrated in the
first component, which gives 
\begin{eqnarray*}
b_1\xi+c_1\omega+d_1\eta&=&0\\
b_2\xi+c_2\omega+d_2\eta&=&0\\
e\xi+f\omega+g\eta&=&0 \;.
\end{eqnarray*}

As the three classes $\xi,\omega,\eta$ are linearly independent, we
get vanishing 
coefficients 
\[ b_1=b_2=c_1=c_2=d_1=d_2=e=f=g=0.\]
We are left with the case $\tilde{\omega}=(a\xi,0,0,hu)$. If $a\neq 0$, then
from the period relation also $h\neq 0$, and conversely. Assume we are in this case. As $\tilde{\omega}=p^*\omega_2$
it follows that the group part of $\tilde{M}\to M_2$ is the projection to
the first factor and $M_2=\Gm\times[\Z\to 0]$. The kernel $M_1$ is equal
to $0\times G^3$ in this situation.
However, $\tilde{\gamma}$ is induced from $M_1$ and therefore of the form
$(0,\dots)$. This contradicts $\sigma\neq 0$.
Hence we must have $a=0$ and $h=0$.
\end{proof}

\section{The CM-Case}\label{sseq:with_CM}

 We turn to the CM-case when $\End(E)_\Q=\Q(\tau)$ is an imaginary quadratic extension of $\Q$.

The action of $\Q(\tau)$ induces new relations between the entries of the period matrix. This is well-known in the language of periods of curves. We take the point of view of $1$-motives instead.

The singular realisation $\Vsing{E}$ is of dimension $2$ as a $\Q$-vector space because $E$ has genus $1$. As a $\Q(\tau)$-vector space it has dimension $1$. For any non-zero $\gamma$ in $\Vsing{E}$, the pair $(\gamma,\tau_*\gamma)$ is a $\Q$-basis of $\Vsing{E}$. After extension of scalars to $\C$, the operation is still semi-simple, meaning that $\Vsing{E}_\C$ decomposes as
a sum of two $\tau_*$-eigenspaces with complex conjugate eigenvalues. 
The dual operation $\tau^*$ induced on $\Vsing{E}^\vee$ has the same set of eigenvalues.    

This can also be described on the de Rham realisation.
We calculate $\VdR{E}=\coLie(E^\natural)=\Omega^1(E^\natural)^{E^\natural}$ by looking at 
\[ 0\to H^1(E,\Oh)^\vee\to E^\natural\to E\to 0. \]
By Theorem~\ref{thm:del_equiv}, $V^\vee(E)$ carries a Hodge structure. (This is the $1$-motivic incarnation of the Hodge decomposition of $H^1_\dR(E)$). It is explicitly given by
\[ F^1\VdR{E}=\Omega^1(E).\]
 This $\Qbar$-sub vector space is invariant under $\tau^*$, hence it is one of the eigenspaces. After extension of scalars to $\C$, we get even the decomposition by Hodge theory
\[ \VdR{E}_\C=F^1\VdR{E}_\C\oplus \overline{F^1\VdR{E}}_\C.\]
Everything is stable under the $\tau^*$-operation, so we have identified the eigenspace description. By definition the complex number $\tau$ corresponds to the unique endomorphism of $E$ which operates as multiplication by $\tau$ on the
Lie algebra and on the dual $\Omega^1(E)$, so the latter is the $\tau$-eigenspace. The eigenvalues are simply $\tau$ and $\bar{\tau}$.  Since $\tau^*$ acts on the $\Qbar$-vector space $\VdR{E}$ its eigenvectors are in $\VdR{E}$.
 Let $\omega',\omega''$ be a basis of $\tau^*$-eigenvectors of $\VdR{E}$, which has the property that
\[ \tau^*\omega'=\tau\cdot\omega',\quad \tau^*\omega''=\bar{\tau}\cdot\omega''.\]

\begin{cor} 
In the basis $(\gamma,\tau_*\gamma)$ of $\Vsing{E}$ and
$\omega',\omega''$ of $\VdR{E}$, the period relations for $E$ can be expressed as
\begin{gather*}
\omega'(\tau_*\gamma)=(\tau^*\omega')(\gamma)=\tau \omega'(\gamma),\\
\omega''(\tau_*\gamma)=(\tau^*\eta)(\gamma)=\bar{\tau}\omega''(\gamma).
\end{gather*}
\end{cor}


\begin{prop}\label{prop:with_CM}
Let $M$ be the motive introduced in Section~\ref{sec:setting_ell} with $\End_\Q(E)=\Q(\tau)$. Then
\[ \delta(M)=9.\]
\end{prop}
\begin{proof}[Direct proof.] 
\[
\tilde{M} = [0\to\Gm]\times [0\to G] \times M.
\]

We go through the proof in the non-CM case and make the necessary changes. Let $\sigma,\gamma_1,\gamma_2,\lambda$ be as in Section~\ref{sec:setting_ell}. We choose a little
more carefully $\gamma_2=\tau_*\gamma_1$ and then 
\begin{gather*}
\omega(\tau_*\gamma_1)=(\tau^*\omega)(\gamma_1)=a\omega(\gamma_1)+b\eta(\gamma_1)+c u(\gamma_1),\\
\eta(\tau_*\gamma)=(\tau^*\eta)(\gamma_1)=d\omega(\gamma_1)+e\eta(\gamma_1)+fu(\gamma_1)
\end{gather*}
with $a,b,c,d,e,f\in\Qbar$.
Indeed, the choice of basis used in the corollary leads to
$a=\tau$, $b=d=0$, $e=\bar{\tau}$; but we do not need the special shape.
Note that the argument does not apply to $\xi(\gamma_2)$ because $\tau^*\xi$ is not defined, or, in other words, would relate not to $M$ but a different $1-$motive.  

It remains to show that
$2\pi i,\xi(\gamma_1),\omega(\gamma_1),\eta(\gamma_1),\xi(\gamma_2),\xi(\lambda),\omega(\lambda),\eta(\lambda),1$ are linearly independent.
If not, there is a linear relation as in the first case, but omitting the
summands for $\omega(\gamma_2)$ and $\eta(\gamma_2)$, so $c_2=d_2=0$.
We consider the motive 
\[ \tilde{M}=[0\to \Gm]\times[0\to  G]\times M\]
and $\tilde{\gamma}$, $\tilde{\omega}$ analogously to before.
Again this gives $M_1,M_2$. Assume that $A_2\neq 0$ and choose $\kappa_2,L_\kappa$, $\kappa$ as in the first case. The composition
\[ \tilde{A}=0\times E^2\to E\]
is now given by a vector $(0,n,k)$ of elements of $\End(E)\subset\Q(\tau)$. The rest of the argument is the same as in the non-CM case.
\end{proof}

\section{Transcendence}

As a simple corollary of the explicit dimension computation, we also deduce the transcendence of periods of our $M$. We concentrate on the case where transcendence is not a simple consequence of the Analytic Subgroup Theorem. In the language of  Chapter~\ref{ch:struct} this refers to a period of the third kind with respect to a non-closed path.

\begin{cor}
\index{transcendence!of non-closed elliptic periods}
Let $M=[\Z\to G]$,
and $\sigma,\gamma_1,\lambda\in\Vsing{M}$, $\omega,\eta,\xi\in\VdR{M}$ be as
in Section~\ref{sec:setting_ell}.
Then the periods 
\[ \omega(\gamma_1),\omega(\lambda), \eta(\gamma_1), \eta(\lambda), \xi(\sigma)=2\pi i, \xi(\gamma_1), \xi(\lambda)\] 
are transcendental.
\end{cor}
\begin{proof} These elements agree with the elements with the ones from the proofs of Proposition~\ref{prop:ell} (non-CM case) and Proposition~\ref{prop:with_CM}. In both cases, $1$ appears as a period and we proved explicitly linear independence of the two periods.
\end{proof}

\begin{rem} 
The same transcendence result already appears in \cite{wuestholz-ell}.
For $\omega(\gamma_1)$, $\eta(\gamma_1)$  this means the transcendence of periods and quasi-periods of elliptic curves. These are old results of Siegel \cite{siegel} and Schneider \cite{S1,S2}. The transcendence of $\omega(\lambda)$, $\eta(\lambda)$ and
$\xi(\gamma_1)$ can be deduced from the Analytic Subgroup Theorem without the detour through $1$-motives.
\end{rem}

We may also deduce the same result more directly from the Analytic Subgroup Theorem for motives. We do the most interesting case $\xi(\lambda)$ as an example. The other cases can be treated in the same way. 

\begin{proof}[Second proof.]
Assume that $\xi(\lambda)$ is algebraic. We apply  the transcendence criterion given in Theorem~\ref{thm:transc_1} and write accordingly $\xi=\phi+\psi$ such that
$\psi(\lambda)=0$ and the image   of $\phi$ in $\VdR{G}$ vanishes. A fortiori the
image  of $\phi$ in $\VdR{\Gm}$ vanishes. This implies that $\psi$ is simply an alternative choice for $\xi$. It suffices to consider the case
\[ \xi(\lambda)=0.\]
By the Analytic Subgroup Theorem for $1$-motives, Theorem~\ref{thm:annihilator_mot} there is  short exact sequence
\[ 0\to M'\xrightarrow{i} M\xrightarrow{p} M''\to 0\]
such that $\lambda=i_*\lambda'$, $\xi=p^*\xi''$. 

The simple constituents of our $M$  are $\Gm, E,[\Z\to 0]$.
As the image of $\lambda$ in $\Vsing{[\Z\rightarrow 0]}$ and the image of $\xi$ in $\VdR{\Gm}$  are bases, we know that
$[\Z\to 0]$ must be a constituent of $M'$ and $\Gm$ a constituent of $M''$.
Hence there are only two possible shapes for $M'$:
\[ [\Z\to 0], [\Z\to E].\]
In the first case $M'=[\Z\to 0]$ and the inclusion $i$ is a section of the natural surjection
$M\to [\Z\to 0]$ and even of
$[\Z\to E]\to [\Z\to E]$. This contradicts the choice of $P$ in the definition
of $M$.

In the second case, $M''=\Gm$. The projection $p$ is section of the natural
inclusion $\Gm\to M$ and even of $\Gm\to G$. This contradicts the choice of
$G$ in the definition of $M$.

The two contradictions lead to $\xi(\lambda)\neq 0$.
\end{proof}

\part{Periods of algebraic varieties}\label{part3}


\chapter{Periods of Algebraic Varieties}\label{sec:nori}
We give an alternative  description of the set of periods of $1$-motives as 
periods of the first cohomology of algebraic varieties defined over the algebraic closure $\Qbar$ of $\Q$.
This needs to fix an embedding $\Qbar\to\C$.

\section{Spaces of Cohomological $1$-Periods}

The basic objects are triples $(X,Y,i)$ where $X$ is a $\Qbar$-variety, $Y$ a closed subvariety and $i\in\Na_0$. Let $H^i_\dR(X,Y)$ be its relative de Rham cohomology (see Section~\ref{sec:deRham_intro}) 
and $H^i_\sing(X,Y;\Q)$ its relative singular cohomology (see Section~\ref{sec:homology_intro}). The first is a $\Qbar$-vector space, the second a $\Q$-vector space. After base change to the complex numbers they become naturally isomorphic via the period isomorphism $\phi$ (see Section~\ref{sec:pairing_intro}).
In good cases, the period isomorphism can be explicitly described as integration of
closed differential forms over singular cycles.

In Chapter~\ref{ch:formalism} we had introduced the category $\VVarg{\Qbar}{\Q}$ with objects of the form $V=(V_\Qbar,V_\Q,\phi)$ where $V_\Qbar$ is a finite dimensional $\Qbar$-vector space, $V_\Q$ a finite dimensional $\Q$-vector space and $\phi:V_\Q\tensor_\Q\C\to V_\Qbar\tensor_\Qbar\C$ an isomorphism.

\begin{defn}
For algebraic varieties $X\supset Y$ over $\Qbar$ and $i\in\Na_0$, we denote by $H^i(X,Y)\in \VVarg{\Qbar}{\Q}$ the triple $(H^i_\dR(X,Y),H^i_\sing(X,Y;\Q),  \phi)$. 
\end{defn}
The assignment  $(X,Y,i) \to H^i(X,Y)$   is natural for morphisms of pairs $(X,Y)\to (X',Y')$. For every
triple $X\supset Y\supset Z$ there are connecting morphisms
\[ \partial: H^i(Y,Z)\to H^{i+1}(X,Y)\]
which are morphisms in $\VVarg{\Qbar}{\Q}$.

\begin{defn}\label{defn:coh_periods}
For $H^i(X,Y)$ as defined above the \emph{set of period numbers $\Per(X,Y,i)$} is the image of the period pairing
\[ H^i_\dR(X,Y)\times H^\sing_i(X,Y;\Q)\to \C.\] 
\index{periods!cohomological}\index{cohomological periods}
\index{$1$-periods}
The set of $i$-periods $\Per^i$ is the union of the $\Per(X,Y,i)$ for all
$X$ and $Y$.
\end{defn}

In our book we are primarily interested in the case $i=1$.

\begin{ex}
We have $\Per^0=\Qbar$ because $H^0(X,Y)$ only depends on the
connected components of $X$ and $Y$.
\end{ex}

\begin{lemma}\label{lem:per_contained}
For all $i$, the set $\Per^i$ is a $\Qbar$-subvector space of $\C$.
We have $\Per^i\subset\Per^{i+1}$.
\end{lemma}
\begin{proof}A sum of two periods of $H^i(X,Y)$ and $H^i(X',Y')$ can be realised as a period of  $H^i(X\amalg X',Y\amalg  Y')$. The set is stable under multiplication by numbers in $\Qbar$ because $H^i_\dR(X,Y)$ is a $\Qbar$-vector space.
Integration of the differential
form $d t$ over the path $s\mapsto e^{2\pi i s}$ on $[0,1]$ gives $1$ as a period of $H^1(\A^1,\{0,1\})$ whence the
periods of $H^i(X,Y)$ are contained (actually equal to) in the set of periods of  
$H^{i+1}(X\times\A^1,Y\times\A^1\cup X\times\{0,1\})\isom H^i(X,Y)\tensor H^1(\A^1,\{0,1\})$.
\end{proof}

\section{Periods of Curve Type}\label{ssec:curve-type}

Nori showed that every affine algebraic variety admits a filtration by subvarieties defined over $\Qbar$ such that their relative homology is
concentrated in a single degree. This ``good filtration'' should be seen as an analogue of the skeletal filtration of a simplicial complex or a CW-complex. Indeed, affine algebraic varieties have the homotopy type of a simplicial complex. The surprising insight is the existence of such a filtration by algebraic subvarieties, even over the ground field. This filtration goes into the construction of the category of Nori motives, but it also has immediate consequences for periods. For the general result see \cite[Section~11]{period-buch}.
We repeat the argument in our case.

\begin{prop}\label{prop:reduce_C}
In the definition of $\Per^1$ it suffices to consider
$H^1(C,D)$ where $C$ is a smooth affine curve and $D$ a finite collection of
points on $C$.
\end{prop}
\begin{proof}
Consider the periods of $H^1(X,Y)$ for arbitrary $X$ and $Y$. We first show that it suffices to deal with affine varieties $X$. 
By Jouanolou's trick  there is an $\A^n$-torsor $\tilde{X}\to X$ with $\tilde{X}$ affine.
Let $\tilde{Y}$ be the preimage of $Y$ in $\tilde{X}$. Since the map
$\tilde{X}\to X$ is a homotopy equivalence, we have
\[ H^1(X,Y)\isom H^1(\tilde{X},\tilde{Y}).\]
 Therefore we may without loss of generality assume that $X$ (and consequently also $Y$) is affine.

Nori's Basic Lemma, see  \cite[Proposition~9.2.3, Corollary~9.2.5]{period-buch}
provides our affine varieties with \emph{very good filtrations} by closed subvarieties
\[ X_0\subset X_1\subset\dots \subset X_n=X,\hspace{2ex} Y_0\subset Y_1\subset \dots\subset Y_n=Y\]
with $Y_i\subset X_i$.
By definition this means that   
\begin{enumerate}
\item $X_i\setminus X_{i-1}$ is smooth,
\item either $\dim X_i=i$ and $\dim X_{i-1}=i-1$ or $X_i=X_{i-1}$ and the  dimension of $\dim X_i$  is less than $i$,
\item $H^j(X_i,X_{i-1})$ vanishes for $j\neq i$.
\end{enumerate}
and the same for $Y_i$. The boundary maps in the long exact sequence for the
triple $(X_{i+1},X_i,X_{i-1})$ introduced in Section~\ref{sec:homology_intro} define a complex
\[ C(X_*)=[H^0(X_0)\to H^1(X_1,X_0)\to H^2(X_2,X_1)\to\dots].\]
By \cite[Theorem~2.35]{hatcher} its cohomology in degree $j$ agrees with $H^j(X)$.  We introduce 
\[ C(X_*,Y_*)=\cone\left( C(X_*)\to C(Y_*)\right)[-1],\]
explicitly given by
\[ [H^0(X_0)\to H^1(X_1,X_0)\oplus H^0(Y_0)\to H^1(X_2,X_1)\oplus H^1(Y_1,Y_0)\to \cdots].
\]
Its cohomology in degree $j$ agrees naturally with $H^j(X,Y)$. As a result,
$H^1(X,Y)$ can be identified with a subquotient of 
\[ H^1(X_1,X_0)\oplus H^0(Y_0).\]
This implies that the periods of $H^1(X,Y)$ are contained  in the space of periods
of $H^1(X_1,X_0)\oplus H^0(Y_0)$.  
As discussed in the proof of Lemma~\ref{lem:per_contained}, the periods of $H^0(Y_0)$ can also be seen as
periods of $H^1(Y_0\times\A^1)$, so they are periods of smooth affine curves.

In conclusion it remains to consider the case where $X$ is an affine curve, $
Y$ a finite set of points and, in addition, $X\setminus Y$ is smooth.
By normalisation, we resolve the singularities of $X$. We denote by $\tilde{X}$ the
normalisation and by $\tilde{Y}$ the preimage of $Y$ in $\tilde{X}$. By excision we have
\[ H^1(X,Y)\isom H^1(\tilde{X},\tilde{Y}).\] 
The curve $\tilde{X}$ is smooth and affine.
\end{proof}

\begin{defn}
We say that a  period is of \emph{of curve type}\index{periods!of curve type}
if it is the period of some $H^1(C,D)$ where $C$ is a smooth affine curve and $D$ a finite collection of points on $C$.
\end{defn}

Proposition~\ref{prop:reduce_C} asserts that all elements of $\Per^1$ are of curve type.

\begin{cor}\label{cor:from_curves}
All elements of $\Per^1$ are $\Z$-linear combinations of integrals of the form
\[ \int_\gamma \omega\]
where $\omega$  is  a regular algebraic $1$-form
on  a smooth affine curve $C$  over $\Qbar$   and $\gamma$ a differentiable path on $C(\C)$ which is either closed or has end points defined over $\Qbar$.
\end{cor}
This is a special case of the identification of normal crossings periods and periods of algebraic varieties, see \cite[Theorem~11.4.2]{period-buch}. The case $i=1$ is easier and we give the proof explicitly.

\begin{proof}
Given Proposition~\ref{prop:reduce_C}, this is a statement about the explicit description of relative singular  and de Rham cohomology. Let $C$ be a smooth affine curve, $Y\subset C$ a finite set of $\Qbar$-points. 
We have made the period computation explicit in Section~\ref{ssec:period_compute_1}
All period numbers are of the form
\[ ((\omega,\alpha),\sigma)=\int_{\sigma}\omega-\alpha(\partial\sigma).\] 
for an algebraic differential form $\omega$ on $C$, a set-theoretic map
$\alpha:Y(\Qbar)\to\Qbar$, and a
formal linear combination $\sigma=\sum n_i\gamma_i$
of smooth maps $\gamma_i:[0,1]\to C^\an$ such that 
$\partial(\sum n_i\gamma_i)=\sum n_i\gamma_i(1)-\sum n_i\gamma_i(0)$ is 
a divisor on $Y(\Qbar)$.

The second term only appears for non-closed paths. It is an algebraic number and can be expressed as a period integral of $\A^1$, namely $\int_{[0,1]}\alpha(\partial\sigma)d t$.   
We conclude that every element of $\Per^1$ is a $\Z$-linear combination of explicit
integrals as stated.
\end{proof}

Conversely, the following proposition  shows that all periods of curves are in $\Per^1$.
\begin{prop}
Let $C$ be a curve over $\Qbar$, $Y\subset C$ a finite set
of $\Qbar$-valued points. Then 
\[\Per\langle H^*(C,Y)\rangle\subset\Per^1.\]
\end{prop}
\begin{proof}Consider $H^i(C,Y)$ for $0\leq i \leq 2$. The assertion holds for $i=1$ by definition
and was shown in Lemma~\ref{lem:per_contained} for $i=0$. In the case $i=2$ dimension reasons show that $H^2(C,Y)\isom H^2(C)$. 
We replace $C$ by its normalisation. 
By the blow-up sequence formulated in Proposition~\ref{prop:blow-up} 
this does not change $H^2(C)$.
Without loss of generality, $C$ is connected.
If $C$ is affine, then $H^2(C)=0$ and $H^2(C)\isom H^2(\Pe^1)$ if $C$ is projective. From the long exact Mayer-Vietoris sequence
 \[ H^1(\A^1)\oplus H^1(\Pe^{1}\ohne\{0\})\to H^1(\Gm)\to H^2(\Pe^1)\to H^2(\A^1)\oplus H^2(\Pe^{1}\ohne\{0\})\]
for the cover of $\Pe^1$ by $\A^1$ and
$\Pe^{1}\ohne\{\infty\}$
and the vanishing of cohomology of affine spaces,
 we deduce an isomorphism
$H^2(\Pe^1)\isom H^1(\Gm)$.
 This shows that  its periods are also in $\Per^1$.
\end{proof}

\section{Comparison with Periods of $1$-Motives}\label{ssec:comp_periods}

From a conceptual point of view, it is also important to describe $1$-periods in terms of Jacobians of curves.

Let $C$ be a smooth curve over $\Qbar$, and  $D\subset C$ a finite set of $\Qbar$-points. Let $J(C)$ be its generalised Jacobian (see Section~\ref{app:B}) and
\[ \Z[D]^0=\left\{f:D\to\Z\;| \sum_{P\in D} f(P)=0\right\}\]
 the set of divisors of degree $0$ supported in $D$.  We consider the $1$-motive
\[ M=[\Z[D]^0\to J(C)].\]
\begin{lemma}\label{lem:per_J}
In this situation, we have
\[ \Per (H^1(C,D))=\Per(H^1(J(C),D))=\Per(M).\]
\index{periods!comparison}
\end{lemma}
\begin{proof}
We write $D$ as   $D=\{P_0,\dots,P_r\}$. The easy case when $D=\emptyset$ is left to the reader.
The point $P_0$ is used for the definition of the  inclusion $C\to J(C)$ which
induces by functoriality a morphism in $\VVarg{\Qbar}{\Q}$
\[ H^1(J(C),D)\to H^1(C,D).\]
We apply the long exact cohomology sequence for relative cohomology \[\begin{xy}\xymatrix{
H^0(J(C))\ar[r]\ar[d]_{\isom}&\ar[r]  H^0(D)\ar[r]\ar@{=}[d]&   H^1(J(C),D)\ar[d]\ar[r]&H^1(J(C))\ar[d]\ar[r]&0\\
H^0(C)\ar[r] & H^0(D)\ar[r]&   H^1(C,D)\ar[r]&H^1(C)\ar[r]&0
}\end{xy}\]
to both terms. 
 By Theorem~\ref{thm:gen_jacob},
the induced natural map $H^1(J(C))\to H^1(C)$ is an isomorphism. According to the Five Lemma the same is true
for $H^1(J(C),D)\to H^1(C,D)$ and so their periods agree.

In our second step we apply Proposition~\ref{prop:compare} to the 
$1$-motive $M=[\Z[D]^0\to J(C)]$.
Note that $e_i=P_i-P_0$ for $i=1,\dots,r$ is a basis of $\Z[D]^0$  and
the natural map $D\to J(C)$ maps $P_0$ to $0$ and all other $P_i$ to the corresponding $e_i$. By Proposition~\ref{prop:compare} this induces an isomorphism
\begin{equation}\label{eq:V} V(M)^\vee=H^1(J(C),D)\end{equation}
and Lemma~\ref{lem:periods_dual} implies
\[ \Per(M)=\Per(H^1(J(C),D)).\]
\end{proof}
Having now identified periods of curves with periods of semi-abelian varieties, we can make the step to $1$-motives.

\begin{prop}\label{prop:period_1}
A complex number is a period of some $1$-motive if and only if it
is  in $\Per^1$. In other words
\[\Per^1=\Per(\onemot_\Qbar).\]
\end{prop}
\begin{proof}
Let $\alpha$ be in $\Per^1$. Proposition~\ref{prop:reduce_C} tells us that it is of curve type and
by Lemma~\ref{lem:per_J} it is the period of a $1$-motive.

For the converse, let $M=[L\to G]$ be a $1$-motive. Up to isogeny  we can split
$L$ as $L=L_1\oplus L_2$ with $L_1\xrightarrow{0}G $ and $L_2\hookrightarrow G$. This gives 
\[ M\isom[L_1\to 0]\oplus [L_2\hookrightarrow G],\]
and it therefore suffices to consider the two special cases  $G=0$ or when the structure map $L\to G$ is injective. For $M=[\Z\to 0]$, Proposition~\ref{prop:compare} gives 
$V(M)^\vee\isom H^0(\Spec(\Qbar))$.In fact, the space of periods is simply $\Qbar$ in this case.
For $M=[\Z^r\hookrightarrow G]$ we have by Proposition~\ref{prop:compare}
that $V(M)^\vee\isom H^1(G,Z)$ with  $Z=\{0,P_1,\dots,P_r\}$ where $P_i$ is the image
of the $i$-th standard basis vector of $\Z^r$. Their periods are in $\Per^1$.
\end{proof}

\begin{summary}\label{sum:all_equal}
There are three different definitions of what a $1$-period might be:
\begin{enumerate}
\item the period of some $H^1(X,Y)$ (cohomological degree $1$),
\item the period of a curve relative to some points (dimension $1$),
\item the period of a Deligne $1$-motive.
\end{enumerate}
Our discussion shows that these three notions agree.
\end{summary}

\section{The Motivic Point of View}\label{ssec:mot}
 
We have only discussed periods of Deligne's category of $1$-motives so far. There are two other categories of mixed motives, due to Voevodsky and Nori, respectively. In both cases, periods can be defined. The purpose of this section is to
compare the sets of numbers that we obtain. 

The theories are technically very demanding. It would go too far to present  this here in detail. In Appendices~\ref{sec:app_nori} and \ref{sec:voe} 
the interested reader can find a more complete survey.

The motivic picture gives a lot more structure to the situation. It also shows that
most results comparing cohomological $1$-periods to other notions of $1$-periods can easily be deduced from the literature.
The results will only be used Chapter~\ref{sec:results}.

Let $k$ be a field with a fixed embedding into $\C$. 
We denote by $\DMgmeff(k,\Q)$ Voevodsky's triangulated category of effective
geometric motives, see Ap\-pen\-dix~\ref{sec:voe}. The category comes with a functor which attaches to every smooth $k$-variety $X$ its \emph{motive} $M(X)$.\index{geometric motive}\index{motive!geometric}
Let $d_1\DMgmeff(k,\Q)$ be the full thick subcategory 
generated by the motives of the form  $M(X)$ for $X$ a smooth variety of dimension at most $1$. There is a natural equivalence
triangulated categories
\[ D^b(\onemot_k)\to d_1\DMgm(k,\Q)\]
from the bounded derived category of the abelian category $\onemot_k$ to the traingulated category 
$d_1\DMgm(k,\Q)$. We refer to Theorem~\ref{thm:orgo_app} for more details.

Next we turn to Nori's abelian category $\MMNeff(k,\Q)$ of
effective motives. It is universal for all cohomological functors compatible with rational singular cohomology. In Appendix~\ref{sec:app_nori} a very brief introduction to Nori's theory is given.\index{motive!Nori}\index{Nori motives}

 Let
$d_1\MMNeff(k,\Q)$ be the smallest full subcategory containing $H^i_\Nori(X,Y)$ for
$Y\subset X$ with $i\leq 1$ and which is closed under subquotients.
Again, by the work of Ayoub and Barbieri-Viale,
see Theorem~\ref{thm:abv}, there is an anti-equivalence
\[ \onemot_k\to d_1\MMNeff(k,\Q).\]
Moreover, the abelian category $d_1\MMNeff(k,\Q)$ has an explicit description as the diagram category  in the sense of Nori  of the category of pairs
$(C,Y)$ where $C$ is a smooth curve and $Y\subset C(k)$ a finite subset. 

By Theorem~\ref{thm:harrer} both categories are linked by a triangulated realisation functor
\[ \DMgmeff(k,\Q)\to D^b(\MMNeff(k,\Q))\]
compatible with their singular realisations
 into the derived category of
$\Q$-vector spaces. 
By Proposition~\ref{prop:harrer_dim} it maps the subcategory $d_1\DMgm(k,\Q)$ to $D^b(d_1\MMN(k,\Q))$.

The universal property of Nori motives implies the existence of
 functors 
\[ \MMNeff(k,\Q)\hookrightarrow \MHS_k\hookrightarrow \VVarg{\Q}{k};\]
see Theorem~\ref{thmdefn:Nori}. The first functor associates to a Nori motive a mixed Hodge structure. By forgetting the filtrations we obtain an object of $\VVarg{\Q}{k}$. 
This allows us to define periods for
the various categories of motives.
All this is summed up  in one diagram: 

\[\begin{xy}\xymatrix{
D^g(\onemot_k)\ar[r]^{H^0}\ar[d]_{\simeq}&\onemot_k\ar[d]^{\simeq}\\
d_1\DMgmeff(k,\Q)\ar[r]^{H^0}\ar@{_{(}->}[d]&d_1\MMNeff(k,\Q)\ar@{_{(}->}[d]\\
\DMgmeff(k,\Q)\ar@/_1pc/[u]\ar[r]^{H^0}\ar[rd]\ar[rdd]&\MMNeff(k,\Q)\ar@/_1pc/[u]\ar[d]\\
&\MHS_k\ar[d]\\
&\VVarg{k}{\Q}
}\end{xy}\]

\noindent By Proposition~\ref{cor:one_vs_geom} the composition in the right column is the functor
$V:\onemot_k\to\VVarg{\Q}{k}$ of Section~\ref{sec:hodge} composed with the external duality functor
of Definition~\ref{defn:external}.

\begin{cor}
The sets of periods of the categories\index{periods!of motives}\index{periods!comparison}
$\onemot_k$, $d_1\DMgmeff(k,\Q)$ and $d_1\MMNeff(k,\Q)$ agree and are equal to $\Per^1$.
In particular, both 
Pro\-po\-si\-tion~\ref{prop:reduce_C} and Proposition~\ref{prop:period_1} hold true.
\end{cor}
\begin{proof}
Theorems~\ref{thm:orgo_app} and \ref{thm:harrer} combined with Theorem~\ref{thm:abv} provide the equivalences of categories
\[ D^b(\onemot_k)\to d_1\DMgmeff(k,\Q)\to D^b(d_1\MMNeff(k,\Q)).\]
 Proposition~\ref{cor:one_vs_geom} asserts that
 Deligne's construction of the realisation of a $1$-motive agrees (up to duality)
with the one obtained via the identification with the category $d_1\MMNeff(k,\Q)$. As a consequence the three categories have identical sets of periods. Finally,  the periods of $D^b(\onemot_k)$ coincide with the periods of $\onemot_k$ by definition.

 Also by definition, $\Per^1\subset \Per(d_1\MMNeff(k,\Q))$. The explicit description of
$d_1\MMNeff(k,\Q)$ in Theorem~\ref{thm:abv} yields the converse inclusion and even the more restrictive description of
Proposition~\ref{prop:reduce_C}.

As a byproduct, we get the equality $\Per^1=\Per(\onemot_k)$, reproving Proposition~\ref{prop:period_1}.
\end{proof}


\chapter{Relations between Periods}\label{sec:results}

In the last chapter, we established different descriptions for the
space of $1$-periods. We now turn to their relations. 

\section{Kontsevich's Period Conjecture}

There is a short list of obvious relations.
\begin{enumerate}
\item[(A)] 
\emph{Bilinearity:}
Let $X$ be a variety over $\Qbar$, $Y\subset X$ a closed subvariety and $i\in\Na_0$.  For all $\sigma_1,\sigma_2\in H_i^\sing(X,Y;\Q)$, $\omega_1,\omega_2\in H^i_\dR(X,Y)^\vee$, $\mu_1,\mu_2\in\Q$, $\lambda_1,\lambda_2\in\Qbar$, we have
\[ \int_{\mu_1\sigma_1+\mu_2\sigma_2}(\lambda_1\omega_1+\lambda_2\omega_2)=\sum_{i,j=1,2}\mu_i\lambda_j\int_{\sigma_i}\omega_j.\]
\item[(B)] 
\emph{Functoriality:}
Let $f:(X,Y)\to (X',Y')$ be a morphism of pairs of $\Qbar$-varieties and $i\in\Na_0$. For all
$\sigma\in H_i^\sing(X,Y;\Q)$ and $\omega'\in H^i_\dR(X',Y')$, we have
\[ \int_\sigma f^*\omega'=\int_{f_*\sigma}\omega'.\]
\item[(C)] 
\emph{Boundary maps:}
Let $X\supset Y\supset Z$ be subvarieties and $i\in\Na_0$.
For all $\sigma\in H_{i+1}^\sing(X,Y;\Q)$ and $\omega\in H^i_\dR(Y,Z)$, we have
\[ \int_{\partial\sigma}\omega=\int_\sigma\partial\omega\]
where $\partial$ denotes the boundary maps
$H^i_\dR(Y,Z)\to H^{i+1}_\dR(X,Y)$ on de Rham cohomology and $H^\sing_{i+1}(X,Y;\Q)\to H^i_\sing(Y,Z;\Q)$ on singular homology, respectively.
\end{enumerate}

To state the Period Conjecture 
we recall from Definition~\ref{defn:coh_periods} the set of
$i$-periods $\Per^i\subset \C$. Let $ \Per^\eff=\bigcup_{i=0}^\infty\Per^i$ be the
set of \emph{effective cohomological periods} and
$\Per=\Per^\eff[\pi^{-1}]$ the \emph{period algebra}. 

\begin{conjecture}[Period conjecture, Kontsevich \cite{kontsevich}]\label{conj:kontsevich_full}\index{Period Conjecture!Kontsevich's}
All $\Qbar$-linear relations between elements of $\Per$ are induced by the above relations.
\end{conjecture}

\begin{rem}
\begin{enumerate}
\item 
In the abstract formalism of Chapter~\ref{ch:formalism}
this is the Period Conjecture for the diagram $\pairseff$ for $k=\Qbar$, see Definition~\ref{defn:pairseff}.
\item
A close look shows that Conjecture~\ref{conj:kontsevich_full} is not identical to the conjecture originally formulated
by Kontsevich in \cite{kontsevich}. He was only considering smooth varieties $
X$ and divisors with normal crossings $Y$. We refer to the discussion in \cite[Remark~13.1.8]{period-buch}
for the  precise connection.  The version above implies that $\Spec(\Per)$
is a torsor under the motivic Galois group of $\Qbar$, a result due to
Nori. It was first formulated in  \cite[Theorem~6]{kontsevich}. A complete proof can be found in \cite[Theorem~13.1.4]{period-buch}.
\item 
By the Künneth formula, products of periods are in fact periods of products of varieties. Hence
the above conjecture also says something about \emph{algebraic} relations between periods and, indeed, it is equivalent to a Grothendieck style version of
the Period Conjecture. For a complete discussion, see \cite[Section~13.2]{period-buch}. We do not deal with  the latter because we are interested in the set $\Per^1$, which is \emph{not}
closed under multiplication.
\end{enumerate}
\end{rem}

\begin{thm}[Period conjecture for $\Per^1$]\label{thm:main_kontsevich}
\index{Period Conjecture!for $\Per^1$}
The Period Conjecture is true for the subset $\Per^1$. More explicitly, the following equivalent statements hold true:
\begin{enumerate}
\item \label{it:period_1}All relations between periods of $1$-motives are induced by bilinearity and functoriality of $1$-motives.
\item \label{it:period_2}All relations between periods of curve type are induced by bilinearity 
and functoriality of pairs $(C,D)\to (C',D')$ with $C, C'$ smooth affine curves and
$D, D'$ finite sets of points of $C$ and $C'$, respectively.
\item \label{it:period_3}All relations between periods in cohomological degree at most $1$ are induced
by the relations (A), (B), (C).
\end{enumerate}
\end{thm}

\begin{rem}
This theorem does not mention Nori motives. In contrast to Section~\ref{sec:nori}, we have not been able to eliminate them from the proofs, at least not without disproportionate effort.
\end{rem}

\begin{proof}[Motivic Proof of Theorem~\ref{thm:main_kontsevich}.]
Assertion~(\ref{it:period_1}) is precisely the 
statement of Theorem~\ref{thm:main_one}.
Hence it remains to show the equivalence with the others.  Here we rely significantly on the results of Appendix~\ref{sec:app_nori}.

By Theorem~\ref{thm:abv} the category $\onemot_\Qbar$ is equivalent to
$d_1\MMNeff(\Qbar,\Q)$. This category has a description as the diagram category of
the diagram of pairs $(C,D)$ with $C$ a smooth affine curve and $D$ a finite set of points on $D$. By the general results of \cite[Theorem~8.4.22]{period-buch} this implies
that all relations are induced by bilinearity and functoriality for the edges of the diagram,  i.e. functoriality for pairs.
This is the implication from Assertion~(\ref{it:period_1}) to Assertion~(\ref{it:period_2}).

The proof of Proposition~\ref{prop:reduce_C} shows that all elements in
$\Per^1$ can be related to periods of curve type using only the operations
(A), (B), (C). 
 Hence Assertion~(\ref{it:period_2}) implies Assertion~(\ref{it:period_3}).

In order to show the implication from Assertion~(\ref{it:period_3}) to
Assertion~(\ref{it:period_1}), we apply Theorem~\ref{thm:abv} and  replace
$\onemot_\Qbar$ by the equivalent $d_1\MMNeff(\Qbar,\Q)$. Every object $M$ in the latter category is a subquotient of
an object of the form $H^i(X,Y)$ for $i\leq 1$. Theorem~\ref{thm:abv} even shows that it suffices to take $i=1$ and $X$ a curve. The functoriality relation for periods of Nori motives identifies the periods of $M$ with the periods
of $H^i(X,Y)$ for $i\leq 1$. The relations 
(B) and (C)
are special cases of the functoriality relation for Nori-$1$-motives. 
This finishes the proof.
\end{proof}

We come back to the category of Nori-$1$-motives and its realisations as discussed in
Section~\ref{ssec:mot}.

\begin{thm}[Fullness]\label{thm:ff}
\index{fullness!for Nori motives and Hodge structures}
The three natural functors $f_1,f_2,f_3$ on $\onemot_{\Qbar}$
\[\begin{xy}\xymatrix{
\onemot_\Qbar\ar[r]^{\simeq\hspace{2em}}& d_1\MMNeff(\Qbar,\Q)\ar@{^(->}[r]^{f_1}\ar@{_(->}[rd]^{f_2}\ar@{_(->}[rdd]^{f_3}&\MMN(\Qbar,\Q)\ar[d]\\
 &&\MHS_\Qbar\ar[d]\\
&&\VVarg{\Qbar}{\Q}
}\end{xy}\]
are fully faithful with image closed under subquotients.
\end{thm}
\begin{proof}It suffices to consider the total functor $f_3$.  Composition of $f_3$ with the anti-equivalence with
$\onemot_\Qbar$ is simply $\cdot\circ V$. Therefore it suffices to consider
$V:\onemot_\Qbar\to \VVneu$. This functor is fully faithful by Theorem~\ref{thm:fulness_VV}
\end{proof}

\begin{rem}
We gave a direct proof for $\MHS_\Qbar$ earlier, see Proposition~\ref{prop:hodge_ff}. Both arguments rely on the Analytic Subgroup Theorem applied to the graph of a morphism, but applied in a different way.
\end{rem}

\section{The Case of Curves}

We turn now to the case of curves, which is of particular interest. We begin with some motivating discussion with historical background. Then we turn to the Period Conjecture for curves. One of the highlights is a precise criterium for a sum of periods of a single differential to be algebraic. 

\subsection{Motivating Examples}

In his book on transcendental numbers \cite{siegel-buch} Siegel mentioned several problems which were not accessible at the time. He wrote (see loc. cit. p. 97)

{\it All our transcendence proofs made essential use of the fact that the problem can be reduced to the proof of a property of entire functions.
This is the reason why the known methods do not work for elliptic integrals of the third kind and not even for integrals of the third kind in the still simpler fact of curves of genus 0. For instance, it is not known whether the number 
\[
\int_0^1\frac{dx}{1+x^3}=\frac{1}{3}(\log 2+\frac{\pi}{\sqrt{3}})
\]
is irrational.}

Integrals of this form along not necessarily closed paths are what we call  incomplete periods of the third kind on $\Pe^1$. As it turned out such integrals are not only irrational but even transcendental as follows from Baker's work on linear forms in logarithms. Indeed
one deduces from the inhomogeneous  case  of Baker's theorem about linear forms in logarithms of algebraic numbers that the numbers $1$, $\log 2$ and $\pi=-i \log(-1)$ are linearly independent over $\Qbar$.
Strictly speaking this is not a transcendence result but a result on linear independence of incomplete periods of the third kind in the case of a curve of  genus 0. However the transcendence of $\log 2$ and $\pi=-i \log(-1)$ is an immediate consequence. 

A.  van der Poorten, see \cite{poorten} considered a more general  complete and also incomplete period of the third kind on a curve of genus 0. In this case  a differential $\xi$  of the third kind takes the form 
\[
\xi = \frac{P(x)}{Q(x)} dx
\] 
where $P(x)$ and $Q(x)$ are polynomials. He considers a path $\gamma: [0,1] \rightarrow \Pe^1$ along which the differential form is defined and which satisfies $\gamma(0), \gamma(1) \in \Pe^1(\Qbar)$. We write  $\alpha_1,\ldots,\alpha_n$ for the zeroes of $Q$ and for $r_1,\ldots,r_n$ the residues at the poles of the differential form $\xi$. Then he deduces again from the inhomogeneous version of Baker's theorem on linear forms in logarithms that $\int_\gamma\xi$ is algebraic if and only
\[
\int_\gamma\big(\sum_{k=1}^n \frac{r_k}{x-\alpha_k}\big)\;dx=0.
\]
This follows from taking the partial fraction decomposition 
\[
\frac{P(x)}{Q(x)}dx= dF(x)+\sum_{k=1}^n\frac{r_k}{x-\alpha_k}dx.
\]
and integrating along $\gamma$.

In Theorem~\ref{thm:transc_curves} below we will give  a generalisation of van der Poorten's result to curves of any genus. In particular van der Poorten's Theorem is a special case of our result. Furthermore in the case of positive genus this includes abelian integrals of the third kind and proves transcendence of complete and incomplete periods.

An even older example was pointed out by Arnol'd in \cite{arnold}.
He gives a reference to a letter of Leibniz to Huygens, dated ${10\over20}$ April 1691. In this letter Leibniz formulated the problem of transcendence of the areas of segments cut off from an  algebraic curve, defined by an equation with rational coefficients, by straight lines with algebraic coefficients (see \cite[p. 93, footnote]{arnold}). In \cite[p. 105]{arnold} Arnol'd reformulated this problem turning it into modern language: an abelian integral along an algebraic curve with rational (algebraic) coefficients taken between limits which are rational (algebraic) numbers is generally a transcendental number. Again Theorem~\ref{thm:transc_curves} below gives the solution to Leibniz' problem. We refer to \cite{wuestholz}
for a more detailed discussion of the example.

As Arnol'd pointed out the problem is also very interesting from a historical point of view, in so far as it was previously believed that transcendence theory developed in the nineteenth century with Liouville, Hermite and others. The document of Leibniz, however, shows that already in the seventeenth century the concept of transcendence of numbers was present.

\subsection{The Period Conjecture for Curves}

\begin{thm}
\index{Period Conjecture!for curves}
Let $C$ be a smooth curve over $\Qbar$ with generalised Jacobian $J(C)$, $D\subset C$ a finite set of $\Qbar$-points with group of divisors of degree $0$ supported in $D$ denoted $L=\Z[D]^0$.
 Then all relations between periods of $H^1(C,D)$ are induced by bilinearity and morphisms between subquotients of sums of the $1$-motive $[L\to J(C)]$.
\end{thm}
\begin{proof}
By Lemma~\ref{lem:per_J}, the periods of $H^1(C,D)$ agree with the
periods of the $1$-motive as described in the theorem.
We then apply
Theorem~\ref{thm:main_one}.
\end{proof}

\begin{rem}
Note that this version of the Period Conjecture does not rely on the higher theory of geometric or Nori motives --- only $1$-motives are used. This version  is actually a lot more useful in computations. 
\end{rem}

We now turn from relations between periods to the question of transcendence of periods of differential forms.
It suffices to consider the case of a smooth projective curve $C$. 
With a rational function  $f\in\Qbar(C)^*$ we associate a  meromorphic differential form $\omega=df$. We also choose a path $\gamma$ in $C^\an$ which avoids the singularities of $f$ and has endpoints with beginning and end point in $C(\Qbar)$. Then
\[ \int_\gamma\omega= f(\gamma(1))-f(\gamma(0))\in\Qbar.\]
This is essentially the only way to produce algebraic periods from meromorphic differential forms as the following theorem shows.

\begin{thm}[Transcendence of periods]\label{thm:transc_curves}
\index{transcendence!of periods of curves}
Let $C$ be a smooth projective curve over $\Qbar$ and $\omega$ a meromorphic differential form defined over $\Qbar$. Let $\sigma=\sum_{i=1}^na_i\gamma_i$ where
 $\gamma_i:[0, 1]\to C$ for $i=1,\dots,n$ are  differentiable paths   avoiding the poles of
$\omega$ and $a_i\in\Z$. We assume that
$\partial\sigma$ has support in $C(\Qbar)$. 
In this situation the period
\[ \alpha=\int_\sigma\omega.\] is algebraic if and only if $\omega$ is the sum of an exact form with no extra poles  and a form with vanishing period.
\end{thm}

\begin{rem}
The theorem includes famous cases like the transcendence of $\pi$, $\log\alpha$ (for
$\alpha$ algebraic) and periods and quasi-periods of elliptic curves.
Forms of the first, second and third kind are allowed.
\end{rem}

\begin{proof} 
Let $C^\circ\subset C$ be an affine curve such that
$\omega$ is holomorphic on $C^\circ$. Let $D\subset C^\circ$ be the set of starting and end points
of the paths $\gamma_1,\dots,\gamma_n$. Then $\alpha$ can be considered as a period of 
$H^1(C^\circ,D)$. 
We introduce the generalised Jacobian $G=J(C^\circ)$ and fix an embedding $C^\circ\to G$. This translates  $\alpha$ into a period of
the $1$-motive $M=[\Z[D]^0\to G]$ by viewing $[\omega]\in H^1_\dR(C^\circ,D)$ as an element of $\VdR{M}$ and $[\sigma]\in H_1^\sing(C^\circ,D;\Q)$ as an element of $\Vsing{M}$.
By Theorem~\ref{thm:transc_1} the algebraicity of the period implies
that $\omega$ can be written as $\phi+\psi$ such that $\int_\sigma\psi=0$ and that the image of 
$\phi$ in $V_\dR(G)^\vee=H^1_\dR(G)\isom H^1_\dR(C^\circ)$ is zero. As $C^\circ$ is affine, this means that the differential form $\phi\in\Omega^1(C^\circ)$ is exact with poles only in $C\ohne C^\circ$. 

This finishes the proof except  when
$\omega\in\Omega^1(C)$. It remains to show that $\phi$ is not only exact but has no poles. This requires an extra argument that we give
in Proposition~\ref{prop:exc_case} below.
\end{proof}

\begin{prop}\label{prop:exc_case}
 Let $C$ be a smooth projective curve over $\Qbar$, $\omega\in\Omega^1(C)$, $\sigma$ a linear combination of paths with endpoints in $C(\Qbar)$. If
$\int_\sigma\omega$ is algebraic, then it is zero.
\end{prop}
\begin{proof}
Let $M=[\Z[D]^0\to J(C)]$ be as in the proof of Theorem~\ref{thm:transc_curves}. We consider $\sigma\in\Vsing{M}$ as an element of $\Lie(M^\natural)_\C$ via the inclusion $\Vsing{M}\subset \Lie(M^\natural)_\C$.
By construction $M^\natural$ is a vector extension of
$J(C)$ and by assumption $\omega$ is in the image of $\coLie(J(C))\to \coLie(M^\natural)$  . Hence the period $\alpha$ only depends on the image $\bar{\sigma}$ of $\sigma$ in $\Lie(J(C))_\C$. 
We now consider the connected algebraic group $G=J(C)\times\Ga$ and apply the
Analytic Subgroup Theorem~\ref{thm:annihilator} to the point $u=(\bar{\sigma},1)\in\Lie(G)^\an$ and to $(\omega,\alpha dt)\in\coLie(G)$. The theorem gives a short exact sequence
\[ 0\to H\to G\xrightarrow{\pi} G/H\to 0\]
such that $u$ is in the image of $\Lie(H)^\an$ and $(\omega,\alpha dt)$ in the image of $\coLie(G/H)$ under $\pi^*$. As $G$ is the product of an abelian variety and  $\Ga$, there are only two possibilities for $H$: it is either an abelian subvariety of $J(C)$ or a product of an abelian subvariety $B$  with  $\Ga$. 
In the first case, the inclusion $H\to G$ factors via $J(C)$. This contradict
the shape of $u$. In the second case $G/H=J(C)/B$ is an abelian variety. The subgroup $\Ga$ is contained in the kernel of $\pi$ and so $\pi^*\coLie(G/H)\subset \coLie(J(C))\times 0$, i.e. $\alpha dt=0$. This gives $\alpha=0$ as we wanted to prove.
\end{proof}

\begin{rem}
This proof goes back to the original formulation of the Analytic Subgroup Theorem. It is not enough to apply the transcendence criterion in Theorem~\ref{thm:transc_1}. The additional input here is the Hodge filtration on $V_\dR(M)$.  
\end{rem}

\begin{cor}\label{cor:closed}
Let $C$, $\omega$ and $\sigma$ be as in Theorem~\ref{thm:transc_curves}. If in addition $\omega\in\Omega^1(C)$ or if  the divisor
$\partial\sigma=\sum_{i=1}^na_i\gamma_i(1)-\sum_{i=1}^na_i\gamma_i(0)$ vanishes, then the period $\alpha$ is either transcendental or zero.
\end{cor}
\begin{proof} Suppose that $\alpha=\int_\sigma\omega$ is algebraic. 
By the theorem there is a decomposition $\omega=df+\phi$ such that $\int_\sigma\phi=0$.
If the boundary divisor vanishes, then
\[ \int_\sigma df=\int_{\partial \sigma} f=0\]
for all $f$.
If $\omega\in\Omega^1(C)$, then $f$ is in $\Oh(C)$, which gives $df=0$.
In both cases $\alpha=\int_\sigma\phi=0$.
\end{proof}

Masser pointed out to us that most of Theorem~\ref{thm:transc_curves} has an elementary reduction to the case of closed cycles announced in \cite{wuestholz-icm}. We explain a variant of his argument. 

\begin{proof}
Let $\sigma$ be as in the theorem such that $\partial\sigma\neq 0$. 
We write $\partial \sigma=\sum_{i=1}^mb_iP_i$ with $P_i\in C(\Qbar)$ and non-vanishing $b_i$. By assumption $m\geq 1$. Let $Q$ be in the polar locus of $\omega$. (If $\omega\in\Omega^1(C)$ pick any $Q\in C(\Qbar)$ not on any of the $\gamma_i([0,1])$.)

We consider the divisors $D_N=NQ-P_2-\dots-P_m$ and $D'_N=D_N-P_1$. For big enough $N$, Riemann-Roch gives $l(D_N)=l(D'_N)-1$. Let $f\in L(D'_N)\ohne L(D_N)$. This is a rational function with a pole only in $Q$ and a zero in
$P_2,\dots,P_m$, but not in $P_1$. As a consequence we obtain 
\[ \int_\sigma df=\sum_{i=1}^m b_i f(P_i)=b_1f(P_1)\neq 0.\]
The function $f$ can be normalised such that the value of the integral is $1$.
If $\alpha=\int_\sigma\omega$ is algebraic, then it is equal to
$\int_\sigma d(\alpha f)$. Introduce $\phi=\omega- d(\alpha f)$. By construction,
$\int_\sigma\phi=0$, as we wanted to show. 

As in the original proof of Theorem~\ref{thm:transc_curves} the argument does not allow to control the poles of $df$ in the special case where
$\omega$ is holomorphic. We deduced this case directly from the Analytic Subgroup Theorem in Proposition~\ref{prop:exc_case}.
\end{proof}

\begin{rem}
Qualitatively, we are in a situation very similar to the case of closed cycles: understanding algebraicity of periods requires understanding vanishing of periods.
\end{rem}

We come back to the case where $\sigma=\gamma$ is a single non-closed path later
in Corollary~\ref{cor:non-closed} under the simplifying assumption that
$J(C)$ is simple.

\chapter{Vanishing of Periods of Curves}\label{ch:van}

In this chapter, we translate our results obtained so far to the classical language and turn to the subtle question when periods integrals on curves vanish.
In this chapter let $C$ be a smooth projective curve over $\Qbar$ and
$\omega\in\Omega^1_{\Qbar(C)}$ a rational algebraic differential form on $C$.

\section{Classical Periods}
\index{classical periods}\index{periods!of curves}\index{periods!vanishing}\index{vanishing of periods}
We come back to the classical terminology.

\begin{defn}
 We say that $\omega$ is 
\begin{enumerate}
\item \emph{exact}\index{differential forms!exact} if it is of the form $\omega=df$ for some $f\in\Qbar(C)^*$,
\item \emph{of the first kind} \index{differential forms!first, second, third kind} if it does not  have poles, i.e. $f\in\Omega^1(C)$;
\item \emph{of the second kind} if the residues of $\omega$  are zero;
\item \emph{of the third kind} if it has at most simple poles.
\end{enumerate}
\end{defn}
This terminology goes back to the early 19th century, when Legendre 
studied elliptic integrals. We refer to \cite[Chapter~7]{Wells-history} for historical comments.

Following this definition, exact forms are of the second kind and differential forms of the first kind are also of the second and third kind. 

\begin{lemma}\label{lem:decomp_23}
Every differential form can be written in the form
\[ \omega=\omega_2+\omega_3\]
with $\omega_2$ of the second kind and $\omega_3$ of the third kind.
\end{lemma}
\begin{proof}
Let $\omega$ be an arbitrary meromorphic differential form. The sum of its residues is $0$. For example by \cite[Lemma~p.~233]{griffiths_harris} there is
a form $\omega_3$ of the third kind with the same poles and residues as $\omega$. Then $\omega_2=\omega-\omega_3$ has the desired property.
\end{proof}

\begin{defn}
A complex number is called \emph{classical period} if it is of the form 
\[ \int_\sigma\omega\]
where $\sigma=\sum_{i=1}^na_i\gamma_i$ is a formal $\Z$-linear combination of $C^\infty$-paths avoiding the singularities of $\omega$ and with endpoints in $C(\Qbar)$. We say that it is 
\begin{enumerate}
\item  \emph{simple} \index{periods!simple}\index{simple periods}if
$n=1$ (integral over a single path),
\item \emph{complete} \index{periods!complete}\index{complete periods} if all $\gamma_i$ are closed;
\item of \emph{the first, second or third kind} \index{periods!first, second, third kind}in the case that $\omega$ is of the first, second or third kind, respectively.
\end{enumerate}
\end{defn}
Note that the classical periods are algebraic for exact forms.
A cycle $\sigma=\sum_{i=1}^na_i\gamma_i$ with closed $\gamma_i$ can be seen  in the abelianisation of the fundamental group. In consequence, we may replace $\sigma$ by a homologous closed path defining the same period number. Accordingly all complete periods are simple. However, not all incomplete periods are simple. 

We get back the same periods that we considered earlier.

\begin{lemma}
The set of all classical periods agrees with the set of cohomological periods \index{periods!cohomological}\index{cohomological periods} $\Per^1$.
\end{lemma}
\begin{proof}
By Corollary~\ref{cor:from_curves}, all elements of $\Per^1$ are classical periods. Conversely, let $C^\circ$ be the complement of the set of poles of $\omega$,
$D\subset C^\circ(\Qbar)$ the finite set of end points of the $\gamma_i$. Then $\omega$ defines a class $[\omega]\in H^1_\dR(C^\circ,D)$ and
$\sigma$ defines a class $[\sigma]\in H_1^\sing(C^{\circ,\an},D;\Z)$. The integral computes the period pairing
\[ \int_\sigma\omega=\langle[\omega],[\sigma]\rangle\]
and makes our classical period a cohomological period. 
\end{proof}

Our next step is to find necessary and sufficient conditions for a differential form $\omega$ and a cycle $\sigma$ to satisfy 
\[ \int_\sigma\omega=0.\]
Here are some examples:
\begin{ex}\label{ex:vanishing}
\begin{enumerate}
\item Let $\gamma$ be contractible in the complement of the set of poles of $\omega$. Then the period vanishes by the Monodromy Theorem.
\item Let $\omega$ be of the third kind with poles in $Q_1,\dots,Q_m$. For every $i=1,\dots,m$ let $\gamma_i$ be the positively oriented boundary of a small disc in $C^\an$ centered at $Q_i$. Then
\[ \int_{\sum \gamma_i}\omega=\sum_{i=1}^m\res_{Q_i}\omega=0\]
by the Residue Theorem.
\item \label{it:van3}
To give an example for vanishing of periods we take the elliptic curve $E$ given by $y^2=x^3+1$ and define a curve $C$ by $y^2=x^6+1$. Sending $(x,y)$ to $(x^2,y)$ defines a morphism $\pi: C\to E$
of degree $2$. The genus of $E$ is 1 and the genus of $C$ is 2. The morphism $\pi$ induces a homomorphism
\[
\pi_*: H_1(C,\Z) \to H_1(E,\Z).
\]
We take $\omega\in H^0(E,\Omega^1(E))$ and $\gamma \in \ker \pi_*$ and obtain
\[
\int_\gamma \pi^*\omega = \int_{\pi_*\gamma} \omega =\int_0\omega =0.
\]
This shows that there are non-trivial examples for vanishing. 
\end{enumerate}
\end{ex}
We may ask how complete this list is. This question will be addressed  by first passing from paths and differential forms to homology and de Rham cohomology and then to $1$-motives. 

\section{The Setting}

The following data is fixed for the rest of the chapter:

\begin{notation}\label{not:curves_etc}
\begin{itemize}
\item $C$ denotes  a smooth projective curve over $\Qbar$ with base point $P_0\in C(\Qbar)$.
\item $\omega\in\Omega^1_{\Qbar(C)}$ is a meromorphic differential form.
\item $S=\{Q_1,\dots,Q_m\}$ stands for the set of poles of $\omega$ with non-trivial residue and  $S'=\{R_1,\dots,R_k\}$ the set of poles with vanishing residue; without loss of generality $P_0\notin S\cup S'$.
\item $C^\circ\subset C$ signifies the complement of the set of poles $S=\{Q_1,\dots,Q_m\}$ of $\omega$.
\item $J(C)$ and $J(C^\circ)$ are   the (generalised) Jacobians \index{generalised Jacobian} of $C$ and $C^\circ$ (in the sense of Chapter~\ref{app:B})  with embeddings $\nu^\circ:C^\circ\to J(C^\circ)$, $\nu:C\to J(C)$ via $P\mapsto P-P_0$.
\item $\sigma=\sum_{i=1}^n a_i\gamma_i$ is a formal $\Z$-linear combination of
$C^\infty$-paths $\gamma_i:[0,1]\to C^{\circ,\an}\ohne S'$ with endpoints defined over
$\Qbar$. 
\item 
Let $D\subset C^\circ\ohne S'$ be a  set of points such that
the divisor $\partial \sigma$ has support on $D$. We define $r=|D|-1$ if $D\neq \emptyset$ and $r=0$ if $D=\emptyset$. 
\item $\alpha=\int_\sigma\omega$ is the period of $\sigma$ and $\omega$.
\end{itemize}
\end{notation}

In Section~\ref{ssec:paths} we introduced  the map $I$ which assigns to a path or more generally a chain in a complex Lie group, an element of the complex Lie algebra.
Given a path $\gamma:[0,1]\to C^{\circ,\an}$, we put  
\[ l^\circ(\gamma)=I(\nu^\circ\circ\gamma)\in\Lie(J(C^\circ))^\an.\]
The operator $l^\circ$ has the property that
\[ \exp(l^\circ(\gamma))=\gamma(1)-\gamma(0)=:P(\gamma)\in J(C^\circ)^\an,\]
hence should be seen as a choice of logarithm.
We extend $l^\circ$ linearly to
\[ l^\circ(\sigma):=\sum_{i=1}^na_il^\circ(\gamma_i).\]
Then 
\[ \exp(l^\circ(\sigma))=\sum_{i=1}^n a_iP(\gamma_i)=:P(\sigma).\]
Let $l(\sigma)$ be the image of $l^\circ(\sigma)$ in $\Lie(J(C)^\an)$.

\subsection{Homological Interpretation}\label{ssec:2deRham} 
Obviously the chain $\sigma$ defines an element
$[\sigma]\in H_1^\sing(C^\circ,D;\Z)$.\index{singular homology} 
Less obviously,
the rational differential form $\omega$ defines an element $[\omega]$ of $H^1_\dR(C^\circ,D)$, see Section~\ref{ssec:de_rham}~(\ref{eq:de_rham_einfach}).\index{de Rham cohomology}
The argument is well-known for $D=\emptyset$. We explain the
construction in general.

If $S\cup S'=\emptyset$, then $\omega$ is in $\Omega^1(C)\subset H^1_\dR(C,D)$ and there is nothing to show. If $S\cup S'\neq \emptyset$,  the curve $C^\circ\ohne S'$ is affine and $\omega|_{C\circ\ohne S'}\in\Omega^1(C^\circ\ohne S')$ defines an element of $H^1_\dR(C^\circ\ohne S',D)$. By definition of $S'$, the cohomology class is in the kernel of
the residue map $H^1_\dR(C^\circ\ohne S',D)\to H^0(S')(-1)$, making it even an element of $H^1_\dR(C^\circ,D)$. We carry out the cocycle computation for $[\omega]$ in the representation of Lemma~\ref{lem:de_rham_explicit}.

Recall that $S'=\{R_1,\dots,R_k\}$ and  
choose $f_i\in\Qbar(C)^*$ such that the principal part of $df_i$ at $R_i$ coincides with the principal part of $\omega$ in $R_i$. Such a function exists
because the residue of $\omega$ vanishes. We write $\omega_i=\omega-df_i$ and $U_i\subset C$ for the complement of the set of poles of $\omega_i$. By construction $R_i\in U_i$. We introduce also $U_0=C^\circ\ohne S'$, $\omega_0=\omega$, $f_0=0$. 
Then $\Uf=\{U_0,\dots,U_k\}$ is an open cover of $C^\circ$. The differential form $\omega$
defines a cocycle in $H^1_\dR(C^\circ\ohne S',D;\Uf)$ given by the tuple  $\underline{\omega}=(\omega|_{U_i\ohne S'},0,0)$ cohomologous to the cocycle  
\[ \underline{\omega}-\partial f =((\omega-df_i)_i,(-f_j+f_i)_{i,j},(f_i|_{D})_i)\]
where $i,j$ run through $0$ to $k$.
It defines a cocycle in $H^1_\dR(C^\circ,D;\Uf)$ as required.

\begin{lemma}\label{lem:difff_deRham}
\index{de Rham cohomology!explicit cocycle}
The class $[\omega]\in H^1_\dR(C^\circ,D)$ is zero if and only if
$\omega$ can be represented as $\omega=df$ for a function $f\in\Qbar(C)^*$ which is regular in $D$ and vanishes there. Moreover, every element of $H^1_\dR(C^\circ,D)$ is of the form $[\omega]$ such that the set of poles of $\omega$ with non-trivial residue is contained in $S$.
\end{lemma}
\begin{proof}If $[\omega]=0$, then by Lemma~\ref{lem:de_rham_explicit} the cocycle is of the form
\[ \underline{\omega}-\partial f=((dg_i)_i, (g_j-g_i)_{ij},(-g_i|_{D_i})_i)\]
for $(g_i)_i\in\prod_i\Oh(U_i)$, in particular,
\[ -f_j+f_i=g_j-g_i\in \Oh(U_i\cap U_j)\]
for all $i,j$.
This implies that the collection of functions $f_i+g_i\in\Oh(U_i)$ glues to a global function  $F\in\Oh(C^\circ)$. Moreover,
\[ \omega-df_i=dg_i\in\Omega^1(U_i)\]
for all $i$ and hence $dF=\omega$.  We then have
\[ \underline{\omega}-\partial f\sim (dF|_{U_i},0,0)\sim (0,0,F|_{D_i}).\]
Note that the $g_i$ and hence also $F$ are unique up to an additive constant. The triviality of $[\omega]$ implies that $F|_D$ is constant. By adjusting the constant, we get $\omega=dF$ with $F|_D=0$ as claimed.

It remains to prove that every class in $H^1_\dR(C^\circ,D)$ can be represented by a differential form. It is well-known that every element of $H^1_\dR(C)$ is represented by a differential form of the second kind, see \cite[p.~459]{griffiths_harris}. The sequence  
\[ H^1_\dR(C)\to H^1_\dR(C^\circ)\xrightarrow{\res} H^0_\dR(S)(-1)\to H^0_\dR(C)(-1)\]
for relative cohomology is exact. For every element of $H^0_\dR(S)\isom \Qbar^m$ summing up to $0$ in $H^0_\dR(C)\isom \Qbar$, there is by \cite[Lemma~p.~233]{griffiths_harris} a form $\omega_3$ of the third kind with these residues. 

This shows that given a class $c\in H^1_\dR(C^\circ)$, there exists $\omega_3$ of the third kind with the same residues. The difference $c-[\omega_3]$ is in the image
of $H^1_\dR(C)$ and represented by a differential form of the second kind.

The sequence
\[H^0_\dR(D)\to H^1_\dR(C^\circ,D)\to H^1_\dR(C^\circ)\]
for the relative cohomology is also exact. Elements of $H^0_\dR(D)$ are functions $f:D\to\Qbar$. 
Let $U_0\subset C^\circ$ be an open affine subset containing $D$ and $U_0,\dots,U_n$ an affine cover of $C^\circ$.
The image of $f$ in $H^1_\dR(C^\circ,D,\Uf)$ is the cocycle 
\[ (0,0,f|_{D_i}).\]
As $U_0$ is affine, the closed immersion $D\subset U_0$ induces a surjection
$\Oh(U_0)\twoheadrightarrow \Oh(D)$.
We choose a lift $\tilde{f}\in\Oh(U_0)$ of $f$. The class of $d\tilde{f}$ agrees
with the class of $(0,0,f|_{D_i})$ from above. 

Given a class $c$ in $H^1_\dR(C^\circ,D)$ we have shown that  there is a form $\omega$ with correct residues such that the images of $c$ and $[\omega]\in H^1_\dR(C^\circ,D)$ in $H^1_\dR(C^\circ)$ agree. Their difference is in the image of $H^0_\dR(D)$ and can be represented by an exact differential form.
\end{proof}

Our integral computes the period pairing
\begin{align*}
\langle\;,\;\rangle: H_1^\sing(C^\circ,D) \times H^1_\dR(C^\circ,D) &\rightarrow \C\\
(\sigma, \omega) & \mapsto \alpha=\langle[\sigma],[\omega]\rangle = \int_\sigma\omega
\end{align*}
It vanishes if $[\sigma]=0$ or $[\omega]=0$. We are interested in the cases where this condition is not satisfied.
 
\begin{defn}
We say that the period pairing has \emph{non-trivial vanishing for $(\omega,\sigma)$}\index{non-trivial vanishing} if $\alpha=\langle [\omega],[\sigma]\rangle=0$ but
 $[\omega]\neq 0, [\sigma]\neq 0$.

 We denote by $\locus\subset H_1^\sing(C^\circ,D)\times H^1_\dR(C^\circ,D)$ the set of pairs $([\sigma],[\omega])$ such that
$\langle [\sigma],[\omega]\rangle=0$. We further denote by $\locus^i\subset\locus$ for $i=1,2,3$ the subset where the
differential form is of first, second and third kind, respectively.
\end{defn}
Our aim is to determine  under which conditions for $\sigma$ and $\omega$  the pairing has non-trivial vanishing. 

 Note that $D\hookrightarrow D'$ implies an inclusion $H_1^\sing(C^\circ,D)\hookrightarrow H_1^\sing(C^\circ,D')$
as well as a surjection $H^1_\dR(C^\circ,D')\twoheadrightarrow H^1_\dR(C^\circ,D)$.
Hence vanishing of $[\sigma]$ does not depend on $D'$, but vanishing of $[\omega]$ does.

\subsection{Translation to Motives}\label{ssec:trans_motives}

Properties of the periods of $H^1(C^\circ,D)$ depend on the $1$-motive
\[ M=[L\to J(C^\circ)]\]
 where $L=\Z[D]^0$ is the group of divisors of degree $0$ supported on $D$. It has rank $r$.
By Lemma~\ref{lem:per_J}, more precisely equation (\ref{eq:V}) of its proof, we have 
\[ \Vsing{M}\isom H_1^\sing(C^\circ,D;\Q), \hspace{1ex}\VdR{M}\isom H^1_\dR(C^\circ,D)\]
and 
\[ \alpha=\int_\sigma\omega\]
can be viewed as a period of $M$. 

\begin{lemma}
The class of $\sigma$ is given by 
\[ [\sigma]=(\partial\sigma,l^\circ(\sigma))\in T_\sing(M)\subset L\times \Lie(J(C^\circ))^\an.\]
Both components have image $P(\sigma)$ in $J(C^\circ)$.
\end{lemma}
\begin{proof}
This is precisely the identification of classes in $H_1^\sing(C^\circ,D;\Z)$ with $T_\sing(M)$ in Lemma~\ref{lem:lem2}.
 Classes in relative homology are represented by pairs in $S_0^\infty(D)\oplus S_1^\infty(C^\circ)$, in our case by $(\partial\sigma,\sigma)$.
 The first summand maps to $L$, the second maps a path $\gamma$ to $I(\gamma)$ and hence $\sigma$ to $l^\circ(\sigma)$.
\end{proof}

The key for determining the spaces $\mathcal N$ and $\mathcal N^i$ is Theorem~\ref{thm:annihilator_mot}. It shows that non-trivial vanishing of $\alpha$ is caused
by a non-trivial exact sequence 
\begin{equation}\label{eq:fund_seq} 
0\to M_1\xrightarrow{\iota} M\xrightarrow{p} M_2\to 0
\end{equation}
with $M_i=[L_i\to G_i]$, with $[\sigma]=\iota_*\sigma_1$ induced from $M_1$, $\sigma_1\in V_\sing(M_1)$,  and $[\omega]=p^*\omega_2$ induced from $M_2$, $\omega_2 \in \VdR{M}$. 

To analyse this explicitly we go through the various types of differential forms.
If $J(C)$ is not simple, there are many such sequences and there is  a lot of non-trivial vanishing, see Example~\ref{ex:vanishing}(\ref{it:van3}). The general case seems to be very complicated, so that
we restrict our discussion to the case where $J(C)$ is simple for the rest of this chapter.

\begin{rem} 
By construction, $[\sigma]$ is induced from the submotive 
\[ 
M'=[\Z_\sigma\to J(C^\circ)]\subset M
\]
where $\Z_\sigma$
 is the sublattice of $L$ generated by the element $\partial \sigma$.
\end{rem}

We go now  through the different cases, that is forms of the first kind, of the second and of the third kind, one after the other and analyse the conditions for non-trivial vanishing. This analysis is carried out by going through the different possible shapes of $M_1$.
We finally specialise to the easier case where, in addition, $\sigma$ is simple.

\section{Forms of the First Kind}\label{sssec:first}
 
A form of the first kind is a non-zero global  differential form $\omega\in H^0(C,\Omega^1_C)$ without pole. This implies that we use $C^\circ=C$. As already mentioned before we assume for simplicity that $J(C)$ is simple. The class of $\omega$ in $H^1_\dR(C,D)$  and even in $H^1_\dR(C)$ is non-zero because differential forms of the first kind cannot be exact.

In this situation the relevant  motive is $M=[\Z[D]^0\to J(C)]$ and for applying Theorem~\ref{thm:annihilator_mot} we have to determine the possible submotives  $M_1$.
Since $J(C)$ is assumed to be simple 
they have either the shape  $[L_1\to J(C)]$ or $[L_1\to 0]$ for some $L_1\subset \Z[D]^0$ with quotient $M_2$.  We discuss the two cases separately.

\subsection{The Case $M_1=[L_1\to J(C)]$}\label{ssec:first_1}
According to Theorem~\ref{thm:annihilator_mot}, the form $[\omega]$ is
a pull-back from $M_2$, hence its pull-back to $[L_1\rightarrow J(C)]$ is zero, as is its restriction to $[0\to J(C)]$. This is equivalent to $[\omega]=0$ in $H^1_\dR(C)$.
Since it is a differential form of the first kind this implies $\omega=0$. This case does
not occur.

\subsection{The Case $M_1=[L_1\to 0]\subset M$}

Since we are in the category of iso-$1$-motives the structure  map
$L_1\to J(C)$  is isogenous to the zero map and $\sigma=\iota_*(\sigma_1)$. 
This means that 
\[ P(\sigma)=\exp(l(\sigma))=\sum_i a_i P(\gamma_i)\in J(C)\]
 is a torsion point, a necessary, but not sufficient condition for non-trivial vanishing. This property has to be translated from a statement about $J(C)$ into a vanishing condition in the Lie algebra. The point is that the exponential map is not injective. This requires a more careful analysis of the situation.

Let $n\geq 1$ be chosen such that the image of $L$ in $J(C)$ under the composition
$L\to J(C)\xrightarrow{[n]_*}J(C)$ is torsion free. The  kernel of the composition has a complement $L_0$ with the property that the structure map of the modified motive $M_0=[L_0\to J(C)]$ (with structure map the restriction from the structure map of $M$) is injective. The map $[n]:M\to M$ factors through a morphism of
$1$-motives $M\to M_0$ with $M_1$ contained in its kernel. It is multiplication by $n$ on the abelian part.
The image of $\sigma$ is zero in $T_\sing(M_0)$ and hence $[n]_*l(\sigma)=0$ in $\Lie(J(C)^\an)$.
The map $[n]_*$ is multiplication by $n$ on $\Lie(J(C))^\an$, hence $[n]_*l(\sigma)=0$ implies
$l(\sigma)=0$.
 We conclude that non-trivial vanishing for $(\sigma,\omega)$ implies that $l(\sigma)=0$, no condition on $\omega$. In this situation this means that $\locus^1\subset \ker(l) \times H^0(C,\Omega^1)$.

From $l(\sigma)=0$ we conclude in particular that 
$P(\sigma)=0$, not only a torsion point. This implies that the points $P(\gamma_i)$ are linearly dependent in $J(C)$. 

Conversely, suppose that $\sigma$ satisfies $l(\sigma)=0$. We introduce the submotive $M_1=[\Z_\sigma\to 0]$ with
$\Z_\sigma$ the sublattice of $L=\Z[D]^0$ generated by $\partial\sigma$.
We map   $L_1=\Z$ to
$L=\Z[D]^0$ by mapping $1$ to
$\sum_{i=1}^n a_i(\gamma_i(1)-\gamma_i(0))\in \Z[D]^0$ and introduce the motive
$M_1=[L_1\to 0]\to [L\to J(C)^\an]$. It is well-defined because $P(\sigma)=0$ and by construction $[\sigma]$ is induced
from a class on $M_1$. 

Morphism of motives induce a morphism of Hodge structures, in particular it respects the Hodge filtration. By assumption $\omega$ is in $H^0(C,\Omega^1)=F^1H^1_\dR(C,D)=F^1\VdR{M}$.
 As a consequence the pull-back of $\omega$ to $M_1$ is in
$F^1\VdR{M_1}=0$ and this gives 
\[
 \int_\sigma\omega= \int_{\iota_*\sigma_1}\omega=\int_{\sigma_1}\iota^*\omega=\int_{\sigma_1}0=0.
 \]
We conclude that every pair $(\sigma,\omega)$ with $l(\sigma)=0$ is contained in $\mathcal N^1$. In other words we have $\locus^1\supset \ker(l) \times H^0(C,\Omega^1)$ and we conclude that  
\[ \locus^1=\ker(l)\times H^0(C,\Omega^1).\]

\begin{summary}\label{sum:1}
Let $J(C)$ be simple and  $\omega$ of the first kind. Then there is non-trivial vanishing for $(\omega,\sigma)$ if and only if $l(\sigma)=0$. We have
\[ \locus^1=\ker(l)\times H^0(C,\Omega^1).\]
 This  implies in particular that the $I(\gamma_i)$ are $\Z$-linearly dependent in $\Lie(J(C)^\an)$. A fortiori the points $P(\gamma_i)$ are linearly dependent in $J(C)$.
\end{summary}

\begin{remark}
The condition looks like trivial vanishing because the class of $\sigma$ vanishes in  $H_1^\sing(C,\Q)$---but it is not. The class
$[\sigma]\in H_1^\sing(C,D;\Q)$ is not in the image   of $H_1^\sing(C,\Q)$ under the natural restriction. Rather
there is a projection $H_1^\sing(C,D;\Q)\to H_1^\sing(C;\Q)$ defined only via the Jacobian. The image of $[\sigma]$
under this projection vanishes.
\end{remark}

\section{Forms of the Second Kind}\label{sssec:2}

Assume that $\omega$ has a non-empty set of poles but no residues. We have
 $[\omega]\in H^1_\dR(C,D)$ and again $M=[\Z[D]^0\to J(C)]$.
Assume that $J(C)$ is simple and that we have non-trivial vanishing induced from a sub-object $M_1\subset [L\to J(C)]$. 
As before we assume that the period pairing has non-trivial vanishing for $(\omega, \sigma)$.
This implies that $\sigma=\iota_*\sigma_1$ and $\omega=p^*\omega_2$. 

\subsection{The Case $M_1=[L_1\to J(C)]$} 

With the same argument as in Section~\ref{ssec:first_1} for $\omega$ of the first kind,
the assumption on $M_1$  leads to $\omega=df$ with $f$ not identically zero
on $D$. In the cocycle computation in Section~\ref{ssec:2deRham} we can choose
all $f_i$ as $f$ and represent the class of  
 $[\omega]$ in $H^1_\dR(C,D)$  by
\[ \underline{\omega}-\partial f=(0,0,f|_D).\]
 The period integral is in this case 
 \begin{align*}
 \alpha&= \langle[\omega],[\sigma]\rangle = \langle f,\partial \sigma\rangle \\
&=\sum_{i=1}^na_i(f(\gamma_i(1))-f(\gamma_i(0))).
\end{align*}
This is a $\Q$-linear combination of algebraic numbers.
It can vanish, and will in examples.

\begin{ex}
\begin{enumerate}
\item
Let  $E\subset\Pe^2$ be the elliptic curve given by 
\[
Y^2Z=X(X-Z)(X-\lambda Z)
\]
for $\lambda\in\Qbar\ohne\{0,1\}$ be an elliptic curve. Consider $\omega=dX$. This is an exact form. For every $x\in\Q$ there is a point $P_x$ in $E(\Qbar)$
with $X(P_x)=x\in \Q$. We choose $x_1,\dots,x_r\in \Q$. There
are coefficients $a_i\in \Q$ such that $\sum a_ix_i=0$. Let $\gamma_i$ be a path
from $P_{\,0}$ to $P_{x_i}$ and define $\sigma=\sum a_i\gamma_i$. Then
$\int_\sigma dX=0$.
\item Let $C$ be a smooth projective curve and $f\in\Qbar(C)$ not constant. 
We view $f$ as a non-constant morphism
$f:C\to \Pe^1$. For  $x\in \Pe^1(\Qbar)$, we  choose $y_0,y_1\in f^{-1}(x)$,  a path $\gamma$ from $y_0$ to $y_1$ in
$C^\an$. Then $\int_\gamma df=f(y_1)-f(y_0)=x-x=0$.
\end{enumerate}
\end{ex}

\begin{summary}\label{sum:ex}
If $\omega=df$ and $J(C)$ is  simple, then there is non-trivial vanishing if and only if
\begin{equation}\label{eq:exact} \langle f,\partial\sigma\rangle=0.
\end{equation}
\end{summary}

If $\sigma=\gamma$ is simple, this is equivalent to $f(\gamma(1))=f(\gamma(0))$.

\subsection{The Case $M_1=[L_1\to 0]$}
The same arguments as for periods of the first kind
imply $l(\sigma)=0$. We show that this is no longer sufficient, but there is also
a condition on $\omega$. Let $\Z_\sigma=\Z\;\partial\sigma\subset \Z[D]^0$ be generated
 by the divisor $\partial\sigma$. It is contained in $L_1$ because $l(\sigma)=0$. Without loss of generality, we take $L_1=\Z_\sigma$.

The condition on $\omega$ to be of the form $p^*\omega_2$ is equivalent to 
\begin{equation}\label{eq:vanish} 
\iota^*([\omega])=[\omega]|_{[\Z_\sigma\to 0]}=0.
\end{equation}

\begin{rem} 
For any $\omega$ of the second kind, the restriction $[\omega]|_{[\Z_\sigma\to 0]}$ is the restriction of an element
of $\VdR{[L\to 0]}$. Translated back to cohomology of curves this means that 
it is in the image of $H^0_\dR(D)$ in $H^1_\dR(C^\circ,D)$. In Lemma~\ref{lem:difff_deRham} the classes in the image are represented by exact differential forms $df$. 
In consequence, we find for every 
every $\omega$ of the second kind an exact form $df$ such that
$\omega-df$ satisfies the vanishing condition. The same is true for the converse: there is always an exact form $df$ such that the vanishing condition is not satisfied.  
\end{rem}

\begin{summary}\label{sum:2}
Let $J(C)$ be simple. Assume that $\omega$ is of the second kind, but neither of the first kind nor exact. Then there is non-trivial vanishing if and only if $l(\sigma)=0$ as well as equation (\ref{eq:vanish}) holds.
\end{summary}

\section{Forms of the Third Kind}\label{ssec:third} 

Assume that $\omega$ is of the third kind but not of the first kind. This means that it has a non-empty set of poles, all of them simple. In the language of Hodge theory this makes it a differential form with log poles. 
 Here we take $M= [L\to J(C^\circ)]$ as our motive. The form
$\omega$ defines a class $[\omega]$ in $H^1_\dR(C^\circ,D)$. It is always
non-zero. Hence the period integral vanishes trivially if $[\sigma]=0$. 
For simplicity, we assume that $J(C)$ is simple.
\subsection{The Case $M_1=[L_1\to 0]$}\label{ssec:third1}
As in the other cases, non-trivial vanishing implies $l^\circ(\sigma)=0$ in $\Lie(J(C^\circ)^\an)$.
We have $\omega\in H^0(C,\Omega^1(\log(S)))= F^1H^1_\dR(C^\circ,D)$, so the discussion is identical to the case of differentials of the first kind. The condition is also sufficient. This means that $l^\circ(\sigma)=0$ implies non-trivial vanishing.

\subsection{The Case $M_1=[L_1\to T_1]\subset [L\to J(C^\circ)]$}\label{ssec:third2}
Here the quotient is $M_2=[L_2 \to J(C^\circ)/T_1]$.  As $\sigma$ is in the image of $\Vsing{M_1}$,  it takes the form $\sigma=\iota_*(\sigma_1)$ for  some $\sigma_1\in \Vsing{M_1}$.  We deduce that   $p_*\sigma=p_*\iota_*\sigma_1=0$ and the same is true for its  image in $\Vsing{[L_2\to J(C)]}$, a quotient of $M_2$.  As in the
case of differentials of the first kind this implies that the image $l(\sigma)=I(\nu \sigma)$ of $l^\circ(\sigma)=I(\nu^\circ \sigma)$
in $\Lie(J(C))^\an$ vanishes. This means that $\sigma$ is closed and  $l^\circ(\sigma)$ is in $\Lie(T)^\an$ where $T$ is the torus part of $J(C^\circ)$  with character group $\Z[S]^0$. In other words, $\sigma$ defines a homology class of $T^\an$.
We choose small enough simple loops $\varepsilon_j$ in $\Lie(J(C^{\circ,\an}))$ around the singularities $Q_j\in S$ with $l^\circ(\varepsilon_i)$  in  $\Lie(T)^\an$ and introduce  $\varepsilon=\sum n_\sigma(Q_i)\varepsilon_i$,   with $n_\sigma(Q_i)$ the winding number of $\sigma$ around $Q_i$. The homology classes of $\varepsilon$ and $\sigma$ agree. As a consequence
 $l^\circ(\sigma)- l^\circ(\varepsilon)$ vanishes in $\Lie(J(C^{\circ,\an}))$. 
It follows that there is non-trivial vanishing for $\sigma-\varepsilon$ by the  case treated in Section~\ref{ssec:third1}. This shows that non-trivial vanishing
for $\sigma$ is equivalent to non-trivial vanishing for $\varepsilon$.
The period integral is up to the factor $2\pi i$ a linear combination of the residues: 
\begin{equation}\label{eq:van_3} 
\int_\sigma\omega=\sum_{i=1}^m 2\pi i \;n_\sigma(Q_i)\mathrm{Res}_{Q_i}(\omega).
\end{equation}
There are cases where this vanishes, but there are also cases where it does not vanish.

\begin{ex}
Choose a small disc $\Delta_i$ around each $Q_i$ and let $\gamma_i=\partial \Delta_i$.
Then $\int_{\sum\gamma_i}\omega=0$ by Cauchy's Residue Theorem.
\end{ex}

\subsection{The Remaining Case}\label{ssec:third3}

Finally we have to consider $M_1=[L_1\to G_1]$ where the abelian part of $G_1$ is
$J(C)$. This implies that $G_2=T_2$ is a torus.
Let $L_2$ be a direct complement of $L_1$ in $L$. Then  $M_2 = [L_2\to T_2]$

By the exact sequence (\ref{eq:fund_seq}) the pull-back of $[\omega]=p^*\omega_2$ to
$M_1$ and further to $\VdR{G_1}$ vanishes.
 If $G_1=J(C^\circ)$, then  $\omega$ would be  exact, but
 $\omega$ is of the third kind and cannot be exact.
This case does not occur. 
Therefore  $G_1\subsetneq J(C^\circ)$ and the quotient $J(C^\circ)/G_1\isom T_2$ is a non-trivial torus. The torus  of $J(C^\circ)$ decomposes up to isogeny as $ T_1\times T_2$ where $T_1$ is the torus part of $G_1$. From the discussion above we get  homomorphisms $J(C^\circ)\to T_2$ and $J(C^\circ) \to G_1$ and as a consequence  an isogeny $J(C^\circ) \cong G_1\times T_2$. The classifying map
$X(G_1\times T_2)\to J(C^\circ)$ 
is the product of the classifying maps of $X(T_1)\to J(C^\circ)$ and $X(T_2)\to 0$.
In particular, $X(T)_\Q\isom X(T_1)_\Qbar\times X(T_2)_\Q$ and the classifying map $X(T)\to J(C)^\vee$ vanishes on $X(T_2)_\Qbar$.

By Lemma~\ref{lem:class_jac}, the classifying map for the semi-abelian variety $J(C^\circ)$ is
\[ X(T)=\Z[S]^0\to J(C)\isom J(C)^\vee.\]
The vanishing of the classifying map on $X(T_2)$ means that there is a linear dependence between the $Q_i$ in $J(C)_\Q\isom J(C)^\vee_\Q$. 

The exact sequence (\ref{eq:fund_seq}) gives that the image of
$[\sigma]\in H_1^\sing(C^\circ,D;\Z)$  in  $H^\sing_1(T_2,D_2;\Z)$ is zero, where $D_2$ is the image of $D$ under the composition of the maps 
$C^\circ\to J(C^\circ)\to T_2$. Moreover, $[\omega]$ has to be a pull-back from
$\VdR{[L_2\to T_2]}$. 
In the identification
\[ \VdR{T}=X(T)_\Qbar\,\isom \,\Qbar[S]^0\] 
the restriction of $\omega$ to  $\VdR{T}$ is mapped to the divisor $\res(\omega)$ given by
\[ \sum \res_{Q_i}(\omega)\,Q_i\in\Qbar[S]^0.\]
This gives the necessary condition, namely that $\res(\omega)$ is in $X(T_2)_\Qbar$, or in other words that  $\sum \res_{Q_i}(\omega)Q_i=0$ in $J(C)_\Qbar$.

\begin{rem}\label{rem:van_nec_3}
The residue condition is necessary, but not sufficient. It is preserved by  adding a form $\omega_1$ of the first kind, but the restriction to
$\VdR{G_1}$ changes. The restriction is in $F^1\VdR{G_1}=F^1\VdR{M_1}$ and 
vanishes in $\VdR{T_1}$. Hence it is induced by a form of the first kind. There is always a choice of $\omega_1$ of the first kind such that $\omega-\omega_1$ satisfies the condition for non-trivial vanishing. Conversely, there is also $\omega_1$ such that it does not.
 \end{rem}

 \begin{summary}\label{sum:3}

Let $J(C)$ be simple, and $\omega$ a differential form of the third kind. Then
there is non-trivial vanishing for $(\omega,\sigma)$ if and only if one of the following conditions is satisfied:
\begin{enumerate}
\item \label{it:1insum3} $l^\circ(\sigma)=0$ in $\Lie(J(C^\circ))^\an$;
\item $l(\sigma)$ in $\Lie(J(C))^\an$ vanishes and
the linear combination in (\ref{eq:van_3}) vanishes;
\item $\res(\omega)$ sums up to $0$ in $J(C)_\Qbar$, and there is a quotient-motive $[L'\to T']$ of $[L\to J(C^\circ)]$ with $T'\subset T$ and 
$X(T') \subset \ker(X(T)\to J(C))$ 
such that
\begin{itemize}
\item $\res(\omega)$ is in $X(T')_\Qbar$,
\item   $[\sigma]$ vanishes in  $H^\sing_1(T',D';\Z)$ where $D'$ denotes  the image of $D$  under the  maps
$C^\circ\to J(C^\circ)\to T'$, 
  
\item $[\omega]$ is a pull-back of some $\omega'\in\VdR{[L'\to T']}$. 
\end{itemize}
\end{enumerate}
\end{summary}

\section{Arbitrary Differential Forms}

So far we have worked out necessary and sufficient conditions for non-trivial vanishing for differential forms of the first, second or third kind. We now turn to the general case
 and write $\omega=\omega_2+\omega_3$ with $\omega_2$ of the second and $\omega_3$ of the third kind. Such a decomposition exists by Lemma~\ref{lem:decomp_23}.

\begin{thm}\label{thm:van_arb}
\index{periods!vanishing}\index{vanishing of periods}
Assume that $J(C)$ is simple and
that we have non-trivial vanishing of $\int_\sigma\omega$.
Then one of the following conditions is satisfied:
\begin{enumerate}
\item \label{it:1arb}$\l^\circ(\sigma)=0$ and the vanishing condition of (\ref{eq:vanish}) holds for $\omega_2$;
\item \label{it:2arb}$l(\sigma)=0$ and for $\varepsilon$ as in Section~\ref{ssec:third2} we have
\[ \int_\varepsilon\omega_3=\sum_{i=1}^m2\pi i n_\varepsilon(Q_i)\res_{Q_i}(\omega_2)=0\]
as well as the vanishing condition of (\ref{eq:vanish}) for $\omega_2$ and $\sigma-\varepsilon$;
\item\label{it:case_exact} $\omega=df$ is exact and the vanishing condition of (\ref{eq:exact}) holds;
\item\label{it:van_arb4}
the divisor $\res(\omega): Q_i\mapsto \res_{Q_i}\omega$ on $S$ sums up  to $0$ in
$J(C)_\Qbar$, 
and there is a quotient-motive $[L'\to T']$ of $[L\to J(C^\circ)]$ with $T'\subset T$ and 
$X(T') \subset \ker(X(T)\to J(C))$ 
such that
\begin{itemize}
\item $\res(\omega)$ is in $X(T')_\Qbar$,
\item   $[\sigma]$ vanishes in  $H^\sing_1(T',D';\Z)$ where $D'$ denotes  the image of $D$  under the  maps
$C^\circ\to J(C^\circ)\to T'$, 
  
\item $[\omega]$ is a pull-back of some $\omega'\in\VdR{[L'\to T']}$. 
\end{itemize}
In this case, $\omega_3$ and $\omega_2$ can be chosen such that
both period integrals vanish.  
\end{enumerate}
\end{thm}
The proof is not fully self-contained, but partially relies on the structural insights of Part~\ref{part4}.
\begin{proof}As discussed in Section~\ref{ssec:trans_motives}, non-trivial vanishing is caused by a short exact sequence
\[ 0\to M_1\xrightarrow{\iota} [\Z[D]^0\to J(C^\circ)]\xrightarrow{p} M_2\to 0.\]
We go through the possible cases for $M_1$.

\textbf{Case $M_1=[L_1\to 0]$} In this case $l^\circ(\sigma)=0$. By the case of differential forms of the third kind the condition implies $\int_\sigma\omega_3=0$ by Summary~\ref{sum:3}~(\ref{it:1insum3})  and  the period integral for $\omega$ depends only on $\omega_2$. We get the vanishing condition formulated for periods of the second kind.

\textbf{Case $M_1=[L_1\to T_1]$}
The discussion starts as for forms of the third kind. We get $l(\sigma)=0$. 
We find $\varepsilon$ consisting of closed loops around the poles such that
$l^\circ(\sigma)=l^\circ(\epsilon)$. The vanishing of period gives an equality
for the periods of $\sigma-\epsilon$ (which only depends on $\omega_2$ by the previous case) and the period $l^\circ(\epsilon)$ (which only depends on
$\omega_3$ and is a linear combination of residues as in the case of periods of the third kind). A sufficient condition of vanishing is
the vanishing of both summands.

For the converse, consider $\int_\varepsilon\omega_3=-\int_{\sigma-\varepsilon}\omega_2$. The left hand side is a $\Qbar$-multiple of $2\pi i$ whereas the right hand side is an incomplete period of the second kind. By the structural results of Chapter~\ref{ch:struct} this makes them linearly independent. Both have to vanish.
To see this, we view the left hand side as a Tate period in the terminology of
Chapter~\ref{ch:struct}. The right hand side is an incomplete period of the second kind. 
By the second part of the linear independence result of Theorem~\ref{thm:disj} both are periods of the first kind with respect to closed paths. By the first linear independence statement of the same theorem, they are in turn linearly independent from Tate periods. This settles the claim.

\textbf{Case $M_1=[L_1\to G_1]$ with abelian part of $G_1$ equal to $J(C)$}
We have $G_1\subset J(C^\circ)$ with quotient a torus $T_2$. The pull-back of $\omega$ to $G_1$ is trivial. If $G_1=J(C^\circ)$, then
this makes $T_2=0$ and $\omega$ exact, a case we have treated in Summary~\ref{sum:ex}.  
 It gives item (\ref{it:case_exact}).

If $T_2\neq 0$, then the argument and the conclusion are the same as for forms of the third kind in Section~\ref{ssec:third3}. 
By Remark~\ref{rem:van_nec_3}, we can choose a form $\omega_1$ such that
there $\int_\sigma(\omega_3-\omega_1)$ vanishes. Hence there is also
vanishing for $\omega_2+\omega_1$.
\end{proof}

\section{Vanishing of Simple Periods}
\index{periods!simple}\index{simple periods}

The discussion simplifies for simple periods. If $\sigma=\gamma$, then
$P(\gamma)=0$ means that the path is closed. The conditions $l(\gamma)=0$  and $l^\circ(\gamma)=0$  mean that $\gamma$
is closed and contractible in $J(C^\an)$ and even $J(C^\circ)^\an$. In many cases this amounts to trivial vanishing: the homology class of $[\gamma]$ vanishes.
We specialise Theorem~\ref{thm:van_arb} to the case of simple periods.

\begin{cor}
Let $J(C)$ be simple, assume that $\sigma=\gamma$  is simple and choose  an arbitrary rational differential form $\omega$. There is non-trivial vanishing of the period $\int_\gamma\omega$ if and only if one of the following condition holds:
\begin{enumerate}
\item $\omega=df$ is exact and $\gamma$ links different points in a fibre of $f:C^\circ\to\A^1$;
\item $\gamma$ is a closed path, homologous to $0$ in $C^\an$ and winding around the poles of $\omega$ in such a way that the linear combination of the residues in (\ref{eq:van_3}) vanishes.
\item Condition (\ref{it:van_arb4}) of Theorem~\ref{thm:van_arb} is satisfied. 
\end{enumerate}
\end{cor}

Using our algebraicity criterion, this immediately translates into a strong transcendence result for simple periods. For closed $\gamma$, we have already
seen in Corollary~\ref{cor:closed} that a period is algebraic if and only if it is vanishes. Together with the above corollary, we now have a complete picture.
We are also ready to treat the case of a non-closed path.

\begin{cor}\label{cor:non-closed}
Let $C$ be a smooth proper curve over $\Qbar$ with simple Jacobian
and $\omega\in\Omega^1_{\Qbar(C)}$ a non-zero rational differential form on
$C$. Let $\gamma$ be a non-closed path on $C^\an$ avoiding the poles of $\omega$
and with endpoints in  $C(\Qbar)$. Then 
\[ \int_\gamma\omega\]
is transcendental, unless one of the following conditions is satisfied:
\begin{enumerate}
\item $\omega=df$ is exact;
\item $\omega=df+\phi$ with $\phi$ of the third kind such that Condition 
(\ref{it:van_arb4}) of Theorem~\ref{thm:van_arb} is satisfied for $\phi$ or, equivalently, for $\omega$.
\end{enumerate}
\end{cor}
\begin{proof}Assume that the period is algebraic. By Theorem~\ref{thm:transc_curves} 
the form $\omega$ can be written as 
$\omega=df+\phi$ with
\[ \int_\gamma\phi=0.\]
We apply what we have learned about the vanishing of simple periods to $\phi$.
As $\gamma$ is not closed, its homology class does not vanish. If $[\phi]=0$, then $\phi$ is exact. This makes $\omega$ exact.

It remains to go through the cases of non-trivial vanishing 
in Theorem~\ref{thm:van_arb}. The cases (\ref{it:1arb}) and (\ref{it:2arb}) are excluded because $\gamma$ is not closed. The case (\ref{it:case_exact}) gives back exact $\phi$. 

If condition ((\ref{it:van_arb4}) of Theorem~\ref{thm:van_arb} is satisfied for
$\phi$, then $\phi=\phi_2+\phi_3$ with $\phi_2$, $\phi_3$ or the second and third kind, respectively, and 
\[ \int_\gamma\phi_2=\int_\gamma\phi_3=0.\] 
By Summary~\ref{sum:ex} and Summary~\ref{sum:2} this can only happen
if either $\phi_2$ is exact or $l(\gamma)=0$. The latter is excluded because
$\gamma$ is not closed. The vanishing for $\phi_3$ is characterised 
in Summary~\ref{sum:3}. This is the same condition as in Theorem~\ref{thm:van_arb}.
\end{proof}

\part{Dimensions of period spaces}\label{part4}


\chapter{Dimension Computations: an Estimate}\label{ch:dim_comp_part1}

The aim   of this chapter is
to establish a formula for the dimension of
of $\Per\langle M\rangle$ for any  $1$-motive $M$.
We have already given a qualitative characterisation in Corollary~\ref{cor:dim_easy}. This characterisation  is now turned into an explicit quantitative version. Explicit formulas are deduced in terms of the constituents of the motive $M$. This will first happen under certain simplifying assumptions. The general case will be considered in the subsequent chapters.

\section{Set-up and Terminology}\label{sec:data}

Throughout this chapter let  $M=[L\to G]$ be in $\onemot_\Qbar$ and let $0\to T\to G\to A\to 0$ be its decomposition
into torus and abelian part. In the arguments below we reserve the letter $B$ for simple
abelian varieties and define $E(B)=\End(B)_\Q$. This is a division algebra of dimension $e(B)=\dim_{\Q}E(B)$.
Let
\[ A\isom B_1^{n_1}\times\dots\dots \times B_m^{n_m}\] 
be the isotypical decomposition of $A$ in the category of abelian varieties up to isogeny.
As usual, we write $X(T)$ for the character lattice of $T$.
If $X$ is a lattice, we denote by $T(X)$  the corresponding torus.
In Section~\ref{sec:sa}  we have shown that the datum of a semi-abelian variety $G$ is equivalent to the datum of a homomorphism $X(T)\to A^\vee(\Qbar)_\Q$.

To establish a formulation of a dimension formula for the period space becomes rather complicated because there is a
non-trivial interplay between the action of $\Hom(A,B)$ on $L$ and the action of $\Hom(A^\vee,B^\vee)$ on the character group of $T$. The problems disappear in special cases listed below.

\begin{defn}\label{defn:red-sat}
We say that the motive $M$ is
\begin{enumerate}
\item
 of \emph{Baker type}\index{Baker type}\index{$1$-motive!of Baker type} if $A=0$; 
\item  of \emph{semi-abelian type} \index{semi-abelian type}\index{$1$-motive!of Baker type}if $L=0$;
\item  of \emph{second kind} \index{$1$-motive!second kind}if $T=0$;
\item 
\emph{reduced}\index{reduced $1$-motive}\index{$1$-motive!reduced} if $L\to A(\Qbar)_\Q$ and $X(T)\to A^\vee(\Qbar)_\Q$ are injective;
\item  \emph{saturated} \index{saturated $1$-motive}\index{$1$-motive!saturated}if it is reduced, 
 and
$\End(M)_\Q=\End(A)_\Q$.
\end{enumerate}
\end{defn}

To state our dimension formulas we need some notation. 
\begin{notation}\label{not:data_all}
\begin{enumerate}
\item Put $\delta(M)=\dim_\Qbar\Per\langle M\rangle$.
\item \label{defn:rk_L}We define the \emph{$L$-rank \index{rank of a motive} $\rk_B(L,M)$ of $M$ with respect to $B$} as the $E(B)$-dimension of
the vector space 
\[ \Hom(A,B)\cdot L:=\sum p(L)\subset B(\Qbar)_\Q\]
for $p\in\Hom(A,B)_\Q$, where $p(L)$ denotes the image of $L$ under the composition of the maps $L\to A(\Qbar)$ and $p$.
\item \label{defn:rk_G}The endomorphism algebra $E(B)$  acts on $B^\vee$ from the right as $(b^\vee,e)\mapsto e^\vee (b^\vee)$ where $e^\vee$  signifies  the isogeny dual to $e$. The \emph{$T$-rank $\rk_B(T,M)$ of $M$ with respect to $B$} is the $E(B)$-dimension of
the right-$E(B)$-vector space  
\[
\Hom(A^\vee,B^\vee)_\Q \cdot X(T):=\sum p(X(T))\subset B^\vee(\Qbar)_\Q
\]
for $p \in \Hom(A^\vee,B^\vee)_\Q$, where $p(X(T))$ denotes
the image of $X(T)$ under the composition of $X(T)\to A^\vee(\Qbar)_\Q$ with $p$.
\item If $[L\to T]$ is a $1$-motive of Baker type, we define the \emph{$L$-rank $\rk_{\Gm}(L,M)$ of $M$ in $\Gm$}  as the rank of
\[ \Hom(T,\Gm)\cdot L:=\sum_{\chi\in X(T)} \chi(L)\]
where $\chi(L)$ denotes the image of $L$ under the composition of
$L\to T$ and $\chi\in X(T)$.
\end{enumerate}
\end{notation}
The main theorem of the present chapter is an estimate from above of the dimension $\delta(M)$.
In many important cases the inequality  is even an equality.                 

\begin{thm}[Dimension estimate]\label{thm:dimension_sat}
\index{dimension estimate}\index{dimension formula}
Let $M$ be a 
$1$-motive with constituents as above.
\begin{enumerate}
\item\label{it:sat_case} If $M$ is the product of a motive of Baker type \index{$1$-motive! of Baker type} $M_0$ and a saturated motive \index{$1$-motive!saturated}$M_1$, then 
\begin{align*} \delta(M)=
\delta(T)&+\sum_B\frac{4g(B)^2}{e(B)} + \delta(L)\\
&+\sum_B \left(2g(B)\rk_{B}(T,M)
+2g(B)\rk_{B}(L,M)\right)\\ 
&+\rk_{\Gm}(L_0,M_0)+\sum_B e(B)g(B)\rk_B(L,M)\rk_B(L,M),
\end{align*}
where all sums are taken over all simple factors of $A$,  without multiplicities and $g(B)=\dim B$, $e(B)=\dim_\Q \End(B)_\Q$.
\item\label{it:contained}For every $1$-motive $M$, there is  a product $\tilde{M}$ of a motive of Baker type and a saturated motive such that $\Per\langle M\rangle \subset \Per\langle \tilde{M}\rangle$. In particular,
\[ \delta(M)\leq \delta(\tilde{M}).\]
The construction is effective (see Lemma~\ref{lem:bk_red} and Lemma~\ref{lem:sat_ok}) in a way that the abelian parts of $M$ and $\tilde{M}$ agree and that
$\rk_B(L,M)=\rk_B(\tilde{L},\tilde{M})$, $\rk_B(T,M)=\rk_B(\tilde{T},\tilde{M})$.

\item\label{it:trivial}
We have $\delta(L)=1$ if $L\neq 0$ and $\delta(L)=0$ if $L=0$, and
 $\delta(T)=1$ if $T\neq 0$ and $\delta(T)=0$ if $T=0$.
\end{enumerate}
\end{thm}

The proof will be given at the end of the chapter.
The following example illustrates the discrepancy between the upper bound and the actual dimension.

\begin{ex}\label{ex:zahlen}
\index{elliptic periods}
We take for $A$ an elliptic curve $E$, a non-trivial
extension $0\to \Gm\to G\to E\to 0$ which is non-split up to isogeny
 and $P\in G(\Qbar)$ a point whose image in $E(\Qbar)$ is not torsion.
We consider the $1$-motive 
\[ M=[\Z \to G]\]
with $1$ mapping to $P$. This is the same motive already considered in Chapter~\ref{ch:ex}.

\begin{description}
\item[Case 1.]
Assume that $E$ does not have complex multiplication, i.e. $\End(E)=\Z$. In this case  $B=E$, $g=1$, $e=e(E)=1$, $\delta(T)=1$, $\delta(L)=1$. Moreover, $\rk_E(T,M)=1$ and $\rk_E(L,M)=1$. The motive is saturated and the theorem predicts
\[ \delta(M)= 11.\]
This has already been verified directly in  Proposition~\ref{prop:ell}.

\item[Case 2.] In the case that $E$ has CM by an imaginary quadratic extension $\Q(\tau)$ of $\Q$, i.e. $\End(E)_\Q=\Q(\tau)$, there is again a single $B=E$.
In this situation, we have $m=1$, $g=1$, $e=e(E)=2$, $\delta(T)=1$, $\delta(L)=1$,  
 $\rk_{E}(L,M)=1$ and $\rk_{E}(T,M)=1$. Going through the construction of $\tilde{M}$ before Lemma~\ref{lem:sat_ok}, we see that we choose the Baker part $M_0$ as $0$ and $\tilde{M}=M_1$ as saturated.  The theorem
gives 
 \[ \delta(M)\leq 10.\] 
By Proposition~\ref{prop:with_CM}, we actually have 
\[ \delta(M)=9.\]
\end{description}
\end{ex}

\begin{rem}
The example shows that we do not have equality in general. 
We will refine the formula in Theorem~\ref{thm:precise1} and completely explicitly in Theorem~\ref{thm:mix_final}.
\end{rem}

\subsection{Outline of the Proof}

In a first and key step in Section~\ref{sec:sat}, we shall prove the dimension formula in the saturated case. In order to simplify notation, we shall first handle in Section~\ref{ssec:s_and_s} the case where the abelian part of $M$ is simple, then upgrade to general $A$. 

Section~\ref{sec:sp_cases} is an interlude: we go through the easier cases of Baker motives, motives of semi-abelian type, motives of the second kind and establish the dimension formulas. None of these need to be saturated, but the formulas can be reduced to the saturated case by constructing a saturation with the same periods and ranks. 

Finally, in Section~\ref{sec:proof_estimate}, we wrap up and establish the dimension estimate announced in Theorem~\ref{thm:dimension_sat}. The case when $M$ is a product of a Baker motive and a saturated motive follows easily with the same type of arguments as in Section~\ref{sec:sat} in the saturated  and in Section~\ref{sec:sp_cases} in the Baker case.
Starting with a general motive $M$, we construct a product of a Baker motive
$M_0$ and a saturated motive $M_1$ such that the space  of periods of $M$ are contained in the space of periods of $M_0\times M_1$. Here we use the ingredients that we identified already in the special case.

\section{The Saturated Case}\label{sec:sat}

We start under restrictive assumptions.
The saturated case is easier, but enough to deduce all structural properties of the period space.

\subsection{Saturated and Simple}\label{ssec:s_and_s}
\index{periods!of saturated $1$-motives}\index{$1$-motive!saturated}\index{saturated $1$-motive}
Assume that $M$ is reduced and saturated (see Definition~\ref{defn:red-sat}) with $A=B$ simple of dimension $g$. We denote by $e$ the $\Q$-dimension of $E=\End(B)_\Q$. Then $\Vsing{M}$ is an $E$-module and we choose an $E$-basis. To be precise,  the inclusions
\[ [0\to T]\subset [0\to G]\to [L\to G]\]
induce injections
\[ \Vsing{T}\hookrightarrow \Vsing{G}\hookrightarrow \Vsing{M},\]
read as inclusions from now on.
Let $\sigma=(\sigma_1,\dots,\sigma_r)$
 be an $E$-basis of $\Vsing{T}$. We write 
$r=|\sigma|$ and use the 
same conventions for all other pieces.
Extend $\sigma$ by $\gamma$ to an $E$-basis of
$\Vsing{G}$ and by $\lambda$ to an $E$-basis of $\Vsing{M}$. 

In the dual setting, we choose $\Qbar$-bases (sic!) of $\VdR{M}$ 
along the inclusions
\[ \VdR{L}\hookrightarrow\VdR{[L\to B]}\hookrightarrow\VdR{M}.\]
To be precise let $u$ be a basis of $\VdR{[L\to 0]}$. We extend $u$ by $\omega$ to a basis of $\VdR{[L\to A]}$ and the resulting basis by $\xi$ to a basis of $\VdR{M}$.
Note that $\VdR{M}$ is also a right $E$-module, but we are not using this structure at this point.

Our discussion shows that the full period matrix \index{period matrix!saturated case}of $M$ with respect to these bases
has the shape
\begin{equation}\label{eq:matrix}
\left(\begin{matrix}\xi(\sigma)&\xi(\gamma)&\xi(\lambda))\\
 \omega(\sigma)&\omega(\gamma)&\omega(\lambda)\\
u(\sigma)&u(\gamma)&u(\lambda)
\end{matrix}\right)
=\left(\begin{matrix}\xi(\sigma)&\xi(\gamma)&\xi(\lambda)\\
 0&\omega(\gamma)&\omega(\lambda)\\
0&0&u(\lambda)
\end{matrix}\right).
\end{equation}

\begin{lemma}\label{lem:compute}
The entries of the matrix above generate $\Per\langle M\rangle$ over $\Qbar$.
The elements of $\xi(\sigma)$ are $\Qbar$-multiples of
$2\pi i$. The elements of $u(\lambda)$ are $\Qbar$-multiples of $1$.
These are the only $\Qbar$-linear relations between the entries.
\end{lemma}

\begin{proof}Let $\alpha_1,\dots,\alpha_e$ be a $\Qbar$-basis
of $E$. Then the tuples 
\[ (\alpha_j\sigma,\alpha_j\gamma,\alpha_j\lambda|j=1,\dots,e)\]
 are
a $\Q$-basis of $\Vsing{M}$, hence the period space is generated by the complex numbers
$\xi_i(\alpha_{j*}\sigma_k), \dots$ over $\Qbar$.  The transformation formula for integrals gives
\[ \xi_i(\alpha_{j*}\sigma_k)=(\alpha^*_j\xi_i)(\sigma_k).\]
The class $\alpha^*_j\xi_i$ is a $\Qbar$-linear combination
of the basis vectors:
\[
 \alpha^*_j\xi_i=\sum b_s\xi_s+\sum c_t\omega_t+d_ru_r
\]
which implies in accordance with the shape of the period matrix that
\[\xi_i(\alpha_{j*}\sigma_k)=\sum b_s\xi_s(\sigma_k).
\]
Similar computations also apply to rest of the basis and we see that
the period space has $\Qbar$-generators as claimed.

Now assume that there is a $\Qbar$-linear relation between the periods. It
has the shape
\begin{equation}\label{eq:relation}
 \sum a\,\xi(\sigma)+\sum b\,\xi(\gamma)
+ \sum c\,\xi(\lambda)+\sum d\,\omega(\gamma)+\sum f\,\omega(\lambda) +\sum gu(\lambda)=0.
\end{equation}
Here $a$ is a matrix  and the first term stands for the sum $\sum_{i,j}a_{ij}\xi_i(\sigma_j)$. The same convention is used in all other places.

We define
 $T_\sigma= \Gm^{|\sigma|}$,  $M_\sigma=[0\to T_\sigma]$, $G_\gamma=G^{|\gamma|}$, $M_\gamma= [0\to G_\gamma], M_\lambda= M^{|\lambda|}$ and  consider  the product 
\[ \tilde{M}=M_\sigma\times M_\gamma\times M_\lambda.\] 
It can be written in the form $\tilde{M}=[\tilde{L}\to \tilde{G}]$  with group part 
\[0\to \tilde{T} \to \tilde{G} \to \tilde{A}\to 0\] 
for 
 a torus $\tilde{T}$  and an abelian variety $\tilde{A}$. We fix elements
\begin{align*} 
\tilde{\gamma}&=(\sigma,\gamma,\lambda)\in\Vsing{\tilde{M}},\\
\tilde{\omega}&=(\phi,\psi,\vartheta)\in\VdR{\tilde{M}}
\end{align*}
where
\begin{align*}
\phi&=\left(\sum a_{ij}\xi_j|i=1,\dots,|\sigma|\right),\\
\psi&=\left(\sum b_{ij}\xi_j+\sum d_{ij}\omega_j| i=1,\dots,|\gamma|\right),\\
\vartheta&= \left(\sum c_{ij}\xi_j+\sum f_{ij}\omega_j +\sum g_{ij}u_j|i=1,\dots,|l|\right)
\end{align*}
using the coefficients of the relation~(\ref{eq:relation}).
Then the $\Qbar$-linear relation~(\ref{eq:relation}) between the periods implies 
\[ \tilde{\omega}(\tilde{\gamma})=0.\]
 By the Subgroup Theorem for $1$-motives, there is a short exact sequence
\[ 0\to M_1\xrightarrow{\nu}\tilde{M}\xrightarrow{p}M_2\to 0\]
with $M_1=[L_1\to G_1]$, $M_2=[L_2\to G_2]$ and $\tilde{\gamma}=\nu_*\gamma_1$, $\tilde{\omega}=p^*\omega_2$.

We analyse $M_2$. In a first step we show that $A_2=0$. Otherwise assume $A_2\neq 0$. Then there is a non-zero homomorphism $A_2\to B$. 
The composition $\tilde{A}= B^{|\gamma|}\times B^{|\lambda|}\to A_2\to B$ is given by a tuple
$(n,m)$ of elements of $E$. We determine the image $\bar{L}_2$ of
$L_2$ in $B$. We have $\bar{L}_2=\sum_{i=1}^{|\lambda|} m_i(L)$
 because the lattice part of $M_\gamma$ is $0$. The image is contained in $L$ because  $L$ is $E$-stable by saturatedness and $L\to G\to B$ injective. Here we use that $M$, being saturated, is reduced.  We have
$\bar{L}_2\neq 0$ if there is some $m_i\neq 0$. In this case $m_i$ is a unit in a division algebra, making the map $m_i:L\to L$ bijective. This implies
$m_i(L)=L$ and then also $\bar{L}_2=L$. The image of  $\tilde{\gamma}$ under
$\bar{p}:\tilde{M}\to M_2\to [L\to B]$ is equal to
\[ \sum n_j\gamma_j+\sum m_i \lambda_i \in \Vsing{[L\to B]}.\]
It vanishes because $p_*\tilde{\gamma}=0$. The image of $(\gamma,\lambda)$ is
an $E$-basis of $\Vsing{[L\to B]}$. Therefore $m_i=0$ for all $i$ and $n_j=0$ for all $j$. This contradicts
the non-triviality of $\tilde{A}\to A_2\to B$. Hence $A_2=0$.

Consider the composition of the inclusion of one of the factors
$[0\to G]$ of $M_\gamma$ into $\tilde{M}$ with $p$. It has the shape \[ [0\to G]\to [L_2\to T_2].\]
If this map was non-zero, its group component would induce a splitting of $G$. This is not possible because we have assumed
that $M$ is reduced. The map has to vanish. As a consequence the pull-back of
$\tilde{\omega}=p^*\omega_2$ to $G$ is equal to 
\[ \tilde{\omega}|_G=(p^*\omega_2)|_G=0^*\omega_2=0.\]
This means $\tilde{\omega}|_{M_\gamma}=0$. In other words, $\psi=0$.
By linear independence of the $\xi_j$ and
$\omega_j$ this implies $b=0$, $d=0$.

We repeat the argument with the inclusion of one of the factors
$M$ of $M_\lambda$ into $\tilde{M}$. It has the shape
\[ [L\to G]\to [L_2\to T_2]\]  
As $M$ is reduced, there is no non-trivial map from $G$ to a torus, so  the group component of this map vanishes. The image of $L$ in $T_2$ has to be $0$  and therefore its image in $L_2$ is in $K_2=\ker(L_2\to T_2)$. This is true
for all factors $M$ of $M_\lambda$, hence the surjection $L^{|\lambda|}\to L_2$ factors via $K_2\subseteq L_2$. 
This implies that  $L_2=K_2$ and the structure map $L_2\to T_2$ of $M_2$ vanishes. The composition
\[ M_\lambda\to \tilde{M}\xrightarrow{p}M_2\isom [L_2\to 0]\times [0\to T_2]\]
has image in $[L_2\to 0]$,
hence the restriction $\vartheta$ of $\tilde{\omega}=p^*\omega_2$ to $\VdR{M_{\lambda}}$ takes values
in $\VdR{[L^{|\lambda|}\to 0]}$.  By linear independence this implies $c=0$,
$f=0$.

This shows that our linear relation has been reduced to
\[ \sum a\xi(\sigma)+\sum gu(\lambda)=0.\]
The terms in the first sum are periods of Tate type, hence  multiples
of $2\pi i$. The terms in the second sum are periods of algebraic type, hence
multiples of $1$. This reduces the proof to the transcendence of $\pi$.
This was shown in  Corollary~\ref{cor:pi}.
\end{proof}

\begin{cor}\label{prop:simple}
\index{dimension formula!saturated, simple case}
Let $M$ be saturated with $A=B$ simple. Then
\begin{multline*} \delta(M)=
\delta(T)+\frac{4g^2}{e} + \delta(L)\\
+2g\,\rk_{B}(G,M)
+2g\,\rk_{B}(L,M)+e\,\rk_B(G,M)\rk_B(L,M).
\end{multline*}
\end{cor}
In particular, the formula in Theorem~\ref{thm:dimension_sat} holds.
\begin{proof}
We read off the numbers from the basis constructed in the proof of the previous lemma.
Accordingly we have 
\begin{gather*}
|\sigma|= \rk_B(G,M),  |\gamma|=2g/e, |\lambda|=\rk_B(L,M),\\
|\xi|=e\,\rk_B(G,M), |\omega|=2g, |u|=e\,\rk_B(L,M).
\end{gather*}
\end{proof}

\subsection{General Saturated Motives}
Assume $M$ is saturated, but $A$ not necessarily simple. 
We have (up to isogeny)
\[ A\isom B_1^{n_1}\times\dots\times B_m^{n_m}\]
with simple, non-isogenous $B_i$. Hence
\[ \End(A)_\Q\isom M_{n_1}(E_1)\times \dots\times M_{n_m}(E_m)\]
with non-isomorphic division algebras $E_i$. 

\begin{lemma}\label{lem:split} 
There is a natural decomposition
\[ M\isom M_1^{n_1}\times\dots \times M_m^{n_m}\]
with each $M_i$ saturated with abelian part given by $B_i$.
\end{lemma}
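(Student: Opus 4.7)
The plan is to transport the decomposition of $\End(A)_\Q$ coming from the isotypical decomposition to $M$ itself, using the saturation hypothesis $\End(M)_\Q = \End(A)_\Q$, and then verify that the resulting summands remain saturated.

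First I would fix the isomorphism $A \isom B_1^{n_1} \times \cdots \times B_m^{n_m}$ and the resulting identification $\End(A)_\Q \isom \prod_{i=1}^m M_{n_i}(E_i)$. Let $\epsilon_j^{(i)} \in M_{n_i}(E_i) \subset \End(A)_\Q$ be the diagonal matrix unit projecting $A$ onto the $j$-th copy of $B_i$; these form a complete orthogonal system of idempotents. By saturatedness they lie in $\End(M)_\Q$. Since $\onemot_\Qbar$ is abelian and $\Q$-linear (Definition~\ref{defn:einsmot}), a complete orthogonal system of idempotents splits the object, so $M \isom \bigoplus_{i,j} M_{i,j}$ with $M_{i,j} := \epsilon_j^{(i)} M$. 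The off-diagonal matrix units $\epsilon_{1j}^{(i)} \in M_{n_i}(E_i)$ provide mutually inverse isomorphisms between $M_{i,1}$ and $M_{i,j}$ in $\onemot_\Qbar$, so putting $M_i := M_{i,1}$ we obtain $M \isom M_1^{n_1} \times \cdots \times M_m^{n_m}$.

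Next I would identify $M_i = [L_i \to G_i]$ with $L_i = \epsilon_1^{(i)} L_\Q$ and $G_i = \epsilon_1^{(i)} G$, the latter being an extension of $B_i$ by the torus $T_i := \epsilon_1^{(i)} T$. The abelian part of $M_i$ is $B_i$ by construction, and the structure map $L_i \to B_i$ is the restriction of $L \to A$ to the $\epsilon_1^{(i)}$-summand. Injectivity of $L_Q \to A(\Qbar)_\Q$ and of $X(T) \to A^\vee(\Qbar)_\Q$ (saturation of $M$) is preserved under the projection onto this summand, since the idempotent acts $\End(A)_\Q$-linearly on both sides; hence $L_i \to B_i(\Qbar)_\Q$ and $X(T_i) \to B_i^\vee(\Qbar)_\Q$ are still injective.

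Finally, for the endomorphism algebra I would compute the corner
\[
\End(M_i)_\Q = \epsilon_1^{(i)} \End(M)_\Q \epsilon_1^{(i)} = \epsilon_1^{(i)} M_{n_i}(E_i) \epsilon_1^{(i)} = E_i = \End(B_i)_\Q,
\]
which is precisely the saturation condition for $M_i$. The main obstacle is bookkeeping: one has to be careful that the idempotent acts compatibly on the three constituents ($L$, $T$ via pullback on characters, and $A$), but this is forced by the fact that it is an endomorphism of $M$ as a $1$-motive rather than just of $A$. Everything else reduces to the standard Peirce decomposition of the semisimple algebra $\End(A)_\Q$.
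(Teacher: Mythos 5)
Your proof is correct and takes essentially the same approach as the paper: decompose via the idempotents of $\End(A)_\Q \isom \prod_i M_{n_i}(E_i)$, which act on $M$ itself by the saturatedness hypothesis $\End(M)_\Q = \End(A)_\Q$, and use (permutation) matrix units to identify the $n_i$ copies of each $M_i$. You are slightly more thorough than the paper in that you also verify that each summand $M_i$ is again saturated, via the injectivity of the restricted structure maps and the Peirce corner computation $\epsilon_1^{(i)}\End(M)_\Q\epsilon_1^{(i)} = E_i = \End(B_i)_\Q$; the paper's proof leaves this check implicit.
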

\begin{proof}
Let $p_i\in\End(A)$ be the projector onto $B_i^{n_i}$. 
This gives a decomposition of the identity
$1=\sum p_i$ into idempotents.

Since $M$ is saturated, we have $\End(A)_\Q=\End(M)$, which implies that $p_i$ can also be viewed as a projector on $M$. We obtain a decomposition 
\begin{equation}\label{isotypical} M\isom \bigoplus_{i=1}^mp_i(M).\end{equation}
into isotypical components.

We now replace $M$ by one of the factors $p_i(M)$ and drop the index $i$. The
abelian part is now $B^{n}$. We write $E=\End(B)_\Q$. Let $q_1,\dots,q_{n}\in M_n(E)$
be the projections to the components. This gives a decomposition of the identity 
$1=q_1+\cdots+q_n$ into idempotents $q_i$. As $\End(M)=M_n(E)$ this
induces a decomposition
\[ M\isom \bigoplus_{i=1}^n q_i(M).\] 
The permutation matrices in $M_n(E)$ induce isomorphisms between
the factors $q_i(M)$, hence we even have
\[ M\isom  (M')^n\]
with abelian part of $M'$ given by $B$.
\end{proof}

\begin{prop}\label{prop:sat}
\index{dimension formula!saturated case}\index{$1$-motive!saturated}\index{saturated $1$-motive}
Consider a saturated $1$-motive $M$.
Then the formula in Theorem~\ref{thm:dimension_sat} holds.
\end{prop}
\begin{proof}
By Lemma~\ref{lem:split} we are dealing with the motive
\[ M_1^{n_1}\times\dots \times M_m^{n_m}\]
with $M_i$ as there. We have
\[ \Per(M_i^{n_i})=\Per(M_i)\] 
hence we may without loss of generality assume $n_i=1$ for $i\geq 1$. We now
repeat the proof of Lemma~\ref{lem:compute} and Corollary~\ref{prop:simple} with an
extra  index $i$.
\end{proof}

\section{Special Cases}\label{sec:sp_cases}

The formulas which we have derived so far simplify if one of the constituents of $M$ vanishes. Indeed, what we have shown so far is enough to give not only estimates but complete formulas. It is worth spelling this out explicitly in the different cases.

\subsection{Motives of Baker Type}
The simplest case is the Baker Motive $M=[L\to T]$. Here we get back Baker's famous theorem on linear forms in logarithms in its qualitative version.

\begin{prop}[Baker's Theorem]\label{prop:baker}
\index{dimension formula!Baker type}\index{$1$-motive!of Baker type}\index{Baker type}
Let $M=[L\to T]$ be of Baker type. Then
\[ \delta(M)=\delta(T)+\delta(L)+\rk_{\Gm}(L,M).\]
\end{prop}
\begin{proof} Let $K$ be the kernel of $L\to T$ and $\bar{L}$ the image (up to torsion). 
Consider the motivic decomposition $[L\to T]=[K\to 0]\oplus [\bar{L}\to T]$ with $\bar{L}\to T$
injective. The periods of $[\bar{L}\to T]$ agree
with the periods of
\[ M'=[\sum_\chi \chi(L)/\mathrm{Torsion}\to \Gm]\]
 with $\chi$ running through $\Hom(T,\Gm)$ as in the definition of $\rk_{\Gm}(L,M)$. The  structure map of $M'$ is injective. The torus
part has rank $\rk_{\Gm}(L,T)$. We now choose bases and proceed as in the proof  of Lemma~\ref{lem:compute}  and Corollary~\ref{prop:simple}, only simpler. 
\end{proof} 

\begin{rem}
This is precisely Baker's theorem, see \cite{baker}, see also
\cite[Theorem~2.3]{baker-wuestholz}.
It can also be deduced directly from the Analytic Subgroup Theorem in its original form applied to a group of the form $V\times T$ for a vector group of dimension equal to the rank of $L$. We can even take $\Ga\times T$. This the line of proof used in  \cite{baker-wuestholz}.
\end{rem}

\subsection{The Semi-abelian Case}
In this section, let $M=[0\to G]$ be of semi-abelian type. The dimension computation
will be achieved by reduction to the saturated case. For later use, we  record
the construction of this saturation.
As always, $G$ is an extension of an abelian variety $A$ by a torus $T$. It is determined by 
\[ X(T)\to A^\vee(\Qbar)_\Q=\Ext^1(A,\Gm)_\Q,\]
see Corollary~\ref{cor:sa_ext}.

\begin{defn}\label{defn:Gsat}
Let $G$ be a semi-abelian variety, $X=X(T)$ the character group and $E=\End(A)_\Q$. We denote by
$X_\red$ the image of $X$ in $A^\vee(\Qbar)_\Q$ and by 
$G_\red$ the semi-abelian variety defined by $X_\red\to A^\vee(\Qbar)_\Q$.
Define 
\[ X_\sat=X_\red E\subset A^\vee(\Qbar)_\Q\]
and $G_\sat$ as  the semi-abelian variety
given by $X_\sat\subset \A^\vee(\Qbar)_\Q$. \end{defn} 
The projection $X\to X_\red$ induces a natural injection
\[ G_\red\to G \] 
and the inclusion
$X_\red\subset X_\red E$  corresponds to a projection 
\[ G_\sat\to G_\red.\]
By construction $\End(G_\sat)=E$.
Our first Lemma relates the spaces $\Per(G)$ and $\Per(G_\sat)$.
\index{saturation}

\begin{lemma}\label{lem:sat_easy}
Let $G$ be semi-abelian. Then
\[ \Per\langle G\rangle=\Per\langle G_\sat\rangle+\Per\langle T\rangle.\]
\end{lemma}
\begin{proof}
Let $X'=\Ker(X\to A^\vee(\Qbar)_\Q)$ and $T'$ the torus corresponding to $X'$. Up to isogeny this gives a decomposition
\[ G\isom T'\times G_\red\]
and deduce that 
\[ \Per\langle G\rangle =\Per\langle T'\rangle +\Per\langle G_\red\rangle.\]
The first space $\Per\langle T'\rangle$ is included in $\Per\langle G_\red\rangle$ unless $X_\red=0$. In this exceptional
case, $G_\red=G_\sat=A$ and the statement is true. In consequence it suffices to 
work in the case $G=G_\red$ and we claim that
\[ \Per\langle G\rangle=\Per \langle G_\sat\rangle.\]
The surjection $G_\sat\to G$ induces an inclusion $\Per\langle G\rangle\subset \Per\langle G_\sat\rangle$ of the period spaces. 
We now establish the converse inclusion. 

Let  $e$ be a $\Q$-basis of  $E=\End(A)$. (As in Section~\ref{ssec:s_and_s} we write $e$ for the array $e_1,\dots,e_d$ etc.) 

Let $\sigma$ be a $\Q$-basis of $\Vsing{T}$ and extend the basis by $\gamma$ to a $\Q$-basis of $\Vsing{G}$.
We write $e_*\sigma$  for the set $e_{i*}\sigma_j$ for $e_i\in e$, $l_j\in l$.
 The tuple $(e_*\sigma,\gamma)$ is a system of generators of $\Vsing{G_\sat}$. 

Let 
$\omega$ be $\Qbar$-basis of $\VdR{A}$ and extend the basis  by $\xi$ to a $\Qbar$-basis of $\VdR{G}$. We denote by  $e^*\xi$  the array
 $e^*_i\xi_j$ for $e_i\in e$, $u_j\in u$. Then 
the array  $(\omega,e^*\xi)$ is a system of generators of $\VdR{G_\sat}$.

All periods of the form $e^*\xi(e_*\sigma)$ 
are multiples of $2\pi i$, hence contained in
$\xi(\sigma)$. The same holds trivially for the periods of the form $\omega(\gamma)$. Consider a period
\[ e_i^*\xi_j(\gamma_k)=\xi_j(e_{i*}\gamma_k)\]
The element $e_{i*}\gamma_k\in \Vsing{M_\sat}$ is a $\Qbar$-linear combination of 
the generators $(e_*\sigma,\gamma)$, hence the period $\xi_j(e_{i*}\gamma_k)$
is a linear combination of periods of the
form  $\xi(e_*\sigma)$ and $\xi(\gamma)$, both
contained in $\Per\langle G\rangle$.
\end{proof}
We are now ready for our dimension computation.

\begin{prop}\label{prop:dim_sa}
\index{dimension formula!semi-abelian type}
\index{$1$-motive!of semi-abelian type}
Let $G$ be semi-abelian, an extension of the abelian variety $A$ by a torus $T$.
Then
\begin{align*} \delta(M)=
\delta(T)&+\sum_B\, \frac{1}{e(B)}\,4g(B)^2+\sum_B \,2g(B)\,\rk_{B}(T,M),
\end{align*}
where the sum is taken over over the simple factors of $A$, without multiplicities.
\end{prop}
\begin{proof}First we consider the exceptional case $G\isom T\times A$, up to isogeny. 
We have 
\[ \delta(A)=\sum_B\, \frac{1}{e(B)}\,4g(B)^2 \]
as a very special case of Proposition~\ref{prop:sat}. In the product case the claim is
the linear independence of the spaces $\Per\langle T\rangle$ and $\Per\langle A\rangle$. This follows from the dimension formula in the case of a non-trivial extension (or its proof), so we may ignore the exception. 

In the non-split case, $\Per\langle T\rangle\subset \Per\langle G_\sat\rangle$ and  Lemma~\ref{lem:sat_easy} gives
\[ \delta(G)=\delta(G_\sat).\]
In the saturated case, the formula is as special instance of Proposition~\ref{prop:sat}.
\end{proof}

\begin{rem}
The statement does not mention $1$-motives, and indeed, the formula can be deduced directly from the Analytic Subgroup Theorem in its original form.
\end{rem}
\subsection{Motives of the Second Kind}
Assume that $M=[L\to A]$ is of second kind. 
We argue  as in  the semi-abelian case.

\begin{defn}\label{defn:Msat2}
Let $M=[L\to A]$ be of second kind, and $E=\End(A)_\Q$.  
We denote by  $L_\red$ be the image of $L$ in $A(\Qbar)_\Q$ and put 
$M_\red=[L_\red\to A]$. 
Define 
\[ L_\sat=EL_\red\subset A(\Qbar)_\Q\]
and write $M_\sat$ for the   motive $[L_\sat\to A]$.\index{saturation}
\end{defn} 
The projection $L\to L_\red$ induces a natural projection
\[ M\to M_\red\] 
and the inclusion
$X_\red\subset X_\red E$ corresponds to an inclusion 
\[ M_\sat\to M_\red.\]
    By construction $\End(M_\sat)$ is $E$.

\begin{lemma}\label{lem:sat_easy2}
Let $M=[L\to A]$ be of the second kind. Then
\[ \Per\langle M\rangle=\Per\langle M_\sat\rangle+\Per\langle L\rangle.\]
\end{lemma}
\begin{proof}
Mutis mutandis, the argument is the same as in in the proof of the semi-abelian case, 
for Lemma~\ref{lem:sat_easy}. 
\end{proof}

\begin{prop}\label{prop:dim_2}
\index{dimension formula!second kind}\index{$1$-motive!of second kind}
Let $[L\to A]$ be a $1$-motive of the second kind.Then
\begin{align*} \delta(M)=
\delta(L)&+
\sum_B\, \frac{1}{e(B)}\,4g(B)^2+\sum_B \,2g(B)\,\rk_{B}(L,M) 
\end{align*}
where the sum is taken over the simple factors of $A$, without multiplicities.
\end{prop}
\begin{proof}Mutis mutandis, the argument is the same as in the semi-abelian case, Proposition~\ref{prop:dim_sa}
\end{proof}

\begin{rem}
In contrast to the semi-abelian case,  the argument here uses the language of $1$-motives. This can be avoided by considering the vector group $M^\natural$ directly. This is not surprising: after all the Analytic Subgroup Theorem for $1$-motives is a consequence of the Analytic Subgroup Theorem in its original form. 

There is an alternative argument for the proof: the Cartier dual of
$M=[L\to A]$ has the shape $[0\to G^\vee]$ where $G^\vee$ is the semi-abelian variety with abelian part $A^\vee$ and defined by the classifying homomorphism $L\to (A^\vee)^\vee(\Qbar)_\Q$. The period spaces of $M$ and $G^\vee$ have the same dimension. Moreover,
$\rk_B(L,M)=\rk_{B^\vee}(T^\vee,G^\vee)$ so that  the formulas for $M$ and
$G^\vee$ match.
\end{rem}


\section{Proof of the Dimension Estimate}\label{sec:proof_estimate}

Theorem~\ref{thm:dimension_sat} states a formula for $\delta(M)$ in the case of a product of a motive of Baker type and a saturated motive. We have already handled each factor separately. Looking a little more carefully at the proof will also give the full result.

\begin{prop}\label{prop:formula_sat_holds}
\index{dimension formula!product of Baker motive and saturated motive}
Let $M$ be a product of a Baker motive and a saturated $1$-motive.
Then the dimension formula of Theorem~\ref{thm:dimension_sat} holds.
\end{prop}
\begin{proof}We have already established the case of saturated motives in Proposition~\ref{prop:sat} and of motives of Baker type, see Proposition~\ref{prop:baker}. With the same argument
as in the proof of Proposition~\ref{prop:sat}, we may assume that the saturated motive is of the form
\[ M_1\times\dots \times M_m\]
where the $M_i$ have simple abelian part $B_i$ and the $B_i$ are pairwise non-isogenous. The periods remain unchanged. As in the proof of Baker's theorem, we further may assume that the motive of Baker type is of the form
\[ M_0=[L\to\Gm]\]
The periods and the ranks in the formula 
remain unchanged by this process.

Going through the argument  in the proof of Proposition~\ref{prop:sat}, but with 
$i=0,1,\dots,m$ instead of $i=1,\dots,m$ proves the Proposition. 
\end{proof}

Since the reduction of the proof of Theorem~\ref{thm:dimension_sat} to the case of Proposition~\ref{prop:formula_sat_holds} is lengthy, we offer a short outline of the proof.  

Given a $1$-motive $M$ we want to find a  $1$-motive $\tilde{M}$ with the same abelian part $A$ such that $E=\End(A)$ operates on all of $\tilde{M}$. The idea is to enlarge both $T$ and $L$ by adding their $E$-translates. In the simplest case the motive is $u:\Z\to A$, which is  made $E$-equivariant by replacing $\Z$ by $\End(A)u$.
We already used this device in Definition~\ref{defn:Msat2}.
However, this needs first to make $M$ reduced before the procedure can be applied. 
If this has been achieved, so that we are in the reduced case, we enlarge $G$ to $G_\sat$ such that $E$ operates on $G_\sat$. To this end,
we have to choose a lift $L\to G$ to $L\to G_\sat$, then enlarge $L$ such that
$E$ operates. In some of the steps the periods remain the same, in other steps, the space of periods is enlarged.
In each step the periods can be controlled. 

To begin with, the first step is to construct in a canonical way a $1$-motive $M_\baker$ of 
Baker type and non-ca\-no\-ni\-cal\-ly a reduced $M_\red$ such that
\begin{equation}\label{eq:reduce} \Per\langle M\rangle =\Per\langle M_\red\rangle+\Per\langle M_\baker\rangle.
\end{equation}

This is done as follows:
The composition $L\to G(\Qbar)_\Q\to A(\Qbar)_\Q$ has a kernel $L'$ and an image $L''$.  We have $[L''\to A]=[L\to A]_\red$ with the notation of Definition~\ref{defn:Msat2}. 
As $L'\to G$ factors via $T$, this defines a motive of Baker type $[L'\to T]$. 

Similarly $X(T)\to A^\vee(\Qbar)$ has a kernel $X(T'')\subset X(T)$
(for some quotient $T\to T''$) and an image $X(T')$ (for some subtorus $T'\subset T$). As in Remark~\ref{rem:decomp_G}
it induces a canonical short exact sequence
\[ 0\to G'\to G\to T''\to 0\]
of semi-abelian groups. The group $G'$ coincides with $G_\red$ in accordance with the notation of Definition~\ref{defn:Gsat}.

With these data we define a Baker type motive by
\[  M_\baker=[L'\to T]\oplus [L\to T''].\]
We now
choose splittings $L\isom L'\times L''$ and 
$X(T)\isom X(T')\times X(T'')$, inducing $G\isom G'\times T''$, all up to isogeny; see Remark~\ref{rem:decomp_G}. The composition of $L''\to G$ with the projection  $G\to G'$ defines (uncanonically, depending on the complement) a reduced motive
\[ M_\red=[L''\to G']\]
with the the $L$-rank and the $T$-rank of $M$ only depending  on $M_\red$. 

\begin{rem}\label{rem:equiv}
For later use, we point out the following: If a semi-simple algebra $E$ operates on $M$, then it will automatically act on
$M_\baker$. Moreover, $M_\red$ can be constructed such that $E$ still operates.
We only have to choose the splittings of $L$ and $X(T)$ equivariantly. 
\end{rem}

\begin{lemma}\label{lem:bk_red}
There exists a decomposition of $\Per(M)$ as    
\[ \Per\langle M\rangle=\Per\langle M_\baker\rangle+\Per\langle M_\red\rangle.\]
\end{lemma}
\begin{proof}Both $M_\baker$ and $M_\red$ are subquotients of $M$, hence their periods
are contained in $\Per(M)$. It remains to check the opposite inclusion.

From Remark~\ref{rem:decomp_G} we know that $G\isom G'\times T''$ with $G'$ reduced as a $1$-motive.
By composition with the projections, the map $L\to G$ induces $L\to G'$ and $L\to T''$. The induced morphism
\[ [L \to G]\to [L\to G']\times [L\to T'']\]
is injective, hence
\[\Per\langle M\rangle\subset \Per\langle[L\to G']\rangle+\Per\langle [L\to T'']\rangle.\]
In the second step, by definition of $M_\baker$, we have $L\isom L'\times L''$ with $L''\to A$ injective and $L'\to G$ factoring via $T$. The map
\[ [L'\to G']\times [L''\to G']\to [L\to G']\]
is surjective, hence
\[ \Per\langle [L\to G']\rangle\subset \Per\langle [L'\to G']\rangle+\Per\langle [L''\to G']\rangle.\]
This is close to the shape we need, but not quite the same. We need to work on the first summand. By assumption the map $L'\to G$ factors via $T$ and hence $L'\to G'$ factors via $T'$. This gives
a well-defined morphism
\[ [L'\to T']\times [0\to G']\to [L'\to G']\]
which is surjective. Hence
\[ \Per\langle [L'\to G']\rangle\subset \Per\langle [L'\to T']\rangle+\Per\langle G'\rangle. \]
Putting these inclusions of period spaces  together we get
\[ \Per\langle M\rangle\subset\Per\langle [L'\to T]\rangle+\Per\langle [L''\to G'\rangle+\Per\langle[L\to T'']\rangle.\]
By definition the first and the last summand add up to $\Per\langle M_\baker\rangle$ whereas the middle term summand equals $\Per \langle M_\red\rangle$.
\end{proof}

Having finished the reduction step, we now consider the case where $M$ is reduced. We keep $E=\End(A)_\Q$. 
Next we shall construct a motive
$M_\sat=M_0\times M_1$ such that $\Per\langle M\rangle\subset \Per\langle M_\sat\rangle$, $M_0$ is of Baker type and $M_1$ is saturated with $\End(M_1)=E$. 
To this end, we use $T_\sat$, $G_\sat$, $L_\sat$ and $L_\sat\to A$ such that the $E$-operation extends to $G_\sat$ and $[L_\sat\to A]$, see Definitions~\ref{defn:Gsat} and~\ref{defn:Msat2}.

It remains to lift $L_\sat\to A$ to a map $L_\sat\to G_\sat$. We choose a lift of $L\to G$ to a morphism $L\to G_\sat$. The image of $L':=E L\subset G_\sat(\Qbar)$ in
$A(\Qbar)$ agrees with $L_\sat$.
By construction $E$ operates on  
\[M'=[L'\to G_\sat].\] 
However,  $M'$ is not necessarily reduced (and then saturated) because $L'\to L_\sat$ is not necessarily injective.
We put
\[ M_0=(M')_\baker,\  M_1=(M')_\red,\  M_\sat=M_0\times M_1.\]
As pointed out in Remark~\ref{rem:equiv}, we can choose $M'_\red$ such that
$E$ still operates. This makes $M_1$ saturated.

\begin{rem}
The construction of $M_\sat$ depends on the choice of a lift
of $L \to G$ to $L\to G_\sat$. We do not know how to do this in a canonical way.
\end{rem}

\begin{lemma}\label{lem:sat_ok}\index{saturation}
Let $M$ be reduced, $M_\sat=M_0\times M_1$ as constructed above. Then $M_0$ is of Baker type, $M_1$ is saturated and
\[ \Per\langle M_\sat\rangle\supset \Per\langle M\rangle.\]
\end{lemma}
\begin{proof}
By definition, $M_0$ is of Baker type and $M_1$ reduced with
$E=\End(A)\subset \End(M_1)\subset \End(A)_\Q$,   hence $M_1$ is saturated.

By construction there is an injection $[L\to G_\sat]\to M'$ (with $M'$ as in the construction of $M_\sat$) and
 a surjection $[L\to G_\sat]\to M$. Together this gives the inclusion
of period spaces
\[ \Per\langle M\rangle\subset \Per\langle M'\rangle.\]
By Lemma~\ref{lem:bk_red} we also have
\[ \Per\langle M'\rangle \subset \Per\langle M'_\baker\times M'_\red\rangle
=\Per\langle M_\sat\rangle.\]
\end{proof}

\begin{proof}[Proof of Theorem~\ref{thm:dimension_sat}.]\index{dimension estimate}
Part (\ref{it:sat_case}) is Proposition~\ref{prop:formula_sat_holds}. 
Part (\ref{it:trivial}) is clear because all periods of $[L\to 0]$ are algebraic and all periods of $T$ are multiples of $2\pi i$. It remains to prove
(\ref{it:contained}).
By the Lemmas~\ref{lem:bk_red} and \ref{lem:sat_ok} we have
\[ \Per\langle M\rangle\subset\Per\langle M_\baker\times M_0\times M_1\rangle\]
with $M_\baker$ and $M_0$ of Baker type and $M_1$ saturated with the
same abelian part as $M$.
\end{proof}


\chapter{Structure of the Period Space}\label{ch:struct}

It is not too difficult to determine the structure  of the space of periods in the general case by going back to its constituents. 
The inclusions
\[ [0\to T]\subset [0\to G]\subset [L\to G]= M\]
induce a filtration
\[ \Vsing{T}\hookrightarrow \Vsing{G}\hookrightarrow \Vsing{M}.\]
and, from the dual point of view, a cofiltration
\[
M=[L\to G]\twoheadrightarrow [L\rightarrow A] \twoheadrightarrow [L\rightarrow 0]
\]
inducing a  filtration 
\[ \VdR{M}\hookleftarrow\VdR{[L\to A]}\hookleftarrow\VdR{[L\to 0]}.\]
Together, they introduce a bifiltration
\[\begin{xy}\xymatrix{
 \Per\langle T\rangle \ar@{^{(}->}[r]&\Per\langle G\rangle\ar@{^{(}->}[r]&\Per\langle M\rangle\\
         &\Per\langle A\rangle\ar@{^{(}->}[r]\ar@{^{(}->}[u]&\Per\langle [L\to A]\rangle\ar@{^{(}->}[u]&\\
         &&\Per\langle [L\to 0]\rangle\ar@{^{(}->}[u]&.
}\end{xy}\]
on $\Per\langle M\rangle $.
We introduce the notation (and terminology):
\begin{align*}
\Per_\tate(M)&=\Per\langle T\rangle&&\text{(Tate periods)}\index{Tate periods}\index{periods!Tate}\\
\Per_\ab(M)&=\Per\langle A\rangle &&\text{(2nd kind wrt closed paths)}\index{periods! second kind}\index{second kind}\\
\Per_\alg(M)&=\Per\langle [L\to 0]\rangle&&\text{(algebraic periods)}\index{periods!algebraic}\index{algebraic periods}\\
\Per_3(M)&=\Per\langle G\rangle/(\Per_\tate(M)+\Per_\ab(M))&&\text{(3rd kind wrt closed paths)}\index{third kind}\index{periods!third kind}\\
\Per_\inc(M)&=\Per\langle[L\to A]\rangle/(\Per_\ab(M)+\Per_\alg(M))&&\text{(2nd kind wrt non-cl. paths)}\\
\Per_\mix(M)&=\Per\langle M\rangle/(\Per_3(M)+\Per_\inc(M))&&\text{(3rd kind wrt non-cl. paths)}\index{periods!incomplete}\index{incomplete periods}
\end{align*} 
If $M$ is of Baker type (i.e. $A=0$) we also use $\Per_\baker(M)=\Per_\mix(M)$.
\index{Baker type}

From the bifiltration scheme we see that for example
that periods of the third kind with respect  to closed paths are only well-defined up to periods of Tate type and periods of the second kind with respect to closed paths.

\begin{defn}\label{defn:delta_list}
In each of the cases $?=\tate, \ab,\alg,3,\inc,\mix$ we put
\[ \delta_?(M)=\dim_\Qbar \Per_?(M).\]
\end{defn}

The dimensions of the various blocks will be determined one by one. 
By
adding up the $\delta_?(M)$ we then get  $\delta(M)$. This works because of the following 
property:

\begin{thm}\label{thm:disj}
\index{period space!structure}\index{structure theorem for period spaces}
The spaces $\Per_\tate(M)$, $\Per_\alg(M)$ and $\Per_\ab(M)$ have mutually trivial intersection. Moreover,
\[ \Per\langle G\rangle\cap \Per\langle[ L\to A]\rangle =\Per\langle A\rangle.\]
\end{thm}
\begin{proof}These are statements about linear independence. We actually determined bases in the case of a product of a motive of Baker type and a saturated motive in order to determine the dimensions. Also by Theorem~\ref{thm:dimension_sat} we find the periods of $M$ in the space of periods of a certain  $\tilde{M}$ of this special shape. The linear independence claims are simply a byproduct.

Alternatively, we can read off the claim from the dimension formulas themselves (rather than their proof). We explain this in detail.

Consider the semi-simple motive
\[ M'=[L\to 0]\times [0\to T]\times [0\to A].\]
It is the product of a motive of Baker type and a saturated motive. Theorem~\ref{thm:dimension_sat} gives
\[ \delta(M')= \delta(T)+\delta(L)+\sum_B \frac{1}{e(B)}\,4g(B)^2=\delta_\alg(M)+ \delta_\tate(M)+\delta_\ab(M).\]
This means that the period spaces $\Per_\tate(M)$, $\Per_\alg(M)$, and $\Per_\ab(M)$ have mutually trivial intersections. 

To prove  the second claim, we consider the motive 
\[ M''=[L\to A]\times [0\to G].\]
The first factor is of second kind, the second factor is of semi-abelian type and we have computed the dimension of their period spaces in Proposition~\ref{prop:dim_sa} and Proposition~\ref{prop:dim_2} as:
\begin{align*} \delta(G)&=
\delta(T)+\delta(A)+\sum_B 2g(B)\,\rk_{B}(T,M),\\
\delta([L\to A]&=\delta(L)+\delta(A)+\sum_B 2g(B)\,\rk_{B}(L,M);
\end{align*}
here we have used that $\rk_B(L,M)=\rk_B(L,M'')$ and $\rk_B(T,M)=\rk_B(T,M'')$. 
On the other hand, combining
Lemma~\ref{lem:sat_easy} and Lemma~\ref{lem:sat_easy2} on the saturations of motives of semi-abelian type or of the second kind, respectively, we get
\[\Per\langle M''\rangle =\Per\langle \tilde{M}''\rangle\]
where
\[ \tilde{M}''=T\times [L\to 0]\times [0\to G_\sat]\times [L_\sat\to A]\]
with $G_\sat$ and $L_\sat$ as in Definition~\ref{defn:Gsat} and \ref{defn:Msat2}. This is a product of a motive of Baker type and a saturated motive, hence
we find the dimension of its period space by Theorem~\ref{thm:dimension_sat} as 
\begin{multline*}
 \delta(M'')=\delta(\tilde{M}'')
\\=
\delta(T)+\delta(L)+\delta(A)+\sum_B 2g(B)\,\rk_{B}(T,M)
+\sum_B 2g(B)\,\rk_{B}(L,M)
\end{multline*}
Moreover, we know that the intersection of $\Per\langle G\rangle$ and $\Per\langle [L\to A]\rangle$ contains at least $\Per\langle A\rangle$. For the numbers to match up, the intersection has to be 
equal to $\Per\langle A\rangle$
\end{proof}

\begin{rem}
The language of $1$-motives has proved useful in the proofs, but it is compulsory for the structural results in the present chapter. The structure of the period space is readily described in terms of the constituents of the $1$-motives, but not in terms of finitely generated subgroups of connected commutative algebraic groups and their constituents.
\end{rem}

\begin{cor}\label{cor:sum}\index{dimension formula!precise}
We always have
\[ \delta(M)=\delta_\tate(M)+\delta_\ab(M)+\delta_\alg(M)+\delta_3(M)+\delta_\inc(M)+\delta_\mix(M).\]
\end{cor}

This can be compared with our results for motives of semi-abelian type, of second kind or of Baker type. This is what we know so far:

\begin{prop}\label{prop:list}
\begin{enumerate}
\item 
All Tate periods are $\Qbar$-multiples of $2\pi i$, all algebraic 
periods are in $\Qbar$. In particular $\delta_\tate(M)$ and $\delta_\alg(M)$ take the values $0$ or $1$, depending on whether $T$ or  $L$ are trivial.
\item We have
\[ \delta_\ab(M)=\sum_B \frac{1}{e(B)}\,4g(B)^2\]
where the sum is taken over all simple factors of $A$, without multiplicities.
\item We have
\begin{align*}
 \delta_3(M)&=\sum_B 2g(B)\,\rk_{B}(T,M)\\[2ex]
  \delta_\inc(M)&=\sum_B2g(B)\,\rk_B(L,M)
\end{align*}
\item If $M$ is of the Baker type, then
\[ \delta_\mix(M)=\delta_\baker(M)=\rk_{\Gm}(L,M).\]
\item \label{it:list6}If $M$ is saturated, then
\[ \delta_\mix(M)=\sum_Be(B)\,\rk_B(G,M)\,\rk_B(L,M).\]
\item If $M$ is the product of a motive of Baker type and a saturated motive, then the contributions add up.
\end{enumerate}
\end{prop}

\begin{rem}
It remains to determine the precise value of $\delta_\mix(M)$ in the general case. This will happen in the next chapter. In contrast to the other entries, there does not seem to be an easy and clean answer. The classical cases are misleading in this respect. The problem is the subtle interplay between the lattice and the torus part as well as with non-trivial endomorphisms of the abelian part.   

Translated to algebraic varieties, these are the periods of the third kind with respect to non-closed paths on algebraic curves of genus bigger than $0$. Next to nothing was known about them before our monograph. Indeed, as already mentioned before it is here were the point of view of $1$-motives really is needed. 
\end{rem}

\chapter{Incomplete Periods of the Third Kind}\label{ch:precise}
\index{third kind}\index{incomplete periods}\index{periods!incomplete}

In this chapter we develop a precise formula for the dimension  $\delta_\mix(M)$
of the space of periods of the third kind with respect to non-closed paths given by
\[ \Per_\mix(M)=\Per\langle M\rangle/(\Per\langle G\rangle +\Per\langle [L\to A]\rangle)\]
where $M=[L\to G]$ and $G$ is an extension of the abelian variety $A$ by the torus $T$.
It is the most complex part of the picture. 

\section{Relation Spaces}

The assignment $M\mapsto \Per_\mix(M)$ only has a weak functoriality. 
If $M'\hookrightarrow M$ is injective or $M\twoheadrightarrow M''$ surjective,
the inclusions $\Per(M'),\Per(M'')\subset \Per(M)$ also induce maps
\[  \Per_\mix(M')\to \Per_\mix(M),\hspace{2ex}\Per_\mix(M'')\to \Per_\mix(M).\]

In terms of the filtration on $\Per\langle M\rangle$ we are now
interested in the periods of the associated gradeds 
\begin{gather*}\Vsing{M}/\Vsing{[0\to G]}\isom \Vsing{[L\to 0]},\\
\VdR{M}/\VdR{[L\to A]}\isom \VdR{T}
\end{gather*}
of the filtrations in Chapter\ref{ch:struct}  of
highest degree. 
We make the identifications 
\begin{gather*}
 \VdR{T}=X(T)\tensor \VdR{\Gm}\isom X(T)_\Qbar,\\  
\Vsing{[L\to 0]}=L\tensor \Vsing{[\Z\to 0]}\isom L_\Q.
\end{gather*}
Given an elementary tensor $l\otimes x\in L\tensor X(T)$ we choose $\xi\in \VdR{M}$ and $\lambda\in\Vsing{M}$ with image $x$ in $\VdR{T}$  and $l$ in $\Vsing{[L\to 0]}$. Then we define a map
\begin{eqnarray*}\Phi: L_\Q\tensor X(T)_\Q&\to & \Per_\mix(M)\\
l\otimes x& \mapsto & \xi(\lambda).
\end{eqnarray*} 

\begin{lemma}
The map $\Phi$  is well-defined and surjective after extension of scalars to $\Qbar$.
\end{lemma}

\begin{proof}
Two choices of $\lambda$ differ by an element $\delta$ of $\Vsing{G}$. The period
$\xi(\delta)$ only depends on the image of $\xi$ in $\VdR{G}$. Hence it is an element of $\Per\langle G\rangle$. The same reasoning works for $\xi$.
Every element of $\Per\langle M\rangle$ is a linear combination of elements
of the form $\xi(\lambda)$. By definition  we have
\[ \xi(\lambda)=\Phi(x\tensor l)\]
where $x$ is the image of $\xi$ in $\Vsing{[L\to 0]}$ and $l$ the image of
$\lambda$ in $\VdR{T}$. This makes $\phi$ surjective.
\end{proof}
The surjectivity of $\Phi$ gives some first information for $ \delta_\mix(M)$. Indeed we have:

\begin{cor}
\[ \delta_\mix(M)\leq (\rk L)(\dim T).\]
\end{cor}

Also we note that
the map $\Phi$ is compatible
with the weak functoriality of $\Per_\mix$. This can be expressed by the commutative diagram
\[\begin{xy}\xymatrix{
&L'\tensor X(T')\ar[r]^{\phi_{M'}}&\Per_\mix(M')\ar[dd]\\
L'\tensor X(T)\ar[ru]\ar[rd]\\
&L\tensor X(T)\ar[r]^{\phi_M}&\Per_\mix(M)
}\end{xy}\]
for $M'\hookrightarrow M$
and the same for $M\twoheadrightarrow M''$.
In order to determine $\delta_\mix(M)$ we need to describe the kernel of
$\Phi$.

\subsection{Structure of Relation Spaces}
In a first step we describe the obvious relations and in a second step we verify that they are sufficient.

A morphism of iso-$1$-motives
$\alpha:M_1\to M$ induces morphisms 
\[
\alpha_*:A_1\to A, \quad \alpha_*:L_{1,\Q}\to L_\Q\quad \text{and} \quad\alpha_*:T_1\to T.
\]
By duality we also get a morphism $\alpha^*:X(T)_\Q\to X(T_1)_\Q$. 

Consider an exact sequence  
\[ M_1\xrightarrow{\alpha}M\xrightarrow{\beta}M_2,\]
For $l_1\in L_1$, $x_2\in X(T_2)$ the period class $\phi_{M}(\alpha_*(l_{1})\tensor\beta^*(x_2))$  agrees with the image of
\[ 
\phi_{M'}(\alpha(l_1)\otimes \beta^*x_2|_{M'})=\phi_{M'}(\alpha(l_1)\otimes 0)=0\in\Per_\mix(M')\]
by the functiorility relation  for $M'=\alpha(M_1)\subset M$.
This shows that
the elements of $\alpha_*(L_{1})_\Q\tensor\beta^*(X(T_2))_\Q$ have
image $0$ in in $\Per_\mix(M)$, which gives us some first relations. They depend on the exact sequence which determines $\alpha$ and $\beta$. This suggests to go further and take the sum over all short exact sequences as first approximation of the relation space.

\begin{defn}\index{relation space}
We define the space 
\[ R_1(M):= \sum_{\alpha,\beta}\alpha_*(L_{1\Q})\tensor \beta^*(X(T_2)_\Q)\subset L_\Q\tensor X(T)_\Q\]
where the sum is with respect to all exact sequences
\[ M_1\xrightarrow{\alpha}M\xrightarrow{\beta} M_2.\]
Such relations are called \it{primitive}.\index{primitive relations}
\end{defn}

For $n\geq 1$ we extend the definition and introduce the summation maps
\[
s_n = \sum p_{i}\tensor q_i: L_\Q^n\tensor X(T^n)_\Q\to L_\Q\tensor X(T)_\Q
\]
where $p_i:\Vsing{M^n}\to \Vsing{M}$ and $q_i:\VdR{M^n}\to \VdR{M}$ are the projections to the
$i$th components, respectively,  induced by the projection
$\pi_i:M^n\to M$ to the $i$th component and  the inclusion $\iota_j:M\to M^n$ into the $j$-component. For $i\neq j$ they satisfy $\pi_i\circ\iota_j=0$.

\begin{defn}
For $n\geq 1$, we put
\[ R_n(M)=s_n(R_1(M^n)).\]
\end{defn}
This is justified by the following lemma.

\begin{lemma}
The space
$R_n(M)$ is contained in the kernel of $\Phi$.
\end{lemma}
\begin{proof} We explained that $R_1(M)$ is contained in the kernel of $\Phi$ before we introduced $R_1(M)$. Applying this to the motive $M^n$ we see that 
\[ \sum_{i,j}(p_i\tensor q_j) (R_1(M^n))\]
is in the kernel of $\Phi$. 
For $i\neq j$, the orthogonality relation implies that all elements in the image of $p_i\tensor q_j$ are in
$\Ker(\Phi)$ themselves. Dropping them from the sum we are still in the kernel.
\end{proof}
\begin{rem}
The space of primitive relations remains unchanged if
we restrict to injective $\alpha$ and surjective $\beta$. However, we find the extra flexibility useful.
Observe that 
\begin{align*}
\beta^*(X(T_2)_\Q)&=\Ker(\alpha_*: X(T)_\Q\to X(T_1)_\Q),\\
\alpha_*(L_{1,\Q})&=\Ker(\beta_*: L_\Q\to L_{2,\Q}),
\end{align*}
where $M_1=[L_1\to G_1]$ with torus part $T_1$ and $M_2=[L_2\to G_2]$ with torus part $T_2$.
This gives a less redundant, but also less symmetric description
\begin{align*}
R_1(M)
&=\sum_{\alpha:M_1\to M}\im(\alpha_*)\tensor\Ker(\alpha^*)\\
&=\sum_{\beta:M\to M_2} \Ker(\beta_*)\tensor \im(\beta^*)
\end{align*}
\end{rem}
There are trivial inclusions $R_n(M)\subset R_{n+1}(M)$. This suggests that we make the following definition.
\begin{defn}
We put\index{relation space}
\[ R_\mix(M)=\bigcup_{n=1}^\infty R_n(M).\]
\end{defn}
The hope is that this makes up all relations. This is the statement of the following theorem.

\begin{thm}\label{thm:precise1}
\index{dimension formula!precise}\index{periods!incomplete}\index{relation space}
The map
\[ \left(L_\Q\tensor X(T)_\Q/R_\mix(M)\right)_\Qbar\to \Per_\mix(M)\]
induced by $\Phi$ is an isomorphism. In particular, we have
\[ \delta_\mix(M)=(\rk L)(\dim T)-\dim(R_\mix(M)).\]
\end{thm}

As $L_\Q\tensor X(T)_\Q$ is finite dimensional, the system  $R_n(M)$
has to stabilise. Actually, the proof below will show that taking $\rk(L)$
or $\rk(X(T))$ for $n$ suffices. The argument has two steps: first we deal in Section~\ref{ssec:L=Z} with  the special case $L=\Z$, then in Section~\ref{ssec:reduce} we give the reduction argument. 

\subsection{The Case $L=\Z$}\label{ssec:L=Z}
In this section we make the hypothesis that $L=\Z$.
In this case $L\tensor X(T)$ is identified with $X(T)$ and with this identification the map $\Phi$ simplifies to 
\[ \Phi: X(T)_\Q\to \Per_\mix(M).\]

\begin{lemma}\label{lem:Rrk1}
Under this assumption
\[ R_1(M)=\sum_{\alpha} \alpha^*X(T')\]
where the sum is with respect to all $M'=[L'\to G']$  with torus part $T'$ and all $\alpha:M\to M'$ such that $\alpha_*(l)=0$ for $l$
the image of $1$ in $G(\Qbar)_\Q$. 
\end{lemma}
\begin{proof}We use the description
\[ R_1(M)=\sum_\alpha \Ker(\alpha_*)\tensor \im(\alpha^*)\]
with respect to all $\alpha:M\to M'$. Clearly we have $\ker(\alpha_*)\neq 0$ if and only if $l\in\ker(\alpha_*)$ and in this case we have identified
$\Z\tensor  \alpha^*X(T')$ with $\im(\alpha^*)$.
\end{proof}

\begin{prop}\label{prop:rk1}
Let $M=[\Z\to G]$ be a $1$-motive. Then the map 
\[ X(T)_\Qbar/R_1(M)_\Qbar\to \Per_\mix(M)\]
is an isomorphism. In particular,
\[ R_\mix(M)=R_1(M).\] 
\end{prop}
\begin{proof}
We have already discussed surjectivity, so it remains to show
\[ \Ker(\Phi)\subset R_1(M).\]
We choose bases along the same pattern and with the same notation as for the proof of  Lemma~\ref{lem:compute}, but without taking the $\End(A)$-action into account.
This means  that $(\sigma,\gamma,\lambda)$ is a $\Q$-basis of $\Vsing{M}$ and
$(u,\omega,\xi)$ is a $\Qbar$-basis of $\VdR{M}$,
and moreover, we choose $\lambda$ such that its image
in $\Vsing{[\Z\to 0]}$ is the basis $1$. Also we choose $\xi$ such that its image $\bar{\xi}$ in $\VdR{T}$ is a $\Q$-basis of $X(T)_\Q$.

Suppose we have an element $\psi$ in the kernel of $\Phi_M$. It is of the form $\psi=\sum c_{j}\xi_j$ such that its period
is contained in $\Per\langle[L\to A]\rangle+\Per\langle G\rangle$ by the definition of $\Per_\mix(M)$.
Unwinding this we see that there is a $\Qbar$-linear relation of the form
\begin{multline*}
 \sum a\,\xi(\sigma)+\sum b\,\xi(\gamma)
+ \sum c\,\xi(\lambda)+\sum d\,\omega(\gamma)+\sum f\,\omega(\lambda) +\sum g u(\lambda)=0.
\end{multline*}
This gives a non-trivial period relation and  the Subgroup Theorem for $1$-motives
can be applied.
We introduce the motives, which can be read off the relation above. They are 
 \begin{align*}
 M_\sigma&=[0\to T_\sigma], \quad T_\sigma=\Gm^{|\sigma|}, \\
 M_\gamma&=[0\to G_\gamma],\quad   G_\gamma=G^{|\gamma|}, 
\\ 
M_\lambda&=M.
 \end{align*}
 We apply the Subgroup Theorem to the motive 
 \[ 
 \tilde{M}=M_\sigma\times M_\gamma\times M_\lambda=[\tilde{L}\to\tilde{G}].
\] 
 In order to write the above relation as a period relation on $\tilde M$ we introduce
\begin{align*} 
\tilde{\gamma}&=(\sigma,\gamma,\lambda)\in\Vsing{\tilde{M}},\\
\tilde{\omega}&=\left(\sum a_{ij}\xi_j|i=1,\dots,|\sigma|\right)\times \left(\sum b_{ij}\xi_j+\sum d_{ij}\omega_j| i=1,\dots,|\gamma|\right)\\
  &\times \left(\sum c_{j}\xi_j+\sum f_{j}\omega_j +\sum g_{j}u_j\right).
\end{align*}
Then the dependence relation above says that 
\[ \tilde{\omega}(\tilde{\gamma})=0.\]
The Subgroup Theorem for $1$-motives gives a short exact sequence
\[ 0\to M_1\xrightarrow{\nu}\tilde{M}\xrightarrow{p}M_2\to 0\]
such that $\tilde{\gamma}=\nu_*\gamma_1$ and $\tilde{\omega}=p^*\omega_2$.
In the next step we unwind what we have obtained so far.

By assumption the push-forward $p_*\tilde\gamma$ 
of $\tilde{\gamma}$ in $\Vsing{M_2}=\Vsing{[L_2\to G_2]}$ vanishes. Further its image  
in $\Vsing{[L_2\to 0]}\isom L_{2,\Q}$ coincides with the image of the generator of
$\tilde{L}\isom \Z$, which vanishes. This implies that $L_2=0$, and $M_2=[0\to G_2]$.

We now construct the $1$-motive $M'$ and $\alpha:M\to M'$ as needed in
the description of $R_1(M)$ in Lemma~\ref{lem:Rrk1}.
Let $G'$ be the image of $G_\lambda\subset \tilde{G}$ in $G_2$ under $p:\tilde{G}\to G_2$. We write $p_\lambda$ for the restriction of $p$ to $M_\lambda$. The motive $M'=[0\to G']$ and $\alpha=p_\lambda:M=M_\lambda\to M'$ satisfy the conditions for $R_1(M)$. It remains to relate $\psi$ to an element in $\alpha^* X(T')$.

The restriction of $\tilde{\omega}$ to $M_\lambda$ only depends on the map $\alpha: M_\lambda \to [0\to G']$ and can be expressed as  $\alpha^*\omega'$ for the restriction $\omega'$ of $\omega_2$ to $\VdR{G'}$.
A further restriction  to $G_\lambda$ gives 
\[ \tilde{\omega}|_{G_\lambda}=  \sum c_{j}\xi_j+\sum f_{j}\omega_j =    p_\lambda^*\omega'\in \VdR{G_\lambda}.\]
Restricting to $T_\lambda=T$ gives
\[ \tilde{\omega}|_{T_\lambda}=\sum c_{j}\xi_j=\alpha^*(\omega'|_{T'})\in \VdR{T_\lambda}=X(T)_\Qbar.\]
In conclusion, we get 
\[ \psi=\sum_j c_{j}\bar{\xi}_j\in \alpha^*X(T')_\Qbar\subset R_1(M)\tensor_\Q\Qbar.\]
and this proves our claim. 
\end{proof}

\subsection{The Reduction Argument}\label{ssec:reduce}
Given a $1$-motive $M=[L\xrightarrow{u}G]$, there is a $1$-motive $\tilde{G}$
with the same periods as $M$, but a lattice of rank $1$: let $l_1,\dots,l_r$ be
a basis of $L$ and put
\[ \tilde{M}=[\Z\to G^r]\]
with structure map $1\mapsto (u(l_1),\dots,u(l_r))$.
The same choice of basis also induces an identification
\[ L\tensor X(T)\isom X(T)^r\]
compatible with the period map $\Phi$.

\begin{lemma}
In this situation,
\[ R_1(\tilde{M})\subset R_1(M^r).\]
\end{lemma}
\begin{proof}
Let $\alpha:\tilde{M}\to \tilde{M}'$ be as in the definition of
$R_1(\tilde{M})$ with $M$ replaced by $\tilde M$, accordingly the image of the basis element $1\in\Z$ in the lattice part of $\tilde{M}'$ vanishes. In other words, the map on the group part
\[ \alpha:G^r\to G'\]
extends to a morphism of motives
\[ \alpha: [\Z\to G^r]\to [0\to G'].\]
Let $L'$ be the image of $L^r$ in $G'$ and $M'=[L'\to G']$. Then the induced
\[ M^r=[L^r\to G^r]\xrightarrow{\alpha'}M'\]
is as in the definition of $R_1(M^r)$. Moreover, the element $(l_1,\dots,l_r)$ is in the kernel of $\alpha'$
because $\alpha(u(l_1),\dots,u(l_r))=0$. 
Let $(t_1,\dots,t_r)$ be an arbitrary element of $\alpha^*(X(T'))\subset R_1(\tilde{M})$. Then $(l_1,\dots,l_r)\tensor(t_1,\dots,t_r))$ fulfills the requirements for being in $R_1(M^r)$.
\end{proof}

\begin{proof}[Proof of Theorem~\ref{thm:precise1}.]
The periods of $M$, $\tilde{M}$ and $M^r$ agree. From Proposition~\ref{prop:rk1} and the last lemma we know that
\[
 \ker(\Phi_{\tilde{M}})=R_1(\tilde{M})\subset R_1(M^r)\subset\ker(\Phi_{M^r}).
\]
In consequence, equality holds everywhere. 
The map $\Phi_{M^r}$ factors through  the summation map $s_r$ and, by definition, 
$R_r(M)=s(R_1(M^r))$. This implies
\[ \ker(\Phi_M)=R_r(M).\]
As the $R_n(M)$ are nested, this means
\[ R_\mix(M)=R_r(M)=\ker(\Phi_M).\]
\end{proof}

\begin{cor}
We have
\[ R_\mix(M)=R_n(M)\]
where $n$ is the minimum of $\rk L$ and $\dim T$.
\end{cor}
\begin{proof}The proof of the theorem gave equality for  $n=r$. The dual arguments allows to reduce to the case where the torus is of dimension $1$ and hence
$n=\dim T$ is also enough.
\end{proof}

\section{Alternative Description of $\delta_\mix(M)$}\label{sec:precise}

In the last section, we used a trick to reduce the computation of $\delta_\mix(M)$ to the case of motives with lattice part of rank $1$. The trick has a canonical interpretation that will lead to an alternative description of
$R_\mix(M)$.

\begin{lemma}
Let $\Ah$ be an additive category, $\Zfree$ the category of finitely generated free abelian groups of finite rank. There is an additive bifunctor, the \emph{external tensor product}
\[ \tensor:\Zfree\times \Ah\to\Ah,\]  
uniquely determined by $(\Z,X)\mapsto X$.   
\end{lemma}
\begin{proof}Let $\Lambda$ be a free $\Z$-module of rank $r$ and choose a basis $\lambda_1,\dots,\lambda_r$. For $X\in\Ah$ we put
\[ \Lambda\tensor X:=X^r.\]
 Let $f:\Lambda\to\Lambda'$ be a  $\Z$-linear map. In our chosen bases it is given by a matrix $(a_{ij})_{i,j}$ with entries in $\Z$. We define
\[ f_*:\Lambda\tensor X\to \Lambda'\tensor X\]
as the map $X^r\to X^{r'}$ defined by the matrix. 
\end{proof}
The construction is applied to the categories of abelian or semi-abelian varieties. Given a $1$-motive $M=[L\xrightarrow{u}G]$ we can consider
$\Lambda=L^\vee$ and the semi-abelian variety $L^\vee\tensor G\isom G^r$.
The structure map $u:L\to G$ of $M$ induces a canonical homomorphism
$\Z\to L^\vee\tensor G$ as follows:
if $l_1,\dots,l_r$ is a basis of $L$, then 
\[ 1\mapsto c:=\sum_i l_i^\vee\tensor u(l_i)\]
 is the image of $1$. We refer to $c$ as the \emph{tautological element}. This map is independent of the choice of a basis and
has the properties of an adjoint of $u$. 

\begin{rem}
The $1$-motive $[\Z\to L^\vee\tensor G]$ agrees with the $1$- motive $\tilde{M}$
considered in Section~\ref{ssec:reduce}.
\end{rem}
The abelian part of $[\Z\to L^\vee\tensor G]$ is $L^\vee\tensor A$, the torus part is $L^\vee\tensor T$ with character group
$X(L^\vee \tensor T)=L\tensor X(T)$.

As pointed out before, the periods of $M$ agree with the periods of our adjoint. Moreover, the map $\Phi$ for $[\Z\to L^\vee\tensor G]$ given by
\[ \Phi: \Z\tensor X(L^\vee\tensor T)=L\tensor X(T)\to \Per_\mix(M)\]
agrees with the map $\Phi_M$. We have shown in Proposition~\ref{prop:rk1} 
that 
\[ \ker(\Phi)=R_1([\Z\to L^\vee\tensor G]).\]
Our objective is to give an explicit description of this space.

The algebra $E=\End(L^\vee\tensor A)_\Q$
operates
(up to isogeny) from the right on $(L^\vee\tensor A)^\vee\isom L\tensor A^\vee$.
On the other hand, the semi-abelian variety $L^\vee\tensor G$ is characterised by a homomorphism 
$[L^\vee\tensor G]:X(L^\vee\tensor T)\to (L^\vee\tensor A)^\vee(\Qbar)_\Qbar$.
A choice of elements $\alpha\in E$, $y\in L\tensor X(T)$,  $x\in L\tensor A^\vee(\Qbar)_\Q$ determines extensions $G_x$, $G_{\alpha^\vee(x)}\isom\alpha^*G_x$ and $G_{[L^\vee\tensor G](y)}$ in $\Ext^1(L^\vee\tensor A,\Gm)$.

We require that the following compatibility conditions are satisfied:
\begin{enumerate}
\item[(A)] $\alpha^\vee(x)= [L^\vee\tensor G](y)$; in other words, the diagram  \[\begin{xy}\xymatrix{
X(L^\vee\tensor T)\ar[d]_{[L^\vee\tensor G]}&\Z=X(\Gm)\ar[l]_{y\leftmapsto 1}\ar[d]^{[G_x]}\\ 
L\tensor A^\vee(\Qbar)&L\tensor A^\vee(\Qbar)\ar[l]^{\alpha^\vee}
}\end{xy}\]
is commutative, which means that  
$\alpha$ extends to a morphism
\[\alpha_y: L^\vee\tensor G\to G_x.
\]
\item[(B)] 
 $\alpha_{y}(c)=0$ in $G_x$ and again as a consequence $\alpha_y$ defines a morphism
\[ [\Z\to L^\vee\tensor G]\to [0\to G_x].\]
\end{enumerate}


\begin{thm}\label{thm:mix_final}
\index{relation space}\index{periods!precise}\index{periods!incomplete}
Let $M=[L\xrightarrow{u}G]$ be a $1$-motive.
Then
\begin{multline*} R_\mix(M)=
\big\langle y\in L_\Q\tensor X(T)_\Q\;|\\
 \exists \alpha\in \End(L^\vee\tensor A)_\Q,  \exists x\in (L\tensor A^\vee)(\Qbar)_\Qbar,  (A), (B)\big\rangle 
\end{multline*}
\end{thm}   
\begin{proof}
We have to check that the set on the right coincides with $R_1([\Z\to L^\vee\tensor G])$.
Given $\alpha\in E$ and $x$ with (A), the morphism $\alpha$ extends to
The element $y\in X(L^\vee\tensor T)$ is in the image of $\tilde{\alpha}^*$ by (A),
hence a primitive relation.

Conversely, take $y\in R_1([\Z\to L^\vee\tensor G])$. By Lemma~\ref{lem:Rrk1} 
there are $\alpha:[\Z\to L^\vee\tensor G]\to M'$ and $x\in X(T')$ such that
$y=\alpha^*(x)$ (this equality is property (A)) and and the tautological element $c$ is in $\ker(\alpha_*)$. A fortiori, the image of
$c$ vanishes in $X(G_x)$ (the vanishing is property (B)).

The abelian part $A'$ of $M'$ is a quotient of $L^\vee\tensor A$. By semi-simplicity, we can choose a direct complement $A''$. By abuse of notation, we also denote the projector $L^\vee\tensor A\to A'\to L^\vee\tensor A$ obtained this way by $\alpha$. We may now
replace $M'$ by $M'\times [0\to A'']$. The new data defines an element of
the right hand side.
\end{proof}

\begin{ex}\label{ex:delta_1}\index{elliptic periods}
We go back to Example~\ref{ex:zahlen} and take for $A$ an elliptic curve $E$, $0\to\Gm\to G\to E\to 0$ a non-trivial extension (non-split, even up to isogeny) and $P\in G(\Qbar)$ a point whose image in $E(\Qbar)$ is not torsion. We consider
\[ M=[\Z\xrightarrow{u} G]\]
with $u(1)=P$. The surjection
\[ \Phi:\Z\tensor X(\Gm)=\Z\to\Per_\mix(M)\]
gives an upper bound of $\delta_\mix(M)\leq 1$. It remains to compute
the relations. Assume that $0\neq y\in R_\mix(M)$.
By Theorem~\ref{thm:mix_final} this means there are $\alpha\in\End_\Q(E)_\Q$ and $x\in E^\vee(\Qbar)_\Qbar$ such that 
$\alpha^\vee(x)=[G](y)$ and $\alpha_y(c)=0$ in $G_x$. The first condition implies that $\alpha$ and $x$ are different from zero. This makes $\alpha$ invertible and we get an isomorphism $G\to G_x$. 
The tautological class $c\in G(\Qbar)_\Q$ is equal to $P$ in our case. It was assumed to be non-torsion in $G(\Qbar)$, and the same remains true after applying the isomorphism $\alpha$. We get a contradiction to (B), and
this shows that $R_\mix(M)=0$ and, 
\[ \delta_\mix(M)=1.\]
This fits with our explicit computation in Chapter~\ref{ch:ex}, both without  CM  and with CM.
\end{ex}

\begin{rem}
We had the suspicion that there might be a better description of $R_\mix(M)$ in the language of biextensions. We now tend to think that this is not the case. The period pairing is not related to the pairing between a $1$-motive and its Cartier-dual. 
\end{rem}

\chapter{Elliptic Curves}\label{sec:elliptic}
In some sense, Baker's theory of linear forms in logarithms can be seen as an intermezzo, although one of the most influential, in establishing a modern  theory of periods. We shall now describe a second very important aspect of the theory, namely elliptic periods,  which has been  developed in the last one hundred years by many authors starting with Siegel and Schneider. We shall describe it first in a classical way as has been understood by these authors and then give  the translation into our modern language of $1$-motives. For more details about the history see the introduction, in particular
Section \ref{background}.

\section{Classical Theory of Periods}\label{ssec:class}
\index{elliptic curves}

We review the classical theory of elliptic curves from an algebraic and analytic point of view. This will be used for an application of our abstract results about $1$-motives to explicit  transcendence results in the elliptic case.
For basics on elliptic curves and functions we refer to \cite[Chapter 7]{ahlfors} or \cite[Chapters III, IV]{chandra}.

Let $E$ be an elliptic curve given in the projective plane $\Pe\,^2$ by an equation of the form as
\begin{eqnarray}\label{PEC}
y^2w=4x^3-g_2 \,xw^2-g_3\,w^3
\end{eqnarray}
with complex parameters $g_2$ and $g_3$ such that the discriminant $\Delta=g_2^3-27g_3^2\neq 0$. 

On $E$ addition can be defined and one obtains a projective commutative  algebraic curve
with unit element $e_\infty=[0:0:1]$, one of the four zeroes $e_\infty, e_1, e_2, e_3$ of the right hand side of $\ref{PEC}$. These are the four Weierstraß points, the $2$-torsion points  on the curve.

The associated complex manifold $E^\an$ becomes a complex Lie group with Lie algebra $\Lie(E)$ and exponential map
\[
\exp_E: \, \Lie (E) \rightarrow E^\an
\]   
with kernel a lattice $\Lambda$, which leads to an exact sequence
\begin{eqnarray*}
0\rightarrow \Lambda \rightarrow \C \xrightarrow{\exp_E} E^\an \xrightarrow{} 0
\end{eqnarray*}
and which shows that $E^\an\simeq \C/\Lambda$.

In terms of complex analysis this can be described as follows.
For a pair of complex numbers $\omega_1$, $\omega_2$ with $\tau=\frac{\omega_2}{\omega_1}$ in the upper half plane $\Im \tau >0$ we write $\Lambda=\Z\omega_1+\Z\omega_2$ and consider the \emph{Weierstraß elliptic function} 
\[
\wp(z;\Lambda) = \frac{1}{z^2}+ \sum_{0\neq\omega\in\Lambda}\left[\frac{1}{(z-\omega)^2}-\frac{1}{\omega^2}\right].
\]
The Weierstraß \index{Weierstraß $\wp$-, $\zeta$-, $\sigma$-functions} elliptic function is meromorphic with poles of order two on the lattice $\Lambda$ and  periodic with period lattice  $\Lambda$. For $z\in \C$ the triple   $(w,x,y)=(1,\wp(z;\Lambda),\wp'(z;\Lambda))$  satisfies the equation $\ref{PEC}$ with
\[
g_2=g_2(\Lambda)= 60 \sum_{\omega\neq 0}\frac{1}{\omega^2} \quad \text{and} \quad g_3= g_3(\Lambda) = 140 \sum_{\omega\neq 0}\frac{1}{\omega^3}.
\]
For $z \notin \Lambda$ in $\C$,  the exponential map $\exp_E$ can be written as 
\[
\exp_E(z) = [1:\wp(z):\wp'(z)]
\]
in terms of the Weierstraß $\wp$-function.
It  parametrises the plane algebraic curve 
\begin{eqnarray*}
y^2=4x^3-g_2 \,x-g_3.
\end{eqnarray*}
In terms of the uniformisation by the exponential map, the Weierstraß points are $e_\infty=\exp_E(0))$, 
$e_1=\exp_E(\omega_1/2)$, $e_2=\exp_E(\omega_2/2)$ and $e_3=\exp_E((\omega_1+\omega_2)/2)$ for a basis $\omega_1,\omega_2$ of $\Lambda$.

There are two more classical Weierstraß  functions which are derived from the \emph{Weierstraß $\wp$-function}. The first is  the \emph{Weierstraß $\zeta$- function}
\[
\zeta(z;\Lambda) = \frac{1}{z}+ \sum_{0\neq\omega\in\Lambda}\left[\frac{1}{(z-\omega)}+\frac{1}{\omega} + \frac{z}{\omega^2}\right]
\]
and the second the \emph{Weierstraß $\sigma$-function}
\[
\sigma(z;\Lambda)= z\prod_{0\neq \omega\in \Lambda}(1-\frac{z}{\omega})\;e^{\frac{z}{\omega}+\frac{1}{2}\bigl(\frac{z}{\omega}\bigr)^2}.
\]
The latter can be seen as a variant of the Jacobi Theta-function, which makes up Jacobi's theory of elliptic functions. The three functions are related by the differential equations
\begin{eqnarray*}
\frac{d}{dz}\log(\sigma(z;\Lambda))=\zeta(z;\Lambda)\quad \text{and} \quad \frac{d}{dz}\zeta(z,\Lambda)=-\,\wp(z;\Lambda).
\end{eqnarray*}
For $u\in\C$ fixed, we put 
\[
F(z;u)=\frac{\sigma(z-u)}{\sigma(z)\sigma(u)}\,e^{\zeta(u)z}
\]
and compute that
\[
\frac{d\log F(z;u)}{dz}= \zeta(z-u) - \zeta(z) + \zeta(u).
\]

The classical differential forms of the first, second and third kind are
\begin{equation}\label{123}
\omega=\frac{dx}{y}, \quad \eta=\frac{x\,dx}{y} \quad  \text{and} \quad \xi_P=\frac{y+y(P)}{x-x(P)}\frac{dx}{y}
\end{equation}
where $P=\exp_E(u)$ is fixed. 

The functions $x$ and $y$ have polar divisor $2(e_\infty)$ and $3\,(e_\infty)$ respectively, so that
the differential form $\omega$ is regular, $\eta$ has a pole of order $2$ at $e_\infty$. 
The function $x-x(P)$ has a zero at $P$ and $-P$ because it is even, and  $y+y(P)$ is zero at $(-P)$ together with two other points $P_1$, $P_2$.
This shows that the divisor of $\xi_P$ is 
\[
(\xi_P)=((P_1)+(P_2)) -\left((e_\infty)+(P)\right).
\]
 We conclude that the polar divisor of $\xi_P$ is $(e_\infty)+(P)$ with residue $-2$ at $(e_\infty)$ and $2$ at $(P)$.

Using \cite[Chapter IV, \S3, equation (3.6)]{chandra}, one  verifies that
\begin{eqnarray*}
\begin{aligned}
d\log F(z;u)&= \left(\zeta(z-u) - \zeta(z) + \zeta(u)\right) dz\\
&=\frac{1}{2} \frac{\wp'(z)+\wp'(u)}{\wp(z)-\wp(u)}dz=\exp_E^* \xi_P.
\end{aligned}
.
\end{eqnarray*}
This gives
\begin{eqnarray*}
\exp_E^*\omega=dz, \quad 
\exp_E^*\eta=-d\zeta(z) \quad \text{and} \quad
\exp_E^*\xi_P=d\log F(z,u).
\end{eqnarray*} 

The Weierstraß $\zeta$-function is quasi-periodic with  quasi-periods $\eta_i=2\zeta(\omega_i/2)$
as is readily seen. Note that this agrees with the normalisation of
\cite[Section~3.2]{ahlfors} and \cite{fricke1} but differs from \cite[Chapter~IV.1 Theorem~1, Theorem~3]{chandra}.  
The  Weierstraß functions transform as 
\begin{eqnarray*}
\begin{aligned}
\wp(z+\omega_i)&=\wp(z),\\
\zeta(z+\omega_i)&=\zeta(z)+\eta_i,\\ 
\sigma(z+\omega_i)& =- \sigma(z)\,e^{\eta_i\left(z+\frac{\omega_i}{2}\right)},\\
F(z+\omega_i,u)&=F(z,u)\,e^{-\eta_i u+\zeta(u)\omega_i} = F(z,u)\,e^{\lambda(u,\omega_i)}
\end{aligned}
\end{eqnarray*}
where $\lambda(u,\omega_i)=\zeta(u)\omega_i-\eta_iu$ for $u\notin \Lambda$. One reads off again that that the function $\wp$ is periodic, as already mentioned, and that the functions $\zeta$, $\sigma$ and $F$ are quasi-periodic with periods $\eta_i$,
$e^{\eta_i\left(z+\frac{\omega_i}{2}\right)}$ and $e^{\lambda(u,\omega_i)}$. 
The function $\lambda$ can be extended additively to a function on $\C\times\Lambda$.
Note that we have
$\lambda(\omega_1/2,\omega_2)=\pi i$ by the Legendre relation.

\section{Elliptic Periods}\label{sec:EP}
\index{elliptic periods}

For $Q=\exp_E(v)\in E^\an$,  and $\gamma$ a path from $e_\infty$ to $Q$ the integral 
\[
\omega(\gamma)=\int^Q_{e_{\infty}}\omega= \int_0^v\exp_E^* \omega=\int_0^vdz=v(\gamma)
\]
defines a multivalued map from $E^\an$ to $\C$. For different paths from $e_\infty$ to $Q$ the integrals differ by a period $\omega\in \Lambda$.  We get back the generators  of $\Lambda$ as the periods               
\[
\omega_1=\omega(\varepsilon_1)= \int_{\varepsilon_1}\omega \quad \text{and} \quad  \omega_2=\omega(\varepsilon_2)=\int_{\varepsilon_2}\omega
\]
taken along the basis $\varepsilon_1,\varepsilon_2$ of $H_1^\sing(E^\an,\Z)$ defined as the image of the straight paths $[0,\omega_i]$ in $\C$.
The integral $\omega(\gamma)$ is called \emph{incomplete period} of the 
first kind and  becomes a period (i.e. an element of $\Lambda$) if $\gamma(1)=e_\infty$.

In the case of periods of the second kind, the path $\gamma$ must not contain the pole of $\eta$, i.e. the Weierstraß point $0=e_\infty$.  For the closed paths $\varepsilon_1$ and $\varepsilon_2$ we get back the quasiperiods $\eta_i=\eta(\varepsilon_i)$ from above.

For the differentials of the third kind  $\xi_P$ as above, with polar divisor $(e_\infty)+(P)$, we have to consider in addition a closed path $\varepsilon_0$  going once counterclockwise around $0$ with no other singularities inside and then $\lambda(u,\omega_i)$  and $2\pi i$ become complete  periods  of the third kind.

In the case of incomplete periods of the second kind, we take 
a path $\gamma$ with $\gamma(0)=\exp_E(v)$ and $\gamma(1)=\exp_E(v+w)$ which does not pass through $e_\infty$ and obtain by \cite[(2)~on~p.~202]{fricke1}
\begin{eqnarray*}
\begin{aligned}
\eta(\gamma) = \int_\gamma\eta&= \int_v^{v+w}\exp_E^*(\omega)=\int_v^{v+w}\wp(z)dz
=-\zeta(w+v)+\zeta(v)\\
&=-\zeta(w)-\frac{1}{2} \frac{\wp'(w)-\wp'(v)}{\wp(w)-\wp(v)}\\
&=-\zeta(w)+\alpha(\gamma). 
\end{aligned}
\end{eqnarray*}

When $E$ is defined over $\Qbar$, then
    $\alpha(\gamma)$ is  algebraic.

Let $\gamma: [0,1] \rightarrow E^\an$ be a path of the form $\exp_E\circ \delta$ with $\delta: [0,1] \rightarrow \Lie \,E^\an$ from $v$ to $w$ continuously differentiable  and not containing any of the two poles  of the differential of the third kind $\xi_P$. Then its period along $\gamma$ is
\begin{align*}
\xi_P(\gamma)&=\int_\gamma\xi_P=\int_{\exp\circ\delta}\xi_P=\int_\delta\exp^*\xi_P
=\int_\delta\frac{F'(z,u)}{F(z,u)}dz\\
&=\int_{\delta}\frac{1}{F(z,u)}d F(z,u)=\int_{F\circ \delta}\frac{dt}{t}\\
&=\log_{F\circ \delta}(F\circ\delta(1))-\log(F\circ\delta(0))
\end{align*}
where $\log_{F\circ\delta}$ is the branch of the logarithm defined by analytically
continuing the function $\log$ from the starting point $F\circ\delta(0)=F(v,u)$ along $F\circ\delta$. Any two branches
of the logarithm  differ by an integral multiple of $2\pi i$. Up to such multiples
$\log$ satisfies the usual functional equation, hence
\[ \xi_P(\gamma)=\log\frac{F(w,u)}{F(v,u)}+2\pi i \nu\]
for some $\nu\in\Z$.

In both cases, we see sees that if $\gamma$ is closed with period $\omega$, we get complete periods $\eta(\gamma)$ and $\xi_P(\gamma)=\lambda(u,\omega)+2\pi i\nu$.

\begin{summary}\label{sum:explicit}
Let $\gamma:[0,1]\to E^\an$ be a path. We write it as
$\exp_E\circ \delta$ with $\delta$ a path in $\C\isom\Lie(E)^\an$ with
$\delta(0)=v$, $\delta(1)=v+w$.
For $\omega, \eta, \xi_P$ as in (\ref{123}) and $P=\exp_E(u)$  we get
\begin{align*}
 \omega(\gamma)&=w\\
 \eta(\gamma)&=-\zeta(w)-\frac{1}{2}\frac{\wp'(w)-\wp'(v)}{\wp(w)-\wp(v)}\\
  \xi_P(\gamma)&=\log\frac{F(v+w,u)}{F(v,u)} +2\pi i\nu(\gamma).
\end{align*}
Let $\varepsilon_1,\varepsilon_2$ be the generators of $H_1(E^\an,\Z)$. Then
$\omega(\varepsilon_i)=\omega_i$ are the generators of the period lattice of $E^\an$, $\eta(\varepsilon_i)=\eta_i$ are the quasi-periods of $E^\an$ and
$\xi_P(\varepsilon_i)=\lambda(u,\omega_i)+2\pi i\nu$, for some $\nu(\gamma)\in\Z$, are the periods of the third kind.
\end{summary}
The period computation for these special differential forms extends to all differential forms.

\begin{lemma}
Every meromorphic differential form $\vartheta$ on $E$ can be written as
\[ \vartheta= a\omega+b\eta+\sum_i c_i\xi_{P_i}+df\]
with complex coefficients,  $P_i\in E$ and elliptic $f$. \end{lemma}
\begin{proof}
Let $P_1,\dots,P_k$ be the points different from $e_\infty$ where $\vartheta$ has a non-vanishing residue $c_i$. Then $\vartheta'=\vartheta-\sum_i c_i\xi_{P_i}$ has vanishing residues in all points different from $e_\infty$. As the sum of all residues vanishes, it is even of the second kind. As spelled out in Section~\ref{ssec:2deRham} it defines a class in
$H^1_\dR(C)$. On the other hand the classes of $\omega,\vartheta$ are a basis
of the same cohomology group. This gives rise to the identity 
\[[\vartheta']=a[\omega]+b[\eta]\]
for suitable $a,b\in\C$ and makes $\vartheta'-a\omega-b\eta$ exact.
\end{proof}

Clearly if $E$ and $\vartheta$ are defined over $\Qbar$, then everything can also be chosen over $\Qbar$.
By the lemma, the above formulae can be put together to a computation of $\vartheta(\sigma)$ for any
$\vartheta$ and chain $\sigma$.

\section{A Calculation}

In this section we are concerned with periods of the form $2\log \sigma(u)-\zeta(u)u$. This is an incomplete period of the third kind. Such a period appears the first time in transcendence theory and it is natural to ask whether it is transcendental. However there is no way to answer this directly,  it has to be isolated from a linear form
in incomplete elliptic periods. To achieve this we definitely need our results about linear independence of periods given in Chapter~\ref{ch:ex} .

\begin{prop}\label{prop:calc_sigma}
Let $P=\exp_E(u)$ and $Q=\exp_E(w)$ be distinct and non-zero points on $E$ and let $\delta:[0,1]\to\C$ be a path from $-w$ to $w$ such that $\gamma=\exp_E\circ \delta$ does not pass through $P$. Then 
\[
\int_{-Q}^Q\xi_P:=\int_\gamma\xi_P= 
2\log\frac{\sigma(u)\sigma(w)}{\sigma(w+u)}+2\zeta(u)w+\log\left(-\wp(w)+\wp(u)\right) +2\pi i\nu
\] 
for some $\nu\in\Z$.  In the case  $w=-u/2$ we write $P/2=\exp_E(u/2)$ and then
\[ \int_{P/2}^{-P/2}\xi_P:=\int_\gamma\xi_P=2\log \sigma(u)-\zeta(u)u+\log \left(-\wp\left(\frac{u}{2}\right)+\wp(u)\right) +2\pi i\nu.\]
\end{prop}
\begin{proof}
Incomplete periods of elliptic integrals are up to integer multiples of $2\pi i$ of the form $\log\frac{F(w,u)}{F(v,u)}$. We specialise to $v=-w$. Going back to the definition of $F(w,u)$ we calculate
\begin{align*}
\frac{F(w,u)}{F(-w,u)}&=\frac{\sigma(w-u)}{\sigma(-w-u)}\frac{\sigma(-w)}{\sigma(w)}e^{2\zeta(u)w}\\
&=\frac{\sigma(w-u)}{\sigma(w+u)}e^{2\zeta(u)w}\\
&=\frac{\sigma(w-u)\sigma(w+u)}{\sigma(w+u)^2}e^{2\zeta(u)w}\\
&=\frac{\sigma(u)^2\sigma(w)^2}{\sigma(w+u)^2}\left(-\wp(w)+\wp(u)\right) e^{2\zeta(u)w}
\end{align*}
using the identity
\[
\frac{\sigma(v+u)\sigma(v-u)}{\sigma(v)^2\sigma(u)^2}=-\wp(v)+\wp(u)
\]
which we take from  (14) in \cite[p. 217]{fricke1}. This proves our first formula.

We continue with the choice $w=-\frac{-u}{2}$ to get
\begin{eqnarray*}
\frac{F(-\frac{u}{2},u)}{F(\frac{u}{2},u)}
&=\sigma(u)^2 e^{-\zeta(u)u}\left(-\wp(\frac{u}{2}))+\wp(u)\right).
\end{eqnarray*}
proving the second formula.
\end{proof}
In the case of interest for us, $E$ is defined over $\Qbar$ and the points
$P$ and $Q$ are chosen in $E(\Qbar)$. Then $\wp(u)$ and $\wp(u/2)$ are algebraic and hence $\int_{-P/2}^{P/2}\xi_P$ 
is equal to $2\log \sigma(u)-\zeta(u)u$ modulo Baker periods and multiples of $2\pi i$.

\section{Transcendence of Incomplete Periods}

We now come back to Schneider's Problem 3 mentioned in the introduction, see \cite[p.~138]{S}.

\begin{theorem}\label{prop:E_transc}
\index{elliptic periods}\index{transcendence!of elliptic periods}
Let $E$ be an elliptic curve over $\Qbar$, $P$ a point in $E(\Qbar)$,
$\omega,\eta,\xi_P$ the differential forms of the first, second and third kind from above. Assume that $P$ is non-torsion in $E(\Qbar)$. Let
$\kappa=\sum_{i=1}^na_i\gamma_i$ be a chain in $E^\an$ with boundary in $E(\Qbar)$  avoiding the points $e_\infty$ and $P$  and 
 such that $P(\kappa)=\sum_{i=1}^na_i(\gamma_i(1)-\gamma_i(0))\in E(\Qbar)$ is not a torsion point.  Then
\[ 1, 2\pi i, \omega(\kappa), \eta(\kappa), \xi_P(\kappa)\]
are $\Qbar$-linearly independent, in particular $2\pi i$,  $\omega(\kappa), \eta(\kappa), \xi_P(\kappa)$ are transcendental. 
\end{theorem}
\begin{proof}
We define  $E^\circ=E\setminus\{e_\infty,P\}$. Let $D\subset E$ be the support
 of the boundary $\partial \kappa$. The chain $\kappa$ defines
a homology class $[\kappa]\in H_1^\sing(E^{\circ,\an},D;\Q)$, and the forms $\omega,\eta,\xi_P$ define classes in $H^1_\dR(E^{\circ},D)$. This shows that we we may view our periods as cohomological periods for  $H^1(E^\circ,D)$ in the sense of Definition~\ref{defn:coh_periods}.

We choose an embedding $\nu^\circ:E^\circ\to J(E^\circ)$ into the generalised Jacobian $J(E^\circ)$ introduced in Section~\ref{app:B}
via a base point $P_0\neq e_\infty,P$. This is an extension of $E$ by $\Gm$. Our  assumption on $P$ ensures  that it is non-split up to isogeny. The induced
map $\nu:E\to J(E)\isom E$ is $Q\mapsto Q+P_0$.

By Lemma~\ref{lem:per_J}, the periods of $H^1(E^\circ,D)$ agree with the
periods of the $1$-motive $[\Z[D]^0\to J(E^\circ)]$. Actually, the submotive
$M=[\Z\to J(E^\circ)]$ with $1\mapsto  P^\circ(\kappa):=\sum_i a_i\nu^\circ(\gamma_i(1)-\gamma_i(0)))$  suffices. Note that the image of $P^\circ(\kappa)$ in $E(\Qbar)$ is $P(\kappa)$, which is independent of the choice of $P_0$. By assumption it is non-torsion. Hence $M$  is reduced as a $1$-motive  (see Definition~\ref{defn:red-sat}) and of the form as considered  in Chapter~\ref{ch:ex}.
The class $[\kappa]$ is can be identified
with an element $\lambda\in \Vsing{M}$ as in the  proof of Proposition~\ref{prop:ell}, the non-CM case,  or Proposition~\ref{prop:with_CM}, the CM case.
Accordingly the periods of $\lambda$ agree with the period integral of $\kappa$. 
As a consequence 
the elements
$1, 2\pi i, \omega(\kappa),\eta(\kappa), \xi_P(\kappa)$ are a subset of the basis
considered there, in particular they are linearly independent.  
\end{proof}

\begin{rem}
\begin{enumerate}
\item The assumption is satisfied for if $\kappa$ is a single non-closed
path $\gamma$ with $\gamma(1)-\gamma(0)$ non-torsion. In this case the period
numbers were computed explicitly in Summary~\ref{sum:explicit} in terms
of the Weierstraß functions. The cases of integrals of the first and second kind are actually already due to Schneider; see \cite[Satz 15, p.~60]{S}. The result is new for integrals of the third kind.
\item It is possible to extend the considerations to the case when $P(\kappa)$ is torsion. We do not go into details here.
\end{enumerate}
\end{rem}

By specialising further we obtain the following explicit transcendence result.

\begin{theorem}\label{thm:sigma}
\index{transcendence!of an explicit period}
Let $u\in\C$ be such that $\wp(u)\in \Qbar$ and $\exp_E(u)$ is non-torsion in $E(\Qbar)$. Then
\[ u\zeta(u)-2\log \sigma(u)\]
is transcendental.
\end{theorem}
\begin{proof}For $\exp_E(u)$ we simply write $P$. We choose a path $\delta$ from $u/2$ to $-u/2$ which avoids  the singularities of $\xi_P$ and put $\gamma=\exp_E\circ\delta$. By Proposition~\ref{prop:calc_sigma} the period has the form
\[\xi_P(\gamma)=2\log \sigma(u)-\zeta(u)u+\log (\alpha) +2\pi i\nu\]
 for some algebraic $\alpha$.  By Theorem~\ref{prop:E_transc} it is transcendental, but this is not enough. If
\[ 2\log\sigma(u)-\zeta(u)u=\xi_P(u)-\log\alpha-2\pi  i\nu\]
was algebraic, then we would have a linear dependence relation between the numbers $1, \xi_P(\gamma), \log\alpha, 2\pi i$. 
To obtain a contradiction, it suffices to show that they are
$\Qbar$-linearly independent. (Except when $\alpha$ is a root of unity and $\log\alpha$ a rational multiple of $2\pi i$. Then the element $\log\alpha$ can be dropped from the list and the linear independence is already shown in Theorem~\ref{prop:E_transc}.)

Note that the term $\log\alpha$ is the period of a Kummer 
motive $M_0=[\Z\to \Gm]$ with $1\mapsto \alpha$ and $\xi_P(\gamma)$ is an incomplete period of the third kind of
$M_1=[\Z\to J(E^\circ)]$ as in the proof of Theorem~\ref{prop:E_transc}.

Linear independence 
could be addressed by applying the techniques
of Chapter~\ref{ch:dim_comp_part1}
directly to $M_0\times M_1$. Instead we explain the deduction from the general results proved earlier. 
 In fact, as shown in Example~\ref{ex:delta_1}, we have $\delta_\mix(M_1)=1$. This means that $\xi_P(\gamma)$ is a non-zero element of $\Per_\mix(M_1)$ (see Chapter~\ref{ch:struct}). By Lemma~\ref{lem:sat_ok} we have
$\Per_\mix(M_1)\subset\Per_\mix(M_1^\sat)$ for $M_1^\sat$  a saturation of
$M_1$ as constructed there.
Proposition~\ref{prop:list}, item (\ref{it:list6})  shows that  $\log\alpha$ and $\xi_P(\gamma)$ are
linearly independent in 
\[ \Per_\mix(M_0\times M_1^\sat)=\Per(M_0\times M_1^\sat) /\left(\Per\langle \Gm\times J(E^\circ)\rangle + \Per\langle [\Z\to E]\rangle\right).\]
The terms $1,2\pi i$ are in $\Per\langle \Gm\times J(E^\circ)\rangle \rangle+\Per\langle [\Z\to E]\rangle$ and linearly independent by Lindemann's result;see  Corollary~\ref{cor:pi} or Theorem~\ref{thm:dimension_sat}.
We deduce that all four are
linearly independent and $u\zeta(u)-2\log\sigma(u)$ is transcendental.
\end{proof}

\section{Elliptic Period Space}

In \cite{wuestholz-ell} period spaces for elliptic curves and abelian varieties of dimension $2$  were considered and their dimension was determined, see 
\cite[Theorem 2 and Theorem 3]{wuestholz-ell}, respectively. The motivation was billiards on the ellipsoids where irrationality of elliptic and abelian periods give an answer to the question whether  curvature lines or geodesics are closed or not. The proofs rely on the Analytic Subgroup Theorem in its original version.

 In this section we come back to this problem in a more general setting using the language of $1$-motives. We confine ourselves to the elliptic case  but we go further and determine the dimension of an extended period space: a period space: a period space where instead of considering
 a single differential of the third kind we consider a finite number. In addition we do not restrict to closed paths as in loc.cit. but allow that the paths need not to be closed. This was mentioned as a problem in \cite{wuestholz-ell}. It turns out that 
the presence of several differentials of the third kind and the additonal generality of allowing non-closed paths 
makes the problem much more difficult and givers a rather unexpected answer for specialists.

Suppose now that $E$ is an elliptic curve defined over $\Qbar$  and $\omega$, $\eta$ are the differentials of the first and the second kind, and $\xi_k=\xi_{P_k}, 1\le k \le n$ differentials of the third kind on $E$ with $P_k=\exp_E(u_k)\in E(\Qbar)$ for $u_k\in\Lie(E)_\C$; see Section~\ref{ssec:class} equation (\ref{123}). We denote by $\omega_1$, $\omega_2$, $\eta_1$ and $\eta_2$ the periods and quasi-periods of $E$ in the classical sense, i.e. the integrals of
$\omega$ and $\eta$ with respect to a pair of basis vectors of $H_1(E^\an,\Z)$.
We choose non-closed paths $\gamma_i: [0,1] \rightarrow E^\an$, $1\le i \le m$  with $\gamma_i(0), \gamma_i(1) \in E(\Qbar)$. 

\begin{defn}
\index{elliptic period space}
The period space $W=W(E,P_k,\gamma_i)$ is generated over $\Qbar$ by 
\[
1, 2\pi i, \omega_1, \omega_2, \eta_1, \eta_2, \lambda(u_k,\omega_1), \lambda(u_k,\omega_2),  \omega(\gamma_i), \eta(\gamma_i), \xi_{P_k}(\gamma_i)
\]
for $1\le i \le m$ and $1\le k \le n$. 
\end{defn}
We shall show that $W$ can be identified with 
the period space of a $1$-motive. Let  $S$ be the set $S=\{0,P_1,\dots,P_n\}\subset E$ and $D$ the union of the supports of  $\partial \gamma_i$ for $i=1,\dots,m$. Put $E^\circ= E\setminus S$. Consider the object $H^1(E^\circ,D)$ in the category $\VVarg{\Qbar}{\Q}$. By 
Lemma~\ref{lem:per_J}, its periods agree with the periods of the $1$-motive
\[ M'=[\Z[D]^0\to J(E^\circ)].\]
 Let $L\subset \Z[D]^0$ be generated by
$\partial\gamma_i$ for $i=1,\dots,m$ and put
\[ M=[L\to J(E^\circ)].\]
The lattice $L$ has rank at most $m$. We write $G=J(E^\circ)$ for short. It has abelian part $E$ and a torus part $T$ of rank at most $n$.

\begin{prop} 
The period space $W$ coincides with $\Per\langle M\rangle$.
\end{prop}
\begin{proof}
We choose generators for 
\[ \Vsing{M}\subset \Vsing{M'}\isom H_1^\sing(E^{\circ,\an},D;\Q)\]
in the following way.
Take small loops $\sigma_0,\sigma_1,\dots,\sigma_n$ around the points
in $S$. They generate $\Vsing{[0\to T]}$. Choose next loops $\varepsilon_1,\varepsilon_2$ 
in $E^{\circ,\an}$ whose images in $E^\an$ generate $H_1^\sing(E^\an,\Z)$.
Finally, view $\gamma_1,\dots,\gamma_m$ as paths in $E^{\circ,\an}$. By definition,
\[ \sigma_0,\dots,\sigma_n,\varepsilon_1,\varepsilon_2,\gamma_1,\dots,\gamma_m\]
generate $\Vsing{M}$.

We turn to $\VdR{M}$, a quotient of  $H^1_\dR(E^\circ,D)$. 
We take exact differential form $u_1,\dots,u_r$ which generate the kernel
of $H^1_\dR(E^\circ,D)\to H^1_\dR(E^\circ)$. The differential forms
$\omega,\eta,\xi_{P_1},\dots,\xi_{P_n}$ can be seen as elements of $H^1_\dR(E^\circ,D)$. As we shall prove the set $u_1,\dots,u_m,\omega,\eta,\xi_{P_1},\dots\xi_{P_n}$ generates the whole cohomology. Obviously $u_1,\dots,u_m,\omega,\eta$ generate the subspace
$H^1_\dR(E,D)$, and hence it remains to show that the $\xi_{P_i}$ generate
\[ H^1_\dR(E^\circ,D)/H^1_\dR(E,D)\isom H^1_\dR(E^\circ)/H^1_\dR(E)\isom \ker(H^0_\dR(S)\to H^0_\dR(E)).\]
The composition of the two isomorphisms maps a logarithmic form $\vartheta$ with polar divisor included in $S$ to its residue vector $(\res_0\vartheta,\res_{P_1}\vartheta,\dots,\res_{P_n}\vartheta)$.
The image of $\xi_{P_i}$  is $(-2,0,\dots,2,0\dots)$ 
 with $2$ in place $i$. Together they generate the kernel as claimed.
 
As a consequence the period matrix of $M$ has the shape
\[
\left(\begin{matrix}\xi_{P_i}(\sigma_j)&\xi_{P_i}(\varepsilon_j)&\xi_{P_i}(\gamma_i)\\
 0&\omega(\varepsilon_i)&\omega(\gamma_j)\\
 0&\eta(\varepsilon_i)&\eta(\gamma_j)\\
0&0&u_i(\gamma_j)
\end{matrix}\right)
=
\left(\begin{matrix}2\pi i\alpha_{ij}&\lambda(u_i,\omega_j)+2\pi \nu_j&\xi_{P_i}(\gamma_i)\\
 0&\omega_i&\omega(\gamma_j)\\
 0&\eta_i&\eta(\gamma_j)\\
0&0&\beta_{ij}
\end{matrix}\right).
\]
with $\alpha_{ij},\beta_{ij}\in\Qbar$, $\nu_j\in\Z$.
Its  entries generate the vector space $W$.
\end{proof}

Corollary~\ref{cor:sum} together with Proposition~\ref{prop:list} give a formula for the dimension of $W$. With the notation introduced earlier, we state the following result.

\begin{thm}
\index{elliptic periods!dimension formula}\index{dimension formula!for elliptic periods}
Let $E/\Qbar$ elliptic with $e=\dim_\Q\End(E)_\Q$. The dimension of $W$ over $\Qbar$ is given by
\begin{align*} \dim_\Qbar W=  2+\frac{4}{e} 
+2\rk_{E}(T,M)
+2\rk_{E}(L,M) 
+\delta_\mix(M)
\end{align*}
If $M$ is saturated, then 
 $\delta_\mix(M)=e\cdot\rk_E(T,M)\cdot\rk_E(L,M)$. 
\end{thm}

Note that $M$ is \emph{not} reduced in  general. If $L\to E(\Qbar)_\Q$
or $X(T)\to E(\Qbar)_\Q$ have a kernel, this implies that suitable Baker periods (i.e. values of $\log$ in algebraic numbers) are contained in $W$. 
This happens for example if the end point of a path or one of the $P_k$ are torsion points in $E(\Qbar)$. The situation simplifies if we exclude this case.

\begin{cor}
Assume that $E$ does not have CM, that $n=\rk\langle P_1,\dots,P_n\rangle$, and that $m=\rk \langle \gamma_i(1)-\gamma_i(0)|i=1,\dots,m\rangle$ as subgroups of $E(\Qbar)$. Then
\[\dim W=6+2(n+m)+nm.\]
\end{cor}
\begin{proof}The assumptions imply that $e(E)=1$, that $M$ is  saturated and that 
$n=\rk_E(T,M)$, $m=\rk_E(L,M)$.
\end{proof}

\subsection{With CM}
The CM case is a lot more complicated, even if $M$ is reduced. We consider 
an example with small rank $n=m=2$, hence $M=[L\to G]$ reduced, with $L\isom \Z^2$, $G$ an extension of
an elliptic curve $E$ by the torus $T=\Gm^2$ characterised by its classifying map $X(T)\to E^\vee$. 

Assume that $K=\End_\Q(E)$ is an imaginary quadratic field, which gives $e=2$. In this situation $\rk_E(L,M)$ and $\rk_E(T,M)$ can take the values $1$ and $2$ and then and then we can state the following computation.

\begin{lemma}
\[ \delta_\mix(M)=\begin{cases}2&\text{$M$ saturated,} \\
                               4&\text{otherwise.}
\end{cases}\]
\end{lemma}
\begin{proof} We show that if there is a non-zero element in $R_\mix(M)$ then $M$ is saturated. We go back to the characterisation of Theorem~\ref{thm:mix_final} and choose a  $\Z$-basis $l_1,l_2$ of $L\subset E(\Qbar)$
which is used to to make the identification  $L^\vee\tensor E\isom E^2$ as before.
This gives $c=(l_1,l_2)$. 
Let
$(\alpha,y,x)$ be  a triple satisfying the conditions in Theorem~\ref{thm:mix_final} with $y\neq 0$. First consider the morphism $\alpha:E^2\to E^2$. We may view it
as an element of $M_2(K)$. Condition (B) 
implies that $\alpha(c)=\alpha(l_1,l_2)=0$. There are three possible cases, depending
on the rank of $\alpha$.

If $\alpha$ is invertible, then the non-zero vector $(l_1,l_2)$ cannot be
mapped to $0$. This case does not occur.

If $\alpha$ has rank $1$, it suffices to consider 
$\alpha':E^2\to \alpha(E)\isom E$. We replace $\alpha$ by $\alpha'$ in the arguments and $x,y$ by their images in $E$ and $X(T)$
.
The new $\alpha$ has shape $(m,n)$ for $m,n\in K$. By assumption the image
of $c$ in $E$ is 
\[ \alpha'(c)=m\alpha'(l_1) + n\alpha'(l_2) = ml_1'+nl_2'=0. \]
Without loss of generality, $m\neq 0$, hence it is invertible. We replace $\alpha$ by $m^{-1}\circ \alpha$ and then have $m=1$, $l'_1+nl'_2=0$  with $n$ replaced by $m^{-1}n$. As $l'_1$ and $l'_2$ are $\Z$-linearly independent, this implies $n\in K\setminus \Q$. The image of $L$ in $E(\Qbar)$ contains  $l'_2$ and  $-nl'_1$.
As $1,-n$ are a $\Q$-basis of $K$, this implies that $Kl'_2\subset L$, and since $L$ has $\Q$-rank $2$, this even implies
$L\isom Kl'_2$. In other words: the lattice $L$ is $K$-stable in $E(\Qbar)_\Q$ and
we have $\rk_E(L,M)=1$. 

We continue with the diagram (A). As $M$ is reduced, the classifying map $X(T)\to E^\vee(\Qbar)$ is injective and we use it to identify elements of
$X(T)$ with elements of $E^\vee(\Qbar)$.
The adjoint of $\alpha$ is of the form
$(1,n^\vee):E^\vee\to {E^\vee}^2$. This gives  $y=\alpha^\vee(x)=(x,n^\vee(x))\in X(T)^2$. 
In particular, $x$ is a non-zero element of $X(T)$ and its image under $n^\vee\in K$ is
again in $X(T)$.  Hence  $X(T)$ is also $K$-stable in $E^\vee(\Qbar)$ and  $\rk_E(T,M)=1$. This implies that the classifying map of $G$ is $K$-equivariant and 
that the operation of $K$ on $E$ extends to an action on
$G$. The map in (A) is 
$G^2\to G$,  given by $(1,n)$. Condition (B) is $l_1+nl_2=0$, and if this is satisfied, then $L\subset G(\Qbar)$ is $K$-stable, which means that $M$ is saturated. In this case 
\[ \delta_\mix(M)=e\rk_E(L,M)\rk_E(T,M)=2.\]

It remains to consider the case $\alpha=0$, in which the condition $[L^\vee\tensor G]^*(y)=\alpha^*(x)=0$ implies
$y=0$ and then $\delta_\mix(M)=4$.
\end{proof}

\begin{cor}
The possible values for $\delta(M)$ in the CM-case are
$16$ when neither neither $L$ nor $X(T)$ are $K$-stable, $14$ if one of $L$ and $X(T)$ is
$K$-stable, $12$ if both are $K$-stable, but $M$  is not saturated, and $10$ if $M$ is saturated.
\end{cor}

\chapter{Values of Hypergeometric Functions}\label{ch:hypergeom}

We review how our knowledge on periods of curves and $1$-motives can also
be used to deduce transcendence results for certain values of hypergeometric functions. The result in the elliptic case can be found as a special case of Wolfart's publication
\cite{wolfart} or in \cite{chudchud} by Chudnovsky--Chudnovsky. 

More generally it is well-known that values of hypergeometric functions can be expressed as quotient of two abelian integrals, in general of the second kind. This leads to a period relation between the two periods of the second kind with the hypergeometric function as coefficient. Algebraic values of the hypergeometric function provide  linear relations between the two periods periods with algebraic coefficients. This cannot be true in general and leads to special  points on certain Shimura varietiea as explained very carefully in Chapter~5 of Tretkoff's beautiful monograph \cite{tretkoff}.  

We explain the method because we think that it should generalise to many other interesting cases.

\section{Elliptic Integrals}\label{sec:ell_int}
\index{Euler integral}\index{elliptic integral}

We fix a parameter $\lambda\in\C\ohne\{0,1\}$. 
The famous differential form
\[
\xi(\lambda) = \frac{du}{\sqrt{u(1-u)(1-u\lambda)}}
\]
on the compactification $\hat{\C}$ of the complex plane is multivalued with branch points $ 0,1,\infty,\lambda^{-1}$, the so-called Weierstraß points.
Locally, the branches differ by a sign. 
  
Integration of  differential forms of the type $\xi(\lambda)$ over paths leads to  so-called Euler integrals which were introduced and studied by Euler in connection with his work on hypergeometric functions. 

In our special case of the differential form $\xi(\lambda)$, we take the integrals over arcs $\gamma_{p,q}$ with loose end   at two of the four branch points $p$ and $q$,  respectively, but not passing through branch points. 
They are given by the improper integrals
\[ 
I_{p,q}(\lambda)=\int_{\gamma_{p,q}}\frac{1}{\sqrt{u(1-u)(1-\lambda u)}}\,du
\]
and there are $6$ choices of pairs. Each choice of such a path determines the value up to sign. The convergence of these integrals can be seen as follows: 
We take the path $\gamma_{0,1}$ and have to show that the integral converges locally at $0$ and $1$. The change of variables $t^2=u$ gives convergence close to $0$ and the change of variables $t^2=1-u$ gives convergence close to $1$. The remaining integrals can be transformed by applying the Moebius transformations
\[ u \mapsto \frac{1}{u}, u-1, \frac{1}{u-1},
\frac{(u-1)}{u}, \frac{u}{1-u} .\]

\begin{ex}\label{ex:branch}
If $\lambda\notin [1,\infty)$, we  choose $\gamma_{0,1}$
as the straight path from $0$ to $1$ in $\C$.  By definition the function $u^a$ for complex $a\in \C$ is given by $\exp(a(\log|u| + i\arg u))$.
We take the branch of the integrand determined by the following assignment of the arguments:
\[
\arg\,u=0, \quad\arg\,(1- u)=0, \quad\vert\arg(1-\lambda u)\vert < \pi/2
\]
and then, as we have indicated above, the integral is convergent.
\end{ex}

The Euler integral can be viewed  as a period in our sense.
To see this, let $C_\lambda$ be the curve of genus $1$ in Legendre form with affine equation
\[ y^2=u(1-u)(1-\lambda u).\]
Since the point $\infty$ is rational the curve has a group structure and becomes an elliptic curve.\index{elliptic curve}

The projection $\pi:C_\lambda\to\Pe^1$ with $(u,y) \mapsto u$ 
is a $2$-fold
cover ramified in the $2$-torsion points  $u=0,1,\lambda^{-1},\infty$. 
The multivalued differential form $\xi(\lambda)$ lifts from $\hat\C$ to the single valued 
form  and we have
\[ 
\omega(\lambda)= \frac{du}{y}
\]
 on $C_\lambda$. The closure of our arcs $\gamma_{p,q}$ in $\Pe^1$  lift to paths $\tilde{\gamma}_{p,q}$ on $C_{\lambda}^\an$ and then
 \[
  I_{p,q}(\lambda)=\int_{\tilde{\gamma}_{p,q}}\omega(\lambda).
  \]

There are two choices for the lift and the choice of the branch over $\gamma_{p,q}$ in the original definition is replaced by the choice of the lift $\tilde{\gamma}_{p,q}$. Let $\tilde{\gamma}^-_{p,q}$ be the other lift. Then $[\tilde{\gamma}^-_{p,q}]=-[\tilde{\gamma}_{p,q}]$. For algebraic $\lambda$, this description makes $I_{p,q}(\lambda)$ an incomplete elliptic period of the first kind corresponding to a motive of the form $[\Z\to C_\lambda]$. This insight is enough for the application to values of the hypergeometric function, but we can be more precise.
As
the end points are $2$-torsion points, the image of $\tilde{\gamma}_{p,q}$ is a closed path in $C_\lambda/C_\lambda[2]$. 
The elliptic integral $I_{p,q}(\lambda)$ agrees with a complete elliptic period of the first kind there.  
Expressed in terms of $1$-motives:
$[\Z\to C_\lambda]$ is isomorphic to $[\Z\to 0]\oplus [0\to C_\lambda]$ in the isogeny category $\onemot_\Qbar$ and the integral $I_{p,q}(\lambda)$ is a period of the second factor.

Another way to see this is by complex analysis.
The standard way in complex analysis is to replace the loose ends by small circles with radius $\epsilon$. We get a closed path $c_{p,q}$ around $p$ and $q$ by going forth and back along the arc from $p$ to $q$ and around the circles.

\begin{center}
\includegraphics[scale=0.3]{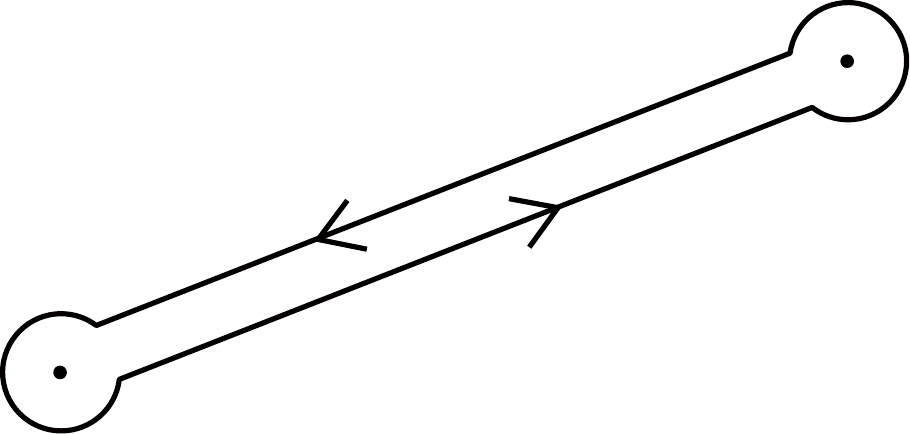}
\end{center}
Since the differential form is multivalued one has to be careful: going once around the circle counterclockwise changes the value of $\xi(\lambda)$ by $-1$  along the arc. Going once around the second circle again counterclockwise does the same and we get back the original determination of the value of the differential form along the arc but we are passing in the opposite direction. The value of the integral is independent of $\varepsilon$. After letting $\epsilon$ tend to zero we obtain   $\int_{c_{p,q}}\xi(\lambda)=2\,I_{p,q}(\lambda)$. 
 Our paths $c_{p,q}$ lift to closed paths $\tilde c_{p,q}$ on $C_\lambda^\an$ such that 
 \[ [\tilde{c}_{p,q}]=[\tilde{\gamma}_{p,q}]-[\tilde{\gamma}^-_{p,q}]=2[\tilde{\gamma}_{p,q}]\in H_1(C_\lambda^\an,\Q),\]
and 
 \[
2\,I_{p,q} (\lambda)= \int_{\tilde c_{p,q}}\omega(\lambda).
 \]
For algebraic $\lambda$, this description makes $I_{p,q}(\lambda)$ a complete elliptic period of the first kind corresponding.\index{elliptic period}

In toto:

\begin{lemma}
Let $p,q,r$ be  distinct in $\{0,1,\infty,\lambda^{-1}\}$. Then $\tilde c_{p,q}$ and $\tilde c_{p,r}$ form  a basis of $H_1^\sing(C_\lambda^\an,\Q)$.
\end{lemma}

\begin{proof}As a topological space we may identify $C_\lambda^\an$ with $(\R/2\Z)^2$. Our exceptional points are the classes of $(0,0)$, $(1,0)$, $(0,1)$, $(1,1)$. We lift the paths $\gamma_{p,q}$ to the universal cover $\R^2$ where they start in a lift $\tilde{p}$ of $p$ and end in lifts $\tilde{q}$ and $\tilde{r}$ of $q$ and $r$, respectively.  The alternative lift
$\tilde{\gamma}^-_{p,q}$ connects $-\tilde{p}$ to $-\tilde{q}$. Up to homotopy
the lift of the closed loop $c_{p,q}$ connects $\tilde{p}$ to $\tilde{p}+2(\tilde{q}-\tilde{p})$. Its homology class is
$2(\tilde{q}-\tilde{p})\in (2\Z)^2=H_1^\sing((\R/2\Z)^2,\Z)$.  
 The vectors $\tilde{q}-\tilde{p},\tilde{r}-\tilde{p}$ are non-zero and distinct in $(\Z/2\Z)^2$, hence linear independent. This makes $2(\tilde{q}-\tilde{p}), 2(\tilde{r}-\tilde{p})$ linear independent in $(2\Z)^2$.
\end{proof}
 
\subsection{A Hypergeometric Function}\label{sec:hyper}

For $|\lambda|<1$ the hypergeometric function \index{hypergeometric function}
with 
parameters $(1/2,1/2,1)$ is defined by  the power series
\[
F\left(\frac{1}{2},\frac{1}{2},1;\lambda\right)=\sum_{n=0}^\infty\frac{(\frac{1}{2})_n(\frac{1}{2})_n}{n!}\frac{\lambda^n}{n!},
\]
which is convergent for $|\lambda|<1$. Here  $(a)_n=a(a+1)+\cdots+(a+n-1)$ for $n>0$ and $1$ for $n=0$ are the Pochhammer symbols.\index{Pochhammer symbol}
The hypergeometric function is a solution of the hypergeometric differential equation
\begin{equation}
\lambda(\lambda-1)\phi''+(2\lambda-1)\phi'+\frac{1}{4}\phi=0.
\end{equation}
As such it extends to a meromorphic function on $\C\ohne\{0,1\}$. The differential equation is a second order  equation of Fuchsian type with regular singular points at $0, 1, \infty$. Its solution space $W$  has dimension 2. A basis for $W$  is given by the pair
\begin{eqnarray*}
z_0(\lambda)&:=&\quad F\left(\frac{1}{2},\frac{1}{2},1;\lambda\right)\\
z_1(\lambda)&:=&-iF\left(\frac{1}{2},\frac{1}{2},1;1-\lambda\right).
\end{eqnarray*}
Each solution extends to a meromorphic function on $\C\setminus \{0, 1\}$.

A classic computation due to Euler (see \cite[p.7]{klein-hyper}; see also \cite[Chapter~2.3.2]{iwasaki-et-al}) shows that
for $|\lambda|<1$
\begin{equation}\label{eq:euler} I_{0,1}(\lambda)=\B\left(\frac{1}{2},\frac{1}{2}\right)F\left(\frac{1}{2},\frac{1}{2},1;\lambda\right)\end{equation}
where $\B(p,q)$ is Euler's beta function. 
From
\[ \B\left(\frac{1}{2},\frac{1}{2})\right)=\int_0^1u^{-1/2}(1-u)^{-1/2}du=\frac{\Gamma(\frac{1}{2})\Gamma(\frac{1}{2})}{\Gamma(1)}=\pi.\]
we conclude that for all $\lambda\in\C\ohne\{0,1\}$. 
\begin{equation}\label{eq:euler_2} I_{\,0,1}(\lambda)=\pi F\left(\frac{1}{2},\frac{1}{2},1;\lambda\right).\end{equation}

We know that $\pi$ and $2\,I_{0,1}(\lambda)$ are periods. For $\pi$ this is clear and for
$I_{0,1}(\lambda)$ we discussed it at length in Section~\ref{sec:ell_int}.  Summary~\ref{sum:1} implies that $I_{0,1}(\lambda)$ 
is non-zero if  $\lambda$ is algebraic. 
Equation~(\ref{eq:euler_2}) is a $\C$-linear relation between
the elliptic period $I_{0,1}(\lambda)$ and $\pi$, which by Theorem~\ref{thm:disj} are linearly independent
over $\Qbar$.  
These considerations prove the following result. 

\begin{prop}[Wolfart,  Chudnovsky-Chudnovsky]\label{prop:1/2} \index{transcendence!of values of the hypergeometric function}\index{hypergeometric function!transcendence}
For $z\in\Qbar\ohne\{0,1\}$, the value
$F(1/2,1/2,1;z)$ of the hypergeometric function is transcendental.
\end{prop}

\begin{rem}
The above fact is pointed out by Wolfart \cite[\S~3,~Fall 3]{wolfart} as a consequence of \cite[Satz~2]{wolfart-wuestholz}. Chudnovsky-Chudnovsky mention it in
\cite[p.~426]{chudchud} as a corollary of Chudnovsky's Theorem on the algebraic independence of elliptic periods. Both references study more generally values of hypergeometric functions from different angles. Andr\'e \cite{andre-crelle} has an alternative approach. But actually, the necessary transcendence result is much older: Schneider \cite[Satz~IIIa]{S3} proved in 1936 the transcendence of
$\pi/\omega$ where $\omega$ is a period of an elliptic curve defined over $\Qbar$. As the relation to values of the hypergeometric function that we described is classical, we do not know who was the first to make the connection to their transcendence. 

An alternative 'modular' proof is suggested by the explicit computation
of $F(1/2/,1/2,1;z)$ given in \cite[Remark~6]{archinard}, corrected by
Bostan, see \cite{bostan}. We have
\begin{equation}\label{eq:bostan}
F\left(\frac{1}{2},\frac{1}{2},1;\lambda(\tau)\right)=E_4(\tau)^{\frac{1}{4}}(\lambda^2(\tau)-\lambda(\tau)+1)^{-\frac{1}{4}}.
\end{equation}
where $\lambda(\tau)$ is the Legendre modular function and $E_4(\tau) =\frac{3}{4\pi^4}g_2(\tau) $ the Eisenstein modular form of weight $4$. The difference to Archinard's formula is the inverse on the left hand side. Assume $\lambda(\tau)$ and $F(1/2,1/2,1;\lambda(\tau))$ to be algebraic. This makes $j(\tau)$ and
$E_4(\tau)$ algebraic. The identity
\[ j(\tau)=1728\frac{E_4(\tau)^3}{E_4(\tau)^3-E_6(\tau)^2}\]
implies that $E_6(\tau)$ is algebraic as well. This is a contradiction to what Bertrand \cite[\S~1.1]{bertrand} showed. As a consequence, $F(1/2,1/2,1;\lambda(\tau))$ is transcendental for algebraic $\lambda(\tau)$. We thank Bostan for pointing out this argument. 
Looking more closely, this is actually the same as before: Bertrand's proof relies on the 
$\Qbar$-linear dependence between $\pi$ and elliptic periods.
\end{rem}

\subsection{The Legendre Family}
\index{Legendre family}\index{elliptic curve}

We return to the integrals $I_{p,q}(\lambda)$ as functions in the variable $\lambda\in S^\an:=\hat{\C}\ohne\{0,1,\infty\}$. The functions are holomorphic and multivalued. We concentrate on $I_{0,1}(\lambda)$. Different  branches correspond to different choices of homotopy classes of
paths $\gamma_{0,1}$.

\begin{ex}
On $\hat{\C}\ohne[1,\infty)$, we use the principal branch of the function normalised as in Example~\ref{ex:branch}. It does not extend to 
$\lambda\in [1,\infty)$ because the straight path $[0,1 ]$ would pass through the branch point $\lambda^{-1}$ and we would  get two possible values for the integral depending on the choice of the branch.  Instead, choose $\lambda_0\in [1,\infty)$ and replace the straight path via one of the two semi-circles sufficiently small radius around $\lambda_0^{-1}$.
\begin{center}
\includegraphics[scale=0.3]{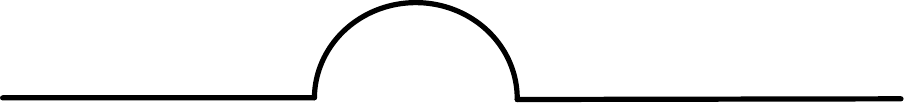}
\end{center}
 some fixed $\lambda_0^{-1}$. The choice of the semi-circle replaces the choice of branch and we take the one on which  $\Im(z) >0$. 
This point of view gives a description of the analytic continuation of $I_{0,1}(\lambda)$. Let $\gamma$ be the new path.
The integral $\int_\gamma\omega(\lambda)$ is well-defined
for $\lambda\in\C\ohne\{\gamma(t)^{-1}| t\in [0,1]\}$, as can be verified, and defines a holomorphic function. In particular, it is well-defined at $\lambda_0$. By the Monodromy Theorem, the modified function agrees with $I_{0,1}(\lambda)$ for $\lambda^{-1}$ in the lower half plane because $\omega(\lambda)$ remains regular between the two paths. It furnishes an analytic continuation of $I_{0,1}(\lambda)$, depending on our choice of $\gamma$. If instead we take the second semi-circle, we obtain a second analytic continuation. Going through all possible paths from $0$ to $1$ gives the full analytic continuation of $I_{0,1}(\lambda)$ as a multi-valued function on $\C\ohne\{0,1\}$. 
In particular, all values of the analytic continuation are periods for different choices of path from $0$ to $1$.
\end{ex} 

The Euler integrals are solutions of the hypergeometric equation
\begin{equation}\label{eq:hypergeom}
\lambda(\lambda-1)\phi''+(2\lambda-1)\phi'+\frac{1}{4}\phi=0
\end{equation}
We refer to \cite[{\textsection 16}]{klein-hyper} or \cite{iwasaki-et-al} for the explicit computation. 
This is a second order differential equation of Fuchsian type with regular singular points at $0, 1, \infty$. The fundamental group of $S^\an=\Pe^1\setminus \{0,1,\infty\}$ is a free group $\Gamma$ with generators $\gamma_0, \gamma_1$ and $\gamma_\infty$, which are loops with base point $b=1/2\in S^\an$ around the points $0,1,\infty$ with $\gamma_0^{-1} = \gamma_1\gamma_0$. The solution space $W$ of the differential equation is a local system of rank $2$. The group $\Gamma$ has a representation $\rho$,  the monodromy representation, in the solution space $W$. Accordingly the solutions are multi-valued functions on $S^\an$ and holomorphic on the universal cover. 
 
A more conceptual interpretation of the situation is to consider the Legendre family 
\[
p: C\rightarrow \mathbb P^1
\]
with fibre $C_\lambda$ at $\lambda$. 
The family has degenerate fibers at $0$, $1$, $\infty$.  Over  $S=\Pe^1\ohne\{0,1,\infty\}$ there is a global
basis of  $H^1_\dR(C_\lambda)$ given by the two differential forms
\begin{eqnarray*}
\omega(\lambda)=\frac{du}{y}, \;  \eta(\lambda)=\frac{udu}{y}
\end{eqnarray*}
The form $\omega(\lambda)$ is holomorphic and thus of the first kind whereas $\eta(\lambda)$ has a pole of order  $2$ at $u=\infty$ and this means that it is of the second kind. 

The homology groups $H_1^\sing(C_\lambda^\an,\Z)$ organise as a
local system of rank $2$ on $S^\an$. Abstractly, it is the dual of
 $p_{S*}\Z$ where $p_S$ is the Legendre family over $S$. For an explicit description
let $s\in S^\an$ be fixed and let  $\gamma_s$   be a cycle in $C_s$. By parallel transport we obtain a horizontal lifting $\gamma(\lambda)$ of the cycle $\gamma_s$. It depends on the choice of an Ehresmann connection. But the homology class of the cycle $\gamma(\lambda)$ is independent of the choice. This gives a horizontal family of homology classes of cycles. The periods
\[
z(\lambda):=\int_{\gamma(\lambda)}\omega(\lambda).
\]
along horizontal cycles give multi-valued analytic functions on $S$. They are solutions of the Gauß--Manin connection  on $H^1_\dR(C_\lambda/S)$, which leads to a differential equation with regular singularities  in $\{0, 1, \infty\}$.
The differential equation so obtained coincides with the differential equation (\ref{eq:hypergeom}).

\section{Abelian Integrals}

In the previous section we introduced the special hypergeometric function $F(1/2,1/2,1;\lambda)$ and showed that for $\lambda \neq 0$ it takes transcendental values. Our proof bases on 
the comparison of the periods of the Legendre curve and the period of the curve $y^2=x(1-x)$ obtained from the Legendre curve when $\lambda=0$. This will now be studied for superelliptic generalisations of the Legendre curve.

\subsection{Euler Integral}

The hypergeometric  function $F(1/2,1/2,1;\lambda)$  is only a very particular case of the hypergeometric function $F(a,b,c;\lambda)$ \index{hypergeometric function}
with expansion
\[
F(a,b,c;\lambda)=\sum_{n=0}^{\infty} \frac{(a)_n(b)_n}{(c)_n}\frac{\lambda^n}{n!}
\]
and \index{hypergeometric function}
convergent for $|\lambda|<1$. As before,
$(a)_n=a(a+1)\dots (a+n-1)$ are the Pochhammer symbols.\index{Pochhammer symbol}
In the most general case the arguments $a$, $b$ and $c$ are complex numbers with $c$ neither zero nor a negative integer.
It satisfies the differential equation
\[
\lambda(1-\lambda) F''+(c-(a+b+1)\lambda)F'-abF=0.
\]
One sees that the differential equation specialises to the differential equation~(\ref{eq:hypergeom}) from above when taking $a=b=1/2$.
Also for this more general hypergeometric function an integral representation can be derived. 
We consider the differential forms
\begin{equation}\label{eq:omega}
\omega(a,b,c;\lambda)=u^{b-1}(1-u)^{c-b-1}(1-\lambda u)^{-a}du
\end{equation}
and
\[ \omega(b,c-b)=u^{b-1}(1-u)^{c-b-1}du.\]
The Euler integral\index{Euler integral}
\[
\Omega(a,b,c;\lambda)=\int_0^1u^{b-1}(1-u)^{c-b-1}(1-\lambda u)^{-a}du
\]
can be expressed in terms of $F(a,b,c;\lambda)$ and the Euler Beta-function. The latter is  usually written as  
\[\B(b,c-b)=\int_0^1 u^{b-1}(1-u)^{c-b-1}du.
\]
\index{Beta integral}
It is obtained from the degeneration of $\omega(a,b,c;\lambda)$ at $\lambda=0$.

\begin{prop}[{ \cite[S.~7]{klein-hyper}, \cite[Chapter~2.3.2]{iwasaki-et-al})}]\label{prop:hyper_gen}
The Euler integral and the hypergeometric function $F(a,b,c;\lambda)$ are related by the equation
\[
\Omega(a,b,c;\lambda) =\B(b,c-b) F(a,b,c;\lambda).
\]
\end{prop}

If $a$, $b$ and $c$ are rational numbers with smallest common denominator $N$, then $\Omega(a,b,c;\lambda)$ can be interpreted as a period on the algebraic curve $C_N(\lambda)$ of the form
\[ y^N=x^r(1-x)^s(1-\lambda x)^t\]
for suitable $r,s,t$. The degeneration $C_N(0)$ with affine equation
\[ y^N=x^r(1-x)^s,\]
has the same property with respect to $\B(b,c-b)$. As in the elliptic case, knowledge about
linear independence of periods leads to transcendence results for
$F(a,b,c;\lambda)$. This connection was already exploited by Wolfart in \cite{wolfart}. We explain an approach from a different angle. In order to simplify the exposition, we restrict to the case where $N=p$ is an odd  prime, $0<r,s,t<p$ and require that $p$ does not divide $r+s+t$ or $r+s$. 

\subsection{Geometry of $C_p(\lambda)$}
Let $\lambda\neq 0,1$ and $p,r,s,t$ as specified above.
The curve $C_p(\lambda)$ was studied first by Wolfart, then very detailed by Archinard  \cite{arch2}, who corrected some errors by Wolfart. Quite  recently Archinard's paper was  extended and corrected by Asakura and Otsubo \cite{AO}. We briefly sketch the results which are relevant for us.

The curves $C_p(\lambda)$ and $C_p=C_p(0)$ are singular in general. Let $X_p(\lambda)$ and $X_p$ be their normalisations, $J_p(\lambda)$ and $J_p$  the Jacobians of $X_p(\lambda)$ and $X_p$, respectively.  The desingularisation is computed in detail by Archinard. As $r,s$ and $t$ are prime to $p$, the branch points have exactly one preimage in the desingularisation by \cite[Remark~3]{arch2}. This  makes the maps $X(\lambda)^\an\to C_p(\lambda)^\an$ and $X_p^\an\to C_p^\an$ homeomorphisms.

\begin{rem}By replacing $y$ by $\pm \lambda^{-t/p}y$ we get the equation for $C_p(\lambda)$ in the form
\[ y^p=x^r(x-1)^s(x-\lambda^{-1})^t\]
considered in \cite{arch2}. 
\end{rem}
\begin{lemma}The genus of $X_p(\lambda)$ and $X_p$ is $p-1$ and $(p-1)/2$, respectively.
\end{lemma}
\begin{proof} Apply \cite[Theorem~4.1]{arch2} to our case.\end{proof}

The group $\mu_p$ of roots of unity operates on $C_p(\lambda)$ and $C_p$ via
$\sigma(x,y)=(x,\zeta^{-1}y)$ for $\zeta\in \mu_p$. This operation induces
an operation of $\mu_p$ on $X_p(\lambda)$ and defines an embedding
$\Q(\mu_p)\to \End_\Q(J_p(\lambda))$. 

\begin{lemma}\label{lem:isotypical}
The abelian variety $J_p(\lambda)$ has at most two isotypical components. If it has two components, both have complex multiplication \index{complex multiplication} by $\Q(\mu_p)$. The abelian variety $J_p$ has complex multiplication.
\end{lemma}
\begin{proof}For $J_p$, we have $2\dim(J_p)=[\Q(\mu_p):\Q]$, making it CM.

For $J_P(\lambda)$, the CM-field $\Q(\mu_p)$ operates on each isotypical component.
The dimension of such a component is at least $(p-1)/2$. This implies that there is either a single isotypical component or there are two. If there are two, both factors have dimension $(p-1)/2$, making them CM.
\end{proof}

\begin{rem}The curve $C_p$ has a cover by the Fermat curve \index{Fermat curve} with affine equation
\[ x_1^p+x_2^p=1\]
via $(x_1,x_2)\mapsto (x_1^p,x_1^rx_2^s)$, see Gross \cite{gross-iccm}. This makes $J_p$ (up to isogeny) a direct factor of the Jacobian of the Fermat curve.
The latter has been studied intensely by Gross and Rohrlich in \cite{gross}.
\end{rem}

\subsection{Differentials on $C_p(\lambda)$}

As the genus is $p-1$, the space $V=\Omega(X_p(\lambda))$ of global differential forms has dimension $p-1$ and $H^1_\dR(X_p(\lambda))$ has dimension $2(p-1)$. Recall that the latter has a description in
terms of differentials of the second kind, see Lemma~\ref{lem:difff_deRham}. 
For $X_p$, the numbers have to be divided by $2$.
\begin{lemma} \label{lem:first}
\index{differential forms!second kind}
\begin{enumerate}
\item 
Suppose that $(p, r+s+t)=1$. For $1\leq n \leq p-1$ and $0\leq u,v,w$ the differential forms on $X_p(\lambda)$, 
\[
\omega_n^{u,v,w}= \frac{x^u(1-x)^v(1-\lambda x)^w}{y^n}dx
\]
are of the second kind. They are holomorphic if and only if
\begin{gather*}
u\geq \left[\frac{rn}{p}\right],\quad v\geq \left[\frac{sn}{p}\right],\quad w\geq \left[\frac{tn}{p}\right],\\
u+v+w\leq \frac{n(r+s+t)-1}{p}-1
\end{gather*}

\item Assume that $(p,r+s)=1$. For $1\leq n\leq p-1$ and $u, v \geq0$ the differential forms on $X_p$,
\[\omega_n^{u,v}=\frac{x^u(1-x)^v}{y^n}dx\]
are of the second kind. They are holomorphic if and only if
\begin{gather*}
u\geq \left[\frac{rn}{p}\right],\quad v\geq \left[\frac{sn}{p}\right],\\
u+v\leq\frac{n(r+s)-1}{p}-1
\end{gather*}
\end{enumerate}
\end{lemma}
\begin{proof}
This is Remark 12 in \cite{arch2} see also Lemma 2.2 in \cite{AO}. In these references the lower bound is given in the form
$\frac{rn+1}{p}-1$ etc. As $u$ is an integer, we can replace the bound by
$\lceil\frac{rn+1}{p}\rceil-1$. Let $rn=kp+e$ with $0\leq e<p$.  This gives
\[ \left\lceil\frac{rn+1}{p}\right\rceil-1=k+\left\lceil\frac{e+1}{p}\right\rceil-1\]
As $1\leq e+1\leq p$, we have $\lceil\frac{e+1}{p}\rceil=1$. We may write the bound in the shape that we used.
\end{proof}
The forms $\omega_n^{u,v,w}$  and $\omega^{u,v}_n$ are $\zeta^n$-eigenvalues for the operation of $\sigma$. 
\begin{cor}[{\cite[Proposition~2.3]{AO}}]\label{cor:diff_einfach}
If $s+t=p$, then on $X_p(\lambda)$,
\[ \omega_n:=\omega_n^{u,v,w}\]
with 
\[
u=\left[\frac{nr}{p}\right], \;
v=\left[\frac{ns}{p}\right], \; w= \left[\frac{nt}{p}\right].
\]
is  of the first kind and
\[ \eta_n:=\omega_n^{u,v+1,w}\]
is of the second kind, but not of first kind. Moreover, the tuple $(\omega_n|n=1,\dots,p-\nobreak1)$ is a basis of $\Omega(X_p(\lambda))$ and 
$(\omega_n,\eta_n\;|\; n=1,\dots,p-1)$ is a basis for $H^1_\dR(X_p(\lambda))$.
\end{cor} 
\begin{proof}
We show that these choices are the only ones giving a holomorphic form.
Division with remainder gives  $nr=kp+e$, $ns=lp+f$ and $nt=mp+g$. It is required that $u\geq m$, $v\geq l$, $w\geq m$.
In conclusion,
\[
u+v+w \geq k+l+m.
\]

Furthermore it is also  required that 
\[ u+v+w \leq \frac{n(r+s+t)-1}{p} -1.\]
 The condition  that $r+s=p$ forces that $p$ divides $f+g$ and leads to $f+g=p$. This implies that
\[
u+v+w \leq k + l+ m +1 +\frac{e-1}{p} -1
\]
and since $u+v+w$ is an integer we conclude that 
\[
u+v+w \leq k+l+m.
\]
It follows that $u+v+w=k+l+m$, $u=k$, $v=l$, $w=m$  and this is what was stated.
\end{proof}

As the forms are eigenforms for the $\mu_p$-operation, this is even an eigenbasis. The forms $\omega_n,\eta_n$ are a basis for the
$\zeta^n$-eigenspace.
 
\begin{cor}\label{cor:diff_X_p}
For $1\leq n\leq p-1$, the differentials
\[ \omega_n:=\frac{x^u(1-x)^v}{y^n}dx\]
with
\[
u=\left[\frac{nr}{p}\right], \;
v=\left[\frac{ns+1}{p}\right], 
\]
are a basis for $H^1_\dR(X_p)$.  The expression 
\begin{equation}\label{eq:expression}
 \left\langle \frac{nr}{p}\right\rangle +\left\langle \frac{ns}{p}\right\rangle-\left\langle \frac{n(r+s)}{p}\right\rangle,
\end{equation}
where  $\langle x\rangle$ denotes the fractional part of a rational number $x$,
takes the value $1$ if $\omega_n$ is holomorphic and the value $0$ if it is not.

Precisely one of the forms $\omega_n$ and $\omega_{p-n}$ is holomorphic. 
\end{cor}

\begin{proof}
They are linearly independent because they have different eigenvalues for the
$\mu_p$-operation. They span a subspace of $H^1_\dR(X_p)$ of dimension $p-1$, hence they even generate it.

The criterion for being holomorphic is  the  computation
of the dimension of $\zeta^n$-eigenspace of $\Omega(X_p)$ in \cite[Theorem~6.7]{arch2}. We check it by hand.

Euclidean division gives $nr=kp+e$, $ns=lp+f$, and then  $n(r+s)=(k+l)p+e+f$. We have
$0<e+f<2p$ and excluded $p\mid e+f$, hence $\langle \frac{n(r+s)}{p}\rangle$ takes the value $\frac{e+f}{p}$ if $e+f<p$ and the value $\frac{e+f-p}{p}$ if
$e+f>p$. This gives
\[ \left\langle \frac{nr}{p}\right\rangle +\left\langle \frac{ns}{p}\right\rangle-\left\langle \frac{n(r+s)}{p}\right\rangle=\begin{cases} 0&e+f<p\\
                          1&e+f>p\end{cases}\]
On the other hand, the condition in Lemma~\ref{lem:first} contains the upper bound
\[ \frac{n(r+s)-1}{p}-1=\left[\frac{rn}{p}\right]+\left[\frac{rs}{p}\right]+\frac{e+f-p-1}{p}\]
We have $e+f-p-1\geq 0$ if and only if $e+f>p$. This makes $\omega_n^{u,v}$ holomorphic if and only if (\ref{eq:expression}) takes the value $1$.

The last statement is \cite[Theorem~6.8]{arch2}; see also \cite[Remark~13]{arch2}. It is also obvious from the above computation: the remainder of $(p-n)r$ is $p-e$ and
the remainder of $(p-n)s$ is $p-f$.
\end{proof}

A CM-pair $(J,\iota)$ consisting of an abelian variety $J$ and an embedding 
$\iota:F\to \End_\Q(J)$ of a CM-field is uniquely determined up to isogeny by the pair
$(J,\Phi)$, where $\Phi$ is the set of eigenvalues for the operation of
$F$ on $\Omega(J)$, the \index{CM type}\index{complex multiplication}CM-type. As a byproduct, the corollary describes the CM-type $\Phi_p$ of $J_p$. In detail:
We introduce 
\[ 
H=\left\{n\in(\Z/p\Z)^*\;\middle\vert\; \left\langle \frac{nr}{p}\right\rangle +\left\langle \frac{ns}{p}\right\rangle-\left\langle \frac{n(r+s)}{p}\right\rangle=1\right\}
\]
and
\[ W=\left\{a\in (\Z/p\Z)^*\mid aH=H\right\}.\]
For a given $n\in H$, the condition in the corollary is satisfied. This means that $\omega_n$ is holomorphic and that $\zeta^n$ appears as an eigenvalue in the operation of $\mu_p$ on $\Omega(J_p)$. We can identify $H$ and $\Phi_p$.
The group $W$ is its stabiliser under the identification of the Galois group $\Gal(\Q(\mu_p)/\Q)$ with $(\Z/p\Z)^*$. 

\begin{rem}Following Gross and Rohrlich in \cite{gross-rohrlich}, we may introduce a fake variable
$t$ with $r+s+t=p$ (unrelated to the $t$ appearing $C_p(\lambda)$). Then
\begin{align*}
 H&=\left\{n\in(\Z/p\Z)^*\;\middle\vert\; \left\langle \frac{nr}{p}\right\rangle +\left\langle \frac{ns}{p}\right\rangle-\left\langle \frac{-nt}{p}\right\rangle=1\right\}\\
&=\left\{n\in(\Z/p\Z)^*\;\middle\vert\;  \left\langle \frac{nr}{p}\right\rangle +\left\langle \frac{ns}{p}\right\rangle+\left\langle \frac{nt}{p}\right\rangle=2\right\}
\end{align*}
because $\langle -x\rangle=1-\langle x\rangle$. This is the complement in $(\Z/p\Z)^*$ of the set
of
\[ H_{r,s,t}=\left\{n\in(\Z/p\Z)^*\;\middle\vert\; \left\langle \frac{nr}{p}\right\rangle +\left\langle \frac{ns}{p}\right\rangle+\left\langle \frac{nt}{p}\right\rangle=1\right\}\]
appearing in \cite{gross-rohrlich} because the expression takes the values $1$ and $2$. 
The normalisation of the operation of $\mu_p$ on $C_p$ in \cite{gross-rohrlich} is complex conjugate to ours, hence they describe the CM type by $H_{r,s,t}$ rather than by our $H$. Note that an $a\in (\Z/pZ)^*$ which stabilises $H$ also stabilises
$H_{r,s,t}=(\Z/p\Z)^*\ohne H$ and conversely. So we actually have
\[ W=W_{r,s,t}\]
as defined in \cite[Lemma~1.6]{gross-rohrlich}.
\end{rem}

\begin{lemma}[Gross and Rohrlich {\cite[Lemma~1.6]{gross-rohrlich}}]\label{lem:W}
The group $W$ is trivial unless $r^3\equiv s^3\equiv(-r-s)^3\mod p$, in which case it is the group of cube roots of unity (modulo $p$).
\end{lemma}
\begin{cor}\label{cor:crit_simple}
The CM-abelian variety $J_p$ is simple if and only if the group $W$ is trivial. In particular, this is the case if $p\nequiv 1\mod 3$. 
\end{cor}
\begin{proof} We identity $H$ with the CM-type, where $n\in(\Z/p\Z)^*$ stands for the $\zeta^n$-eigenspace of the operation of $\zeta\in\mu_p$ on $\Omega(J_p)$. The operation of the Galois group  $\Gal(\Q(\mu_p)/\Q)=(\Z/p\Z)^*$ on the eigenvalues is identified with the left multiplication of $(\Z/p\Z)^*$ on itself.
The condition in the corollary means that the stabiliser of $H$ is trivial, i.e. the CM-type is primitive. Being primitive is equivalent to the abelian variety being simple.

If $J_p$ is not simple, then Lemma~\ref{lem:W} implies that $3$ divides $p-1$.
\end{proof}

\begin{ex}\label{ex:is_simple}
Let $p=11$, $r=s=2$. Then $H=\{3,4,5,9,10\}$ and $J_p$ is simple. 
For $p=7$, $r=2$, $s=4$, we have $H=\{3,5,6\}$ and $J_p$ is not simple because 
$W=\{1,2,4\}$. For $p=7$, $r=s=2$, we have $H=\{2,3,6\}$ and $J_p$ is again simple.
These examples are compatible the criterion of Lemma~\ref{lem:W}.
\end{ex}

\begin{rem}It can happen that $\omega_n^{u,v,w}$ is of the first kind, but $\omega_n^{u,v}$ (for the same parameters $r,s,t,u,v$) is not. Indeed, it must happen because the dimension of the space of differential forms of the first kind goes down.
\end{rem}

\subsection{Transcendence}

We can now connect the periods of $X_p(\lambda)$ to our Euler integrals.
Recall the differential forms $\omega(a,b,c;\lambda)$ in
the complex plane, see (\ref{eq:omega})  

\begin{prop}\label{prop:abc}
Let $p$ be an odd prime, $0<r,s,t<p$ and $p\nmid r+s+t$. 
Choose $1\leq n\leq p-1$, $u,v,w\geq 0$ and introduce 
\[ a=-w+\frac{nt}{p},\quad b=u+1-\frac{nr}{p},\quad c=b+1+v-\frac{ns}{p}=u+v+2-\frac{n(r+s)}{p}.\]
\begin{enumerate}
\item
The form $\omega_n^{u,v,w}$ is the pull-back of $\omega(a,b,c;\lambda)$ 
to $X_p(\lambda)$ under the projection $(x,y)\mapsto x$.
The Euler integral $\Omega(a,b,c;\lambda)$ is a complete period of the second kind for $J_p(\lambda)$. 
\item
The form $\omega^{u,v}_n$ is the pull-back of $\omega(b,c-b)$ to $X_p$ under the projection $(x,y)\mapsto x$.
The Beta integral $\B(b,c-b)$ is a complete period of the second kind for $J_p$.
\end{enumerate}
\end{prop}
\begin{proof}
The first claim is straightforward by replacing $y$ in $\omega_n^{u,v,w}$ by
\[ y=x^{r/p}(1-x)^{s/p}(1-\lambda x)^{t/p}.\]

We write $\omega$ for our meromorphic differential form.
A chosen generator $\zeta\in\mu_p$ operates via $\sigma$ on $C_p(\lambda)$ and (by functoriality) on $X_p(\lambda)$.
Let $\gamma$ be the lift of $[0,1]$ to 
$X_p(\lambda)$ corresponding to the choice of branch in the Euler integral. Then $(\sigma_*\gamma)^{-1}\gamma$ is a closed loop in $X_p(\lambda)^\an$. In computing the integral over the closed path we get
\[ \int_{\gamma}\omega-\int_{\sigma_*\gamma}\omega=\int_\gamma(\omega-\sigma^*\omega)=(1-\zeta^n)\int_\gamma\omega\]
This makes the Euler integral a closed period of the second kind. The argument for the Beta integral is analogous. The period is of the second kind because the differential form $\omega_n^{u,v}$ is of the second kind in general.
\end{proof}

By Proposition~\ref{prop:abc}, the formula
\[ \Omega(a,b,c;\lambda)=F(a,b,c;\lambda)\B(b,c-b)\]
of Proposition~\ref{prop:hyper_gen} can be regarded as a linear relation between periods of algebraic curves. The dimension computations in Chapter~\ref{ch:dim_comp_part1} tell us about $\Qbar$-linear independence of period numbers.

Recall that $\Phi_p$ is the CM-type of $J_p$ with the operation of $\Q(\mu_p)$ induced from the operation of $\mu_p$ on $C_p$. For $\alpha\in\Gal(\Q(\mu_p)/\Q)$ we write $J_p^\alpha=(J_p,\alpha\Phi_p)$ and $\overline{J_p^\alpha}=(J_p,\overline{\alpha\Phi_p})$ (the complementary CM-type).
\index{CM-type}

\begin{thm}\label{thm:hypergeom}
\index{hypergeometric function!transcendence}\index{complex multiplication}
Let $p$ be an odd prime, $0<r,s<p$ such that $p$ does not divide  $r+s$ and put $t=p-s$. We assume that $J_p$ is simple and take $0\leq u,v$  and $a,b,c$ as in Proposition~\ref{prop:abc}, $\lambda\neq 0,1$ algebraic. If $F(a,b,c;\lambda)$  is algebraic and not zero then   (up to isogeny)
\[ J_p(\lambda)\isom J_p^\alpha\times \overline{J_p^\alpha}\]
in the category of abelian varieties with $\mu_p$-action. 
\end{thm}

\begin{proof}The assumption $t+s=p$ ensures that $p\nmid r+s+t$ is satisfied.
The period $\B(b,c-d)$ does not vanish, by the criterion of Summary~\ref{sum:1}, because the lift  $\gamma$ of $[0,1]$ to $X_p^\an$ is not closed.  As $F(a,b,c;\lambda)$ is assumed non-zero, this also makes $\Omega(a,b,c;\lambda)$ non-zero.

We consider $A=J_p\times J_p(\lambda)$. 
The dimension formula in Theorem~\ref{thm:dimension_sat} implies that there are no $\Qbar$-linear relations between non-trivial periods of different isotypical components. This implies
that $J_p$ and $J_p(\lambda)$ share a simple factor. As $J_p$ is simple, this means
that 
\[ J_p(\lambda)\isom J_1\times J_2\]
with 
\[ J_p\isom J_1\]
up to isogeny. Note that we do not know if the isomorphism is compatible with the
$\mu_p$-operation. 

By Lemma~\ref{lem:isotypical} both factors have CM by $\Q(\mu_p)$ (because either $J_p(\lambda)=J_p^2$ or it has two isotypical components) and the eigenbasis 
 computation in Corollary~\ref{cor:diff_einfach} shows  that $J_1$ and $J_2$ have
complex conjugate CM-types $\Phi$ and $\bar \Phi$.  As $J_1$ is simple, the simplicity criterion of Corollary~\ref{cor:crit_simple} applies to $\Phi$  and then also to $\bar\Phi$. 
Primitive CM-types classify pairs $(B,\iota)$ for $B$ a simple abelian variety and
$\iota:\Q(\mu_p)\to\End_\Q(B)$. The CM-type of $J_2$ is realised by $(J_1,\bar{\iota})$, hence $J_2$ and $J_1$ are isogenous after all. Here $\bar{\iota}$ means complex conjugation on $\Q(\mu_p)$ followed by $\iota$.

We now have $J_p(\lambda)\isom J_p^2$. Let $\zeta$ be a generator of $\mu_p$ and
$\sigma$ the corresponding automorphism of $J_p(\lambda)$. It operates on
$J_p^2$ by a ($2\times 2$)-matrix with entries in $\Q(\mu_p)=\End_\Q(J_p)$. As
$\sigma^p=\id$, it is diagonalisable and its eigenvalues are $p$th roots of unity. Without loss of generality, it has the shape
\[
\left(\begin{matrix} \zeta^\alpha&0\\ 0&\zeta^{\alpha'}\end{matrix}\right).
\]
The induced operation on $\Omega(J_p(\lambda))$ is diagonalisable with
$p-1$ distinct eigenvalues. This implies that $\alpha=-\alpha'\mod p$. 
In other words, the isomorphism $J_p(\lambda)\isom J_p^2$ can be chosen such that $\sigma$ operates via $\zeta^\alpha$ on the first factor and
via $\zeta^{-\alpha}$ on the second. This determines the 
CM-types of the two factors. They are complex conjugate to each other.
\end{proof}

\begin{rem}As a consequence of the theorem, the argument $\lambda$ has to be \emph{special} in the sense of Shimura varieties. This could certainly be investigated in more detail. However, the subtelties of Shimura varieties and the Andr\'e-Oort conjecture are beyond the scope of our book.
\end{rem}

\begin{cor}\label{cor:jetzt_aber}
Let $p,r,s,t$ be as in the theorem.
Let $1\leq n\geq p-1$ 
and $u,v,w$ as in Corollary~\ref{cor:diff_einfach} and $a,b,c$ as in Proposition~\ref{prop:abc}. If
\[ \left\langle \frac{nr}{p}\right\rangle +\left\langle \frac{ns}{p}\right\rangle-\left\langle \frac{n(r+s)}{p}\right\rangle\neq 1\]
then $F(a,b,c;\lambda)$ is zero or transcendental \index{hypergeometric function!transcendence}
for all algebraic $\lambda\neq 0,1$.
\end{cor}
\begin{proof}In this case, the form $\omega_n^{u,v,w}$ on $X_p(\lambda)$ is of the first kind, whereas
$\omega_n^{u,v}$ on $X_p$ is not. 

Assume $F=F(a,b,c;\lambda)$ is not zero and algebraic for algebraic $\lambda\neq 0,1$. 
As the CM-types of the two factors of $J_p(\lambda)$ are distinct,
the form $\omega^{u,v,w}_n$ restricts to $0$ on one of them, say the second.
We may view it as a differential form of the first kind on $J_p$. As an element of $H^1_\dR(J_p)$ it is $\Qbar$-linearly independent of the form $\omega_n^{u,w}$, which is not of the first kind by our choice of $n$. By Lemma~\ref{lem:compute}
this makes their periods $\Qbar$-linearly independent.

\end{proof}

Recall that we have an easy numerical criterion to check whether $J_p$ is simple, see Lemma~\ref{lem:W}.

\begin{ex}We choose $p=11$, $r=s=2$, $r+s=4$, $t=9$. As pointed out in Example~\ref{ex:is_simple} this makes $J_p$ simple with CM-type given by $H=\{3,4,5,9,10\}$. Conversely, $\omega_n$ is not holomorphic for $n=1,2,6,7,8$. We have
$a=\langle\frac{9n}{11}\rangle$, $b=1-\langle\frac{2n}{11}\rangle$, $c=2b$
\begin{center}\begin{tabular}{|c|c|c|c|c|c|c|}
$n$&$u$&$v$&$w$&$a$&$b$&$c$\\
$1$&$0$&$0$&$0$& $9/11$ & $9/11$ & $18/11$\\
$2$&$0$&$0$&$1$& $7/11$ & $7/11$ & $14/11$\\
$6$&$1$&$1$&$4$& $10/11$ & $10/11$ & $20/11$\\
$7$&$1$&$1$&$5$& $8/11$ & $8/11$ & $16/11$ \\
$8$&$1$&$1$&$6$& $6/11$ & $6/11$ & $12/11$
\end{tabular}
\end{center}

\noindent The corresponding values of the hypergeometric function
 are zero or transcendental for all algebraic $\lambda\neq 0,1$. 
For $\lambda\in (0,1)$, the Euler integral is non-zero and $F(10/11,10/11,20/11;\lambda)$ is a transcendental number.
\end{ex}


\part{Appendices}

\begin{appendix}

\chapter{Nori Motives}\label{sec:app_nori}
\index{Nori motives}\index{motive!Nori}

In this section a bare minimum of Nori's theory of motives is reviewed, to the extent
needed in the main text. For a more complete picture, see
\cite[Chapter~9.1]{period-buch}.

Our base field will be $k\subset \C$ and we shall work  with $\Q$-coefficients throughout.
We denote by $\Q\Vect$ the category of finite dimensional $\Q$-vector spaces
and more generally by $E\Mod$ the category of finitely generated $E$-left modules
for a finite dimensional $\Q$-algebra $E$.

\section{Effective Motives and Realisations}\label{sec:eff}

A \emph{diagram} $D$ is an oriented graph. A \emph{representation} of $D$ is a map of oriented graphs $T:D\to\Ah$ into an abelian category $\Ah$. It assigns an object to every vertex and a morphism to every edge. There is an abstract construction due to Nori that attaches to every representation $T:D\to\Q\Vect$
a $\Q$-linear abelian category. It should be thought of as the abelian category generated by $D$ inside the category $\Q\Vect$. For a particular choice of the diagram and the representation we obtain the category of motives.

\begin{defn}[{\cite[Definition~9.1.1]{period-buch}}]\label{defn:pairseff}
Let $\pairseff$ be the diagram for which
\begin{enumerate}
\item the vertices are triples
$(X,D,i)$ where $X$ is an algebraic variety over $k$, $D\subset X$
is a closed subvariety and $i\in\Na_0$;
\item two types of edges:
\begin{itemize}
\item (functoriality) for every morphism of varieties $X\to X'$ mapping a subvariety
$D\subset X$ to $D'\subset X'$ an edge \[ f^*: (X',D',i)\to (X,D,i);\]
\item (coboundary) for every triple $X\supset Y\supset Z$ an edge
\[ \partial: (X,Y,i)\to (Y,Z,i+1).\]
\end{itemize}
\end{enumerate}
We define the \emph{singular realisation}\index{singular realisation!of a Nori motive}
\[ H_\sing: \pairseff\to\Q\Vect\]
 by mapping a vertex to the singular cohomology 
\[ (X,D,i)\mapsto H^i_\sing(X^\an,D^\an;\Q)\]
of the datum 
and edges of type $f^*$ to pull-back on cohomology and edges of type coboundary
to the coboundary map in the long exact sequence in cohomology.
\end{defn}

\begin{thmdefn}[Nori, {\cite[Definition~9.1.3, Theorem~9.1.10]{period-buch}}]
\label{thmdefn:Nori}
There is are an abelian $\Q$-linear category $\MMNeff(k,\Q)$, the \emph{category of
effective Nori motives over $k$},\index{effective Nori motives}\index{Nori motives!effective} a faithful exact functor
\[ H_\sing:\MMNeff(k,\Q) \to \Q\Vect\]
and a representation 
\[ H_\Nori:\pairseff \to \MMNeff(k,\Q)\]
such that
\[ H_\sing\circ H_\Nori=H_\sing,\]
is an isomorphism of functors,
in particular
\[ H_\sing\circ H_\Nori(X,D,i)=H_\sing(X,D,i)=H^i_\sing(X,Y;\Q).\]

This triple ($\MMNeff(k,\Q)$, $ H_\sing$, $H_\Nori$)  is uniquely determined by the following universal property:

For any abelian $\Q$-linear category $\Ah$, together with a $\Q$-linear faithful exact functor $f:\Ah\to \Q\Vect$ and a representation $T:\pairseff\to\Ah$
such that
\[ f\circ T\isom H_\sing\]
there is
there is a $\Q$-linear
exact functor
\[ \tilde{T}:\MMNeff(k,\Q)\to\Ah\]
and an isomorphism of functors  $f\circ \tilde{T}\to H^*$ which extends the isomorphism on $\pairseff$.
\end{thmdefn}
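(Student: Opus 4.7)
The plan is to deduce this theorem from Nori's general diagram-category formalism (the purely categorical result that to any representation of a diagram into finite-dimensional vector spaces one can associate a universal abelian category). We therefore first verify that $H_\sing$ is a representation into $\Q\Vect$ in the strict sense, i.e., that each $H^i_\sing(X^\an,D^\an;\Q)$ is \emph{finite-dimensional}; this is standard for pairs of algebraic varieties because $(X^\an,D^\an)$ has the homotopy type of a finite CW pair. Then we specialise the general construction to $(\pairseff,H_\sing)$ and read off all the claims.

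For the construction itself, I would proceed through finite subdiagrams $F \subset \pairseff$. Attached to each such $F$ is the finite-dimensional $\Q$-algebra
\[
E(F) \;:=\; \End\bigl(H_\sing|_F\bigr) \;=\; \Bigl\{(\phi_v)_{v\in F}\in\prod_{v\in F}\End_\Q(H_\sing(v))\;\Big|\;\phi_{v'}\!\circ\! T(e)=T(e)\!\circ\!\phi_v\text{ for every edge }e:v\to v'\text{ in }F\Bigr\},
\]
which is a subalgebra of the finite product of $\End_\Q(H_\sing(v))$ and hence finite-dimensional. One sets $\Ah(F):=E(F)\Mod$; by construction $H_\sing|_F$ factors through a representation $\pairseff\supset F \to \Ah(F)$ followed by the forgetful functor to $\Q\Vect$. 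For $F\subseteq F'$ restriction gives an algebra map $E(F')\to E(F)$, hence a base-change functor $\Ah(F)\to\Ah(F')$, and one defines
\[
\MMNeff(k,\Q) \;:=\; \varinjlim_{F\subset\pairseff\text{ finite}}\Ah(F),
\]
together with the induced representation $H_\Nori$ and the forgetful functor $H_\sing:\MMNeff(k,\Q)\to\Q\Vect$. Exactness and faithfulness of $H_\sing$ are inherited from the forgetful functor on each $\Ah(F)$.

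The universal property is the non-formal input. Given a $\Q$-linear faithful exact $f:\Ah\to\Q\Vect$ and a representation $T:\pairseff\to\Ah$ with an isomorphism $\alpha:f\circ T\xrightarrow{\simeq}H_\sing$, the strategy is to extend $T$ finite subdiagram by finite subdiagram. Fix a finite $F$; using $\alpha$ to transport structure, the object $\bigoplus_{v\in F}T(v)\in\Ah$ acquires, through $f$ and faithful exactness, a natural left $E(F)$-action compatible with the edges of $F$. This produces an exact $\Q$-linear functor $\tilde{T}_F:\Ah(F)\to\Ah$ lifting $T|_F$, and compatibility along $F\subset F'$ gives a functor on the colimit $\tilde{T}:\MMNeff(k,\Q)\to\Ah$ with the required properties, unique up to canonical isomorphism.

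The main obstacle is the construction of the $E(F)$-action on $\bigoplus_{v\in F}T(v)$ inside $\Ah$: one must promote a datum that lives a priori only after applying $f$ (commutation of endomorphisms in $\Q\Vect$) to a datum in $\Ah$ itself. This is precisely where the hypothesis that $f$ be \emph{faithful exact} is used: faithfulness lets one recognise morphisms in $\Ah$ by their images under $f$, while exactness is needed to transport kernels, images and direct sums. The verification that these choices are compatible with edges of $\pairseff$ and functorial in $F$ is the technical heart of the argument; this is exactly Nori's original theorem on diagram categories, which we would either quote or prove in detail in Appendix~\ref{sec:app_nori}.
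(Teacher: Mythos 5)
Your outline follows the structure of Nori's construction exactly as it appears in the reference cited in the statement: for each finite subdiagram $F$ one forms the finite-dimensional algebra $E(F)$, sets $\Ah(F)=E(F)\Mod$, takes a $2$-colimit over inclusions $F\subseteq F'$ (with the transition functor being restriction of scalars along $E(F')\to E(F)$, not ``base change'' as you call it), and then proves the universal property one finite subdiagram at a time. Since the paper gives no independent proof of this statement but only cites the literature, this reconstruction is the right framework, and the finite-dimensionality check on $H_\sing$ is the right point to verify first.

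There is, however, a concrete gap at the step you flag as the main obstacle, and you also misdescribe where the difficulty lies. You claim that $\bigoplus_{v\in F}T(v)\in\Ah$ ``acquires, through $f$ and faithful exactness, a natural left $E(F)$-action compatible with the edges of $F$,'' and that the technical heart is checking compatibility with edges and functoriality in $F$. The first claim is false: faithfulness of $f$ gives only an \emph{injection} $\End_\Ah(X)\hookrightarrow\End_\Q(f(X))$, so an endomorphism of $f(X)$ --- such as an element of $E(F)$ acting on $\bigoplus_v H_\sing(v)$ via the isomorphism $\alpha$ --- need not lift to an endomorphism in $\Ah$, and exactness does not produce the lift either. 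Nori's argument does \emph{not} equip $\bigoplus_v T(v)$ with an $E(F)$-action. Rather, one first observes that every finitely generated $E(F)$-module is a subquotient of $V^N$ where $V:=\bigoplus_v H_\sing(v)$ (since $E(F)$ acts faithfully on the finite-dimensional $V$, it embeds as a left module over itself into $\End_\Q(V)\isom V^{\dim_\Q V}$, hence so does any $E(F)^n$), and then one shows that this subquotient data in $\Q\Vect$ transfers across $f$ to subquotient data on $\bigl(\bigoplus_v T(v)\bigr)^N$ in $\Ah$; that transfer lemma, which uses faithfulness and exactness of $f$ in an essential and non-obvious way, is what actually produces $\tilde{T}_F$ and is the genuinely hard content of the theorem. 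To close the gap you would need to carry out that transfer, i.e.\ quote or reprove the relevant lemma from the cited reference rather than rely on the transport-of-structure heuristic you sketched.
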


The universal property can be summed up in a diagram:
\[\begin{xy}\xymatrix{
&\MMNeff(k,\Q)\ar[rd]^{H_\sing}\ar@{-->}[dd]_{\tilde{T}}\\
D\ar[ru]^{H_\sing}\ar[rd]_T&&\Q\Vect\\
&\Ah\ar[ru]_f
}\end{xy}\]
In this very precise sense, $\MMNeff(k,\Q)$ is the abelian category
generated by $\pairseff$.

We also use the notation
\[ H^i_\Nori(X,D)=H_\Nori(X,D,i)\]
and call it the $i$th Nori motive of $(X,D)$.

 For our purposes, the most important choices for $\Ah$ are
the category $\MHS_k$ of mixed $\Q$-Hodge structures over $k$, see Definition~\ref{defn:mhs}, and the category $\VVarg{k}{\Q}$, see Definition~\ref{defn:VV}.
Deligne constructed in \cite{hodge3} a functor $H^*_\hodge=(H^*_\dR,H^*_\sing,\phi)$ from the category of $k$-varieties
to $\MHS_k$. This has been extended to the diagram $\pairseff$ (e.g. 
in \cite{huber_1604} or as a by-product of \cite{Hreal, Hreal2}).  
The situation can be summed up by the commutative  diagram
\[\begin{xy}\xymatrix{
&\MMNeff(k,\Q)\ar[rd]^{H_\sing}\ar@{-->}[dd]_{H_\hodge}\\
D\ar[ru]^{H_\sing}\ar[rd]_{H_\hodge}&&\Q\Vect\\
&\MHS_k\ar[ru]_f
}\end{xy}\]
where $f$ is the forgetful functor $(V_\dR,V_\sing,\phi)\mapsto V_\sing$.

By the universal property of Nori motives, the representation $H_\hodge$ extends to  a functor 
\[ H_\hodge:\MMNeff(k,\Q)\to \MHS_k\]
on Nori motives. 
We sum up: 

\begin{defn}
\begin{enumerate}
\item
The \emph{Hodge realisation}\index{mixed Hodge structure!of a Nori motive}
\[ H_\hodge: \MMNeff(k,\Q)\to \MHS_k\]
is the canonical extension of the representation of $\pairseff$ in $\MHS_k$ compatible with the singular realisation. 
\item The \emph{period realisation}\index{period realisation of a Nori motive}
\[ H:\MMNeff(k,\Q)\to \VVarg{k}{\Q}\]
is defined by forgetting the filtrations (i.e. composing with the faithful exact functor 
$\MHS_k\to \VVarg{k}{\Q}$).
\item The \emph{de Rham realisation}\index{de Rham realisation!of a Nori motive}
\[ H_\dR:\MMNeff(k,\Q)\to k\Vect\]
is defined
by projecting to the $k$-component (i.e. composing with the faithful exact functor $\VVarg{k}{\Q}\to k\Vect$).
\end{enumerate}
\end{defn}

\begin{rem}\label{rem:nach_Nori}
\begin{enumerate}
\item Every effective Nori motive $M$ over $\Qbar$ has a well-defined set of periods given by the periods of  $H(M)\in\VVarg{k}{\Q}$, i.e. the image of the period pairing
\[ H_\dR(M)\times H_\sing(M)^\vee\to \C,\]
see \cite[Section~11.2]{period-buch}.
\item \label{it:subquotient_Nori} By the universal property of $\MMNeff(k,\Q)$, every object in the category is a subquotient
of a motive of the form $H^i_\Nori(X,D)$. This will allow us to reduce questions on periods of motives to motives of the special shape.
\item In our monograph the theory of motives has been set up to be \emph{contravariant} on the category of varieties. This follows the convention of \cite{period-buch}, but differs from Nori's original approach.
\end{enumerate}
\end{rem}

\section{Filtration by Degree}\label{sec:filt}

Following Ayoub and Barbieri-Viale in \cite{ayoub-barbieri} we concentrate on the subcategories generated by motives of bounded cohomological degree or dimension. 

\begin{defn}
For $n\geq 0$ let
$d_n\MMN(k,\Q)\subset \MMNeff(k,\Q)$ be the thick abelian subcategory (i.e. full and closed under extensions and subquotients) generated by the
objects $H^i_\Nori(X,D)$, with $X$ a $k$-variety, $D\subset X$ a closed subvariety and $i\leq n$.\index{degree filtration of Nori motives}\index{filtration by degree of a Nori motive}
\end{defn}

\begin{rem}
Our definition  is the contravariant analogue of \cite[Definition~3.1]{ayoub-barbieri}. By \cite[Proposition~3.2]{ayoub-barbieri} it suffices to deal with the case when $X$ has  dimension at most $n$. The period version of the argument for $n=1$,  which is the case of our interest    is given in Proposition~\ref{prop:reduce_C}.
\end{rem}

For $n=0$, the category $d_0\MMN(k,\Q)$ is the category of Artin motives; see
\cite[Theorem~4.3]{ayoub-barbieri}. 
The following theorem discusses the case $n=1$, which is of direct relevance for us.

\begin{thm}[Ayoub and Barbieri-Viale {\cite[Sections~5, 6]{ayoub-barbieri}}]\label{thm:abv} The following hold.
\begin{enumerate} 
\item
The inclusion 
\[ d_1\MMNeff(k,\Q)\to \MMNeff(k,\Q)\]
 has a left-adjoint. 
\item There is an anti-equivalence of categories 
\[ \onemot_k\to d_1\MMNeff(k,\Q).\]
\item The abelian category $d_1\MMNeff(k,\Q)$ can be described as the diagram category  in the sense of Nori  (see \cite[Theorem~7.1.13]{period-buch}),
defined by
the diagram with vertices $(C,D)$, where $C$ is a smooth affine curve and $D$ a collection of points on $C$, and edges given by morphisms of pairs together with singular cohomology as a representation.
\end{enumerate}
\end{thm}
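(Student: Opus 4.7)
The plan is to establish parts (3), (2), (1) in that order, since (3) provides the foundation and (2) then follows from it; (1) is a consequence of the equivalence.

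For part (3), I would apply Nori's universal construction to the diagram $\Gd$ of pairs $(C,D)$ with $C$ a smooth affine curve over $k$ and $D \subset C$ a finite set of points, equipped with the representation $T(C,D) = H^1_\sing(C^\an,D;\Q)$. By Nori's diagram category theorem, this yields a $\Q$-linear abelian category $\mathcal{C}(\Gd,T)$ with a faithful exact functor to $\Q\Vect$. Since $\Gd$ is a sub-diagram of $\pairseff$ compatibly with the singular representation, the universal property of $\MMNeff(k,\Q)$ produces a canonical exact functor $\mathcal{C}(\Gd,T) \to d_1\MMNeff(k,\Q)$. To see this is an equivalence, I would repeat the reduction of Proposition~\ref{prop:reduce_C}: Jouanolou's trick reduces to affine $X$, Nori's Basic Lemma gives very good filtrations, and the resulting spectral sequence shows every $H^i(X,Y)$ with $i \leq 1$ is a subquotient of a direct sum built from $H^0$ and $H^1(C,D)$ with $C$ smooth affine. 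Since $H^0$ is itself a subquotient of $H^1(\Gm,\{1\})$, the diagram $\Gd$ suffices.

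For part (2), I would construct the contravariant functor $F: \onemot_k \to d_1\MMNeff(k,\Q)$ on generators via generalised Jacobians. Given $M = [L \to G]$, after an isogeny write it as $[\Z[D]^0 \to J(C)]$ for a suitable pair $(C,D)$, and set $F(M) := H^1_\Nori(C,D)$. The compatibility is Proposition~\ref{prop:compare}: under the realisation, $V(M)^\vee \cong H^1(C,D)$ in $\VValg$, so $F$ is well-defined up to canonical isomorphism, and morphisms of $1$-motives lift to morphisms of pairs, giving functoriality. Faithfulness follows from Proposition~\ref{prop:hodge_ff}, which tells us that the composition $\onemot_\Qbar \to \MHS_\Qbar$ (factoring through $F$ composed with the Hodge realisation) is faithful. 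Essential surjectivity follows from part (3): the generators $H^1(C,D)$ lie in the image of $F$, and $F$ is exact, hence its image is closed under subquotients in $d_1\MMNeff$.

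For part (1), once (2) and (3) are known, the left adjoint to $d_1\MMNeff(k,\Q) \hookrightarrow \MMNeff(k,\Q)$ can be constructed as follows. For any $M \in \MMNeff(k,\Q)$, consider the filtered system of all quotients $M \twoheadrightarrow N$ with $N \in d_1\MMNeff$. Since $\MMNeff$ is noetherian (inherited from finite-dimensionality of $H_\sing$), this system stabilises at a largest quotient $\tau_{\leq 1}(M)$. The assignment $M \mapsto \tau_{\leq 1}(M)$ is functorial and, by construction, satisfies $\Hom_{\MMNeff}(M, N) = \Hom_{d_1\MMNeff}(\tau_{\leq 1}(M), N)$ for all $N \in d_1\MMNeff$, exhibiting $\tau_{\leq 1}$ as the desired left adjoint. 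Alternatively, one can appeal to the weight filtration on Nori motives and define $\tau_{\leq 1}$ as a quotient along the weight $\leq 1$ part.

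The main obstacle is the full-faithfulness in part (2): showing that every morphism in $d_1\MMNeff$ between objects of the form $F(M)$ arises from a morphism of $1$-motives. This requires identifying the endomorphisms and extensions in the diagram category with their $1$-motivic counterparts, and ultimately rests on the analytic subgroup theorem, which (as in Proposition~\ref{prop:hodge_ff}) is needed to show that analytic group homomorphisms between semi-abelian varieties induced by motivic morphisms over $\Qbar$ are in fact algebraic over $\Qbar$.
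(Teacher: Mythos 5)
This theorem is quoted in the paper from Ayoub and Barbieri-Viale (with a citation to Sections~5 and~6 of their paper); the paper does not prove it, so you are supplying a proof that is deliberately omitted. Evaluating your attempt on its own terms, the main problem is your treatment of full-faithfulness in part~(2). You say it ``ultimately rests on the analytic subgroup theorem,'' invoking Proposition~\ref{prop:hodge_ff}---but that proposition is proved only for $k = \Qbar$ precisely because its proof needs transcendence input, whereas Theorem~\ref{thm:abv} is stated (and used in Section~\ref{ssec:mot}) for a general algebraically closed $k \subset \C$. The Ayoub--Barbieri-Viale proof is a purely motivic and geometric argument via the Albanese functor and the theory of $1$-motivic sheaves; it does not rest on transcendence theory at all. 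Even if you completed your route, you would obtain a weaker statement restricted to $\Qbar$, not the one being cited.

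There are two further gaps. In part~(2), the proposed assignment $F(M) = H^1_\Nori(C,D)$ is not well-defined on objects: an arbitrary $1$-motive $[L\to G]$ is not isogenous to one of the form $[\Z[D]^0 \to J(C)]$ (the generalised Jacobian is in general strictly larger than $G$), so $M$ is only a \emph{subquotient} of such a motive, and functoriality then does not follow from morphisms of pairs $(C,D)\to(C',D')$ as you claim. In part~(3), you reduce to curves by ``repeating'' Proposition~\ref{prop:reduce_C}, but that reduction makes essential use of the boundary maps of the long exact sequence, which are edges of $\pairseff$ absent from your diagram $\Gd$ (whose only edges are morphisms of pairs); proving that deleting the coboundary edges does not change the diagram category is exactly the hard step that Ayoub and Barbieri-Viale carry out, and it is not automatic. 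As a minor side remark, your construction of the left adjoint in part~(1) is sound as a general categorical argument in a finite-length abelian category with a thick subcategory closed under subquotients, but it does not actually use parts~(2) or~(3), contrary to what you assert.
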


\section{Non-effective Motives}\label{sec:non-eff}

The full category of motives is constructed from the category of effective motives by inverting the  \emph{Lefschetz motive}
 \[ \Q(-1)=H^2_\Nori(\Pe^1)=H^*_\Nori(\Gm,\{1\}).\]
We explain the construction.

\begin{rem}
Nori constructs a tensor structure on the category $\MMNeff(k,\Q)$. However, the resulting tensor category is not rigid. This defect can resolved by passing to $\MMN(k,\Q)$, which turns out to be rigid. 
We do not need the tensor structure, so we do not go into details; instead see \cite[Section~9.3]{period-buch}.
\end{rem}

The map of diagrams
\[ \pairseff\to\pairseff\]
given by
\[ (X,D,i)\mapsto (X\times\Gm,X\times \{1\}\amalg D\times\Gm,i+1)\]
is compatible with the singular realisation because
\begin{multline*}
 H_\sing^{i+1}(X\times\Gm,X\times\{1\}\amalg D\times\Gm;\Q)\\
\isom H_\sing^i(X,D;\Q)\tensor H^1(\Gm,\{1\};\Q)\isom H^i_\sing(X,D;\Q).
\end{multline*}
Note that this stupid version of the Künneth formula actually holds true because the whole cohomology of $(\Gm,\{1\})$ is concentrated in degree $1$. The datum $(\Gm,\{1,\},1)$ is what Nori calls a \emph{good pair}. 

By the universal property of  the category of effective Nori motives, this induces a faithful exact functor
\[ (-1):\MMNeff(k,\Q)\to\MMNeff(k,\Q),\]
the \emph{Tate twist}.

\begin{defn}
The category of \emph{Nori motives}\index{Nori motives!non-effective} $\MMN(k,\Q)$ over $k$ with coefficients in 
$\Q$ is defined as the localisation  of $\MMNeff(k,\Q)$ with respect to
the twist functor $(-1)$: objects in $\MMN(k,\Q)$ are of the form
$M(i)$ for $M\in\MMNeff(k,\Q)$ and $i\in\Z$ and
\[ \Hom_{\MMN(k,\Q)}( M(i),N(j))=\lim_{n\to\infty}\Hom_{\MMNeff(k,\Q)}(M(i+n),N(j+n))\]
and
 $(-1)$ induces an equivalence of categories
\[ (-1):\MMN(k,\Q)\to\MMN(k,\Q).\]
\end{defn}
See \cite[Section~8.2]{period-buch} for a construction of the same localisation in terms of diagrams. The natural functor
\[ \MMNeff(k,\Q)\to\MMN(k,\Q)\]
is faithful because both categories are equipped with forgetful functors into the category of $\Q$-vector spaces.. However, we do not know if it is full.

\begin{rem}
It is also an open question whether the inclusion 
\[ d_1\MMNeff(k,\Q)\hookrightarrow \MMN(k,\Q)\]
 into the category of all motives is full. We give a positive answer for
$k=\Qbar$ in Theorem~\ref{thm:ff}.
\end{rem}

\section{The Period Conjecture}
\index{Period Conjecture!for Nori motives}

In this section we restrict to $k=\Qbar$.

The formalism of Chapter~\ref{ch:formalism} can be applied to
the  diagram $\pairseff$ of Definition~\ref{defn:pairseff} or the additive category $\MMNeff(\Qbar,\Q)$ and the representation/functor $H$ with values
in $\VVarg{\Qbar}{\Q}$. We make this explicit.

 The periods of $\pairseff$ are the period numbers 
\[ \Per(\pairseff)=\bigcup_{i=0}^\infty\Per^i=:\Per^\eff\]
 of
Definition~\ref{defn:coh_periods}
On the other hand, we have
\[ \Per(\pairseff)=\Per(\MMNeff(\Qbar,\Q))\]
because any object of $\MMNeff(\Qbar,\Q)$ is a subquotient of an object of the form $H^i_\Nori(X,D)$ see Remark~\ref{rem:nach_Nori} (\ref{it:subquotient_Nori}).
In Definition~\ref{defn:formal_abstract}, we also introduced the notion of a vector space of formal periods attached to an additive category or a diagram. The notion aims at a description of actual period spaces in terms of generators and relations. A priori, passing from the diagram $\pairseff$ to the abelian category $\MMNeff(\Qbar,\Q)$ might introduce new generators and more relations. However, this is not the case. 
We also have
\[ \Perform(\pairseff)=\Perform(\MMNeff(\Qbar,\Q));\]
this holds true for any representation of a diagram and its diagram category. 
The fact is implicit in \cite{period-buch}; see \cite[Theorem~3.7]{huber_galois}
for full details. 
Conjecture~\ref{conj:kontsevich_full} is the 
Period Conjecture for $\pairseff$ in the sense of Definition~\ref{defn:conj_abstract}, i.e.  the question about  injectivity of the map 
\[ \Perform(\pairseff)\to \Per^\eff. \]
Up to a minor changes the conjecture was formulated by Kontsevich in \cite{kontsevich}. We refer the reader to in \cite[Remark~13.1.8]{period-buch}
for a detailed discussion. Injectivity is equivalent to the Period Conjecture for
effective Nori motives over $\Qbar$, i.e. the injectivity of
\[ \Perform(\MMNeff(\Qbar,\Q))\to \Per^\eff.\]
By Corollary~\ref{cor:fullness_abstract}, the Period Conjecture implies fullness of $\MMNeff(\Qbar,\Q)$.
By Lemma~\ref{lem:conj_single}, the Period Conjecture for $\MMNeff(\Qbar,\Q)$ is equivalent to the Period Conjecture for $\langle M\rangle $ for all $M\in\MMNeff(\Qbar,\Q)$. Here, as in Definition~\ref{defn:single}, we denote by $\langle M\rangle\subset\MMNeff(\Qbar,\Q)$ the smallest full subcategory containing $X$ which is closed under subquotients. Moreover, the Period Conjecture for $M$ can be reformulated as asking for
\[ \dim_\Qbar\Per\langle M\rangle=\dim_\Q E(M)\]
with
\[ E(M):=\End(H_\sing|_{\langle M\rangle}),\]
as in Definition~\ref{defn:End}.
Unconditionally, we get the estimate
\[ \dim_\Qbar\Per\langle M\rangle\leq \dim_\Q E(M).\]

In Section~\ref{sec:filt} we have introduced the filtration of $\MMNeff(\Qbar,\Q)$ by degree.

\begin{cor}
The Period Conjecture for all motives is equivalent to the Period Conjecture for
$d_{n}\MMNeff(\Qbar,\Q)$ for all $n\geq 0$.
\end{cor}
\begin{proof}If $M\in d_{n}\MMNeff(\Qbar,\Q)$, then even
$\langle M\rangle\subset\MMNeff(\Qbar,\Q)$. Moreover, every object of
$\MMNeff(\Qbar,\Q)$ is contained in some $d_n\MMNeff(\Qbar,\Q)$. If the Period Conjecture holds for $\MMNeff(\Qbar,\Q)$, then it holds for all
$\langle M\rangle$ and hence for all $d_n\MMNeff(\Qbar,\Q)$, and conversely.
\end{proof}

One of the main results of the present monograph is the validity of
the Period Conjecture for $d_1\MMNeff(\Qbar,\Q)$; see Theorem~\ref{thm:main_kontsevich}. This implies the dimension formula for all $1$-motives; see Corollary~\ref{cor:dim_easy}.

The Period Conjecture can also be formulated for non-effective Nori motives, see Section~\ref{sec:non-eff}.
We have 
\[ \Per(\MMN(\Qbar,\Q))= \Per^\eff[1/2\pi i]\]
because $2\pi i$ is the period of $\Q(-1)$. As we do not know if
$\MMNeff(\Qbar,\Q)\to \MMN(\Qbar,\Q)$ is full, it is also an open question whether
$\Perform(\MMNeff(\Qbar,\Q))\to \Perform(\MMN(\Qbar,\Q))$ is injective.
By Proposition~\ref{prop:is_full} this injectivity is a consequence of the Period Conjecture for $\MMN(\Qbar,\Q)$. We deduce it for the category $d_1\MMN(\Qbar,\Q)$ in
Theorem~\ref{thm:ff}.

\chapter{Voevodsky Motives}\label{sec:voe}
\index{geometric motives}\index{motive!geometric}

An alternative approach to the theory of motives starts out with a triangulated category of motives, thought of as the bounded derived category of motives. We use Voevodksy's approach. His category and the results known about its relation to $1$-motives are only used in the proof of Theorem~\ref{thm:main_kontsevich} (2).
We refer to Voevodsky's survey in \cite{voevodsky-motives} for explicit constructions and proofs.

\section{Geometric Motives and the Singular Realisation}

Let $k$ be a field of characteristic $0$. We work with $\Q$-coefficients throughout. This assumption makes most of subtleties of \cite{voevodsky-motives}, like finite correspondences and the distinction between the Nisnevich and the \'etale topology unnecessary. There are alternative constructions by different authors which yield the same result. For  example, Ayoub's category $\DA_c^\eff(k,\Q)$ built with the etale topology and without transfers  (see \cite{ayoub_etale}) is equivalent to Voevodksy's $\DMgmeff(k,\Q)$ built with the Nisnevich topology and correspondences. 

\begin{defn}[{\cite[Definition~2.1.1]{voevodsky-motives}}]\label{defn:geom_motives}
We denote by $\DMgmeff(k,\Q)$ the triangulated category of \emph{effective geometric motives} and by
\[ M:\mathrm{Var}_k\to \DMgmeff(k,\Q) \]
the covariant functor which attaches to an algebraic $k$-variety its \emph{(Voevodsky) motive}. 
\end{defn}

Singular cohomology \index{singular cohomology}\index{singular realisation!of a geometric motive}extends to a contravariant functor
\[ H_\sing:\DMgmeff(k,\Q)\to \Q\Vect\]
such that
\[ H_\sing(M(X)[i])=H^i_\sing(X,\Q).\]
This is a consequence and easy special case of \cite{Hreal, Hreal2}.

\begin{rem}
From the point of view of Voevodsky motives as well as from the point of view of $1$-motives, it can be argued that it would be more natural to work with singular \emph{homology} instead. Indeed, this is the point of view taken by Ayoub in his construction of the Betti realisation not only of motives over a field, but over general base; see \cite{ayoub-betti}. On the other hand, de Rham cohomology is a lot more natural than de Rham homology. 
We stick to the conventions set up in \cite{period-buch}.
\end{rem}

Let $\pairseff$ be the diagram considered in Definition~\ref{defn:pairseff} in order to define Nori motives.

\begin{lemma}
The functor $M$ extends to a representation
\[ M:\pairseff\to \DMgm(k,\Q)\]
such that $H_\sing\circ M$  agrees with the singular realisation of
Section~\ref{sec:eff}.
\end{lemma}
\begin{proof}
Let $(X,Y,i)$ be a vertex of $\pairseff$, i.e.  $X$ an algebraic variety, $Y$ a closed subvariety and $i\geq 0$. We define
\[ M(X,Y,i)=M(Y\to X)[i]\]
where $M(Y\to X)$ is the object of $\DMgmeff(k,\Q)$ corresponding to
the bounded complex $ [Y\to X]$ in the additive category $\mathrm{SmCor}$ of finite correspondences; see \cite[Section~2.1]{voevodsky-motives}. Its singular realisation is $H^i_\sing(X,Y;\Q)$.

A morphism $f:X\to X'$ mapping $Y$ to $Y'$ induces a natural morphism $f^*:M(X',Y',i)\to M(X,Y,i)$. A triple $Z\subset Y\subset X$ induces
\[ \partial: M(X,Y,i)\to M(Y,Z,i+1).\]
Both are mapped to the correct map on singular cohomology.
\end{proof}

\section{Filtration by Dimension}

\begin{defn}[{\cite[Section~3.4]{voevodsky-motives}}]\label{defn:dim_filt}
\index{dimension filtration for geometric motives}\index{filtration by dimension for geometric motives}
For $n\geq 0$ let
$d_n\DMgmeff(k, \Q)$ be the full thick subcategory generated by the motives of the form  $M (X)$ for  a smooth variety $X$ of dimension at most $n$.
\end{defn}

The case $n=0$ is easy. Voevodsky showed that $d_0\DMgmeff(k,\Q)$ is equivalent to the bounded derived category of finite dimensional continuous representations of $\Gal(\bar{k}/k)$. If $k$ is algebraically closed, this is simply the
bounded derived category of $\Q\Vect$.

\begin{thm}[Orgogozo \cite{orgogozo}, Barbieri-Viale--Kahn \cite{BVK}]\label{thm:orgo_app}
There is a natural equivalence of triangulated categories
\[ D^b(\onemot_k)\to d_1\DMgm(k,\Q)\]
from the derived category $D^b(\onemot_k)$ of the abelian category of iso-$1$-motives to 
$d_1\DMgm(k,\Q)$.
The inclusion $d_1\DMgm(k,\Q)\to \DMgmeff(k,\Q)$ has a left adjoint which is a section.
\end{thm}

For $n\geq 2$, the subcategories $d_n\DMgmeff(k,\Q)$ remain mysterious.

\section{Relation to Nori Motives}

We have now seen two approaches to a theory of motives. They are related.
\begin{thm}[Nori, Harrer \cite{harrer}]\label{thm:harrer}
\index{Nori motives}
There is a triangulated functor
\[ \DMgmeff(k,\Q)\to D^b(\MMNeff(k,\Q))\]
between triangulated categories compatible with the singular realisation
 into the derived category of
$\Q$-vector spaces. 
\end{thm}
\begin{proof} The existence of the functor is due to Harrer, based on a construction of Nori. 
\end{proof}

\begin{prop}\label{prop:harrer_dim}
The functor of Theorem~\ref{thm:harrer} maps $d_n\DMgmeff(k,\Q)$ to 
the full subcategory of $D^b(\MMNeff(k,\Q))$ consisting
of objects with cohomology in $d_n\MMNeff(k,\Q)$.
\end{prop}
\begin{proof}
As $d_n\MMNeff(k,\Q)$ is closed under subquotients and extensions, it suffices to check the claim for a system of generators for $d_n\DMgmeff(k,\Q)$ as a triangulated category.
We first do the case $n=1$, which is of most significance for
us. 

Let $C$ be a connected smooth variety of dimension $0$ or $1$ and $M(C)$ be the
corresponding object in $\DMgm(k,\Q)$. Its image in $D^b(\MMN(k,\Q))$ has
cohomology in degrees at most $2$. Cohomology in degree $2$ only occurs
if $C$ is a smooth proper curve. In this case
\[ H^2_\Nori(C)\isom H^2_\Nori(\Pe^1)\isom H^1_\Nori(\Gm),\]
hence it is also in $d_1\MMN(k, \Q)$.

In the general case, we need to consider
$M(X)$ with $X$ a smooth variety of dimension at most $n$. It remains 
to show that $H^i_\Nori(X)$ is in $d_n\MMNeff(k,\Q)$.
By the Mayer-Vietoris property we may assume that $X$ is affine. 
We then follow the construction of $H^i_\Nori(X)$ in \cite{period-buch}.
We choose a \emph{good filtration}\index{good filtration} 
\[ X_0\subset X_1\subset \dots\subset X_n=X\]
by closed subvarieties such that $H^i_\sing(X_j,X_{j-1};\Q)$ is concentrated in
degree $j$. (The existence of such a filtration is guaranteed by Nori's Basic Lemma; see~\cite[Proposition~9.2.3]{period-buch}.) This yields the complex
\[ H^0_\Nori(X_0)\to H^1_\Nori(X_1,X_0)\to\dots\to H^n_\Nori(X_n,X_{n-1}),\]
whose cohomology is equal to $H^*_\Nori(X)$. As the complex is in
$d_n\MMNeff(k,\Q)$, so is its cohomology.
\end{proof}


\chapter{Comparison of Realisations}\label{ch:comp_real}

In this appendix, we identify Deligne's explicit realisations of $1$-motives
with realisations of motives of algebraic varieties. 

We work over an algebraically closed field $k$ with a fixed embedding into $\C$.
Recall the functor
\[ V:\onemot_k\to \MHS_k\to \VVarg{\Q}{k}.\]

It maps the iso-$1$-motive $M=[L\to G]$ to the triple consisting of its singular realisation $V_\sing(M)$, its de Rham realisation $V_\dR(M)$ and the period isomorphism.
We refer to Chapter~\ref{sec:one-mot} for their construction.\index{period realisation!of a $1$-motives}\index{singular realisation!of a $1$-motive}\index{de Rham realisation!of a $1$-motive}

On the other hand, there is a functor
\[ (X,Y)\mapsto H^1(X,Y)\in\VVarg{k}{\Q}\]
for pairs \index{singular realisation!of a Nori motive}of a variety $X$ and a closed subvariety. A complete reference for its construction is \cite{period-buch} and this is what we are going to rely on. We have already discussed it in Appendix~\ref{sec:app_nori} in the context of Nori motives. Alternatively, we can also construct it from the theory of Voevodsky motives, see Appendix~\ref{sec:voe}: every morphism
 $Y\to X$ of smooth $k$-varieties gives rise to
an object $M(Y\to X)\in\DMgm(k)$. Let $H^0:\DMgm(k)\to\VVarg{k}{\Q}$  be the standard cohomological functor of \cite{Hreal, Hreal2} and $H^1=H^0\circ [1]$. By composition
we define
\[ H^1(Y\to X)\in \VVarg{k}{\Q}. \]
Its de Rham component is defined explicitly as  $H^1$ of
\[ \RGamma_\dR(X,Y):=\cone(\RGamma_\dR(Y)\to \RGamma_\dR(X))[-1]\]
and its singular component as $H^1$ of
\[ \RGamma_\sing(X,Y):=\cone(\RGamma_\sing(Y)\to \RGamma_\sing(X))[-1],\]
where $\RGamma_\dR$ and $\RGamma_\sing$ are functorial complexes computing de Rham and singular cohomology respectively; see \cite[Section 3.3.3, Section~5.5]{period-buch}. They are connected by a period isomorphism, see \cite[Corollary~5.52]{period-buch}. In the special case $Y\subset X$, we get back relative cohomology.

\section{The de Rham Realisation Revisited}\label{ssec:de_rham}

In Chapter~\ref{ch:homology} we gave an explicit definition of relative algebraic de Rham cohomology in the smooth case. We need to relate it to the one in our main reference \cite{period-buch}
\index{de Rham cohomology}\index{de Rham realisation!of a pair}

\begin{lemma}
Let $X$ be a smooth variety and $\Uf$ a finite open affine cover of $X$. Then there is a natural isomorphism of complexes in the derived category
\[ \RGammatilde_\dR(X,\Uf)\to \RGamma_\dR(X),\]
functorial for affine maps.
If $Y\subset X$ is a smooth closed subvariety, then 
\[ \RGammatilde_\dR(X,Y,\Uf):=\cone(\RGamma_\dR(Y,\Uf\cap Y)\to \RGamma_\dR(X,\Uf))[-1]\]
is naturally isomorphic to $\RGamma_\dR(X,Y)$.
\end{lemma}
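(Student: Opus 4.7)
The plan is to recognise $\RGamma_\dR(X,\Uf)$ as the total complex of the \v{C}ech bicomplex associated to the affine cover $\Uf$ and the complex of sheaves $\Omega^\bullet_X$, and then to invoke the standard comparison between \v{C}ech cohomology and derived functor hypercohomology. Recall that on an affine variety every coherent sheaf is acyclic (Serre/Grothendieck vanishing), hence so is each $\Omega^q_X|_{U_I}$, since $U_I$ is a finite intersection of affine opens in the separated scheme $X$ and therefore itself affine. Thus $\Uf$ is a Leray cover for every $\Omega^q_X$.

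First I would fix a Cartan--Eilenberg (or injective) resolution $\Omega^\bullet_X \to \mathcal{I}^{\bullet\bullet}$ and compare the two triple complexes obtained by applying $C^p(\Uf,-)$ versus $\Gamma(X,-)$ to $\mathcal{I}^{\bullet\bullet}$. The augmentation $\Gamma(X,-) \to C^\bullet(\Uf,-)$ is a quasi-isomorphism on each $\mathcal{I}^{q,r}$ because injective sheaves are flasque and hence have vanishing higher \v{C}ech cohomology. In the other direction, the augmentation $\Omega^q_X \to \mathcal{I}^{q,\bullet}$ becomes, after applying $C^p(\Uf,-)$, a quasi-isomorphism on each $U_I$ by acyclicity of coherent sheaves on affines. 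Taking totalisations and comparing yields a zig-zag of quasi-isomorphisms
\[
\RGamma_\dR(X,\Uf) \xrightarrow{\sim} \mathrm{Tot}\, C^\bullet(\Uf,\mathcal{I}^{\bullet\bullet}) \xleftarrow{\sim} \mathrm{Tot}\,\Gamma(X,\mathcal{I}^{\bullet\bullet}) = \RGamma_\dR(X),
\]
natural in the pair $(X,\Uf)$ whenever the refinement map is defined, i.e.\ for any affine morphism $f : Y \to X$ with $f^{-1}\Uf$ an affine cover of $Y$ (since preimages of affines under affine maps are affine).

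For the relative statement, having established the absolute quasi-isomorphism naturally for affine inclusions, I would note that the closed immersion $Y \hookrightarrow X$ is affine, so $\Uf \cap Y$ is an affine cover of $Y$ and the natural map $\RGamma_\dR(Y,\Uf\cap Y) \to \RGamma_\dR(Y)$ is a quasi-isomorphism compatible with the restriction $\RGamma_\dR(X,\Uf) \to \RGamma_\dR(Y,\Uf\cap Y)$. Passing to shifted cones, this commutative square of quasi-isomorphisms yields the desired quasi-isomorphism
\[
\RGamma_\dR(X,Y,\Uf) = \cone\bigl(\RGamma_\dR(Y,\Uf\cap Y) \to \RGamma_\dR(X,\Uf)\bigr)[-1] \xrightarrow{\sim} \RGamma_\dR(X,Y).
\]
The only mildly delicate point is strictness of naturality on the nose rather than up to homotopy; the standard remedy, which I would adopt, is to work in the derived category throughout, where the zig-zag above becomes a canonical isomorphism and all compatibilities follow formally. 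No new computation with differential forms is required beyond the vanishing of higher coherent cohomology on affines.
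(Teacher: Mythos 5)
Your argument is correct and follows the same basic route as the paper: compare $\RGamma_\dR(X,\Uf)$ to the \v{C}ech complex of a resolution of $\Omega^*_X$, using vanishing of higher coherent cohomology on affines (and that finite intersections of affine opens in a separated scheme are affine) to produce quasi-isomorphisms in both directions, then pass to cones for the relative case. The one difference is the choice of resolution: the paper uses the Godement resolution $\Gd_X\Omega^*_X$, since that is literally how $\RGamma_\dR(X)$ is defined in \cite[Definition~3.3.14]{period-buch}, and the Godement resolution is strictly functorial, so naturality for affine maps holds on the nose. You instead take an arbitrary Cartan--Eilenberg/injective resolution, which gives a zig-zag through a triple complex and forces the closing caveat about strictness and working in the derived category. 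Both routes succeed; the paper's choice of the Godement resolution simply dissolves the naturality worry you raise at the end.
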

\begin{proof}
The definition of $R\Gamma_\dR$ for complexes of smooth varieties
is given in \cite[Definition~3.3.1]{period-buch}. For a single smooth
variety, by \cite[Definition~3.3.14]{period-buch}, it is given as global sections
of the Godement resolution (\cite[Section~1.4.2]{period-buch})
\[ \RGamma_\dR(X)=\Gd_X\Omega^*_X(X).\]
The \v{C}ech-complex 
\[ \mathrm{tot}(C^*(\Uf,\Gd_X\Omega^*_X))\] 
 receives natural quasi-isomorphisms both from
$\RGammatilde(X, \Uf)$ (because the natural map
$\Omega^*_X\to\Gd_X\Omega^*_X$ is quasi-isomorphism of complexes of sheaves)
and from $\RGamma_\dR(X)$ (because the cover $\Uf$ refines
$X$). Together they define an isomorphism in the derived category.

The construction extends to complexes $Y\to X$.
\end{proof}

\section{The Comparison Result}

In Definition~\ref{defn:external}, we introduced the external duality functor
\[ \cdot^\vee:\VVarg{\Q}{k}\to \VVarg{k}{\Q}\]
mapping the triple $(V_k,V_\Q,\phi)$ to $(V_\Q^\vee, V_k^\vee,\phi^\vee)$.

\begin{prop}\label{prop:compare}
Let $M=[L\xrightarrow{f} G]$ be a $1$-motive and $e_1,\dots,e_r$ be a basis of
$L$ and put $e_0=0$. We put $Z=\coprod_{i=0}^r\Spec(k)=\{P_0,\dots,P_r\}$
and $\tilde{f}:Z\to G$ given by $\tilde{f}(P_i)=f(e_i)$.
Then\index{$1$-motive!comparison of realisations} 
\[ V(M)^\vee\isom H^1(Z\xrightarrow{\tilde{f}}G).\]
\end{prop}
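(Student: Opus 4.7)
The plan is to realise both $V(M)^\vee$ and $H^1(Z \to G)$ as extensions in $\VValg$ of $V(G)^\vee \isom H^1(G)$ by $V([L\to 0])^\vee \isom \tilde{H}^0(Z)$, and then show the two extensions agree via explicit compatible isomorphisms on each component. Here $\tilde{H}^0(Z)$ denotes $\coker(H^0(\Spec k) \to H^0(Z))$ in $\VValg$.

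First I will handle the two endpoint identifications. For $M = [0 \to G]$ (so $r = 0$, $Z = \{P_0\}$), one has $V_\sing(G) = H_1(G^\an, \Q)$ by definition, so $V_\sing(G)^\vee = H^1_\sing(G, \Q)$; and $V_\dR(G) = \Lie(G^\natural)$ with dual $H^1_\dR(G) = \coLie(G^\natural)$, the standard de Rham computation for a semi-abelian group. For $M = [L \to 0]$ (so $G = 0$), one has $M^\natural = \Ga(L_k)$ and hence $V_\dR([L\to 0]) = L_k$ with dual $L^\vee_k \isom \tilde{H}^0_\dR(Z)$ via $e_i^\vee \leftrightarrow \chi_{P_i} - \chi_{P_0}$; the singular side is analogous, using $V_\sing([L\to 0]) = L_\Q$. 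Compatibility with the period isomorphism $\phi$ is routine in both extremal cases.

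Second, I combine these using the short exact sequence $0 \to [0\to G] \to M \to [L\to 0] \to 0$ of iso-$1$-motives. Applying the functor $V: \onemot_k \to \VValg$, which is exact by Proposition~\ref{prop:subquot_is_mot} combined with the exactness of the Hodge realisation, and dualising gives
\[ 0 \to V([L\to 0])^\vee \to V(M)^\vee \to V(G)^\vee \to 0 \]
in $\VValg$. Comparing with the long exact sequence associated to $\RGamma(Z \to G) = \cone(\RGamma(G) \to \RGamma(Z))[-1]$, which yields
\[ 0 \to \tilde{H}^0(Z) \to H^1(Z \to G) \to H^1(G) \to 0 \]
(using $H^1(Z) = 0$ and that $H^0(G) \to H^0(Z)$ is the diagonal $\Qbar \hookrightarrow \Qbar^{r+1}$), we have two extensions of the same sub- and quotient-objects.

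The final and crucial step is constructing the isomorphism. On singular homology, I send $(v, l) \in T_\sing(M)$ (with $v \in \Lie(G^\an)$, $l \in L$, $\exp(v) = f(l)$) to the class of the relative $1$-cycle $(\gamma_v, z_l)$ in $\cone(S_*^\infty(Z^\an) \to S_*^\infty(G^\an))$, where $\gamma_v(t) = \exp(tv)$ is the path in $G^\an$ from $0$ to $f(l)$ and $z_l \in \Z[Z]^0$ is the image of $l$ under $L \isom \Z[Z]^0$, $e_i \mapsto P_i - P_0$. The identity $\partial \gamma_v = f(l) - 0 = \tilde{f}_* z_l$ confirms $(\gamma_v, z_l)$ is a relative $1$-cycle; the five lemma applied to the two short exact sequences above then promotes this to an isomorphism $V_\sing(M) \isom H_1(G, Z; \Q)$, whose dual is $H^1_\sing(Z \to G; \Q)$. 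The de Rham isomorphism uses the lift $L \to M^\natural$: a \v{C}ech representative of $H^1_\dR(Z \to G)$ from Lemma~\ref{lem:de_rham_explicit} pairs with $\Lie(M^\natural)$ via the natural integration of invariant forms on $M^\natural$ along lifted paths. The main obstacle is the de Rham identification and its compatibility with the period pairing: concretely, one must match Deligne's identification $V_\dR(M) = \Lie(M^\natural)$, constructed via the universal vector extension, with the \v{C}ech cocycle description of $H^1_\dR(Z \to G)$, and verify that integration of a cocycle $(\omega_i, f_{ij}, g_i)$ against $\gamma_v$ reproduces the period pairing via $\phi_M$. This compatibility holds essentially because $\phi_M$ itself was constructed in Chapter~\ref{sec:one-mot} out of the same exp-lifting procedure.
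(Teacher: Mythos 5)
Your overall architecture — splice $V(M)^\vee$ and $H^1(Z\to G)$ into short exact sequences with the same sub- and quotient-objects, identify the extremes, then compare — is the same strategy the paper takes (Lemmas~\ref{lem:lem1}--\ref{lem:lem3} compare the long exact sequences of the two sides), and the singular part of your construction is essentially their Lemma~\ref{lem:lem2}: the cycle $(\gamma_v, z_l)$ with $\partial\gamma_v = \tilde f_*z_l$ is exactly the explicit map used there, and the five-lemma step is fine once the commutativity with the sub- and quotient maps is checked (a routine diagram chase you should still write out).

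The genuine gap is on the de Rham side. You describe the isomorphism $V_\dR(M)^\vee\isom H^1_\dR(Z\to G)$ as ``pairing'' a \v{C}ech cocycle with $\Lie(M^\natural)$ ``via the natural integration of invariant forms on $M^\natural$ along lifted paths.'' But that isomorphism lives entirely over $\Qbar$ — it is a map between $\Qbar$-vector spaces and must be given by algebra, not by integration. Integration along a path is a transcendental operation on $\C$-points, and is what defines the \emph{period pairing} $V_\dR(M)^\vee\times V_\sing(M)\to\C$, i.e.\ the content of the compatibility statement (Lemma~\ref{lem:lem3}), not of the de Rham identification itself. As written, you have collapsed Lemma~\ref{lem:lem1} into Lemma~\ref{lem:lem3}, and there is no algebraic map on the de Rham component at all. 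The correct construction is roughly: an element of $\coLie(M^\natural)$ extends to an invariant, closed $1$-form on $M^\natural$; since $M^\natural\to G$ is a torsor under a vector group, $H^1_\dR(M^\natural)\isom H^1_\dR(G)$ by homotopy invariance, which supplies the $H^1_\dR(G)$-component of the cocycle; the extra $H^0_\dR(Z)$-component comes from restricting a primitive of the pullback of $\omega(\alpha)$ along the canonical lift $L\to M^\natural$, and one has to check explicitly that the resulting arrow on $\Hom(L,\G_a)\to H^0_\dR(Z)/H^0_\dR(G)$ is the obvious bijection. None of this uses paths or the complex embedding. You should also be explicit that $\coLie(G^\natural)\to H^1_\dR(G)$ is an isomorphism (a fact for semi-abelian varieties, checked on $\G_m$ and on abelian $A$ via Hodge theory), since that is the needed ingredient on the subobject of the five-lemma diagram.
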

The proof will take the rest of the appendix. Before going into it, we want to record a consequence. Let $M=[L\to G]$ be a $1$-motive. The assignment
\[ S\mapsto \left[L\tensor\Q\to G(S)\tensor_\Z\Q\right]\]
defines a complex of homotopy invariant Nisnevich sheaves with transfers on the category $\Sm/k$ of smooth $k$-varieties; see \cite[Lemma~2.1.2]{AEH} building on work of Spie\ss-Szamuely and Orgogozo. 
 Hence it defines
an object  $\uL\to\uG$ in Voevodsky's category of motivic complexes with $\uL$ in degree $0$. In fact, it is even
an object of the full subcategory $\DMgm(k,\Q)$; see \cite[Proposition~5.2.1]{AEH}. We denote it $M_\gm(\uL\to \uG)$.

\begin{cor}\label{cor:one_vs_geom}
\index{geometric motives!vs $1$-motives}
Let $M=[L\to G]$ be a $1$-motive over $k$. Then
\[ V(M)^\vee\isom H^*(M_\gm(\uL\to \uG))=H^0(M_\gm(\uL\to \uG)).\]
\end{cor}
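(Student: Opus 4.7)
The plan is to deduce this corollary from Proposition~\ref{prop:compare} by constructing a canonical isomorphism $M_\gm(\uL \to \uG) \isom M(Z \xrightarrow{\tilde{f}} G)[1]$ in $\DMgm(k,\Q)$. Granting this identification, the cohomological functor $H^0: \DMgm(k,\Q) \to \VV$ yields
\[ H^*(M_\gm(\uL \to \uG)) = H^0(M(Z \to G)[1]) = H^1(Z \to G), \]
which by Proposition~\ref{prop:compare} equals $V(M)^\vee$. Concentration of $H^*$ in a single degree is then automatic, since $V(M)^\vee$ is a single object of $\VV$ and the left-hand side computes it through a fixed shift.

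To build the isomorphism, I would invoke Orgogozo's equivalence $D^b(\onemot_k) \simeq d_1\DMgm(k,\Q)$ (Theorem~\ref{thm:orgo}). By construction (going back to the Barbieri-Viale--Kahn / \cite{AEH} description recalled in the excerpt), $M_\gm(\uL \to \uG)$ is precisely the image of the $1$-motive $[L \to G]$ under this equivalence. On the other side, I would construct a canonical morphism $M(Z \to G)[1] \to M_\gm(\uL \to \uG)$ in $\DMgm(k,\Q)$ using the Albanese-type morphisms: the cone of $M(P_0) \to M(Z)$ is canonically $\uL$ via the basis $e_1, \dots, e_r$, and for semi-abelian $G$ the Yoneda morphism corresponding to $\id_G \in G(G)_\Q$ gives a canonical map $M(G) \to \uG$ in $\DMgm(k,\Q)$. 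These maps are compatible with the structure morphisms $\tilde{f}$ and $L \to G$, so the cones receive the desired comparison map.

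The main obstacle is verifying that this comparison is an isomorphism in $\DMgm(k,\Q)$ --- equivalently, that passing from $M(G)$ to $\uG$ discards only the pieces irrelevant to the $1$-motivic part. This reduces, via Orgogozo's equivalence, to the statement that $M(Z \to G)[1]$ corresponds to the same $1$-motive $[L \to G]$ in $D^b(\onemot_k)$ as $M_\gm(\uL \to \uG)$ does. For the $M_\gm$-side this is by definition; for the $M(Z \to G)[1]$-side it is the content of the explicit description of the $1$-motivic part of a smooth variety (due to Barbieri-Viale--Kahn building on Spie\ss--Szamuely and Orgogozo), according to which the degree-$1$ motivic cohomology of the semi-abelian variety $G$ is exactly the sheaf $\uG$, while the degree-$0$ contribution from $Z$ modulo its basepoint reduces to the discrete sheaf $\uL$. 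Unpacking this identification is the technical heart of the proof.
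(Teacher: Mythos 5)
Your high-level plan---compare $M_\gm(\uL\to\uG)$ with the geometric motive $M(Z\xrightarrow{\tilde f}G)$ and invoke Proposition~\ref{prop:compare}---is the one the paper uses, and your identification of the comparison of these two motivic objects as the crux is correct. But the specific comparison you propose, an isomorphism $M_\gm(\uL\to\uG)\isom M(Z\to G)[1]$ in $\DMgm(k,\Q)$, is false, and your argument breaks at exactly the point you flag as the ``technical heart.'' The object $M_\gm(\uL\to\uG)$ has cohomology concentrated in degree $0$; this is \cite[Proposition~7.2.3]{AEH}, which is the first equality in the statement of the corollary and cannot be derived from your purported isomorphism. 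On the other hand, $H^i(M(Z\to G)[1])=H^{i+1}(G,Z)$ is not concentrated in a single degree: already for $G=\Gm^2$, $L=0$, so $Z=\{0\}$ and $M_\gm(\uL\to\uG)=\uG$, one has $H^1(M(Z\to G)[1])=H^2(\Gm^2)=\Q(-2)\neq 0$, whereas $H^1(\uG)=0$. Structurally the point is that $\uG$ is only a \emph{direct summand} of $M_\gm(G)$ (the complement carries $H^0(G)$ and $H^i(G)$ for $i\geq 2$), and $M_\gm(Z)$ has one more summand than $\uL\isom M_\gm(Z')$; these discrepancies do not cancel in the cone, so the two objects of $\DMgm(k,\Q)$ are genuinely different. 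Your remark that concentration in a single degree ``is then automatic'' is also circular: it presupposes the conclusion $H^*(M_\gm(\uL\to\uG))\isom V(M)^\vee$, and under your false isomorphism the left side would in fact \emph{not} be concentrated.

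The paper avoids all of this by not asserting an isomorphism of motivic complexes. It constructs the honest morphism
$[M_\gm(Z)\to M_\gm(G)]\to[M_\gm(Z')\to \uG]$,
where $M_\gm(Z)\to M_\gm(Z')$ collapses the basepoint summand and $M_\gm(G)\to\uG$ is the projection onto the Albanese summand (the summation map $\Cor(S,G)\to G(S)$); neither component is an isomorphism, and neither is the induced map on cones. The paper then proves only that the induced map on $H^0$ is an isomorphism, via the long exact cohomology sequence of the stupid filtration: the surplus $H^0(G)$ on the top row and the missing $H^0(Z')$-summand cancel against each other in the five-lemma argument, while the $H^1(G)$-terms match by the identity. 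That is precisely the weaker statement the corollary needs. If you want to repair your write-up, keep the framework but replace the claimed isomorphism by this morphism and supply the five-lemma step, citing (rather than deriving) both the concentration result and the Albanese-summand description from \cite{AEH}.
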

\begin{proof} 
The equality on the right is \cite[Proposition~7.2.3]{AEH}. 
In order to compare $V(M)^\vee$ and $H^0(M_\gm(\uL\to \uG))$ it suffices to give a natural isomorphism
\[ H^0(M_\gm(\uL\to\uG))\to H^1(G,Z)\]
with $Z$ as in Proposition~\ref{prop:compare}. The main result of
\cite{AEH} is to describe $\uG$ as a direct summand of $M_\gm(G)$. Its cohomology
in any contravariant Weil cohomology theory (e.g. de Rham or singular cohomology) agrees
with $H^1(G)$. In particular,
\[ H^*(M_\gm(\uG))=H^1(G).\]
The projection map
$M(G)\to \uG$ is given by the summation map 
\[ M(G)(S)=\Cor(S,G)\to G(S).\]
On the other hand, $\uL=M(Z')$ with $Z'=\{P_1,\dots,P_r\}$. There is 
a natural projection $M(Z)\to M(Z')$.

Our comparison isomorphism is induced by the morphism
of motivic complexes
\[ [M(Z)\to M(G)]\to [M(Z')\to \uG].\]
We apply the long exact cohomology sequence for the stupid filtration:
\[\begin{xy}\xymatrix{
H^0(G)\ar[r]&H^0(Z)\ar[r]& H^1(G,Z)\ar[r] &H^1(G)\ar[r]&0\\
0\ar[r]&H^0(Z')\ar[u]\ar[r]&H^0(M_\gm(\uL\to \uG))\ar[u]\ar[r]&H^1(G)\ar[u]^\id \ar[r]&0
}\end{xy}\]
The vertical map on the left induces an isomorphism 
\[ \Q^r\isom H^0(Z')\to H^0(Z)/H^0(G)=\Q^{r+1}/\Delta(\Q).\]
Together with the identity on the right this induces an isomorphism in the middle. 
\end{proof}

We now start on the proof of Proposition~\ref{prop:compare}.

\begin{lemma}\label{lem:lem1}\index{de Rham realisation!of a $1$-motive}
Let $M=[L\to G]\in\onemot_k$ and $Z\to G$ as in Proposition~\ref{prop:compare}.
Then
\[ V_\dR(M)^\vee\isom H^1_\dR(Z\to G).\]
\end{lemma}
\begin{proof}
We refer the reader to Chapter~\ref{sec:one-mot} for the construction of the algebraic group $M^\natural\to G$ and the realisations.
Note that $\RGamma_\dR(Z)= H^0(Z,\Oh_Z)\isom k^{r+1}$ because $Z$ is of dimension zero. Hence
\[ \RGamma_\dR(G,Z)^i=\begin{cases}R\Gamma_\dR(G)^i&i\neq 1\\
              \RGamma_\dR(G)^1\oplus H^0(Z,\Oh_Z) &i=1
  \end{cases}\]
We claim that there is a natural map in the derived category\[ V_\dR(M)^\vee[-1]=\coLie(M^\natural)[-1]\to \RGamma_\dR(G,Z).\]
It is induced by the composition
\[ \coLie(M^\natural)\to \Omega^1(M^\natural)\to \RGamma_\dR(M^\natural)^1
\leftarrow \RGamma_\dR(G)^1,\]
where the maps are
\begin{itemize}
\item extension of an element of the cotangent space to a unique equivariant differential form (it is closed because the co-Lie bracket is trivial);
\item association of a section of the de Rham complex to a section of its Godement resolution;
\item functoriality of $\RGamma_\dR$ (a quasi-isomorphism by homotopy invariance)
\end{itemize}
together with the map
\[ \coLie(M^\natural)\xrightarrow{0}H^0(Z,\Oh_Z).\]
We compare the long exact sequence of the cone with the exact sequence for
$V_\dR(M)$: 
{\small
\[
\begin{xy}\xymatrix{
0\ar[r]& H^0_\dR(G)\ar[r]& H^0_\dR(Z)\ar[r]& H^1_\dR(G,Z)\ar[r]& H^1_\dR(G)\ar[r]& 0\\
 &0\ar[r]&\Hom(L,\Ga)\ar[r]\ar@{.>}[u]&\coLie(M^\natural)\ar[r]\ar[u]&\coLie(G^\natural)\ar[r]\ar[u]&0
}\end{xy}\]
}
The square on the right commutes by naturality. The dotted arrow does not exist as a natural map, but we get an induced map
\[ \Hom(L,\Ga)\to H^0_\dR(Z)/H^0_\dR(G).\]
We make it
explicit. Pick $\alpha:L\to\Ga$. It defines a differential
form $\omega(\alpha)$ on the algebraic group 
$V=\Ga(\Hom(L,\Ga)^\vee)$. In coordinates: let $t_i$ be the coordinate on $V$
corresponding to the basis vector $e_i$ of $L$. Then 
\[ \omega(\alpha)=\sum_{i=1}^r \alpha(e_i)dt_i.\]
We have an exact sequence
\[ 0\to G^\natural\to M^\natural\to V\to 0,\]
hence we can pull $\omega(\alpha)$ back to $M^\natural$. This defines
a class in $H^1_\dR(G,Z)$. Its image in $H^1_\dR(G)$ vanishes by construction.
This implies that the class is exact. Indeed,
\[ \omega(\alpha)=d\left(\sum_{i=1}^r\alpha(e_i)t_i+c\right)\]
for any $c\in k$. We lift $Z\to G$ to $Z\to M^\natural$ by mapping
$P_i$ to the image of $e_i$ in $M^\natural$. We get
a class in $H^0_\dR(Z)=H^0(Z, \Oh_Z)$  by restricting our
function $\sum_i\alpha(e_i)t_i+c$ to $Z$:
\[ P_i\mapsto \alpha(e_i)+c.\]
(Note that $\alpha(e_0)=\alpha(0)=0$.) The equivalence class in
$H^0_\dR(Z)/H^0_\dR(G)$ is independent of $c$ and hence well-defined. 

Now that we have an explicit formula, it is obvious that the map is bijective.
Hence it remains to show that $\coLie(G^\natural)\to H^1_\dR(G)$ is an isomorphism. Both $\coLie$ and $H^1_\dR$ are exact functors on the category of semi-abelian varieties, hence it suffices to consider the cases $G=\Gm$ and $G=A$ abelian.
In the first case, $\Gm^\natural=\G_m$ and the invariant differential 
$dt/t$ is known to generate $H^1_\dR(\Gm)$.

Let $G=A$ be an abelian variety. In this case $\Omega^1(A)=\Omega^1(A)^A\isom\coLie(A)$. By Hodge theory, we have the short exact sequence
\[ 0\to \Omega^1(A)\to H^1_\dR(A)\to H^1(A,\Oh_A)\to 0.\]
The last group also identifies with $\Ext^1(A,\Ga)$. The exact sequence is
compatible with the sequence
\[ 0\to\coLie(A)\to\coLie(A^\natural)\to \Ext^1(A,\Ga)\to 0.\]
Hence $\coLie(A^\natural)\to H^1_\dR(A)$ is an isomorphism as well.
\end{proof}

\begin{lemma}\label{lem:lem2}
\index{singular realisation!of a $1$-motive}
Let $M=[L\to G]\in\onemot_k$ and $Z\to G$ as in Proposition~\ref{prop:compare}.
Then
\[ V_\sing(M)^\vee\isom H^1_\sing(Z\to G).\]
\end{lemma}
\begin{proof}
It is more natural to give the argument in terms of homology. We use the description via $C^\infty$-chains; see Remark~\ref{rem:smooth_chains} or \cite[Definition~2.2.2]{period-buch}. We work with integral coefficients throughout and omit them from the notation 

Recall that, by construction, $f:L\to G$ has an injective lift $f^\natural:L\to M^\natural$. By abuse of notation, the map $P_i\mapsto f^\natural(e_i)$ is also denoted $\tilde{f}:Z\to M^\natural$.
 By homotopy invariance, $H_1^\sing(Z\to G)\isom H_1^\sing(Z\to M^\natural)$. From now on we work with the latter.  

Given an algebraic variety $X/k$ let
$S^\infty_*(X)$ be the chain complex of $C^\infty$-chains on $X^\an$ with integral coefficients. For a morphism $f:Y\to X$, we then define
$H_i^\sing(X,Y)$ as $H_i$ of the complex
\[ S^\infty_*(X,Y):=\cone( S^\infty_*(Y)\to S^\infty_*(X)).\] 
As $Z$ is a disjoint union of points,
$S^\infty_n(Z)=\Z[Z]$ (linear combinations of points) for all $n\geq 0$ and
the map of complexes
\[ S^\infty_*(Z)\to \Z[Z][0]\] 
is a quasi-isomorphism, where as usual $\Z[Z][0]$ denotes the complex concentrated in degree $0$. We define a map of complexes
\[ S^\infty_*(M^\natural)\to [\Lie(M^{\natural,\an})\to M^{\natural,\an}]\]
as follows: 
\begin{itemize}
\item in degree $1$ a path $\gamma:[0,1]\to M^{\natural,\an}$ is mapped to 
$I(\gamma)\in\Lie(M^{\natural,\an})$ (see Section~\ref{ssec:paths});
\item in degree $0$ a formal sum in $\Z[M^{\natural,\an}]$ is mapped to its
sum in $M^{\natural,\an}$.
\end{itemize}
This is compatible with the differential because $\exp(I(\gamma))=\gamma(1)-\gamma(0)$. 
It is easy to see that the diagram of complexes
\[\begin{xy}\xymatrix{
S^\infty_*(Z)\ar[r]\ar[d]&S^\infty_*(M^\natural)\ar[d]\\
L[0]\ar[r]&[\Lie(M^{\natural,\an})\to M^{\natural,\an}]
}\end{xy}\]
commutes. Hence we get morphisms of complexes
\begin{align*}
 S^\infty_*(M^\natural,Z)&\to \cone\left(L[0]\to [\Lie(M^{\natural,\an})\to M^{\natural,\an}]\right)\\
&=[L\oplus \Lie(M^{\natural,\an})\to M^{\natural,\an}]\\
&\to [\Lie(M^{\natural,\an})\to M^{\natural,\an}/f^\natural(L)].
\end{align*}
By definition
\[ T_\sing(M)=\ker(\Lie(M^{\natural,\an})\to M^{\natural,\an}/f^{\natural}(L))\]hence we have defined a natural morphism in the derived category
\[ S^\infty_*(M^\natural,Z)\to T_\sing(M)[-1].\]

We compare the long exact sequence of the cone with the exact sequence for
$V_\sing(M)$:
\[
\begin{xy}\xymatrix{
0\ar[r]& H_1^\sing(M^\natural)\ar[r]\ar@{.>}[d]& H_1^\sing(Z\to M^{\natural})\ar[r]\ar[d]& H_0^\sing(Z)\ar@{->>}[r]\ar@{.>}[d]&H_0^\sing(M^\natural) \\
 0\ar[r]&T_\sing(G)\ar[r]&T_\sing(M)\ar[r]&L\ar[r]&0
}\end{xy}\]
We make the dotted maps explicit. The elements of $H_1^\sing(M^\natural)$ are represented
by closed loops in $M^{\natural,\an}$. Let $\gamma$ be such a closed loop.
Then the end points of $\tilde{\gamma}$ have the same image in $M^{\natural,\an}$, hence $\tilde{\gamma}(1)-\tilde{\gamma}(0)\in \ker(\Lie(M^{\natural,\an})\to M^{\natural,\an})=T_\sing(G)$. The dotted map on the left is an isomorphism.

The kernel of $H_0^\sing(Z)\to H_0^\sing(G)$ is generated by formal
differences $P_i-P_0$. Choose a path $\gamma_i$ from $\tilde{f}(P_0)=0$ to 
$\tilde{f}(P_i)$ in
$M^{\natural,\an}$. Then 
\[ (-\gamma_i,P_i-P_0)\in S^\infty_1(M^\natural,Z)\]
is
in the kernel of boundary map. Hence its cohomology class is the preimage of
$P_i-P_0$. Its image in $\Lie(M^{\natural,\an})$ is
given by $\tilde{\gamma}_i(0)-\tilde{\gamma}_i(1)$ for a lift
$\tilde{\gamma}_i$ of $\gamma_i$.  We may choose $\tilde{\gamma}_i(0)=0$, then
$-\tilde{\gamma}_i(1)$ is in the preimage of $\tilde{f}(P_i)$. (Which preimage
depends on the choice of $\gamma_i$. It is only unique
up to $2$-chains.) 
Its equivalence class modulo $T_\sing(G)$ is nothing but
the image in $M^{\natural,\an}$, hence $\tilde{f}(P_i)=f^\natural(e_i)$. 
The map $\ker(H_0(Z)\to H_0(M^\natural))\to L$ is also bijective.

This finishes the proof.
\end{proof}

\begin{lemma}\label{lem:lem3}
\index{period isomorphism!for $1$-motives}
The comparison maps for de Rham and singular cohomology are compatible with
the period isomorphism.
\end{lemma}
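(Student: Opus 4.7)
The plan is to check that the isomorphisms constructed in Lemma~\ref{lem:lem1} and Lemma~\ref{lem:lem2} fit into a commutative square
\[\begin{xy}\xymatrix{
V_\dR(M)^\vee_\C\ar[r]^{\isom}\ar[d]_{\phi_M^\vee\otimes\C}& H^1_\dR(Z\to G)_\C\ar[d]^{\mathrm{per}}\\
V_\sing(M)^\vee_\C\ar[r]^{\isom}& H^1_\sing(Z\to G)_\C
}\end{xy}\]
where the right-hand vertical arrow is the period isomorphism of \cite[Corollary~5.52]{period-buch}. Both horizontal arrows were built from the same geometric input, namely the universal vector extension $M^\natural$ together with the integration map $I$ from Section~\ref{ssec:exp}, so the task is really to unpack the definitions and match them up term by term. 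I would reduce the check to two special types of cycle/cocycle pairs by filtering both sides by the weight filtration; the compatibility on graded pieces will imply the full compatibility because the isomorphisms of Lemmas~\ref{lem:lem1} and~\ref{lem:lem2} were constructed compatibly with the long exact sequences associated to $[0\to G]\hookrightarrow [L\to G]$.

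The first graded piece is the $G$-part: a closed loop $\gamma$ in $M^{\natural,\an}$ is sent on the singular side to $I(\gamma)\in T_\sing(G)\subset\Lie(M^{\natural,\an})$, while a de Rham class on $G$ is sent to the unique $G^\natural$-invariant differential extending the corresponding element of $\coLie(G^\natural)$. By the very definition of $I$ (Section~\ref{ssec:exp}, Lemma~\ref{lem:int}), for any invariant form $\omega$ the classical pairing satisfies $\int_\gamma\omega=\omega(I(\gamma))$. This is exactly the statement that the period pairing $H^1_\dR(G)\times H_1^\sing(G^\an)\to\C$ agrees with the tautological pairing $\coLie(G^\natural)\times T_\sing(G)\to\C$, modulo the identification $H^1_\dR(G)\isom\coLie(G^\natural)$ recalled at the end of the proof of Lemma~\ref{lem:lem1}. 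This case is therefore essentially built into the construction.

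The second graded piece is the $L$-part. On the de Rham side the isomorphism was made explicit: for $\alpha\in\Hom(L,\Ga)$ a representative cocycle in the \v{C}ech–de Rham description \eqref{eq:de_rham_einfach} is the pair $(\omega(\alpha),-(\alpha(e_i)+c)|_{P_i})$, where $\omega(\alpha)=\sum\alpha(e_j)dt_j$ on the vector extension and $F=\sum\alpha(e_j)t_j+c$ is a global primitive on $M^\natural$. On the singular side, the preimage of $P_i-P_0$ under the dotted arrow in the proof of Lemma~\ref{lem:lem2} is represented by $(-\gamma_i,P_i-P_0)\in S_1^\infty(M^\natural,Z)$, with $\gamma_i$ a path from $0$ to $f^\natural(e_i)$. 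The relative period pairing then gives
\[
\int_{-\gamma_i}\omega(\alpha)-F(P_i)+F(P_0)=-\int_{\gamma_i}dF+F(P_0)-F(P_i)=0\cdot c-\alpha(e_i),
\]
by Stokes and because $\tilde{\gamma}_i(0)=0$, $\tilde{\gamma}_i(1)=f^\natural(e_i)$. This equals the tautological pairing $\langle\alpha,e_i\rangle=\alpha(e_i)$ up to the expected sign convention, which is precisely the compatibility for the $L$-piece.

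The main subtlety, and the step I expect to require the most care, is the off-diagonal term: one must verify that the lifts chosen in Lemma~\ref{lem:lem2} (a path $\gamma_i$ in $M^{\natural,\an}$ through a chosen point above $f(e_i)$) and the lifts chosen in Lemma~\ref{lem:lem1} (the primitive $F$ of $\omega(\alpha)$ on $M^\natural$, only well-defined up to the constant $c$) transform compatibly under the change of representative. This is a finite cocycle bookkeeping: both ambiguities live in $\coLie(G^\natural)$ respectively $T_\sing(G)$, and cancel against each other under the compatibility already established on the $G$-piece. Once this is checked, functoriality and five-lemma arguments across the long exact sequence combine the two graded pieces into the required commutative square, completing the proof.
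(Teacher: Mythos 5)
Your plan is in the right spirit — reducing to compatibility of period pairings, using the identifications of Lemmas~\ref{lem:lem1} and~\ref{lem:lem2}, and noting that everything was built from the same object $M^{\natural,\an}$ and the integration map $I$ — and your explicit checks on the two diagonal blocks (closed loop vs.\ invariant form, path $\gamma_i$ vs.\ the form $\omega(\alpha)$) are correct modulo sign conventions. However, the weight-filtration reduction you propose has a real gap: because both period pairings are only block-\emph{triangular} rather than block-diagonal, agreement on graded pieces does \emph{not} imply agreement of the full pairing. The off-diagonal block — pairing a chain $\gamma_i$ with nonzero image in $L$ against a cohomology class coming from $\coLie(G^\natural)$ — carries the interesting ``logarithmic'' periods and is precisely where the content lives. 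Your paragraph on this point conflates the ambiguity of the chosen representatives ($\gamma_i$ and the primitive $F$, each well-defined only up to something in the $G$-part) with the actual value of the off-diagonal entry of the period matrix; the claim that ``both ambiguities cancel against each other under the compatibility already established on the $G$-piece'' is asserted rather than demonstrated, and does not on its face establish the needed equality of off-diagonal pairings.

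The argument the paper actually uses is much more direct and avoids the filtration entirely. The period pairing on the $1$-motive side is, after the discussion in Chapter~\ref{sec:periods_eins}, already described as integration of an $M^\natural$-equivariant differential form on $M^{\natural,\an}$ along a path in $M^{\natural,\an}$. The comparison map of Lemma~\ref{lem:lem1} sends a cotangent vector to exactly that differential form (viewed as a de Rham cohomology class via homotopy invariance of $M^\natural\to G$), and the comparison map of Lemma~\ref{lem:lem2} sends an element of $T_\sing(M)$ to exactly the corresponding relative $1$-chain. The cohomological period pairing is likewise given by integration in this situation. So the two pairings are being evaluated on \emph{the same form} and \emph{the same path}, and the square commutes tautologically — no decomposition into graded pieces, no separate off-diagonal check. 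If you do want to run the filtration argument, you would still need to verify the off-diagonal entry directly, and the only honest way to do that is the same ``integrate the form along the path'' observation — at which point the filtration scaffolding is redundant.
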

\begin{proof}
In keeping with the proof of the last lemma, we prefer to check 
compatibility with the period pairing:
\[\begin{xy}\xymatrix{
H^1_\dR(G,Z)\times H_1^\sing(G,Z)\ar[r]&\C\\
V_\dR(M)^\vee\times T_\sing(M)\ar[u]\ar[ru]
}\end{xy}\]
A priori, the pairing on the $1$-motive level is simply evaluation of an
element of $\coLie(M^\natural)$ on $\sigma\in \Lie(M)_\C$. However, we have
already made the translation to the integration of equivariant 
differential forms on $M^{\natural,\an}$ along paths.

We now view the same differential form as a class in de Rham cohomology and the path as a class in singular homology. The 
 cohomological version of the period pairing is also given by integration in these special cases. 
\end{proof}

\begin{proof}[Proof of Proposition~\ref{prop:compare}]
Combine Lemmas \ref{lem:lem1}, \ref{lem:lem2} and \ref{lem:lem3}
\end{proof}

\end{appendix}

\chapter*{List of Notations}\label{sec:not}

\subsection*{General}\ \\

\begin{tabular}{p{3cm}l}
 $k$&  algebraically closed field with a fixed embedding $k\to\C$\\
$V^\vee$ &dual vector space\\
$\langle \cdot,\cdot\rangle$&natural pairing between a vector space and its dual\\
$X^\an$&analytic space attached to an algebraic variety over $k$\\
$G^\an$&complex Lie group attached to an algebraic group over $k$\\
$\mathrm{Var}_k$&varieties over $k$, i.e. reduced $k$-schemes of finite type\\
$\Sm_k$&smooth varieties over $k$\\
$D^b(\Ah)$&bounded derived category of an abelian category\\
\end{tabular}

\subsection*{Chapter 2}\ \\

\begin{tabular}{p{3cm}l}
$\Ah\tensor\Q$&isogeny category of an additive category $\Ah$\\
$\Z[\Ch]$&additive hull of a category\\
\end{tabular}

\subsection*{Chapter 3}\ \\

\begin{tabular}{p{3cm}l}
$\Delta_n$&standard $n$-dimensional simplex\\
$S_n(X)$& space of singular $n$-chains\\
$H_n^\sing(X,\Q)$&singular homology\\
$H_n^\sing(X,Y;\Q)$&relative singular homology\\
$H^n_\dR(X)$&algebraic de Rham cohomology\\
$H^n_\dR(X,Y)$&relative algebraic de Rham cohomology\\
$\RGammatilde_\dR(X,\Uf)$&explicit complex computing de Rham cohomology\\
\end{tabular}

\subsection*{Chapter 4}\ \\

\begin{tabular}{p{3cm}l}
$\grp$&category of commutative connected algebraic groups over $k$\\
$\Ga$&additive group\\
$\Gm$&multiplicative group\\
$A^\vee$&dual abelian variety\\
$X(T)$&character group of a torus\\
$\Gm(\Xi)$& dual torus\\
$\Ext^1(A,B)$&Yoneda-extension group\\
$G^\natural$&universal vector extension of a semi-abelian variety\\
$J(Y)$&generalised Jacobian of a smooth curve\\
\end{tabular}

\subsection*{Chapter 5}\ \\

\begin{tabular}{p{3cm}l}
$\Lie(G)$, $\gg$&Lie algebra of an algebraic group or a complex Lie group\\
$\exp_G$&exponential map of commutative complex Lie group\\
$I(\gamma)$&path integral as inverse of the exponential map\\
$\log_G$ &multi-valued inverse of $\exp_G$\\
\end{tabular}

\subsection*{Chapter 6}\ \\

\begin{tabular}{p{3cm}l}
$\Ann(u)$&annhilator of an element\\
\end{tabular}

\subsection*{Chapter 7}\ \\

\begin{tabular}{p{3cm}l}
$\VV$&category of pairs of vector spaces with a period isomorphism\\
$\Per(V)$&set of periods of an object $V\in \VV$\\
$\Per\langle V\rangle$&space of periods of an object $V\in\VV$\\
$\Per(\Ch)$&set of periods of a catgory\\
$\Perform(\Ch)$&space of formal periods of an additive category\\
$\ev$&evaluation map for formal periods\\
$V^\vee$ &external dual of $V\in\VV$\\
$\End(T)$&endormorphism algebra of a functor $T$ on $\Ch$\\
$\Ah(\Ch,T)$&coalgebra of a functor $T$ on $\Ch$\\
\end{tabular}

\subsection*{Chapter 8}\ \\

\begin{tabular}{p{3cm}l}
$[L\to G]$& $1$-motive with lattice part $L$ and semi-abelian part $G$\\
$\onemot_k$&the category of   iso-$1$-motives over $k$\\
$T_\sing(M)$&integral singular realisation of a $1$-motive\\
$V_\sing(M)$&rational singular realisation of a $1$-motives\\
$M^\natural$&universal vector extension of a $1$-motives\\
$\onemotgen_k$&category of generalised $1$-motives\\
$V_\dR(M)$& de Rham realisation of a $1$-motive\\
$\phi_M$&period isomorphism for a $1$-motive\\
$\MHS_k$&the category of   mixed $\Q$-Hodge structures
over $k$\\
\end{tabular}

\subsection*{Chapter 9}\ \\

\begin{tabular}{p{3cm}l}
$\Per(M)$&set of periods of a $1$-motive\\
$\Per\langle M\rangle$&space of periods of a $1$-motive\\
$\int_\sigma\omega$&period pairing for a $1$-motive\\
$\coLie(G)$&dual of the Lie algebra\\
$\VdR{M}$&dual of the de Rham realisation of $M$\\
$E(M)$&endomorphism algebra for the period realisation of a $1$-motive\\
$V$&functor $\onemot_\Qbar\to\VVneu$\\
\end{tabular}

\subsection*{Chapter 10}\ \\

\begin{tabular}{p{3cm}l}
(GS)&Gelfond-Schneider formulation of Hilbert's 7th problem\\
(B) &Baker formulation of Hilbert's 7th problem\\
\end{tabular}

\subsection*{Chapter 11}\ \\

\begin{tabular}{p{3cm}l}
$\delta(M)$&dimension of the space of periods of $M$\\
\end{tabular}

\subsection*{Chapter 12}\ \\

\begin{tabular}{p{3cm}l}
$H^i(X,Y)$&object in $\VV$ attached to $Y\subset X$\\
$\partial$&(co)boundary map for relative (co)homology\\
$\Per(X,Y,i)$&set of periods for $H^i(X,Y)$\\
$\Per^i$&set of $i$-periods for all $X,Y$\\
$\Z[D]^0$&divisors of degree $0$ supported on $D$\\
$J(C)$& generalised Jacobian of a smooth curve\\
$d_1\DMgm(k,\Q)$&geometric motives of dimension at most $1$\\
$d_1\MMNeff(k,\Q)$&Nori motives of degree at most $1$\\
\end{tabular}

\subsection*{Chapter 13}\ \\

\begin{tabular}{p{3cm}l}
$\Omega^1(C)$&regular algebraic differential forms on a smooth projective curve $C$\\
\end{tabular}

\subsection*{Chapter 14}\ \\

\begin{tabular}{p{3cm}l}
$l(\sigma)$&element of $\Lie(J(C)^\an)$ attached to the chain $\sigma$\\
$l^\circ(\sigma)$&element of $\Lie(J(C^\circ)^\an)$ attached to the chain $\sigma$\\
$[\omega]$& class of a differential form in de Rham cohomology \\
$[\sigma]$&class of a chain in singular homology\\
$\Z_\sigma$&subgroup of $\Z[D]^0$ generated by $\partial\sigma$\\
\end{tabular}

\subsection*{Chapter 15}\ \\

\begin{tabular}{p{3cm}l}
$\delta(M)$&dimension of the period space of a $1$-motive\\
$g(B)$&dimension of the abelian variety $B$\\
$e(B)$&dimension of $\End_\Q(B)$ for an abelian variety $B$\\
$\rk_B(L,B)$&$L$-rank of $M$ with respect to the simple abelian variety $B$\\
$\rk_B(T,M)$&$T$-rank of $M$ with respect to $B$\\
$\rk_{\Gm}(L,M)$& $L$-rank of a Baker motive in $\Gm$\\
$X_\red$&image of the lattice $X(T)$ in $A^\vee(\Qbar)_\Q$\\
$X_\sat$&saturation of $X_\red$\\
$G_\red$&semi-abelian variety with torus lattice $X_\red$\\
$G_\sat$&semi-abelian variety with torus lattice $X_\sat$\\
$L_\red$&image of the lattice $L$ in $A(\Qbar)_\Q$\\
$L_\sat$&saturation of $L_\red$\\
$M_\red$&reduced motive constructed from $M$\\
$M_\sat$&saturated motive constructed from $M$\\
\end{tabular}

\subsection*{Chapter 16}\ \\

\begin{tabular}{p{3cm}l}
$\Per_\tate(M)$&Tate periods for $M$\\
$\Per_2(M)$&periods of the 2nd kind wrt closed paths\\
$\Per_\alg(M)$&algebraic periods\\
$\Per_3(M)$&periods of the 3rd kind wrt closed paths\\
$\Per_\inc(M)$&periods of the 2nd kind wrt non-closed paths\\
$\Per_\mix(M)$&periods of the 3rd kind wrt non-closed paths\\
$\Per_\baker(M)$&periods of the 3rd kind wrt non-closed paths in the Baker case\\
$\delta_?(M)$& dimension of $\Per_?(M)$ \\
\end{tabular}

\subsection*{Chapter 17}\ \\

\begin{tabular}{p{3cm}l}
$\Phi$&the map $L_\Q\tensor X(T)_\Q\to \Per_\mix(M)$\\
$R_1(M)$&primitive relations on $L_\Q\tensor X(T)_\Q$\\
$R_n(M)$&$n$-fold relations on $L_\Q\tensor X(T)_\Q$\\
$R_\inc(M)$&full relation space\\
\end{tabular}

\subsection*{Chapter 18}\ \\

\begin{tabular}{p{3cm}l}
$\wp(z;\Lambda)$&Weierstraß $\wp$-function for the lattice $\Lambda$\\
$g_2$, $g_3$&lattice sums\\
$\zeta(z;\Lambda)$&Weierstraß $\zeta$-function for the lattice $\Lambda$\\
$\sigma(z;\Lambda)$&Weierstraß $\sigma$-function for the lattice $\lambda$\\
$\omega$&standard regular invariant differential on $E$\\
$\eta$&standard differential of the second kind on $E$\\
$\xi_P$&standard differential with simple pole in $P$\\
$F(w,u)$&exponential of $\xi_P$\\
\end{tabular}

\subsection*{Chapter 19}\ \\

\begin{tabular}{p{3cm}l}
$\xi(\lambda)$&elliptic differential form on $\C$\\
$\gamma_{p,q}$&path from $p$ to $q$\\
$I_{p,q}(\lambda)$&Euler integral from $p$ to $q$\\
$C_\lambda$&elliptic curve in Legendre normal form\\
$\omega(\lambda)$&standard invariant differential form on $C_\lambda$\\
$\tilde{\gamma}_{p,q}$&lift of $\gamma_{p,q}$\\
$c_{p,q}$&closed path around $p$ and $p$\\
$\tilde{c}_{p,q}$&lift of $c_{p,q}$\\
$F(1/2,1/2,1;\lambda)$&hypergeometric function\\
$\B(p,q)$&Euler's Beta-function\\
$C$&Legendre family of elliptic curves\\
$\eta(\lambda)$& standard differential form of the second kind on $C_\lambda$\\
$F(a,b,c;\lambda)$&hypergeometric function in general\\
$\omega(a,b,c;\lambda)$&differential form in the Euler integral\\
$\Omega(a,b,c;\lambda)$&Euler integral\\
$\omega(b,c-b)$&differential form in the Beta integral\\
$\B(b,c-b)$&Euler's Beta-function\\
$C_N(\lambda)$& curve with affine equation $y^N=x^r(1-x)^s(1-\lambda x)^t$\\
$C_p$& degeneration $C_p(0)$\\
$X_p(\lambda)$&normalisation of $C_p(\lambda)$\\
$X_p$&normalisation of $C_p$\\
$J_p(\lambda)$&Jacobian of $X_p(\lambda)$\\
$J_p$ &Jacobian of $X_p$\\
$\omega_n^{u,v,w}$&differential form on $X_p(\lambda)$\\
$\omega_n^{u,v}$&differential form on $X_p$\\
$\omega_n$&$\omega^{u,v,w}_n$ or $\omega^{u,v}_n$ for distinguished choice of $u,v,w$\\
$\langle x\rangle$&fractional part of a rational number $x$\\

\end{tabular}

\subsection*{Appendix A}\ \\

\begin{tabular}{p{3cm}l}
$\Q\Vect$&category of finite dimensional $\Q$-vector spaces\\
$E\Mod$&finitely generated left $E$-modules\\
$D$&diagram, i.e. oriented graph\\
$T$&representation of a diagram\\
$\pairseff$&Nori's diagram of (effective) pairs\\
$\MMNeff(k,\Q)$&category of effective Nori motives\\
$H_\sing$&singular realisation of Nori motives\\
$H_\Nori$&representation of $\pairseff$ in Nori motives\\
$H_\hodge$&Hode realisation of Nori motives\\
$H$&period realisation of Nori motives\\
$H_\dR$&de Rham realisation of Nori motives\\
$d_n\MMN(k,\Q)$&degree filtration on Nori motives\\
$\Q(-1)$&Lefschetz motive\\
$\MMN(k,\Q)$&category of all Nori motives\\
$\Per^\eff$&set of all periods of effective Nori motives\\
$E(M)$&endomorphism algebra of $H_\sing|_{\langle M\rangle}$\\
\end{tabular}

\subsection*{Appendix B}\ \\

\begin{tabular}{p{3cm}l}
$\DMgmeff(k,\Q)$&Voevodsky's triangulated category of effective geometric motives\\
$M$&standard functor $\mathrm{Var}_k\to \DMgmeff(k,\Q)$, also representation of $\pairseff_k$\\
$H_\sing$&singular realisation of geometric motives\\
$d_n\DMgmeff(k,\Q)$&filtration by dimension\\

\end{tabular}

\subsection*{Appendix C}\ \\

\begin{tabular}{p{3cm}l}
$V$&functor $\onemot_k\to\VVneu$\\
$\RGamma_\dR(X,Y)$&functorial complexes computing relative de Rham cohomology\\
$\RGamma_\sing(X,Y)$&functorial complexes computing singular cohomology\\
$\RGammatilde_\dR(X,Y,\Uf)$& \v{C}ech complexes computing relative de Rham cohomology\\
$\uG$&Nisnevich sheaf with transfers defined by $G$\\
$\uL$ &Nisnevich sheaf with transfers defined by $L$\\
$M_\gm(\cdot)$&geometric motive defined by a complex of Nisnevich sheaves with transfers\\
$V(M)^\vee$& external dual of $V(M)$\\
\end{tabular}
\bibliographystyle{alpha} 
\bibliography{periods}

\printindex
\end{document}